\crefname{theorem}{Theorem}{Theorems}
\crefname{thm}{Theorem}{Theorems}
\crefname{lemma}{Lemma}{Lemmas}
\crefname{claim}{Claim}{Claims}
\crefname{lem}{Lemma}{Lemmas}
\crefname{remark}{Remark}{Remarks}
\crefname{prop}{Proposition}{Propositions}
\crefname{proposition}{Proposition}{Propositions}
\crefname{defn}{Definition}{Definitions}
\crefname{definition}{Definition}{Definitions}
\crefname{corollary}{Corollary}{Corollaries}
\crefname{conjecture}{Conjecture}{Conjectures}
\crefname{question}{Question}{Questions}
\crefname{chapter}{Chapter}{Chapters}
\crefname{section}{Section}{Sections}
\crefname{part}{Part}{Parts}
\crefname{figure}{Figure}{Figures}
\theoremstyle{plain}
\newtheorem{thm}{Theorem}[section]
\newtheorem{lemma}[thm]{Lemma}
\newtheorem{theorem}[thm]{Theorem}
\newtheorem{corollary}[thm]{Corollary}
\newtheorem{prop}[thm]{Proposition}
\newtheorem{proposition}[thm]{Proposition}
\newtheorem{conjecture}[thm]{Conjecture}
\theoremstyle{definition}
\newtheorem{definition}[thm]{Definition}
\theoremstyle{remark}
\newtheorem{remark}[thm]{Remark}
\newtheorem*{remark*}{Remark}
\numberwithin{equation}{section}
\renewcommand{\P}{\mathbb P}
\newcommand{\E}{\mathbb E}
\newcommand{\R}{\mathbb R}
\newcommand{\Z}{\mathbb Z}
\newcommand{\N}{\mathbb N}
\newcommand{\cG}{\mathcal G}
\newcommand{\sA}{\mathscr A}
\newcommand{\sB}{\mathscr B}
\newcommand{\sD}{\mathscr D}
\newcommand{\eps}{\varepsilon}
\newcommand{\lra}{\leftrightarrow}
\newcommand{\Aut}{\text{\textup{Aut}}\,}
\newcommand*{\diam}{\mathop{\textup{diam}}\nolimits}
\newcommand{\ph}{\varphi}
\numberwithin{equation}{section}
\newcommand{\bG}{\mathbf{G}}
\newcommand{\bP}{\mathbf{P}}
\newcommand{\bp}{\mathbf{p}}
\newcommand{\Ceff}{\mathscr{C}_\mathrm{eff}}
\newcommand{\sign}{\operatorname{sgn}}
\newcommand{\Cay}{\operatorname{Cay}}
\newcommand{\xRrightarrow}[2][]{\ext@arrow 0359\Rrightarrowfill@{#1}{#2}}
\newcommand{\Rrightarrowfill@}{\arrowfill@\equiv\equiv\Rrightarrow}
\newcommand{\xLleftarrow}[2][]{\ext@arrow 3095\Lleftarrowfill@{#1}{#2}}
\newcommand{\Lleftarrowfill@}{\arrowfill@\Lleftarrow\equiv\equiv}
\newcommand{\xLleftRrightarrow}[2][]{\ext@arrow 3399\LleftRrightarrowfill@{#1}{#2}}
\newcommand{\LleftRrightarrowfill@}{\arrowfill@\Lleftarrow\equiv\Rrightarrow}
\title{\bf Non-triviality of the phase transition for percolation on finite transitive graphs}
\renewenvironment{abstract}
 {\par\noindent\textbf{\abstractname.}\ \ignorespaces}
 {\par\medskip}
\author{{\bf Tom Hutchcroft and Matthew Tointon}}
\begin{document}
\maketitle

\begin{abstract}
We prove that if $(G_n)_{n
\geq1}=((V_n,E_n))_{n\geq 1}$ is a sequence of finite, vertex-transitive graphs with bounded degrees and $|V_n|\to\infty$ that is at least $(1+\eps)$-dimensional for some $\eps>0$ in the sense that
\[\diam (G_n)=O\!\left(|V_n|^{1/(1+\eps)}\right) \qquad \text{ as $n\to\infty$}\]
then this sequence of graphs has a non-trivial phase transition for Bernoulli bond percolation.  More precisely, we prove under these conditions that for each $0<\alpha <1$ there exists $p_c(\alpha)<1$ such that for each $p\geq p_c(\alpha)$, Bernoulli-$p$ bond percolation on $G_n$ has a cluster of size at least $\alpha |V_n|$ with probability tending to $1$ as $n\to \infty$.
In fact, we prove more generally that there exists a universal constant $a$ such that the same conclusion holds whenever
\[\diam (G_n)=O\!\left(\frac{|V_n|}{(\log |V_n|)^a}\right) \qquad \text{ as $n\to\infty$.}\]
This verifies a conjecture of Benjamini~(2001) up to the value of the constant $a$, which he suggested should be $1$. We also prove a generalization of this result to quasitransitive graph sequences with a bounded number of vertex orbits.

A key step in our argument is a direct proof of our result when the graphs $G_n$ are all Cayley graphs of Abelian groups, in which case we show that one may indeed take $a=1$. This result relies crucially on a new theorem of independent interest stating roughly that balls in arbitrary Abelian Cayley graphs can always be approximated by boxes in  $\Z^d$ with the standard generating set. Another key step is to adapt the methods of Duminil-Copin, Goswami, Raoufi, Severo, and Yadin (\emph{Duke Math.\ J.}, 2020) from infinite graphs to finite graphs. This adaptation also leads to an isoperimetric criterion for infinite graphs to have a nontrivial uniqueness phase (i.e., to have $p_u<1$) which is of independent interest. We also prove that the set of possible values of the critical probability of an infinite quasitransitive graph has a \emph{gap} at $1$ in the sense that for every $k,n<\infty$ there exists $\eps>0$ such that every infinite graph $G$ of degree at most $k$ whose vertex set has at most $n$ orbits under $\Aut(G)$ either has $p_c=1$ or $p_c\leq 1-\eps$.
\end{abstract}

\newpage

\setcounter{tocdepth}{2}
\tableofcontents

\newpage

\setstretch{1.1}

\section{Introduction}

In \textbf{Bernoulli bond percolation}, the edges of a connected, locally finite graph $G=(V,E)$ are chosen to be either deleted (\textbf{closed}) or retained (\textbf{open}) independently at random with retention probability $p\in [0,1]$. We write $\P_p=\P_p^G$ for the law of the resulting random subgraph and refer to the connected components of this subgraph as \textbf{clusters}. When $G$ is infinite, the \textbf{critical probability} $p_c=p_c(G)$ is defined to be
\[
p_c = \sup\bigl\{p \in [0,1] : \text{every cluster is finite $\P_p^G$-almost surely} \bigr\}.
\]
It is a fact of fundamental importance that percolation undergoes a non-trivial phase transition in the sense that $0<p_c<1$ on most infinite graphs.
  Indeed, in the traditional setting of Euclidean lattices\footnote{As is standard in the area, we abuse notation by writing $\Z^d$ both for the free Abelian group of rank $d$ and its standard Cayley graph, the hypercubic lattice.}, it is a classical consequence of the \emph{Peierls argument} that $p_c(\Z^d)<1$ for every $d \geq 2$ \cite[Theorem 1.10]{grimmett2010percolation}, while the complementary bound $p_c \geq 1/(M-1)>0$ holds for every graph of maximum degree $M$ by elementary path-counting arguments \cite[Chapter 3]{grimmett2018probability}.  Besides the obvious importance of such  results to the study of percolation itself, non-triviality of the percolation phase transition also implies the non-triviality of the phase transition for many other important models in probability and mathematical physics, including the random cluster, Ising, and Potts models; see \cite[Section 3.4]{GrimFKbook} and \cref{rem:other_models} below.

Since the pioneering work of Benjamini and Schramm \cite{bperc96}, there has been substantial interest in understanding the behaviour of percolation beyond the traditional setting of Euclidean lattices. A natural level of generality is that of \textbf{(vertex-)transitive} graphs, i.e., graphs for which any vertex can be mapped to any other vertex by a graph automorphism. More generally, one can also consider  \textbf{quasitransitive} graphs, i.e., graphs $G=(V,E)$ for which the action of the automorphism group $\Aut(G)$ on $V$ has finitely many orbits. We refer the reader to \cite{grimmett2010percolation} for background on percolation in the Euclidean context and \cite{LP:book} for percolation on general transitive graphs.

Benjamini and Schramm conjectured in the same work that $p_c<1$ for every infinite, connected, quasitransitive graph that has superlinear volume growth (or, equivalently, is not rough-isometric to $\Z$). The final remaining cases of this conjecture were finally resolved in the recent breakthrough work of Duminil-Copin, Goswami, Raoufi, Severo, and Yadin \cite{1806.07733}. Several important cases of the conjecture have been known for much longer, including the cases that the graph in question has polynomial volume growth (see the discussion in \cite[Section 1]{1806.07733}), exponential volume growth \cite{lyons1995random}, or is a Cayley graph of a finitely presented group~\cite{MR1622785,MR2286517}.
Indeed, the precise result proven in \cite{1806.07733} is that $p_c<1$ for every infinite, connected, bounded degree graph satisfying a $d$-dimensional isoperimetric inequality for some $d > 4$ (we recall what this means later in the introduction). The general conjecture follows since every infinite transitive graph that does not satisfy this condition must have polynomial volume growth by a theorem of Coulhon and Saloff-Coste \cite{MR1232845}, and hence is covered by previous results.
Further works concerning this problem include \cite{MR1841989,MR3607808,MR1634419,MR3865659,MR3520023}; see the introduction of \cite{1806.07733} for a detailed guide to the relevant literature.

The purpose of this paper is to develop an analogous theory for \emph{finite} transitive and quasitransitive graphs. For such graphs there are multiple, potentially inequivalent ways to define the supercritical phase (see \cref{rem:pc_def}); we work with the most stringent such definition, which requires the existence of a \emph{giant cluster} whose volume is proportional to that of the entire graph. 
Given a graph $G=(V,E)$ and parameters $\alpha,q\in(0,1)$, define the \emph{critical probability} 
\[
p_c(G,\alpha,q)=\inf\Bigl\{p\in[0,1]:\P_p\bigl(\text{there exists an open cluster $K$ such that $|K|\ge\alpha|G|$}\bigr)\ge q\Bigr\}.
\]
Of course, we trivially have that $p_c(G,\alpha,q)<1$ whenever $G$ is a finite connected graph and $\alpha,q \in (0,1)$, so the relevant problem is instead to find conditions on \emph{sequences} of graphs $(G_n)_{n\geq 1}$ guaranteeing that $p_c(G_n,\alpha,q)$ is bounded away from $1$ as $n\to \infty$.

By analogy with the infinite case, we would ideally like to say that this holds whenever the graphs $G_n$ are ``not one-dimensional'' in some sense. However, a precise interpretation of what this should mean is much more delicate to determine in the finite case. In the infinite case, well-known results of Trofimov~\cite{MR811571}, Gromov~\cite{gromov81poly} and Bass and Guivarc'h~\cite{bass72poly-growth,guivarch73poly-growth} imply that for every transitive graph $G$ of at most polynomial growth there exists an integer $d$ such that for each $n\in\N$ the ball of radius $n$ in $G$ has cardinality bounded above and below by constants times $n^d$. In particular, any such graph with superlinear growth has at least quadratic growth. Sequences of finite transitive graphs, on the other hand, may have volume growth that is only very barely superlinear. Indeed, if one considers a highly asymmetric torus $(\Z / n \Z) \times ( \Z / m \Z)$ with $m=m(n) \ll n$, one can show 
that the percolation phase transition is non-trivial if and only if $m = \Omega( \log n)$ (see \cref{lem:mal-pak.d=2,rem:log.needed}). This example led Benjamini \cite[Conjecture 2.1]{benjamini2001percolation} to make the following conjecture.
\begin{conjecture}[Benjamini 2001]\label{conj:fin.perc}
For every $k \geq 1$, $\lambda>0$ and $\alpha,q\in(0,1)$ there exists $\eps=\eps(k,\lambda,\alpha,q)>0$ such that if $G=(V,E)$ is a finite, vertex-transitive graph of degree at most $k$ satisfying
\[
\diam(G)\le\frac{\lambda|V|}{\log|V|}
\]
then $p_c(G,\alpha,q)\le1-\eps$. 
\end{conjecture}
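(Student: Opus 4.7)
The plan is to prove the conjecture, possibly with $\log|V|$ replaced by $(\log|V|)^a$ for some universal constant $a$ as advertised in the abstract, by combining three ingredients: a direct treatment of Cayley graphs of finite abelian groups, a finite-volume adaptation of the Duminil-Copin--Goswami--Raoufi--Severo--Yadin (DGRSY) sharpness-via-isoperimetry machinery, and a structural reduction that brings an arbitrary vertex-transitive graph satisfying the diameter hypothesis into the scope of one of the first two ingredients.

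I would first handle the abelian Cayley graph case. The key geometric input is that a ball in $\Cay(A,S)$, for $A$ a finite abelian group and $S$ a symmetric generating set of bounded size, can be sandwiched between two boxes in $\Z^d$ of comparable volume (under the standard generators of $\Z^d$), for some $d$ controlled by $|S|$; this is the ball-to-box approximation theorem advertised in the abstract. Under the hypothesis $\diam(G) = O(|V|/\log|V|)$, this sandwich approximation forces the approximating box to have all its side-lengths of order at least $\log|V|$, since otherwise the ball of radius $\diam(G)$ would be too small to cover $G$. The conclusion $p_c(G,\alpha,q)\le 1-\eps$ then follows from a Peierls-style argument on the approximating torus $\prod_i(\Z/n_i\Z)$ with $\min_i n_i\to\infty$, combined with a sprinkling step to promote a giant component from the torus to the original graph via the quasi-isometry given by the approximation.

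Next I would adapt the DGRSY argument to finite graphs: their theorem shows that infinite bounded-degree graphs satisfying a $d$-dimensional isoperimetric inequality for some $d>4$ have $p_c<1$, via an OSSS-type differential inequality. On a finite transitive graph satisfying a comparable isoperimetric inequality at all scales up to the diameter, a parallel argument should yield that, for $p$ sufficiently close to $1$, there is a cluster of size at least $\alpha|V|$ with probability at least $q$, where the extraction of a giant-component statement from the sharpness-type bound is achieved by a sprinkling-and-gluing step analogous to the $p_c<p_u$ arguments on non-amenable Cayley graphs. Finally I would patch the two strands together via structural theorems for vertex-transitive graphs of slow growth (in the spirit of Breuillard--Green--Tao and Tessera--Tointon): any finite vertex-transitive graph satisfying the diameter hypothesis is, at scales comparable to $\diam(G)$, close to a Cayley graph of a virtually nilpotent group, and hence either admits an abelian Cayley quotient (to which the first step applies, lifted via a quasi-isometry) or satisfies an isoperimetric inequality of dimension $>4$ up to the relevant scale (to which the second step applies).

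The main obstacle, I expect, is twofold. First, the ball-to-box approximation in arbitrary abelian Cayley graphs is delicate because $S$ may be an arbitrary bounded generating set, not the standard one, so the quantitative control of how the side-lengths of the approximating box depend on $|V|$ and on $\diam(G)$ will require a genuinely new geometric argument in abelian group theory. Second, the structural reduction from arbitrary vertex-transitive to the abelian or nilpotent case is typically large-scale in nature, while the diameter hypothesis is global; extracting uniform-in-$|V|$ estimates from the available approximate-group machinery, and in particular losing only a polylogarithmic factor in the diameter bound, is the quantitative bottleneck that will ultimately determine the admissible constant $a$.
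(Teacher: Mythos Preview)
Your high-level architecture matches the paper's: abelian Cayley graphs via a ball-to-box approximation, a finite-volume adaptation of DGRSY under a high-dimensional isoperimetric inequality, and a structural reduction via the Breuillard--Green--Tao/Tessera--Tointon machinery. However, several points deserve correction or sharpening.

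First, the structural dichotomy is not an either/or between ``admits an abelian quotient'' and ``satisfies a $d$-dimensional isoperimetric inequality with $d>4$''. Rather, the structure theory produces a \emph{scale} $m$ such that the graph looks at least $d$-dimensional below $m$ and looks like a Cayley graph of a virtually \emph{nilpotent} (not abelian) group above $m$. Both regimes are typically present simultaneously, and the hard case is when $m$ is of intermediate size: one must build a large cluster inside balls of radius $\asymp m$ using the isoperimetric/DGRSY argument, and then glue these local clusters across the whole graph using the nilpotent structure. This patching at the crossover scale is a substantial part of the argument and is where powers of $\log$ are lost; your proposal does not really anticipate it.

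Second, you skip the nilpotent step entirely. The structure theory yields a nilpotent quotient of bounded step, not an abelian one, and passing from abelian to nilpotent requires an induction on the step (quotienting by the last term $\Gamma_s$ of the lower central series, which is central and generated by bounded-length commutators). This induction is another place where the exponent $a$ grows beyond $1$.

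Third, two technical misattributions: the DGRSY argument is based on a comparison with the Gaussian free field (via an Edwards--Sokal-type coupling), not on an OSSS inequality; and in the abelian case the paper works with \emph{boxes} in $\Z^d$ rather than tori, reducing ultimately to two-dimensional crossing estimates (duality/Kesten) rather than a direct Peierls count. Finally, the step from ``DGRSY gives large clusters'' to ``DGRSY gives a \emph{giant} cluster'' in finite volume is nontrivial and requires an additional bootstrapping argument (the paper uses a ghost-field trick and iterates), which is why the isoperimetric dimension threshold in the finite-volume theorem is pushed above $4$ to roughly $12$.
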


This conjecture has been verified for expander graphs (which automatically have diameter at most logarithmic in their volume) by Alon, Benjamini, and Stacey \cite{MR2073175}; see also \cite{MR2773031,sarkar2018note,MR4101348,MR3005730} for more refined results in this case. Malon and Pak \cite{MR2238046} verified the conjecture for \emph{Cayley graphs of Abelian groups generated by Hall bases} (a.k.a.\ hypercubic tori) and expressed a belief that the conjecture should be \emph{false} in general. Note that the case of the \emph{symmetric} hypercubic torus $(\Z/n\Z)^d$ is classical; indeed it follows from the work of Grimmett and Marstrand \cite{MR1068308} that such a torus has a giant cluster with high probability for every $p>p_c(\Z^d)$. 

The main result of the present paper verifies \cref{conj:fin.perc} for all but the very weakest instances of the hypotheses, applying in particular to graphs $G=(V,E)$ that are $(1+\eps)$-dimensional in the sense that $\diam(G) \leq \lambda |V|^{1/(1+\eps)}$ for some $\eps>0$ and $\lambda<\infty$. In fact, all of our results also apply more generally in the quasitransitive case. Of course, since every finite graph is trivially quasitransitive, we must define our quasitransitivity assumption quantitatively if it is to have any impact. Given $n\in\N$, we therefore define a graph $G=(V,E)$ to be \textbf{$n$-quasitransitive} if the action of $\Aut(G)$ on $V$ has at most $n$ distinct orbits, so that transitive graphs are $1$-quasitransitive.

\begin{theorem}\label{thm:fin.perc}
There exists an absolute constant $a\ge1$ such that for every $k,n \geq 1$, $\lambda>0$ and $\alpha,q\in(0,1)$ there exists $\eps=\eps(k,n,\lambda,\alpha,q)>0$ such that if $G=(V,E)$ is a finite, $n$-quasitransitive graph of degree at most $k$ satisfying
\[
\diam(G)\le\frac{\lambda|V|}{(\log|V|)^a}
\]
then $p_c(G,\alpha,q)\le1-\eps$. 
\end{theorem}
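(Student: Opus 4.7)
The plan is to establish \cref{thm:fin.perc} via a structural dichotomy on the graph $G_n$ combined with the two key inputs highlighted in the abstract. Either $G_n$ is sufficiently ``low-dimensional'' that its structure can be reduced to that of an Abelian Cayley graph, where one invokes the direct proof of the theorem (with $a = 1$), or it is sufficiently ``high-dimensional'' at scales up to $\diam(G_n)$ that a finite-graph adaptation of the Duminil-Copin--Goswami--Raoufi--Severo--Yadin (DGRSY) argument applies. In both regimes the output is that, once $p$ is close enough to $1$ uniformly in $n$, there is a cluster of size at least $\alpha|V_n|$ with probability at least $q$.

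For the Abelian case, I would use the new approximation theorem announced in the abstract: any ball in an arbitrary Abelian Cayley graph is comparable to a box in $\Z^d$ equipped with the standard generating set, for some $d$ controlled by the graph. Given this, Grimmett--Marstrand slab percolation on $\Z^d$ supplies a giant cluster of positive density inside a large Euclidean box for any $p$ exceeding $p_c(\Z^d)$ by a $d$-dependent margin. Pulling this back through the approximation gives a linear-sized cluster in a ball of $G_n$; covering $V_n$ by a bounded number of such balls, which the diameter hypothesis with $a=1$ permits, and applying a sprinkling step upgrades this to a giant cluster across the whole graph with high probability.

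For the high-dimensional case I would adapt the DGRSY multiscale renormalization/sprinkling scheme, which on an infinite graph with a $d$-dimensional isoperimetric inequality ($d>4$) iteratively boosts connection probabilities across increasing length scales. In the finite setting the scheme is run only up to the diameter, which under our hypothesis is essentially $|V_n|/(\log|V_n|)^a$, so that after $O(\log|V_n|)$ stages one obtains a cluster of linear size with probability $1 - o(1)$. The technical care needed is in controlling cumulative error terms uniformly in $n$ and in converting the infinite-graph ``connection to infinity'' event into a genuine linear-sized cluster via an additional sprinkling step.

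To combine the two cases I would argue as follows: if $G_n$ fails the high-dimensional isoperimetric hypothesis required by adapted DGRSY, it must exhibit approximately polynomial volume growth of bounded degree at scales up to $\diam(G_n)$, and a quantitative structure theorem for quasitransitive graphs of polynomial growth (in the spirit of Breuillard--Green--Tao and Tessera--Tointon) then reduces us to a Cayley graph of a virtually Abelian group, where the Abelian argument applies. The extension from transitive to $n$-quasitransitive is absorbed by passing to a bounded cover or by observing that $n$ orbits distort volumes and boundaries only by a factor of $n$. The principal obstacle is this dichotomy itself: one starts from a hypothesis only on the diameter and must convert it, uniformly in $n$, into one of the two regimes above. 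I expect it is in this gluing step that the exponent $a$ is forced to exceed $1$; recovery of Benjamini's conjectured $a = 1$ throughout would require a sharper dichotomy than the arguments sketched above provide.
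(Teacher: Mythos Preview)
Your high-level dichotomy --- either the graph has good isoperimetry up to its diameter (and a finite-volume DGRSY argument applies) or structure theory reduces it to something algebraic --- matches the paper's architecture. But there are two genuine gaps.

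First, the structure theorem does not hand you a virtually \emph{Abelian} group; it hands you a virtually \emph{nilpotent} one of bounded step (see \cref{prop:structure.last.large.ball}). Passing from nilpotent to Abelian requires a separate induction on the step (\cref{subsec:NilpotentNilpotent}, \cref{prop:fin.perc.nilp}), and this induction is precisely one of the two places where extra powers of $\log$ are lost. So your claim that the Abelian case ``with $a=1$'' covers the low-dimensional regime is incorrect: you need \cref{thm:fin.perc.nilp}, which already requires $a=s$ where $s$ is the step bound coming from the structure theorem. Your sketch for the Abelian case (box approximation plus Grimmett--Marstrand) is also looser than what is actually used --- the paper handles boxes directly via a two-dimensional duality argument (\cref{prop:mal-pak}) and then applies the box-approximation theorem (\cref{prop:boxes}) --- but this is a smaller issue.

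Second, and more seriously, you have not confronted the case where the dichotomy fails to be clean: the graph looks high-dimensional up to some intermediate scale $m$ and nilpotent thereafter, with the quotient $G/H$ of genuinely intermediate size $1 \ll |V/H| \ll |V|$. Neither your DGRSY argument (which needs isoperimetry up to the full diameter) nor your nilpotent argument (which needs the $H$-orbits to be of bounded size so that $|\Gamma| \asymp |V|$) applies on its own. The paper devotes most of \cref{sec:mainproofs} to this: one must first prove \cref{prop:iso.subgraph}, which extracts a subset near a ball of radius $m$ that satisfies a good \emph{internal} isoperimetric inequality, then run the isoperimetric argument inside translates of these subsets to produce moderately large clusters, and finally glue these clusters across the whole graph using a further isoperimetric argument at a larger scale together with a union bound over $|V|$ vertices. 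This gluing is the second place where powers of $\log$ are lost, and it is not something that falls out of a generic sprinkling step. Your proposal acknowledges that ``the principal obstacle is this dichotomy itself'' but does not supply the mechanism to bridge it.
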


A crucial ingredient in our argument is a direct proof of \cref{thm:fin.perc} for an arbitrary Abelian Cayley graph. In fact, in this case we show that the exponent $a$ can be taken to be $1$, resolving \cref{conj:fin.perc} in full and significantly generalising the results of Malon and Pak \cite{MR2238046}.

\begin{theorem}\label{thm:fin.perc.ab}
Let $k \geq 1$, $\lambda>0$, and $\alpha,q\in(0,1)$. Then there exists $\eps=\eps(k,\lambda,\alpha,q)>0$ such that if $G=(V,E)$ is a Cayley graph of a finite Abelian group with degree at most $k$ satisfying 
\begin{equation}\label{eq:diam.ab}
\diam(G)\le\frac{\lambda|V|}{\log|V|}
\end{equation}
then $p_c(G,\alpha,q)\le1-\eps$.
\end{theorem}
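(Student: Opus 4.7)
The strategy is to reduce Theorem~\ref{thm:fin.perc.ab} to the case of hypercubic tori (standard Cayley graphs of $\prod_i \mathbb{Z}/n_i\mathbb{Z}$), for which the statement is due to Malon and Pak, via the ball-approximation theorem highlighted in the abstract. Roughly, that theorem asserts that any ball in a finite Abelian Cayley graph of degree at most $k$ is, up to a quasi-isometry with distortion depending only on $k$, a box $[0,l_1]\times\cdots\times[0,l_d]\subset\mathbb{Z}^d$ equipped with the standard generating set, with $d\le d(k)$. Applying this to a maximal ball---which is all of $G$, since $G$ is finite---yields side-lengths $l_1\ge l_2\ge\cdots\ge l_d$ satisfying $\prod_i l_i\asymp|V|$ and $\sum_i l_i\asymp\diam(G)$.

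Next, I would use the diameter hypothesis~\eqref{eq:diam.ab} to show that the approximating box cannot be ``long and thin'': specifically, that $l_2\ge c(k,\lambda)\log|V|$. The argument is essentially pigeonhole---if $l_2,\ldots,l_d$ were all bounded by some constant $C$, then $l_1 C^{d-1}\asymp |V|$ while $l_1\asymp\diam(G)$, so $|V|/\diam(G)\lesssim C^{d-1}$; combined with~\eqref{eq:diam.ab} this gives $\log|V|\lesssim C^{d-1}$, which fails once $|V|$ is large enough. A quantitative version of this reasoning yields the claimed lower bound on $l_2$.

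Having identified two long directions $l_1\ge l_2\ge c\log|V|$ in the approximating box, I would invoke Malon and Pak's theorem on the hypercubic torus $(\mathbb{Z}/l_1\mathbb{Z})\times(\mathbb{Z}/l_2\mathbb{Z})$---equivalently, run a direct two-dimensional Grimmett--Marstrand style renormalisation, which is manageable precisely because the minor side length is at least logarithmic---to produce an open cluster occupying a positive fraction of the two-dimensional approximating box with probability at least $q$ for all $p\ge 1-\eps$, where $\eps=\eps(k,\lambda,\alpha,q)>0$. Transporting this cluster back to $G$ through the quasi-isometry, and noting that bounded distortion costs at most a constant factor in volume (which can be absorbed into $\alpha$ or $\eps$), completes the argument.

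The principal obstacle will be this transport step: for the plan to go through, the approximation theorem must provide bi-Lipschitz control with bounded multiplicity rather than merely Gromov--Hausdorff closeness, because percolation is highly sensitive to the precise adjacency structure of $G$ and a giant cluster in the model box need not correspond to a giant cluster in $G$ under a weaker identification. Establishing a sufficiently strong approximation theorem---which the abstract identifies as an independently interesting new result---is thus the heart of the proof; once this is in hand, Steps 1--3 above are relatively routine.
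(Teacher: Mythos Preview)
Your overall strategy matches the paper's: use the additive-combinatorial approximation theorem (the paper's \cref{prop:boxes}) to replace the ball by a box in $\Z^d$, invoke a Malon--Pak style result for boxes, and transport back. However, your reduction to two dimensions has a genuine gap.

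The pigeonhole argument you give does \emph{not} yield $l_2\ge c\log|V|$. Reworking it with $C=l_2$: since $l_1\asymp\diam(G)$ and $\prod_i l_i\asymp|V|$, one gets $l_2\cdots l_d\asymp|V|/\diam(G)\gtrsim\log|V|$, hence only $l_2\gtrsim(\log|V|)^{1/(d-1)}$. This is sharp: for $\Gamma=(\Z/n\Z)\times(\Z/m\Z)^{d-1}$ with $m\asymp(\log n)^{1/(d-1)}$ the hypothesis \eqref{eq:diam.ab} holds, yet $l_2=m\asymp(\log|V|)^{1/(d-1)}\ll\log l_1$. The two-dimensional percolation estimate you want to invoke requires the short side to be at least logarithmic in the long side, and this fails here. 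Moreover, even if the $l_1\times l_2$ slice did carry a giant cluster, that cluster has order $l_1l_2$, which is not a positive fraction of $|V|\asymp l_1\cdots l_d$ unless the remaining $l_i$ are bounded.

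The paper's fix (following Malon--Pak) is to avoid singling out two coordinates and instead split $\{1,\dots,d\}$ into two blocks so that both products $\prod_{i\le k}N_i$ and $\prod_{i>k}N_i$ are at least $\lambda^{-1}\log|B|$; this is always possible under \eqref{eq:mal-pak.hyp}. One then embeds a two-dimensional box $B(m,n)$ with $m=\frac12(\prod_{i\le k}N_i-1)$ and $n=\frac12(\prod_{i>k}N_i-1)$ into $B(n_1,\dots,n_d)$ via Hamiltonian paths in each factor (see the proof of \cref{prop:mal-pak}). This is the missing idea.

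On the transport step: the paper's approximation theorem is stronger than a quasi-isometry. It produces a \emph{proper} progression $P\subseteq r\hat S$ with $r\hat S\subseteq C_k(P+\hat S)$. Properness means $P$ spans a subgraph of $\Cay(\Gamma,S)$ genuinely isomorphic to a Euclidean box, so percolation connections in the box are literally connections in $G$; the covering then lets one chain via Harris--FKG to connect any two points of $r\hat S$. No rough-isometry transfer of percolation is needed.
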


The proof of \cref{thm:fin.perc.ab} relies on an interesting additive-combinatorial theorem, \cref{prop:boxes}, which roughly states that balls in Abelian Cayley graphs can always be approximated, in a certain sense, by boxes in $\Z^d$ with the standard generating set. This result is in turn proven using a notion of taking a \emph{quotient modulo a subset} of a group; we believe that both \cref{prop:boxes} and this notion are new, and are optimistic that both will have significant further applications in future work.

\subsection{About the proof} 
Our work relies crucially on the quantitative structure theory of finite transitive graphs as developed in a series of works by Tessera and the second author \cite{MR3877012,tt.Lie,tt.Trof,tt.resist}, building on Breuillard, Green and Tao's celebrated structure theorem for finite approximate groups \cite{bgt12}. Roughly speaking, this theory states that for each integer $d\geq 1$ and each locally finite, vertex-transitive graph $G$, there exists a scale $m$ such that $G$ looks at least $(d+1)$-dimensional on scales smaller than $m$ and looks like a nilpotent group of dimension\footnote{There are a number of natural notions of the dimension of a nilpotent group, not all of which agree. Here we use the term \emph{dimension} to refer to the degree of polynomial growth or to the isoperimetric dimension; these always agree by a result of Coulhon and Saloff-Coste~\cite{MR1232845} (see \cref{sec:structuretheory} for more details).} at most $d$ on scales larger than $m$; see \cref{sec:structuretheory} for detailed statements. 
Again, we stress that it is indeed possible for a vertex-transitive graph to look higher-dimensional on small scales than it does on large scales: Consider for example the torus $(\Z/n\Z)^{d_1} \times (\Z/m \Z)^{d_2-d_1}$ with $d_2>d_1$ and $m \ll n$, which looks $d_2$-dimensional on scales up to $m$ and $d_1$-dimensional on scales $k$ satisfying $m \ll k \leq n$. Given these structure-theoretic results, the proof of \cref{thm:iso.inf} has three main components: an analysis of percolation on Cayley graphs of finite nilpotent groups, an analysis of percolation under a high-dimensional isoperimetric condition using the techniques of \cite{1806.07733}, and finally an argument showing that we can patch together the outputs of these two analyses at the relevant crossover scale if necessary.

\medskip

Let us now outline this proof in a little more detail.
\begin{itemize}
    \item
	In \cref{subsec:quasitrans} we reduce \cref{thm:fin.perc} to the transitive case by noting every $n$-quasitransitive graph is rough-isometric to a transitive graph of comparable volume and diameter.
	\item 
  In \cref{subsec:boxes,subsec:Abelian}, we prove \cref{thm:fin.perc.ab} by using techniques from additive combinatorics to reduce from arbitrary Abelian Cayley graphs to 
   boxes in $\Z^d$ with the standard generating set as discussed above. We stress that this reduction from arbitrary Abelian Cayley graphs to hypercubic boxes, summarised by \cref{prop:boxes}, is one of the main technical innovations of the paper. Once this reduction has been carried out, \cref{thm:fin.perc.ab} can be handled by minor variations on the methods of Malon and Pak \cite{MR2238046}. 

   \item In \cref{subsec:NilpotentNilpotent}, we prove a version of \cref{thm:fin.perc} for Cayley graphs of nilpotent groups (\cref{thm:fin.perc.nilp}), but where the exponent $a$ is taken to be the step of the group (i.e., the length of the lower central series). This is done by induction on the step, with the base case being handled by \cref{thm:fin.perc.ab}, and is the first place in which we lose additional powers of log in our analysis. 
\item
   In \cref{sec:Isoperimetry}, which can be read independently of the rest of the paper, we adapt the methods of \cite{1806.07733} to analyze percolation on finite graphs satisfying a $12$-dimensional isoperimetric condition. As described in more detail just after the statement of \cref{thm:iso.inf} below, the proof of that paper adapts straightforwardly to show under a $(4+\eps)$-dimensional isoperimetric assumption that there is a non-trivial phase in which there exist \emph{large} clusters (i.e. clusters of size going to infinity with the volume of the graph), but an additional and completely new argument is needed to deduce the existence of a \emph{giant} cluster (i.e. a cluster of volume \emph{proportional} to the volume of the graph).
   \item
   In \cref{sec:structuretheory} we review the structure theory of vertex-transitive graphs as developed in \cite{tt.resist,tt.Trof}. We also prove an important supporting technical proposition, stating roughly that at the scale where the graph crosses over from being at least $(d+1)$-dimensional to at most $d$-dimensional, we can find a set that well approximates the ball and that induces a subgraph satisfying a $(d+1)$-dimensional isoperimetric inequality.
   \item Finally, in \cref{sec:mainproofs} we put all these ingredients together to deduce \cref{thm:fin.perc}. Note that a substantial argument is still required to complete this stage of the proof, which is the second and last place that additional powers of log are lost in our analysis.
\end{itemize}

\subsection{Further results}

We now state our other main results.

\medskip

\noindent\textbf{Isoperimetric criteria for percolation.} We now discuss our results concerning percolation on finite graphs under isoperimetric conditions, which build on the work of \cite{1806.07733} and play an important role in the proof of \cref{thm:fin.perc} as described above.
Let $d\geq 1$ and $c>0$. A locally finite graph $G=(V,E)$ is said to satisfy a \textbf{$d$-dimensional isoperimetric inequality} with constant $c$, abbreviated \eqref{Assumption:ID}, if
\begin{equation}
\label{Assumption:ID}
\tag{$\mathrm{ID}_{d,c}$} |\partial_E K| \geq c \min\bigl\{|K|,|V \setminus K|\bigr\}^{(d-1)/d}
\end{equation}
for every finite set of vertices $K \subseteq V$. Here, $\partial_E K$ denotes the edge boundary of $K$, i.e., the set of edges with endpoints in both $K$ and $V \setminus K$. It is a classical result of Coulhon and Saloff-Coste~\cite{MR1232845} that transitive graphs of at least $d$-dimensional volume growth always satisfy $d$-dimensional isoperimetric inequalities, and strong quantitative versions of this result for finite transitive graphs have recently been proven by Tessera and the second author in \cite{tt.resist}. 

The aforementioned theorem of Duminil-Copin, Goswami, Raoufi, Severo, and Yadin \cite{1806.07733}, which we review in detail in \cref{sec:Isoperimetry}, may be phrased quantitatively as follows. (While they did not phrase their results in this way, one may easily verify that all the constants appearing in their proof can be taken to depend only on the parameters $d$, $c$, and $k$; see \cref{rem:DGRSY} below.)

\begin{theorem}[DGRSY]
\label{thm:quantitative_DGRSY}
For each $k<\infty$, $d >4$, and $c>0$, there exists $\eps=\eps(d,c,k)>0$ such that if $G=(V,E)$ is an infinite, connected graph with degrees bounded by $k$ satisfying the $d$-dimensional isoperimetric inequality \eqref{Assumption:ID} then
$p_c(G)\leq 1-\eps$. Moreover, there exists a constant $\eta=\eta(d,c,k)>0$ such that
\[
\P_p\bigl(A \leftrightarrow \infty\bigr) \geq 1-  \exp\left[ -\eta |A|^{(d-2)/d} \right]
\]
for every $p \geq 1-\eps$ and every finite non-empty set $A \subseteq V$. 
\end{theorem}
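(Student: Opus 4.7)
The plan is to adapt the proof of Duminil-Copin--Goswami--Raoufi--Severo--Yadin (DGRSY) to a quantitative setting, tracking how every constant depends on the graph parameters $(d,c,k)$, and to extend their machinery to yield the stretched-exponential set-connection bound claimed in the ``moreover'' statement.

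For the first claim $p_c(G) \leq 1-\eps$, we rerun the DGRSY argument with careful bookkeeping. Their proof applies the OSSS inequality to the indicator of $\{o \leftrightarrow \partial B(o,n)\}$ using a randomized exploration algorithm whose revealments are bounded in terms of $\chi_n(p)/|B(o,n)|$, combines this with the Margulis--Russo formula, and applies $(\mathrm{ID}_{d,c})$ to produce a differential inequality of the schematic form
\[ \theta_n'(p) \;\geq\; c_1(d,c,k)\,n\, \chi_n(p)^{-1}\, \theta_n(p)\bigl(1-\theta_n(p)\bigr), \]
where $\theta_n(p):=\P_p(o\leftrightarrow \partial B(o,n))$ and $\chi_n(p) := \sum_{v \in B(o,n)} \P_p(o \leftrightarrow v)$. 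Integration over $p$ then forces a threshold $1-\eps(d,c,k)<1$ above which $\theta(p)>0$. Every step in the derivation uses only (i) the degree bound $k$, (ii) the isoperimetric parameters $(d,c)$, and (iii) universal Boolean-analysis inequalities, so the resulting $\eps$ is a function of $(d,c,k)$ alone.

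For the moreover part, the goal is to leverage the sharp threshold produced by the OSSS iteration, which delivers, for $p \geq 1-\eps$, a stretched-exponential single-site bound of the form $\P_p(v \leftrightarrow \partial B(v,r)) \geq 1-\exp(-\eta_0 r^{\gamma})$ uniformly in $v$, with $\eta_0,\gamma>0$ depending only on $(d,c,k)$. To pass from this to the claimed set-level bound, observe that $\{A\not\leftrightarrow\infty\}$ forces $K_A:=\bigcup_{v\in A}K_v$ to be a finite set containing $A$ whose edge boundary is entirely closed, so that $|\partial_E K_A|\geq c|A|^{(d-1)/d}$ by $(\mathrm{ID}_{d,c})$. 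A coarse-graining argument at a suitable scale $r$ then reduces $\{A\not\leftrightarrow\infty\}$ to the existence of a ``bad'' separating surface in the renormalized graph; bounding the probability of such surfaces using the sharp single-site threshold together with a Peierls-type enumeration in the coarse graph, and optimizing $r$ against the competing contributions of the single-site rate $r^\gamma$ and the coarse isoperimetric exponent $d-1$, produces the claimed exponent $(d-2)/d$.

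The main technical obstacle is obtaining the sharp exponent $(d-2)/d$ in the moreover part. A naive Peierls enumeration at the original scale fails in general bounded-degree graphs, because the number of connected sets with a given volume typically grows exponentially in the volume rather than in the surface area; coarse-graining remedies this by reducing to a subcritical-percolation-like problem on the renormalized graph, but extracting the correct exponent requires careful simultaneous control of the stretched-exponential rate $\gamma$ inherited from DGRSY, the coarse-scale isoperimetric constant supplied by $(\mathrm{ID}_{d,c})$, and the combinatorial count of separating surfaces at the chosen scale. A secondary bookkeeping task is to verify that every algorithmic and combinatorial input in the DGRSY framework is graph-agnostic beyond its dependence on $(d,c,k)$.
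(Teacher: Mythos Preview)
Your proposal misidentifies the method used by DGRSY. Their proof of $p_c<1$ is \emph{not} based on the OSSS inequality and randomized algorithms; that machinery belongs to the Duminil-Copin--Raoufi--Tassion papers on sharpness and is designed to show that the phase transition is sharp, not that $p_c<1$. In particular, a differential inequality of the form you write is compatible with $\theta_n(p)\to 0$ for every $p<1$ (consider $\Z$), so ``integration over $p$'' does not by itself produce a threshold strictly below $1$. The DGRSY argument instead proceeds via the Gaussian free field: one first shows (\cref{prop:GFFpercestimate}) that a percolation-in-random-environment model derived from the GFF satisfies
\[
\E_B^{\mathrm{GFF}}\bigl[\P_{\mathbf p(\varphi)}(A\leftrightarrow B)\bigr]\;\ge\;1-\exp\Bigl[-\tfrac12\,\Ceff(A\leftrightarrow B)\Bigr],
\]
and then (\cref{prop:GFF_comparison}) uses the hypothesis $d>4$ to compare this model to Bernoulli percolation at a sufficiently large $p_0=p_0(d,c,k)<1$. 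The paper recovers \cref{thm:quantitative_DGRSY} by applying this to an exhaustion of $G$ by finite sets (\cref{rem:DGRSY}); the quantitative dependence on $(d,c,k)$ is checked line-by-line in the proof of \cref{prop:GFF_comparison}.

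Your plan for the ``moreover'' clause is also more indirect than necessary. Once one has $\P_p(A\leftrightarrow B)\ge 1-\exp\bigl[-\tfrac12\Ceff(A\leftrightarrow B)\bigr]$, the exponent $(d-2)/d$ falls out immediately from the Lyons--Morris--Schramm bound $\Ceff(A\leftrightarrow B)\ge c'\min\{|A|,|B|\}^{(d-2)/d}$ of \eqref{eq:LMS}, with no coarse-graining or Peierls enumeration required. The obstacle you yourself flag---extracting the sharp exponent from a Peierls-type argument on a general bounded-degree graph where connected-set counts are volume- rather than surface-controlled---is genuine, and it is not clear your route reaches $(d-2)/d$ at all; the conductance route bypasses it entirely.
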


Our second main theorem extends this result to finite graphs under a stronger assumption on the dimension. Note that neither \cref{thm:quantitative_DGRSY} nor \cref{thm:iso.inf} require transitivity. Given two sets of vertices $A$ and $B$, we write $\{A \leftrightarrow B\}$ for the event that there is an open path connecting $A$ to $B$. The dimensional threshold $6+2\sqrt{7}$ appearing here satisfies $6+2\sqrt{7} \approx 11.29 < 12$.

\begin{theorem}
\label{thm:iso.inf}
Let $G=(V,E)$ be a finite, connected graph with degrees bounded by $k$ that satisfies a $d$-dimensional isoperimetric inequality \eqref{Assumption:ID} for some $d > 6+2\sqrt{7}$ and $c>0$. There exists a positive constant $\eta=\eta(d,c,k)$ such that for every $\eps>0$ there exists $0<p_0=p_0(\eps,d,c,k)<1$
 such that $\P_p(|K_v| \geq (1-\eps)|V|) \geq 1-\eps$ for every $v\in V$ and $p\geq p_0$ and
\[
\P_p\bigl(A \leftrightarrow B\bigr) \geq 1- \eps \exp\left[ -\eta \min\{|A|,|B|\}^{(d-2)/d} \right]
\]
for every $p \geq p_0$ and every two non-empty sets $A,B \subseteq V$.
\end{theorem}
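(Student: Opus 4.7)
The plan is to prove the theorem in two stages, with the stronger dimensional assumption $d > 6+2\sqrt{7}$ entering only in the second.

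First I would adapt the argument of Duminil-Copin, Goswami, Raoufi, Severo, and Yadin~\cite{1806.07733} from the infinite to the finite setting to obtain, for $d > 4$, a stretched-exponential cluster-size tail bound of the form
$$\P_p\bigl(n \leq |K_v| \leq (1-\beta)|V|\bigr) \leq \exp\bigl[-\eta n^{(d-2)/d}\bigr]$$
valid for all $p$ close enough to $1$ and any fixed $\beta > 0$. The DGRSY argument uses only the bounded-degree assumption and the isoperimetric inequality \eqref{Assumption:ID}, not the infiniteness of the graph; one simply reinterprets the event ``cluster is infinite'' as ``cluster has size at least $(1-\beta)|V|$'' throughout. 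In particular, with high probability every vertex lies either in a cluster of polylogarithmic size or in a cluster of size at least $(1-\beta)|V|$. The same adaptation, applied symmetrically to two sets via DGRSY's multi-step sprinkling scheme, yields the set-to-set bound $\P_p(A \leftrightarrow B) \geq 1 - \exp[-\eta \min\{|A|,|B|\}^{(d-2)/d}]$ in the second conclusion of the theorem, at least up to the $\eps$ prefactor.

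Second, and this is the novel step, I would use the tail bound together with a Peierls-type enumeration of separating interfaces to establish uniqueness of the giant cluster. The event that two disjoint clusters of size at least $c|V|$ coexist forces each to have a closed edge boundary of size at least $c|V|^{(d-1)/d}$ by \eqref{Assumption:ID}. A naive Peierls bound using the animal bound $(ek)^n$ on the number of connected subsets of size $n$ is too entropically expensive when combined with the isoperimetric lower bound on the interface; one must instead use the DGRSY tail in a second-order way, both to restrict attention to interfaces whose ``complementary'' intermediate clusters lie in the controlled size range, and to bound the probability of each candidate interface configuration. Balancing the entropy of interface shapes against the resulting probabilistic weight leads to a quadratic inequality in $d$, whose threshold is exactly the positive root $6+2\sqrt{7}$ of $d^2-12d+8=0$. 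Once uniqueness is established, the first conclusion follows from the second via a standard Markov argument: if the largest cluster had size below $(1-\eps)|V|$, then at least $\eps(1-\eps)|V|^2$ ordered pairs $(u,v)$ would satisfy $u \not\leftrightarrow v$, whereas the two-point bound with $|A|=|B|=1$ implies that the expected number of such disconnected pairs is at most a prefactor times $|V|^2$ that can be made arbitrarily small.

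The main obstacle I anticipate is the uniqueness argument in the second stage. The specific threshold $6 + 2\sqrt{7}$ reflects a tight two-way exponent balance between interface entropy and interface probability, and pushing it to the natural-looking value $d>4$ would seem to require strictly stronger inputs than \eqref{Assumption:ID} alone — for example, heat-kernel or transience estimates on the underlying random walk, neither of which is directly accessible from the isoperimetric inequality in the finite setting. Making the Peierls balance fully rigorous, and extracting exactly the stated quadratic threshold rather than a weaker constant, is the main technical hurdle.
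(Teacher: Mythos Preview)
Your first stage overclaims what the DGRSY argument yields in finite volume. The Gaussian-free-field comparison (Proposition~\ref{prop:GFFpercestimate} and Proposition~\ref{prop:GFF_comparison} in the paper) requires a choice of boundary set $B$ on which the GFF is pinned to zero, and the $d$-dimensional isoperimetric inequality \eqref{Assumption:ID} only transfers to the relative inequality \eqref{Assumption:IDB} when $|B| \gtrsim |V|^{1-\delta}$ with $\delta < (d-4)/4(d-1)$ (Lemma~\ref{lem:isoperimetry_twodefinitions} and Corollary~\ref{cor:connecting_big_sets}). So what one actually obtains for $d>4$ is a bound on $\P_p(A \nleftrightarrow B)$ only when at least one of $A,B$ has size $\gtrsim |V|^{1-\delta}$; there is no direct two-sided tail $\P_p(n \leq |K_v| \leq (1-\beta)|V|) \leq \exp[-\eta n^{(d-2)/d}]$ at this stage, and in particular no set-to-set bound for arbitrary small sets.

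Your second stage is not the paper's approach, and the Peierls sketch is too vague to carry the specific threshold. The paper does \emph{not} enumerate separating interfaces. Instead it applies the DGRSY bound with $B$ taken to be a random \emph{ghost field} $\mathcal{G}$ of intensity $h \asymp |V|^{-\delta}$: since $\P_{p,h}(A \nleftrightarrow \mathcal{G} \mid K_A) = e^{-h|K_A|}$, the connectivity bound translates into a Laplace-transform estimate on $|K_A|$, yielding $|K_A| \gtrsim |V|^{\delta}|A|^{(d-2)/d}$ with the correct stretched-exponential probability. Iterating this via sprinkling a bounded number of times grows $|K_A|$ to size $\gtrsim |V|^{1-\delta}$, after which one more application of Corollary~\ref{cor:connecting_big_sets} connects any two such sets. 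The iteration genuinely grows $|K_A|$ only if $\delta > 2/d$ (so that $|V|^{\delta}x^{(d-2)/d} \geq |V|^{\delta-(2/d)(1-\delta)} x > x$ for $x \leq |V|^{1-\delta}$), while the DGRSY step requires $\delta < (d-4)/4(d-1)$. Compatibility of these two constraints is exactly $8(d-1) < d(d-4)$, i.e.\ $d^2 - 12d + 8 > 0$, giving $d > 6+2\sqrt{7}$. So the threshold arises from an analytic window for the ghost-field intensity, not from an entropy--probability balance of interfaces.
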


As we shall see in \cref{sec:Isoperimetry}, the proof of \cite{1806.07733} extends straightforwardly to show that if $G$ satisfies a $d$-dimensional isoperimetric assumption for some $d>4$ then there exists $\eps=\eps(k,d)>0$ such that if $p\geq 1-\eps$ then each vertex $v$ has a good probability to be connected to any set $A \subseteq V$ satisfying $|A| \geq |V|^{1-\delta}$, where $\delta=\delta(d)=(d-4)/4(d-1)$ is an explicit positive constant tending to $0$ as $d \downarrow 4$. An additional argument is required to deduce that a giant component exists, and we have been able to implement such an argument only under a stronger assumption on $d$.

\begin{remark}
We have not optimized the value of the dimensional threshold $6+2\sqrt{7}\approx 11.29$ appearing here. We have been able to extend the result to some lower values of the dimension, but not all the way down to $4+\eps$, and do not pursue these improvements here. We expect that \cref{thm:quantitative_DGRSY,thm:iso.inf} should hold for any $d>1$, but this appears to be beyond the scope of existing methods.
\end{remark}

\medskip

\noindent \textbf{Critical probability gap for infinite transitive graphs.}
Although the main focus of this paper is percolation on finite graphs, a number of the techniques apply equally well to infinite graphs. In particular, this allows us to make the results of \cite{1806.07733} more quantitative in the following sense. Recall that a transitive graph is said to have \textbf{superlinear volume growth} if $\limsup_{n\to\infty} \frac{1}{n} |B(o,n)| = \infty$, where $B(o,n)$ denotes the ball of radius $n$ around the vertex $o$.
\begin{theorem}[Critical probability gap]\label{thm:gap}
Let $k,n\in\N$. Then there exists $\eps=\eps(k,n)>0$ such that if $G$ is an infinite, connected $n$-quasitransitive graph of degree at most $k$ with superlinear volume growth then $p_c(G)\le1-\eps$. In particular, every infinite, connected, $n$-quasitransitive graph $G$ of degree at most $k$ has either $p_c(G)\le1-\eps$ or $p_c(G)=1$.
\end{theorem}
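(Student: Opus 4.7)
The plan is to combine the quantitative structure theory of infinite bounded-degree quasitransitive graphs, to be reviewed in \cref{sec:structuretheory}, with the quantitative isoperimetric criterion \cref{thm:quantitative_DGRSY}, which crucially does not require any transitivity assumption.

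Fixing a threshold $d_0=5$, the quantitative structure theory of~\cite{tt.Trof,tt.resist} yields a dichotomy: either $G$ satisfies the isoperimetric inequality $(\mathrm{ID}_{d_0,c})$ for some $c=c(k,n)>0$ depending only on $k$ and $n$, or $G$ has polynomial volume growth of integer degree $\delta<d_0$ (integrality being a consequence of the quasitransitive Trofimov--Gromov theorem). In the first case, \cref{thm:quantitative_DGRSY} immediately gives $p_c(G)\le 1-\eps(k,n)$, completing the proof. In the second case, the superlinear-growth hypothesis forces $\delta\geq 2$, leaving the finitely many cases $\delta\in\{2,3,4\}$. For each such $\delta$ the quantitative Coulhon--Saloff-Coste theorem from~\cite{tt.resist} upgrades polynomial growth of degree $\delta$ to a $\delta$-dimensional isoperimetric inequality with constant depending only on $k$, $n$ and $\delta$, but this falls below the dimensional threshold of \cref{thm:quantitative_DGRSY}.

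The main obstacle is therefore the low-dimensional cases $\delta\in\{2,3,4\}$. To handle them uniformly, I would invoke the finer conclusion of the quantitative structure theorem that any such $G$ is quasi-isometric, with constants depending only on $k$, $n$ and $\delta$, to a Cayley graph drawn from a finite family of model torsion-free nilpotent Cayley graphs of Hirsch length $\delta$. On each such model, a Peierls-type renormalisation in the spirit of the hypercubic-box argument used in the proof of \cref{thm:fin.perc.ab} gives $p_c<1$ with a constant that is uniform across the family; pulling this back across the quasi-isometry and taking the worst constant over $\delta\in\{2,3,4\}$ (together with the previous case) produces the claimed uniform $\eps=\eps(k,n)$. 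The final ``gap'' conclusion then follows because the only infinite connected $n$-quasitransitive graphs of linear growth are quasi-isometric to $\Z$, on which $p_c=1$ trivially.
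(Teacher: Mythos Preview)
Your high-dimensional branch is correct and matches the paper: after reducing to the transitive case (\cref{cor:quasitrans}), if $|B(o,n)|\ge n^5$ for all $n$ one applies \cref{thm:isop_resist} and then \cref{thm:quantitative_DGRSY}. The gap is in the low-dimensional branch. Your assertion that any such $G$ is quasi-isometric, with constants depending only on $k,n,\delta$, to a member of a \emph{finite} family of model nilpotent Cayley graphs is false. Although there are only finitely many torsion-free nilpotent \emph{groups} of Hirsch length $\le4$, the bounded-degree transitive graphs of growth degree $\delta$ are not uniformly quasi-isometric to any fixed Cayley graph: for instance the graphs $G_N=\Cay(\Z^2\times\Z/N\Z,\{e_1,e_2,e_3\})$ are degree-$6$ transitive graphs of growth degree $2$, yet the ball of radius $N/4$ in $G_N$ has volume of order $N^3$ while any ball of comparable radius in $\Z^2$ has volume of order $N^2$, ruling out an $(\alpha,\beta)$-quasi-isometry to $\Z^2$ with $\alpha,\beta$ independent of $N$. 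A Peierls bound on the model therefore cannot be pulled back to a uniform bound on $p_c(G_N)$.

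The paper handles this case by a mechanism that avoids any uniform quasi-isometry. Applying \cref{prop:structure.last.large.ball} with $d=5$ gives a quotient $G/H$ (with $H$-orbits of possibly unbounded diameter) on which a group $\Gamma$ having a nilpotent subgroup of index and step at most an absolute constant $M$ acts transitively with stabilisers of size $\le M$. The key input is then \cref{thm:inf.perc.nilp}, whose bound $p_c(\Cay(\Gamma,S))\le1-\eps$ depends only on that index $M$ and \emph{not} on the generating set $S$ or on which virtually nilpotent group $\Gamma$ is; its proof works by locating inside any such Cayley graph a subgraph generated by $O_M(1)$ elements of $\hat S$ that surjects onto $\Z^2$ (via \cref{lem:nilp.Z^2.quotient} and \cref{lem:fin.ind.gen}), and then invoking $p_c(\Z^2)=\tfrac12$ through \cref{prop:Benj-Schr} and \cref{lem:roughembedding}. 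One then passes from $\Cay(\Gamma,S)$ to $G/H$ using \cref{prop:cay-ab} and \cref{lem:bdd.orbits} (the relevant orbits having size $\le M$), and from $G/H$ back to $G$ using \cref{prop:Benj-Schr}; neither step introduces any dependence on the unbounded scale $m$. This $\Z^2$-quotient trick is the missing ingredient in your plan.
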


Since the main results of \cite{1806.07733} are already proven quantitatively as discussed above, the main novelty of the proof of \cref{thm:gap} comes from the application of quantitative forms of Gromov's theorem as developed in \cite{MR3877012,tt.resist,tt.Trof,tt.Lie} to handle the low-dimensional case in a quantitative way.

A critical probability gap of the form established by \cref{thm:gap} was first suggested to hold by G\'abor Pete \cite[p.\ 225]{Pete}, who noted that it would follow from Schramm's locality conjecture \cite{MR2773031} together with the (then conjectural) results of \cite{1806.07733}. Moreover, \cref{thm:gap} shows in particular that, in the formulation of the locality conjecture, one may harmlessly 
 replace the assumption  that $p_c(G_n)<1$ for all sufficiently large $n$ with the \emph{a priori} stronger assumption that $\limsup_{n\to\infty}p_c(G_n)<1$. See \cite{MR2773031,1808.08940} for overviews of this conjecture and the progress that has been made on it. [Added in proof: Schramm's locality conjecture has now been solved in a preprint of Easo and the first author \cite{easo2023critical}.]

\medskip

\noindent\textbf{Corollaries for the uniqueness threshold.}
 Recall that if $G=(V,E)$ is an infinite, connected, locally finite graph then the \textbf{uniqueness threshold} $p_u=p_u(G)$ for Bernoulli bond percolation on $G$ is defined by
\[
p_u = \inf\left\{p\in [0,1] : \text{there is a unique infinite cluster $\P_p$-almost surely}\right\}.
\]
It is a result originally due to H\"aggstrom, Peres, and Schonmann \cite{HaggPe99,MR1676831,HPS99} that if $G$ is quasitransitive then there is a unique infinite cluster almost surely for every $p>p_u$.
Benjamini and Schramm \cite[Question 3]{bperc96} asked whether the strict inequality $p_u<1$ holds for every transitive graph with one end. Of course, when the graph in question is amenable we have that $p_c=p_u$ by the classical results of Aizenman, Kesten, and Newman~\cite{MR901151} and Burton and Keane~\cite{burton1989density}, so that the question has a positive answer in this case by the results of \cite{1806.07733}.  In the nonamenable setting, the question has been resolved positively for Cayley graphs of finitely presented groups by Babson and Benjamini \cite{MR1622785} (see also \cite{MR2286517}), for graphs defined as direct products by Peres \cite{MR1770624}, and for Cayley graphs of Kazhdan groups and wreath products by Lyons and Schramm \cite{LS99} but remains open in general.

\cref{thm:iso.inf} leads to an interesting isoperimetric criterion for an infinite graph to have $p_u<1$. We define the \textbf{internal isoperimetric dimension} of an infinite, connected, locally finite graph $G=(V,E)$ to be the supremal value of $d$ for which there exists a positive constant $c$ and an exhaustion $V_1 \subseteq V_2 \subseteq \cdots$ of $V$ by finite connected sets such that the subgraph $G_n$ of $G$ induced by $V_n$ satisfies the $d$-dimensional isoperimetric inequality \eqref{Assumption:ID} for every $n\geq 1$. Notions closely related to the internal isoperimetric dimension have been studied systematically in the recent work of Hume, Mackay, and Tessera \cite{hume2017poincar}, whose methods implicitly lead to computations of the internal isoperimetric dimension in various examples: For example, one can prove via their methods that $\Z^d$ has internal isoperimetric dimension $d$ for every $d\geq 1$,  the $3$-regular tree has internal isoperimetric dimension $1$, and graphs rough-isometric to $d$-dimensional hyperbolic space $\mathbb{H}^d$ with $d \geq 2$ have internal isoperimetric dimension $d-1$. 

We are now ready to state our results on the uniqueness threshold.

\begin{thm}
\label{thm:pu}
Let $k\in\N$, $c>0$ and $d>6+2\sqrt{7}$, and suppose that $G=(V,E)$ is an infinite, connected graph with degree at most $k$ for which there exists an exhaustion $V_1 \subseteq V_2 \subseteq \cdots$ of $V$ by finite connected sets such that each subgraph $G_n$ of $G$ induced by $V_n$ satisfies the $d$-dimensional isoperimetric inequality \eqref{Assumption:ID}. Then there exists $\eps=\eps(d,c,k)>0$ such that Bernoulli-$p$ bond percolation on $G$ has a unique infinite cluster almost surely for every $p\ge1-\eps$. 
\end{thm}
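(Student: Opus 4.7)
The plan is to apply \cref{thm:iso.inf} to each induced subgraph $G_n$ and lift the resulting estimates to $G$ via the natural monotone coupling in which the edges of $G_n$ appear open in $G$ if and only if they do in $G_n$. Each $G_n$ is finite, connected, has maximum degree at most $k$, and satisfies \eqref{Assumption:ID} by hypothesis, so \cref{thm:iso.inf} supplies constants $\eta = \eta(d,c,k) > 0$ and, for each $\delta > 0$, a threshold $p_0 = p_0(\delta,d,c,k) < 1$ such that for $p \geq p_0$ the largest cluster $C_n^*$ of $G_n$ satisfies $|C_n^*| \geq (1-\delta)|V_n|$ with probability at least $1-\delta$, and
\[
\P(A \leftrightarrow B \text{ in } G_n) \geq 1 - \delta \exp\bigl[-\eta \min(|A|,|B|)^{(d-2)/d}\bigr]
\]
for every deterministic $A, B \subseteq V_n$. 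Since any open path in $G_n$ is open in $G$, this bound passes to $G$; in particular $\P(u \leftrightarrow v \text{ in } G) \geq 1 - \delta e^{-\eta}$ uniformly in $u, v \in V$.

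Next, I would build a canonical infinite cluster $C^*$ of $G$ by patching together the giants $C_n^*$. When $|V_m|/|V_n| \leq 1/(2\delta)$, a short counting estimate shows $|C_n^* \cap C_m^*| \geq (1 - \delta |V_m|/|V_n|)|V_n|>0$ on the event that both giants are of the expected size; since $C_n^*$ is a connected subgraph of $G_m$ sharing a vertex with the $G_m$-cluster $C_m^*$, this forces $C_n^* \subseteq C_m^*$. Combining such inclusions along an increasing subsequence of controlled growth ratio, and invoking reverse Fatou to see that $\{n : |C_n^*|\geq(1-\delta)|V_n|\}$ is infinite with probability at least $1-\delta$, one obtains a nested union $C^* = \bigcup_k C_{n_k}^*$, defined on a high-probability event and contained in a single infinite cluster of $G$.

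Finally, for uniqueness, any putative second infinite cluster $C \neq C^*$ must satisfy $C \cap V_n \subseteq V_n \setminus C_n^*$, a set of size at most $\delta|V_n|$ on the giant-existence event, while $|C \cap V_n| \to \infty$ since $C$ is infinite. Heuristically the connection estimate then gives $\P(C \cap V_n \not\leftrightarrow C_n^* \text{ in } G_n) \leq \delta \exp[-\eta |C \cap V_n|^{(d-2)/d}] \to 0$, contradicting $C \neq C^*$. The main obstacle in promoting this to a rigorous argument is the randomness of $C$, since the connection estimate applies only to deterministic sets and a naive union bound over possible shadows of $C \cap V_n$ is too weak. I expect the cleanest resolution to combine the uniform pairwise estimate $\P(u \not\leftrightarrow v) \leq \delta e^{-\eta}$ with a Newman--Schulman-type finite-energy argument giving $N \in \{0,1,\infty\}$ almost surely, and then to rule out $N = \infty$ via an $L^1$ bound on the number of vertices of $V_n$ lying in infinite clusters other than $C^*$, combined with a monotonicity/Borel--Cantelli argument using the fast decay in the connection estimate.
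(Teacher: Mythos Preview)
Your first paragraph is exactly right and is all the paper uses: applying \cref{thm:iso.inf} to each $G_n$ and passing to $G$ via the obvious monotonicity yields
\[
\P_p^G(A \nleftrightarrow B) \leq \delta \exp\bigl[-\eta \min\{|A|,|B|\}^{(d-2)/d}\bigr]
\]
for every pair of \emph{deterministic} finite sets $A,B\subseteq V$ and every $p\ge p_0(\delta,d,c,k)$. From this point the paper simply invokes \cref{thm:uniquenesscriterion}, which says that this estimate (for deterministic sets) is already enough to force a unique infinite cluster for every $p>p_0$.

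The remainder of your proposal attempts to bypass \cref{thm:uniquenesscriterion} by constructing the infinite cluster directly and ruling out competitors, and here there is a genuine gap that you yourself flag: the connection estimate is for deterministic sets, while your putative second cluster $C$ is random. Your suggested patch via a Newman--Schulman trichotomy is not available in this setting, because $G$ carries no transitivity or ergodicity hypothesis whatsoever---it is merely a bounded-degree graph whose finite induced subgraphs satisfy an isoperimetric inequality---so there is no reason for the number of infinite clusters to be an almost-sure constant, let alone to lie in $\{0,1,\infty\}$. The paper's proof of \cref{thm:uniquenesscriterion} handles exactly this difficulty: it uses a sprinkling argument (via Grimmett's ``interior'' estimate \eqref{eq:interiorprobability}) to upgrade the deterministic two-set estimate to many edge-disjoint paths, then shows that the set of $p$ with multiple infinite clusters is contained in the countable set of discontinuities of the functions $p\mapsto\P_p(u\leftrightarrow v)$, and finally invokes Schonmann's monotonicity theorem to conclude. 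This machinery is what replaces the ergodicity you are missing.
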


\begin{corollary}
Every infinite, connected, bounded degree graph with internal isoperimetric dimension strictly greater than $6+2\sqrt{7}$ has $p_u<1$.
\end{corollary}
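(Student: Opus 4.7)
The plan is to reduce the corollary directly to \cref{thm:pu} by unpacking the definition of internal isoperimetric dimension. Let $G=(V,E)$ be an infinite, connected, bounded degree graph whose internal isoperimetric dimension $D$ satisfies $D > 6+2\sqrt{7}$. By definition of the supremum, we may pick some real $d$ strictly between $6+2\sqrt{7}$ and $D$ (for example $d = \tfrac{1}{2}(D + 6 + 2\sqrt{7})$, or, if $D = \infty$, any $d > 6+2\sqrt{7}$). Since $d < D$ and $D$ is the supremum of those exponents admitting a witnessing exhaustion, there exist a constant $c>0$ and an exhaustion $V_1 \subseteq V_2 \subseteq \cdots$ of $V$ by finite connected sets such that for every $n \geq 1$ the induced subgraph $G_n = G[V_n]$ satisfies the $d$-dimensional isoperimetric inequality \eqref{Assumption:ID}.

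With this data in hand, the hypotheses of \cref{thm:pu} are met: $G$ has degrees bounded by some $k$, and we have produced an exhaustion by connected sets whose induced subgraphs all satisfy a common $d$-dimensional isoperimetric inequality with $d > 6+2\sqrt{7}$. Applying \cref{thm:pu} yields an $\eps = \eps(d,c,k) > 0$ such that Bernoulli-$p$ bond percolation on $G$ has a unique infinite cluster almost surely for every $p \geq 1-\eps$. In particular $p_u(G) \leq 1-\eps < 1$, which is exactly the conclusion of the corollary.

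There is essentially no obstacle here; the only minor care needed is the observation that strict inequality $D > 6+2\sqrt{7}$ (rather than merely $\geq$) is exactly what is required to extract a witness $d > 6+2\sqrt{7}$ from the supremum, after which \cref{thm:pu} does all the real work. Consequently this corollary is a one-line deduction and need not involve any further percolation arguments beyond those already developed to prove \cref{thm:pu} (and, through it, \cref{thm:iso.inf}).
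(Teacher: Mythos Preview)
Your proposal is correct and is exactly the intended deduction: the paper states this corollary immediately after \cref{thm:pu} without a separate proof, treating it as an immediate consequence of the definition of internal isoperimetric dimension together with \cref{thm:pu}. Your unpacking of the supremum to extract a witnessing exponent $d>6+2\sqrt{7}$ and exhaustion is the right (and only) step needed.
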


As above, the dimensional threshold appearing in \cref{thm:pu} has not been optimized, and we conjecture that the same conclusion holds for every infinite, connected, bounded degree graph with internal isoperimetric dimension strictly greater than $1$. (This should only be a sufficient condition for $p_c<1$, since graphs rough-isometric to the hyperbolic plane have $p_u<1$ but internal isoperimetric dimension $1$. One may be able to formulate a sharp condition for $p_u<1$ in terms of \emph{logarithmic} internal isoperimetric inequalities.) We remark that our notion of internal isoperimetric dimension is also closely related to the isoperimetric criteria for $p_u<1$ for graphs of polynomial growth developed in the work of Teixeira \cite{MR3520023}.

When $G$ is transitive, \cref{thm:pu} follows immediately from \cref{thm:iso.inf} applied to the sequence of finite graphs $G_n$ together with a result of Schonmann \cite{MR1676831} stating that if $G$ is transitive then
\[
\Bigl(\text{there is almost-sure uniqueness at $p$}\Bigr) \iff \Bigl(\lim_{n\to\infty} \inf_{x,y \in V} \P_p\left(B(x,n) \leftrightarrow B(y,n)\right) =1\Bigr),
\]
where $B(x,n)$ denotes the graph distance ball of radius $n$ around $x$.
See also \cite{LS99,tang2018heavy} for stronger versions of this theorem. In order to deduce \cref{thm:pu} from \cref{thm:iso.inf} without the assumption that $G$ is transitive, we prove the following variation on Schonmann's theorem in \cref{subsec:pu}.

\begin{theorem}
\label{thm:uniquenesscriterion}
Let $G=(V,E)$ be an infinite, connected, bounded degree graph. If $0 < p_0 < 1$ is such that 
\[
\lim_{n\to \infty} \max\Bigl\{ \P_{p_0}(A \nleftrightarrow B) : A,B \subseteq V \text{ with } |A|,|B|\geq n\Bigr\}  =0
\]
then Bernoulli-$p$ bond percolation on $G$ has a unique infinite cluster almost surely for each $p>p_0$.
\end{theorem}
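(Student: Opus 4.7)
The plan is to adapt Schonmann's classical uniqueness criterion for quasitransitive graphs (cf.\ \cite{MR1676831}) to the bounded-degree setting. Two modifications are essential: replacing the role of balls $B(x,n), B(y,n)$ in Schonmann's criterion by arbitrary sets of size $\ge n$ (which is what makes the criterion meaningful in the non-transitive setting), and exploiting the strict inequality $p>p_0$ via sprinkling.

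First I would observe that the hypothesis at $p_0$ automatically extends to every $q\in[p_0,1]$ via the standard monotone coupling $\omega_{p_0}\subseteq\omega_q$: we have $\{A\leftrightarrow B\text{ in }\omega_{p_0}\}\subseteq\{A\leftrightarrow B\text{ in }\omega_q\}$, hence $\mathbb{P}_q(A\nleftrightarrow B)\le\mathbb{P}_{p_0}(A\nleftrightarrow B)\le\eps_n$ whenever $|A|,|B|\ge n$. We may therefore assume that $p_0$ lies as close to $p$ as we wish.

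Next, suppose for contradiction that Bernoulli-$p$ percolation has $\ge 2$ infinite clusters with positive probability. Since $V\times V$ is countable, there exist deterministic vertices $u,v\in V$ and $\gamma>0$ such that $\mathbb{P}_p(u,v\text{ lie in distinct infinite clusters})\ge\gamma$. I would then run a two-copy sprinkling argument: choose $p_0'\in(p_0,p)$ and an integer $k\ge 2$ so that $p\ge 1-(1-p_0')^k$ (which is achievable for any $p>p_0$ provided $p_0'$ is chosen close enough to $p_0$), and couple $\omega_p$ to contain the union $\omega_{p_0'}^{(1)}\cup\cdots\cup\omega_{p_0'}^{(k)}$ of $k$ independent $p_0'$-percolations. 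Let $K_u, K_v$ denote the clusters of $u, v$ in $\omega_{p_0'}^{(1)}$; being $\sigma(\omega_{p_0'}^{(1)})$-measurable, they are independent of $\omega_{p_0'}^{(2)}$. Applying the (extended) hypothesis at $p_0'$ to $\omega_{p_0'}^{(2)}$ conditionally on $(K_u,K_v)$ then yields
\[
\mathbb{P}\bigl(K_u\nleftrightarrow K_v \text{ in } \omega_{p_0'}^{(2)}\bigm|\omega_{p_0'}^{(1)}\bigr)\le\eps_n\quad\text{on }\{|K_u|,|K_v|\ge n\},
\]
so that on the complementary event the chain $u\leftrightarrow K_u$ in $\omega_{p_0'}^{(1)}$, $K_u\leftrightarrow K_v$ in $\omega_{p_0'}^{(2)}$, $K_v\leftrightarrow v$ in $\omega_{p_0'}^{(1)}$ forces $u\leftrightarrow v$ in $\omega_p$. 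After letting $n\to\infty$ and applying FKG to the increasing events $\{|K_u|\ge n\}$ and $\{|K_v|\ge n\}$, this produces a bound of the shape $\mathbb{P}_p(u\leftrightarrow v)\ge\mathbb{P}_{p_0'}(u\leftrightarrow\infty)\,\mathbb{P}_{p_0'}(v\leftrightarrow\infty)$.

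The hard part will be converting this into a contradiction with $\gamma>0$: the bound just produced, with $p_0'$ rather than $p$ on the right-hand side, does not a priori exceed $\mathbb{P}_p(u,v\in\infty)$, so one cannot immediately rule out the existence of two distinct infinite clusters containing $u$ and $v$. I anticipate that closing the argument will require iterating the sprinkling construction along a sequence $p_0'\uparrow p$ and controlling the limiting behavior of $\mathbb{P}_{p_0'}(w\leftrightarrow\infty)$ as $p_0'\to p$; more concretely, one selects random sets $A\ni u$ and $B\ni v$ of size $n$ directly from the infinite $p$-clusters of $u$ and $v$ (using a BFS ordering), and then applies the hypothesis to an independent $p_0'$-copy that is independent of the selection, combining the resulting bound with the trivial inequality $\mathbb{P}_p(u\leftrightarrow v)\le\mathbb{P}_p(u,v\text{ in same infinite cluster})$ to force $\mathbb{P}_p(u,v\text{ in distinct infinite clusters})=0$, contradicting $\gamma>0$ and completing the proof.
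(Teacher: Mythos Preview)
Your sketch contains the right opening moves (monotonicity, sprinkling into independent copies, applying the hypothesis conditionally), but the step you flag as ``the hard part'' is a genuine gap that your suggested fixes do not close.

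The obstruction is an independence issue. Concretely: if you want to contradict ``$u,v$ lie in distinct infinite $\omega_p$-clusters'', you must select large sets $A,B$ from those $\omega_p$-clusters; but $\omega_p$ is built from \emph{all} of your sprinkled copies $\omega_{p_0'}^{(1)},\ldots,\omega_{p_0'}^{(k)}$, so $A,B$ are not independent of $\omega_{p_0'}^{(2)}$, and the conditional application of the hypothesis fails. Your argument as written only shows that any two infinite $\omega_{p_0'}^{(1)}$-clusters merge in $\omega_p$; it does not rule out infinite $\omega_p$-clusters that contain no infinite $\omega_{p_0'}^{(1)}$-cluster. Your proposal to ``iterate along $p_0'\uparrow p$'' requires exactly the continuity of $p'\mapsto \P_{p'}(w\leftrightarrow\infty)$ that is not available without further input.

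The paper resolves this via a different mechanism. It takes two i.i.d.\ copies $\omega_1,\omega_2$ at level $p$ and \emph{resamples}: set $\omega(e)=\omega_1(e)$ if $e$ touches $K_u(\omega_1)\cup K_v(\omega_1)$ and $\omega(e)=\omega_2(e)$ otherwise. Then $\omega$ is again Bernoulli-$p$, the clusters of $u,v$ in $\omega$ agree with those in $\omega_1$, and crucially $\omega_2$ is independent of these clusters. Applying the hypothesis (strengthened via the inequality $\P_{p_2}(I_r(\mathscr A))\ge 1-(p_2/(p_2-p_1))^r\P_{p_1}(\mathscr A^c)$ and Menger's theorem) to $\omega_2$ yields many edge-disjoint paths in $\omega$ that are open except possibly at their first and last edges. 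Sprinkling from $p$ to $p+\eps$ then forces a jump discontinuity in $\P_p(u\leftrightarrow v)$ whenever $u,v$ lie in distinct infinite clusters with positive probability; since monotone functions have only countably many discontinuities, the set $\mathscr D$ of non-uniqueness parameters is countable, and one concludes via Schonmann's theorem (which supplies the needed monotonicity of uniqueness above $p_0$). None of the four ingredients---resampling, many disjoint paths, the discontinuity argument, or Schonmann's monotonicity---appears in your proposal, and at least the first and last seem essential to any proof along these lines.
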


\subsection{Further discussion and remarks}

\begin{remark}[Other models]
\label{rem:other_models}
It follows by standard stochastic domination arguments that each of our main theorems implies analogous results for several other percolation-type models, including site percolation \cite{grimmett1998critical}, finitely dependent models \cite{MR1428500}, and the Fortuin-Kastelyn random cluster model~\cite{GrimFKbook}. 
 Using the Edwards-Sokal coupling \cite{edwards1988generalization}, it follows moreover that the ferromagnetic Ising and Potts models have uniformly non-trivial ordered phases on the classes of graphs treated by these theorems (i.e., that the pair correlations of these models are uniformly bounded away from zero at sufficiently low temperatures).
\end{remark}

\begin{remark}[Sharp thresholds]
It is a standard consequence of the abstract sharp-threshold theorem of Kahn, Kalai, and Linial \cite{kahn1989influence} that if $G_n$ is a sequence of vertex-transitive graphs with volume tending to infinity then $|p_c(G_n,\alpha,1-\eps)-p_c(G_n,\alpha,\eps)|\to 0$ as $n\to \infty$ for each fixed $0<\alpha,\eps<1$. See e.g.\ \cite[Section 4.7]{grimmett2018probability} for background on the use of such sharp-threshold theorems in percolation. This allows us to immediately deduce the statement given in the abstract from that of \cref{thm:fin.perc}: There exists a universal constant $a$ such that if $(G_n)_{n
\geq1}=((V_n,E_n))_{n\geq 1}$ is a sequence of finite, vertex-transitive graphs with bounded degrees and $|V_n|\to\infty$ such that
\[\diam (G_n)=O\left(\frac{|V_n|}{(\log |V_n|)^a}\right) \qquad \text{ as $n\to\infty$}\]
 then for each $0<\alpha <1$ there exists $p_c(\alpha)<1$ such that for each $p\geq p_c(\alpha)$, Bernoulli-$p$ bond percolation on $G_n$ has a cluster of size at least $\alpha |V_n|$ with probability tending to $1$ as $n\to \infty$. 
\end{remark}

\begin{remark}[Large clusters vs.\ giant clusters]
\label{rem:pc_def}
We now discuss a key issue underlying many of the additional difficulties arising in the finite volume that do not arise in infinite volume.
 Let $(G_n)_{n\geq 1}=((V_n,E_n))_{n\geq 1}$ be a sequence of finite, vertex-transitive graphs of bounded degree and with $|V_n|\to\infty$. Our main theorems give criteria under which the critical probability
\begin{equation}
\label{eq:p_c_sequence_definition}
p_c=p_c((G_n)_{n\geq 1}):= \lim_{\alpha \downarrow 0} \lim_{q \downarrow 0} \limsup_{n\to\infty} p_c(G_n,\alpha,q)
\end{equation}
is strictly less than $1$. In the case that $(G_n)_{n\geq 1}$ converges locally to some infinite, connected, locally finite, vertex-transitive graph $G$, it is natural to wonder whether the two critical probabilities $p_c(G)$ and $p_c((G_n)_{n\geq 1})$ are necessarily equal. Indeed, if this were the case, one would be able to deduce our main results (and more!) rather easily from the results of \cite{1806.07733,tt.Trof} by a compactness argument. Alon, Benjamini, and Stacey \cite{MR2073175} proved that the equality $p_c(G)=p_c((G_n)_{n\geq 1})$ holds when $(G_n)_{n\geq 1}$ is an expander sequence. This equality is not true in general, however; indeed, the elongated torus $G_{n,k}=(\Z/n\Z)\times (\Z / k^n \Z)$ (with its standard generating set) Benjamini-Schramm converges to the square lattice $\Z^2$, which has $p_c(\Z^2)=1/2$, but has $p_c((G_{n,k})_{n\geq 1}) \to 1$ as $k\to\infty$. 

On the other hand, there are several alternative notions of critical probability for the sequence $(G_n)_{n\geq 1}$ that do always coincide with $p_c(G)$ when $G_n$ converges locally to $G$. For example, if we let $o_n$ be a vertex of $G_n$ for each $n\geq 1$, let $K_{o_n}$ be the cluster of $o_n$, and define 
\[
p_T=p_T((G_n)_{n\geq 1}) := \sup\left\{p \in [0,1] : \limsup_{n\to\infty} \E_p^{G_n} |K_{o_n}| < \infty \right\}
\]
then it follows from the sharpness of the phase transition \cite{duminil2015new,aizenman1987sharpness} that $p_T((G_n)_{n\geq 0})=p_c(G)$ whenever $G_n$ is a sequence of transitive graphs converging locally to a transitive, locally finite graph $G$.  It follows by similar sharpness arguments that the critical probability $p_T$ can also be characterised as the supremal value of $p$ for which the cluster volumes $|K_{o_n}|$ are tight as $n\to\infty$. 
Together with the structure theoretic results of \cite{tt.Trof}, one can deduce rather easily from these considerations and the results of \cite{1806.07733} that if $(G_n)_{n\geq 1}=((V_n,E_n))_{n\geq 1}$ is any sequence of finite, vertex-transitive, bounded degree graphs with $\diam(G_n) = o(|V_n|)$ as $n\to\infty$ then there exists $p < 1$ and $f:\N\to\N$ with $\lim_{n\to\infty} f(n)=\infty$ such that
\[
\P_p^{G_n}\!\left(|K_{v}| \geq f(n) \right) \geq \frac{1}{2}
\]
for every $n\geq 1$ and $v\in V_n$. In other words, \emph{a sublinear diameter suffices for there to be a non-trivial phase in which the cluster of the origin is unboundedly large with good probability.} However, as we see in the example of the elongated torus, it is possible in finite graphs to have a non-trivial phase in which there are many large clusters but no giant cluster. As mentioned above, this issue underlies many of the additional technical difficulties that arise for finite transitive graphs but not for infinite transitive graphs.
\end{remark}

\begin{remark}
In the recent works of Easo and the first author \cite{easo2021supercritical2,easo2021supercritical}, which were written after this paper first appeared, it is proven that the giant cluster is unique and has concentrated volume in supercritical percolation on any finite vertex-transitive graph. See also Easo's work \cite{easo2023existence} on the problem of when there is a well-defined \emph{critical probability} for large finite transitive graphs.
\end{remark}

\subsection{Notation}

Throughout the paper we assume without loss of generality that graphs do not have loops or multiple edges. This can indeed be done without loss of generality since adding loops has no effect on percolation, while adding multiple edges only makes it easier for a giant cluster to exist.

\medskip

Given a subset $A$ of a group $\Gamma$, we write $A^{-1}=\{a^{-1}:a\in A\}$ and $\hat A=A\cup\{\mathrm{id}\}\cup A^{-1}$. Given in addition $m\in\N$, we define $A^m = \{a_1 \cdots a_m : a_1,\ldots,a_m \in A\}$, write $\hat A^m=(\hat A)^m$, and define $\hat A^0=\{\mathrm{id}\}$. Given two sets $A$ and $B$ we also write $AB = \{ab : a\in A, b\in B\}$. We write $\langle A \rangle = \bigcup_{m\geq 0} \hat A^m$ for the subgroup of $\Gamma$ generated by $A$, and say that $A$ \textbf{generates} $\Gamma$ if $\langle A \rangle = \Gamma$. Given a group $\Gamma$ and a finite generating set $S$ of $\Gamma$, the (undirected) \textbf{Cayley graph} $\Cay(\Gamma,S)$ is the graph with vertex set $\Gamma$ and edge set $\{\{x,y\}\in \Gamma \times \Gamma :x = ys $ for some $s\in  S \cup S^{-1}\setminus\{\mathrm{id}\}\}$. Cayley graphs are always transitive since left multiplication defines a transitive action of $\Gamma$ on $\Cay(\Gamma,S)$ by graph automorphisms. We write $\diam_S(\Gamma)$ for the diameter of $\Cay(\Gamma,S)$, which is equal to the infimal $m$ such that $\Gamma = \hat S^m$.
 When $\Gamma$ is Abelian we will often denote the same objects using additive notation, so that e.g.\ $-A = \{-a : a\in A\}$, $\hat A = A\cup\{0\}\cup(-A)$, and  $m A= \{a_1 + \cdots + a_m : a_1,\ldots,a_m \in A\}$.

\medskip

As above, we write $\P_p=\P_p^G$ for the law of Bernoulli-$p$ bond percolation on a graph $G$, including the superscript only when the choice of graph is ambiguous.
We write $\{x \leftrightarrow y\}$ for the event that $x$ and $y$ belong to the same open cluster, and write $\{A \leftrightarrow B\}$ for the event that there is an open cluster intersecting both $A$ and $B$. We write $K_x$ for the cluster containing the vertex $x$. 

\medskip

Given a graph $G=(V,E)$ and a subgroup $H < \Aut(G)$, we write $Hv = \{hv:h\in H\}$ for the orbit of a vertex $v$ in $H$ and write $H_v = \{h\in H: hv=v\}$ for the stabilizer of $v$ in $H$. We also use similar notation for orbits and stabilizers of edges. We will often take $o$ to be a fixed root vertex of a vertex-transitive graph and write $B(o,n)$ for the graph-distance ball of radius $n$ around $o$.

\section{Background on percolation and vertex-transitive graphs}

In this section we present some basic tools for use in the rest of the paper.

\medskip

\noindent \textbf{The Harris-FKG inequality.} Let $G=(V,E)$ be a countable graph. A set $A \subseteq \{0,1\}^E$ is said to be \emph{increasing} if whenever $\omega\subset\omega'$ and $\omega\in A$ we have $\omega'\in A$. The Harris-FKG inequality \cite[Chapter 2.2]{grimmett2010percolation} states that increasing events are positively correlated under product measures, so that $\P_p(A \cap B) \geq \P_p(A)\P_p(B)$ for every two increasing measurable sets $A$ and $B$.
Since we will usually be free to increase $p$ whenever needed, we may also use the following trick to handle some situations in which Harris-FKG does not apply: If $G$ is a countable graph and $\omega$ and $\omega'$ are independent Bernoulli percolation configurations with retention probabilities $1-\bar p_1$ and $1-\bar p_2$ respectively, then $\omega \cup \omega'$ is distributed as Bernoulli percolation with retention probability $1-\bar p_1 \bar p_2$.

\medskip

\noindent \textbf{Criteria for a giant component.} 
The next lemma gives necessary and sufficient conditions for a giant component in terms of point-to-point connection probabilities. 

\begin{lemma}\label{lem:cluster/connect}
Let $G=(V,E)$ be a finite graph, let $p\in [0,1]$, and let $\alpha \in (0,1)$.
\begin{enumerate}
\item If $u\in V$ and $\beta>\alpha$ are such that $\E_p |K_u| \geq \beta |V|$, then $\P_p(|K_u| \geq \alpha |V|) \geq (\beta-\alpha)/(1-\alpha)$.
\item
 If $G$ is vertex-transitive and $0< \alpha,q \leq 1$ are such that $\P_p(|K_u|\ge\alpha|V|)\ge q$ for every $u\in V$ then $\E_p|K_u|\ge\alpha q|V|$ and
 \[\P_p(u\leftrightarrow v)\geq \eta^{3/\eta} \qquad \text{ where } 
 \qquad\eta=\frac{\alpha q}{2} \wedge p\] for every $u,v\in V$.
\end{enumerate} 
\end{lemma}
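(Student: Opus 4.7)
The first assertion is a one-line averaging argument: from $|K_u|\leq\alpha|V|$ on $\{|K_u|<\alpha|V|\}$ and $|K_u|\leq|V|$ always, we get $\E_p|K_u|\leq\alpha|V|+(1-\alpha)|V|\,\P_p(|K_u|\geq\alpha|V|)$, which rearranges to the claim. For the second assertion, the inequality $\E_p|K_u|\geq\alpha q|V|$ is immediate from $\E_p|K_u|\geq\alpha|V|\cdot\P_p(|K_u|\geq\alpha|V|)$. The substantive content is the pointwise bound $\P_p(u\leftrightarrow v)\geq\eta^{3/\eta}$.

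My plan for this bound is to build an auxiliary graph $H$ on $V$ whose edges encode ``good'' two-point connection probabilities, show $H$ is connected with small diameter, and then chain an $H$-path via Harris--FKG. Set $\eta:=(\alpha q/2)\wedge p$ and $S_u:=\{v\in V:\P_p(u\leftrightarrow v)\geq\eta\}$. Combining $\sum_v\P_p(u\leftrightarrow v)=\E_p|K_u|\geq 2\eta|V|$ with the trivial bound $\sum_v\P_p(u\leftrightarrow v)\leq|S_u|+\eta(|V|-|S_u|)$ yields $|S_u|\geq\eta|V|$ for every $u\in V$. Let $H$ be the graph on $V$ with edge set $\{\{u,v\}:u\ne v,\ v\in S_u\}$, which is well-defined by symmetry of the connection probabilities. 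Every $H$-vertex has degree at least $\eta|V|-1$, and the closed $H$-neighborhood of any vertex $w$ is precisely $S_w$, of size $\geq\eta|V|$.

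The key observation---and the only reason for the ``$\wedge p$'' in the definition of $\eta$---will be that $\P_p(u\leftrightarrow v)\geq p\geq\eta$ whenever $\{u,v\}$ is an edge of $G$, so $G$ is a subgraph of $H$ and in particular $H$ is connected. A standard disjoint-ball argument then controls $\diam(H)$: along any shortest $H$-path $u=u_0,u_1,\ldots,u_L=v$, the closed $H$-neighborhoods of every third vertex $u_0,u_3,u_6,\ldots$ are pairwise disjoint, so $(\lfloor L/3\rfloor+1)\,\eta|V|\leq|V|$ and hence $L\leq 3/\eta$. Finally, applying Harris--FKG to the increasing events $\{u_i\leftrightarrow u_{i+1}\}$ along this path gives
\[
\P_p(u\leftrightarrow v)\;\geq\;\P_p\Bigl(\bigcap_{i=0}^{L-1}\{u_i\leftrightarrow u_{i+1}\}\Bigr)\;\geq\;\prod_{i=0}^{L-1}\P_p(u_i\leftrightarrow u_{i+1})\;\geq\;\eta^L\;\geq\;\eta^{3/\eta},
\]
where the final inequality uses $\eta\leq 1$. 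I expect the only mildly subtle point to be recognising the role of the ``$\wedge p$'' in the definition of $\eta$: without it, $H$ would have large minimum degree but need not be connected (cf.\ the degenerate example of two disjoint cliques), so one could not guarantee that a short $H$-path between arbitrary $u,v$ exists.
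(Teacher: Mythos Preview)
Your proof is correct. For part 1 you give the same Markov-inequality argument as the paper (phrased on $|K_u|$ rather than on $|V\setminus K_u|$). For part 2 you take a genuinely different route from the paper.

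The paper lifts to the automorphism group $\Gamma=\Aut(G)$: it sets $A=\{\gamma\in\Gamma:\P_p(o\leftrightarrow\gamma o)\ge\alpha q/2\wedge p\}$, notes that $A$ is symmetric, contains the identity, and has $|A|>|\Gamma|/(m+1)$ for a suitable $m$, and then invokes an algebraic lemma (if $|A|>|\Gamma|/(m+1)$ then $A^{3m}=\langle A\rangle$) together with the observation that $\{\gamma:d(o,\gamma o)\le1\}\subseteq A$ generates $\Gamma$. The chaining step via Harris--FKG is the same as yours. By contrast, your argument is purely graph-theoretic: you bound the diameter of the auxiliary graph $H$ by a packing argument (closed $H$-neighbourhoods of every third vertex along a geodesic are disjoint). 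Your approach is more elementary and, notably, never uses vertex-transitivity---it works for any connected finite graph satisfying the ``for every $u$'' hypothesis, so in effect you have shown that the transitivity assumption in part 2 is redundant once the hypothesis is stated uniformly in $u$. The paper's group-theoretic route, on the other hand, generalises more directly to the orbit setting (their \cref{lem:half.doubles.to.whole2}, where one works inside a single $H$-orbit for $H<\Aut(G)$), which is used later in the paper; your packing argument does not adapt to that setting without further work, since the $H$-orbit need not be connected in $G$.
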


\begin{remark}
Note that, for any given $u\in V$, an assumption that $\P_p(u\leftrightarrow v)\ge\beta$ for every $v\in V$ is enough to be able to apply the first part of \cref{lem:cluster/connect}, since $\E_p|K_u|\ge\beta|V|$ in that case.
\end{remark}

\begin{remark}
The second part of \cref{lem:cluster/connect} is not true without the transitivity assumption: consider a path of length $n$ connected at one end to a complete graph with $n$ vertices. 
\end{remark}

\begin{remark}\label{rem:increase.alpha.q}
It is an important and useful fact \cite[Theorem 2.38]{grimmett2010percolation} that $\P_{p^\theta}(A) \geq \P_p(A)^\theta$ for every increasing event $A$ and $p,\theta \in (0,1)$. Using this together with \cref{lem:cluster/connect}, one may easily deduce that for each $0<\alpha_1 \leq \alpha_2<1$ and $0<q_1 \leq q_2 <1$ there exists $\theta=\theta(\alpha_1,\alpha_2,q_1,q_2) \in (0,1)$ such that $p_c(G,\alpha_2,q_2) \leq p_c(G,\alpha_1,q_1)^\theta$ for every finite vertex-transitive graph $G$. As such, in order to prove all our main theorems it would suffice to prove them for a specific value of $\alpha$ and $q$.
\end{remark}

The first item of the lemma follows trivially by applying Markov's inequality to $|V|-|K_u|$. The second item appears in a slightly less general form in the work of Benjamini \cite[Proposition 1.3]{benjamini2001percolation}, who attributed the argument to Schramm. 
For completeness, we now give a quick proof of this lemma at the full level of generality that we require.

\begin{lemma}\label{lem:half.doubles.to.whole}
 Let $G=(V,E)$ be a finite vertex-transitive graph, let $o\in V$, and let $p \in [0,1]$. If $0<\alpha \leq 1$ and $m\in \N$ are such that $|\{v\in V:\P_p(o\leftrightarrow v)\ge\alpha\}|>|V|/(m+1)$ then $\P_p(o\leftrightarrow v)\ge (\alpha \wedge p)^{3m}$ for every $v\in V$. 
\end{lemma}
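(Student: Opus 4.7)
The plan is to build an auxiliary graph $H$ on $V$ in which consecutive vertices of the original graph $G$ and pairs with high percolation-connection probability are all adjacent, and then chain along a short $H$-path via Harris-FKG to conclude.

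More precisely, set $\beta = \alpha \wedge p$ and let $H$ be the graph with vertex set $V$ and edge set $\{\{u,v\} : u \neq v,\ \P_p(u \leftrightarrow v) \geq \beta\}$. Two features of $H$ are key. First, $H$ contains $G$ as a spanning subgraph, since $\P_p(u \leftrightarrow v) \geq p \geq \beta$ whenever $u$ and $v$ are $G$-adjacent; in particular $H$ is connected. Second, for every vertex $u$, the closed $H$-neighborhood of $u$ contains the set $A_u = \{w : \P_p(u \leftrightarrow w) \geq \alpha\}$, and by vertex-transitivity $|A_u| = |A_o| > |V|/(m+1)$, so every closed $H$-neighborhood has size strictly greater than $|V|/(m+1)$.

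Next I would show that the $H$-diameter is at most $3m$. Let $L_0, L_1, \ldots, L_D$ be the BFS layers of $H$ from $o$, where $D$ denotes the diameter. Any vertex in $L_i$ has its closed $H$-neighborhood contained in $L_{i-1} \cup L_i \cup L_{i+1}$, so $|L_{i-1}| + |L_i| + |L_{i+1}| > |V|/(m+1)$ whenever $L_i$ is non-empty. The disjoint triples $L_{3j} \cup L_{3j+1} \cup L_{3j+2}$ for those $j$ with $3j+1 \leq D$ therefore together contain more than $\bigl(\lfloor (D-1)/3 \rfloor + 1\bigr) \cdot |V|/(m+1)$ vertices, which cannot exceed $|V|$; rearranging gives $\lfloor (D-1)/3 \rfloor + 1 \leq m$ and hence $D \leq 3m$.

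Finally, for any target $v \in V$, fix an $H$-path $o = w_0, w_1, \ldots, w_k = v$ with $k \leq 3m$. Each event $\{w_i \leftrightarrow w_{i+1}\}$ is increasing with probability at least $\beta$, so the Harris-FKG inequality gives
\[
\P_p(o \leftrightarrow v) \geq \P_p\Bigl(\bigcap_{i=0}^{k-1} \{w_i \leftrightarrow w_{i+1}\}\Bigr) \geq \prod_{i=0}^{k-1} \P_p(w_i \leftrightarrow w_{i+1}) \geq \beta^k \geq \beta^{3m} = (\alpha \wedge p)^{3m}.
\]
The main subtlety is the choice of threshold $\alpha \wedge p$ rather than $\alpha$ in defining $H$: with threshold $\alpha$ alone $H$ could split into several connected components and one would need to bridge between them using $G$-edges while bookkeeping the associated $p$-factors separately. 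Folding the $G$-edges into $H$ at the outset via the lower threshold $\alpha \wedge p$ both guarantees connectedness and produces the bound $(\alpha \wedge p)^{3m}$ directly.
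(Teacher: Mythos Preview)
Your proof is correct. Both your argument and the paper's share the same skeleton: introduce a symmetric ``good-connection'' relation with threshold $\alpha\wedge p$ (so that $G$-edges are automatically included, guaranteeing connectivity), show that every vertex is reachable from $o$ in at most $3m$ steps of this relation, and then chain along such a path via Harris--FKG. The difference lies only in how the $3m$-step reachability is established. The paper lifts to the group $\Gamma=\Aut(G)$: it sets $A=\{\gamma\in\Gamma:\P_p(o\leftrightarrow\gamma o)\ge\alpha\wedge p\}$, uses the orbit--stabilizer relation to convert the hypothesis into $|A|>|\Gamma|/(m+1)$, and then invokes an externally cited algebraic lemma stating that any symmetric identity-containing subset $A$ of a finite group with $|A|>|\Gamma|/(m+1)$ satisfies $A^{3m}=\langle A\rangle$. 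You instead stay on the vertex set and give a direct BFS-layer pigeonhole argument on the auxiliary graph $H$: since each triple of consecutive layers contains a full closed neighbourhood and hence has size exceeding $|V|/(m+1)$, there can be at most $m$ disjoint such triples, forcing the diameter of $H$ to be at most $3m$. Your route is more elementary and entirely self-contained; the paper's is shorter once the algebraic lemma is granted, and makes the group-theoretic content explicit.
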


\begin{proof}[Proof of \cref{lem:half.doubles.to.whole}]
Write $\Gamma=\Aut(G)$. We will apply the following algebraic lemma of \cite[Lemma 2.1]{MR3403962} (see also \cite[Proposition 1.3]{benjamini2001percolation}).
	\begin{lemma}
	\label{lem:large_sets_double}
Let $\Gamma$ be a finite group and let $A \subseteq \Gamma$ be symmetric and contain the identity. If $m\geq 1$ is such that $|A| > |\Gamma|/(m+1)$ then $A^{3m}=\langle A \rangle$.
	\end{lemma}
For each $v\in V$, fix some $\gamma_v \in \Gamma$ such that $\gamma_v o=v$. Set $A=\{\gamma \in \Gamma :\P_p(o \leftrightarrow \gamma o) \ge \alpha \wedge p\}$, noting that $A$ is symmetric and contains the identity. Since $\{\gamma \in \Gamma: \gamma o=v\}=\gamma_v \Gamma_o$ for every $v\in V$, we have $|A|>|\Gamma|/(m+1)$. Moreover,  we have trivially that $\{\gamma \in \Gamma : d(o,\gamma o) \leq 1 \} \subseteq A$, and since this set generates $\Gamma$ we deduce that $\langle A \rangle = \Gamma$ also. (This is why we considered $\alpha \wedge p$ instead of $\alpha$.)
Thus, we may apply \cref{lem:large_sets_double} to deduce that $A^{3m}=\Gamma$. Setting $k=3m$, for each $v\in V$ there therefore exist $a_1,\ldots,a_k\in A$ such that $\gamma_v=a_k\cdots a_1$ and hence that $v=a_k\cdots a_1 o$. Writing $v_0=o$ and $v_i = a_i \cdots a_1 o$ for each $1 \leq i \leq k$ we deduce by Harris-FKG and the definition of $A$ that
\begin{align*}
\P_p(o\leftrightarrow v)
   \ge\prod_{\ell=1}^k\P_p(v_{i-1}\leftrightarrow v_{i}) 
   =\prod_{\ell=1}^k\P_p(v_{i-1} \leftrightarrow a_i v_{i-1}) \ge (\alpha \wedge p)^k 
\end{align*}
as required.
\end{proof}

\begin{proof}[Proof of Lemma \ref{lem:cluster/connect}]
Markov's inequality implies that if $\E |K_u| \geq \beta |V|$ then
\begin{align*}
\P\Bigl(|V\setminus K_u|>(1-\alpha)|V|\Bigr)&\le\frac{\E|V\setminus K_u|}{(1-\alpha)|V|} \le\frac{1-\beta}{1-\alpha}
\end{align*}
which is equivalent to the desired conclusion.
Conversely,  
if $\P_p(|K_u|\ge\alpha|V|)\ge q$ then $\E_p[\,|K_u|\,]\ge\alpha q|V|$. This implies that there are at least $\alpha q|V|/2$ vertices $v$ satisfying $\P_p(u\leftrightarrow v)\ge\alpha q/2$, so that if $G$ is vertex transitive then the second desired conclusion follows from \cref{lem:half.doubles.to.whole}.
\end{proof}

The following variation on \cref{lem:half.doubles.to.whole} will also be useful.   Recall that when $H$ is a subgroup of $\Aut(G)$ and $v$ is a vertex of $G$, write $Hv$ for the orbit $Hv=\{h v :h \in H\}$ and $H_v$ for the stabiliser $H_v=\{h\in H:h(v)=v\}$.

\begin{lemma}\label{lem:half.doubles.to.whole2}
 Let $G=(V,E)$ be a finite vertex-transitive graph, let $o\in V$, let $p \in [0,1]$, and let $H<\Aut(G)$. If $\alpha>0$ is such that $|\{v\in Ho:\P_p(o\leftrightarrow v)\ge\alpha\}|>|Ho|/2$
then $\P_p(u\leftrightarrow v)\ge\alpha^2$ for every $v\in Ho$.
\end{lemma}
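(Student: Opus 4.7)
My plan is to argue by a standard ``doubling'' trick, exploiting the fact that any $h\in H$ pushes forward the Bernoulli-$p$ measure to itself (so $\P_p(x\leftrightarrow y)=\P_p(hx\leftrightarrow hy)$ for all $x,y\in V$) together with the Harris--FKG inequality. I read the conclusion as saying $\P_p(o\leftrightarrow v)\ge\alpha^2$ for every $v\in Ho$ (i.e.\ the $u$ in the displayed inequality is the distinguished root $o$), since that is what the hypothesis most naturally yields.

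Set $A=\{w\in Ho:\P_p(o\leftrightarrow w)\ge\alpha\}$, so that by hypothesis $|A|>|Ho|/2$. Fix $v\in Ho$ and choose $h\in H$ with $h o=v$. Since $h$ permutes the orbit $Ho$, we have $|hA|=|A|>|Ho|/2$, and hence $A$ and $hA$ are two subsets of $Ho$ each of size greater than $|Ho|/2$. Therefore their intersection is non-empty, and we may pick $w\in A\cap hA$ and write $w=hw'$ for some $w'\in A$.

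By construction $\P_p(o\leftrightarrow w)\ge\alpha$ because $w\in A$, while
\[
\P_p(v\leftrightarrow w)=\P_p(ho\leftrightarrow hw')=\P_p(o\leftrightarrow w')\ge\alpha
\]
because $h$ is a graph automorphism (so it preserves the law of Bernoulli-$p$ percolation) and $w'\in A$. Since both of these events are increasing, the Harris--FKG inequality gives
\[
\P_p(o\leftrightarrow v)\ge \P_p\bigl(\{o\leftrightarrow w\}\cap\{v\leftrightarrow w\}\bigr)\ge \P_p(o\leftrightarrow w)\,\P_p(v\leftrightarrow w)\ge\alpha^2,
\]
which is the desired conclusion.

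There is no real obstacle here: the only step requiring a moment's thought is checking that $|A|>|Ho|/2$ forces $A\cap hA\ne\emptyset$ for every $h\in H$ (because $h$ acts as a bijection on $Ho$), and this pigeonhole is what the hypothesis is tuned to supply. The remainder is just automorphism-invariance of $\P_p$ plus Harris--FKG, in exact parallel to the proof strategy used in \cref{lem:half.doubles.to.whole}, but now ``inside'' the single orbit $Ho$ rather than across the whole vertex set.
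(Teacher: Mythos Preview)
Your proof is correct and follows essentially the same approach as the paper's: the paper lifts the set $A$ to the group $H$ (setting $A=\{h\in H:\P_p(o\leftrightarrow ho)\ge\alpha\}$) and observes that $|A|>|H|/2$ forces $A^2=H$, but this is exactly the same pigeonhole argument you run directly in the orbit $Ho$ via $A\cap hA\ne\varnothing$, followed by automorphism-invariance and Harris--FKG.
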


\begin{proof}
Let $A = \{h\in H : \P_p(o\leftrightarrow ho) \geq \alpha\}$, which is symmetric and contains the identity. Then we have by transitivity that $|A| = |\{v\in Ho:\P_p(o\leftrightarrow v)\ge\alpha\}| \cdot |H_o|$ and $|H| = |Ho| \cdot |H_o|$, so that $|A| > |H|/2$. It follows that $A^2 =H$, since for each $h\in H$ we have that $h A \cap A \neq \varnothing$ and hence that there exist $a_1,a_2 \in A$ such that $h a_1 = a_2$. The claim now follows similarly to the proof of \cref{lem:half.doubles.to.whole}.
\end{proof}

\subsection{Quotients and rough isometries}

We now discuss several useful ways in which percolation on two different graphs can be compared. We will be particularly interested in the cases that either one graph is a quotient of the other or the two graphs are rough-isometric.

\medskip

\noindent \textbf{Monotonicity under quotients.} We first recall a coupling of percolation on a graph and a quotient of that graph due to Benjamini and Schramm \cite[Theorem 1]{bperc96} (see also \cite[Section 2]{MR2276449}), which implies in particular that if a graph $G$ admits a quotient in which $p_c<1$ then $G$ also has $p_c<1$. See also \cite{MR4038050} for strengthened forms of this result.

\begin{proposition}[Benjamini--Schramm]\label{prop:Benj-Schr}
 Let $G=(V,E)$ be a locally finite graph, let $H<\Aut(G)$, and let $\pi:G\to G/H$ be the quotient map. For each $v\in G$ and $p\in [0,1]$, the cluster of $\pi(v)$ in Bernoulli-$p$ percolation on $G / H$ is stochastically dominated by the image under $\pi$ of the cluster of $v$ in Bernoulli-$p$ percolation on $G$. That is,
\[
\P_p^G\Bigl(\pi(K_v) \in A\Bigr)\ge\P_p^{G/H}\Bigl( K_{\pi(v)} \in A\Bigr)
\]
for every increasing measurable set $A \subseteq \{0,1\}^{V/H}$.
\end{proposition}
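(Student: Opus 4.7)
The plan is to build an explicit coupling of Bernoulli-$p$ percolation $\omega$ on $G$ with Bernoulli-$p$ percolation $\tilde\omega$ on $G/H$ so that, almost surely, $K_{\pi(v)}^{\tilde\omega}\subseteq\pi(K_v^\omega)$. Since a pointwise inclusion of random sets immediately yields stochastic domination with respect to every increasing measurable event, this will prove the proposition.

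I would realize the coupling through a joint exploration. Initialise $\bar W_0=\{\pi(v)\}$ together with a partial lift $\psi_0:\bar W_0\to V$ defined by $\psi_0(\pi(v))=v$, and maintain inductively the invariants $\pi\circ\psi_i=\mathrm{id}_{\bar W_i}$ and $\psi_i(\bar W_i)\subseteq K_v^\omega$. At step $i+1$, pick (using some fixed rule) a boundary edge $\{\bar x,\bar y\}\in E(G/H)$ with $\bar x\in\bar W_i$, $\bar y\notin\bar W_i$, and whose $\tilde\omega$-status has not yet been assigned. Writing $x=\psi_i(\bar x)$: since $H$ acts on $G$ by graph automorphisms and $\{\bar x,\bar y\}$ is an edge of $G/H$, there exists at least one $y\in\pi^{-1}(\bar y)$ with $\{x,y\}\in E$, and I select such a $y$ canonically using a fixed enumeration of $E$. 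Declare $\tilde\omega(\{\bar x,\bar y\})=\omega(\{x,y\})$; if this edge is open, add $\bar y$ to $\bar W_{i+1}$ and set $\psi_{i+1}(\bar y)=y$, observing that $y\in K_v^\omega$ because $x\in K_v^\omega$ and $\{x,y\}$ is open, so that the invariants are preserved. Finally, sample the remaining $\tilde\omega$-edges (those never examined by the exploration) as independent Bernoulli-$p$ variables.

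To verify that $\tilde\omega$ has the full product Bernoulli-$p$ law on $E(G/H)$ it suffices to check that distinct $\tilde\omega$-edges examined during the exploration are decided by pairwise distinct $\omega$-edges. This is ensured by the combination of examining each $\tilde\omega$-edge at most once and the canonical choice rule, which determines the consulted $\omega$-edge as a deterministic function of $\bar x$, $\bar y$, and the exploration history (in particular of $\psi_i(\bar x)$, which is fixed before the edge is examined). Given this, the invariants yield $\bar W_\infty\subseteq\pi(K_v^\omega)$, and since the $\tilde\omega$-cluster of $\pi(v)$ is contained in $\bar W_\infty$ by construction, we obtain $K_{\pi(v)}^{\tilde\omega}\subseteq\pi(K_v^\omega)$ as required. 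The only genuine obstacle in the argument is the independence bookkeeping that prevents any $\omega$-edge from being consulted more than once during the exploration; once that is in place the rest is a routine exploration argument.
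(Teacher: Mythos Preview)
The paper does not provide its own proof of this proposition; it is stated with attribution to Benjamini and Schramm and a reference to the original source. Your exploration-based coupling is precisely the classical argument and is correct: the crucial bookkeeping point---that distinct quotient edges examined during the exploration are decided by distinct $G$-edges---holds simply because any $G$-edge projects to a unique $G/H$-edge, so the $\omega$-values revealed are fresh i.i.d.\ Bernoulli-$p$ variables and $\tilde\omega$ has the correct product law.
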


See e.g.\ \cite[Chapter 4.1]{grimmett2018probability} for background on stochastic domination.

The next lemma is a straightforward observation allowing us to make a comparison in the opposite direction when $H$ has bounded edge-orbits, at the cost of increasing $p$ in a way that depends on the size of these orbits.  It implies in particular that if $H$ has bounded orbits and $G$ has $p_c<1$ then $G/H$ has $p_c<1$ also.

\begin{lemma}\label{lem:bdd.orbits}
 Suppose that $G=(V,E)$ is a locally finite graph and that $H<\Aut(G)$ satisfies $|He|\le k$ for some $k \geq 1$ and every $e\in E$. 
For each $v\in G$ and $p\in [0,1]$, the cluster of $\pi(v)$ in Bernoulli-$(1-(1-p)^{k})$ percolation on $G / H$ stochastically dominates the image under $\pi$ of the cluster of $v$ in Bernoulli-$p$ percolation on $G$. That is,
\[
\P_{1-(1-p)^{k}}^{G/H}\left(K_{\pi(v)} \in A\right) \ge \P_p^G\left(\pi(K_v) \in A\right)
\]
for every increasing measurable set $A \subseteq \{0,1\}^{V/H}$.
\end{lemma}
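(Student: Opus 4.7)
The plan is to construct an explicit coupling between Bernoulli-$p$ percolation on $G$ and Bernoulli-$\bigl(1-(1-p)^k\bigr)$ percolation on $G/H$ that directly realises the desired stochastic domination. Identify each edge $\bar e$ of $G/H$ with its corresponding $H$-orbit of edges in $E$. Starting from a sample $\omega\in\{0,1\}^E$ of $\P_p^G$, define a configuration $\omega'$ on $G/H$ by
\[\omega'(\bar e)\;=\;\max_{e\in \bar e}\omega(e),\]
i.e., declare $\bar e$ to be $\omega'$-open if at least one edge in its orbit is $\omega$-open.

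Since the $H$-orbits partition $E$, the variables $\{\omega'(\bar e)\}_{\bar e}$ are independent, and each is Bernoulli with parameter $1-(1-p)^{|\bar e|}\le 1-(1-p)^k$ by the hypothesis on orbit sizes. By the standard stochastic monotonicity of independent Bernoulli product measures (or, concretely, by making one further independent coin flip per edge of $G/H$), we may enlarge $\omega'$, on the same probability space, to a configuration $\tilde\omega'$ with law $\P_{1-(1-p)^k}^{G/H}$ and $\omega'\le\tilde\omega'$ pointwise.

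The key remaining step is to check that $\pi(K_v(\omega))\subseteq K_{\pi(v)}(\tilde\omega')$ for every $v\in V$. Given $u\in K_v(\omega)$, take an $\omega$-open path $v=v_0,v_1,\dots,v_m=u$ in $G$; its image under $\pi$ is a walk from $\pi(v)$ to $\pi(u)$ in $G/H$ whose consecutive edges are exactly the orbits of the edges $\{v_{i-1},v_i\}$, and these are $\omega'$-open by construction, hence $\tilde\omega'$-open. Thus $\pi(u)\in K_{\pi(v)}(\tilde\omega')$. For any increasing measurable $A\subseteq\{0,1\}^{V/H}$, using that $S\subseteq T$ with $S\in A$ implies $T\in A$, we obtain
\[\P_{1-(1-p)^{k}}^{G/H}\bigl(K_{\pi(v)} \in A\bigr)\;=\;\P\bigl(K_{\pi(v)}(\tilde\omega')\in A\bigr)\;\ge\;\P\bigl(\pi(K_v(\omega))\in A\bigr)\;=\;\P_p^G\bigl(\pi(K_v)\in A\bigr),\]
as required. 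No step is genuinely difficult; the only mild subtlety is bookkeeping around the definition of $G/H$ (loops and multi-edges potentially created by the quotient map may be safely discarded in view of the conventions declared at the start of the notation section), and the factor $k$ in the retention probability is precisely what one needs to absorb the worst-case orbit size in the max-coupling.
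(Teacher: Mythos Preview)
Your proof is correct and follows essentially the same approach as the paper's own proof: both push forward $\omega$ to a configuration on $G/H$ by declaring an orbit open if any of its edges is open, observe that the resulting edge variables are independent Bernoulli with parameters at most $1-(1-p)^k$, and then use standard stochastic monotonicity of product measures to dominate by Bernoulli-$(1-(1-p)^k)$ percolation. Your write-up is in fact slightly more explicit than the paper's (you spell out the path-pushing argument for the cluster containment and the enlargement to $\tilde\omega'$), but the underlying argument is identical.
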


\begin{proof}
Let $\omega$ be Bernoulli-$p$ bond percolation on $G$, and let $\eta \in \{0,1\}^{E/H}$ be defined by taking $\eta(e)=1$ if and only if there is at least one $\omega$-open edge in the preimage $\pi^{-1}(e)$. For each vertex $v$ of $G$, we clearly have that the cluster of $\pi(v)$ in $\eta$ contains the image of the cluster of $v$ in $\omega$.
 Moreover, the edges of $G/H$ are open or closed in $\eta$ independently of one another and each is open with probability at most $1-(1-p)^k$. It follows by a standard argument \cite[Chapter 5.2]{LP:book} that $\eta$ is stochastically dominated by Bernoulli-$(1-(1-p)^k)$ percolation on $G / H$ and the result follows.
\end{proof}

\noindent \textbf{Rough isometries and rough embeddings}. 
We now make note of a simple folkloric lemma allowing us to compare percolation on two \emph{rough-isometric} graphs, or more generally on a graph that can be \emph{roughly embedded} into another graph. We first recall the relevant definitions, referring the reader to e.g.\ \cite[Chapter 2.6]{LP:book} for further background.
Let $G_1=(V_1,E_1)$ and $G_2=(V_2,E_2)$ be connected, locally finite graphs and let $\alpha \geq 1$ and $\beta \geq 0$. We abuse notation and write $d( \,\cdot\,,\,\cdot\,)$ for the graph distance on both $G_1$ and $G_2$. A function $\phi:V_1 \to V_2$ is said to be an $(\alpha,\beta)$-\textbf{rough isometry} if the following conditions hold:
\begin{enumerate}
	\item ($\phi$ roughly preserves distances.) The inequality $\alpha^{-1} d(x,y)-\beta \leq d(\phi(x),\phi(y)) \leq \alpha d(x,y)+\beta$ holds for every $x,y \in V_1$. 
	\item ($\phi$ is roughly surjective.) For every $y\in V_2$ there exists $x\in V_1$ with $d(\phi(x),y)\leq \beta$.
\end{enumerate}
Note that the relation of rough isometry is approximately symmetric in the sense that for each $\alpha \geq 1$ and $\beta\geq 0$ there exist $\alpha' =\alpha'(\alpha,\beta) \geq 1$ and $\beta'=\beta'(\alpha,\beta)\geq 0$ such that whenever there exists an $(\alpha,\beta)$-rough isometry $\phi:V_1\to V_2$ between two graphs $G_1=(V_1,E_1)$ and $G_2=(V_2,E_2)$ then there also exists an $(\alpha',\beta')$-rough isometry $\psi : V_2 \to V_1$. (Indeed, simply choose $\psi(y)$ to be an arbitrary element of the set $\{x: d(\phi(x),y)\leq \beta\}$ for each $y\in V_2$.) 
More generally, we say that $\phi$ is an $(\alpha,\beta)$-\textbf{rough embedding} if the following conditions hold:
\begin{enumerate}
	\item ($\phi$ is roughly Lipschitz.) The inequality $d(\phi(x),\phi(y)) \leq \alpha d(x,y)+\beta$ holds for every $x,y \in V_1$. 
	\item ($\phi$ is bounded-to-one.) $|\phi^{-1}(x)| \leq \beta$ for every $x \in V_2$.
\end{enumerate}
Note that every $(\alpha,\beta)$-rough isometry between graphs with degrees bounded by $k$ is an $(\alpha,\beta')$-rough embedding for some $\beta'=\beta'(\alpha,\beta,k)$.

\begin{lemma}
\label{lem:roughembedding}
For each $k, \alpha \geq 1$ and $\beta \geq 0$ there exists a constant $C=C(k,\alpha,\beta)$ such that the following holds. Let $f:[0,1]\to [0,1]$ be the increasing homeomorphism $f(p)= 1-(1-p^{1/C})^C$, let $G_1=(V_1,E_1)$ and $G_2=(V_2,E_2)$ be connected graphs with degrees bounded by $k$, and let $\phi:V_1 \to V_2$ be an $(\alpha,\beta)$-rough embedding. Then for each $v \in V_1$ and $p\in [0,1]$, the cluster of $\phi(v)$ in Bernoulli-$f(p)$ percolation on $G_2$ stochastically dominates the image under $\phi$ of the cluster of $v$ in Bernoulli-$p$ percolation on $G_1$.  
That is,
\[
\P_{f(p)}^{G_2}\left(K_{\phi(v)} \in A\right) \ge \P_p^{G_1}\left(\phi(K_v) \in A\right)
\]
for every increasing measurable set $A \subseteq \{0,1\}^{V_2}$.
\end{lemma}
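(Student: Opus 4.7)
The plan is to construct an explicit coupling of $\omega_1\sim\P_p^{G_1}$ with $\omega_2\sim\P_{f(p)}^{G_2}$ in which every $\omega_1$-open edge $e=\{x,y\}\in E_1$ is witnessed by an open $\omega_2$-path from $\phi(x)$ to $\phi(y)$; concatenating such witness paths along any $\omega_1$-open path then yields $\phi(K_v^{\omega_1})\subseteq K_{\phi(v)}^{\omega_2}$ almost surely, which is precisely the stochastic domination asserted by the lemma.

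First, for each $e=\{x,y\}\in E_1$ I would fix a path $P_e$ in $G_2$ from $\phi(x)$ to $\phi(y)$ of length at most $C':=\alpha+\beta$; such a path exists by the rough-Lipschitz bound $d(\phi(x),\phi(y))\le\alpha+\beta$. The critical quantity to control is the edge-multiplicity
\[M_0:=\max_{e'\in E_2}\#\{e\in E_1:e'\in P_e\},\]
which a direct ball-counting argument bounds by $2\beta k^{C'+2}$: if $e'=\{u,v\}\in P_e$, then $\phi(x)$ must lie in $B(u,C')\cup B(v,C')$, a set of at most $2k^{C'+1}$ vertices; the bounded-to-one property of $\phi$ then confines $x$ to a set of size at most $2\beta k^{C'+1}$, and $y$ is one of at most $k$ neighbours of $x$. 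Crucially, $M_0$ depends only on $k,\alpha,\beta$.

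Next I would greedily colour $E_1$ with $C:=C'M_0+1$ colours so that any two same-coloured edges have edge-disjoint paths in $G_2$: the ``conflict graph'' on $E_1$ (with $e\sim e'$ iff $P_e$ and $P_{e'}$ share a $G_2$-edge) has maximum degree at most $|P_e|(M_0-1)\le C'M_0$, so a $(C'M_0+1)$-colouring exists. Writing $c:E_1\to\{1,\dots,C\}$ for this colouring, I would take i.i.d.\ Bernoulli$(q)$ variables $X_{e',i}$ for each $(e',i)\in E_2\times\{1,\dots,C\}$ with $q=p^{1/C}$, declare $e'\in E_2$ open in $\omega_2$ iff $X_{e',i}=1$ for some $i$ (making $\omega_2$ exactly Bernoulli-$f(p)$), and declare auxiliary $\eta(e)=1$ iff $X_{e'',c(e)}=1$ for every $e''\in P_e$. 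Because same-coloured edges have disjoint paths and different-coloured edges use different $X$-indices, the events $\{\eta(e)=1\}$ depend on pairwise-disjoint subfamilies of the $X$'s, hence are mutually independent, with $\P(\eta(e)=1)=q^{|P_e|}\ge q^{C'}\ge q^C=p$ (using $q\le 1$ and $C'\le C$). A routine edgewise independent thinning then produces $\omega_1\le\eta$ with $\omega_1\sim\P_p^{G_1}$, and by construction any $\omega_1$-open $e=\{x,y\}$ forces every edge of $P_e$ to be open in $\omega_2$, giving $\phi(x)\leftrightarrow\phi(y)$ in $\omega_2$.

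The main technical hurdle is the multiplicity bound on $M_0$ together with the bookkeeping required to make a single constant $C=C(k,\alpha,\beta)$ serve simultaneously as the exponent $1/C$ inside and as the outer exponent of $f(p)=1-(1-p^{1/C})^C$. The greedy-colouring overhead is precisely what is absorbed by the choice $C=C'M_0+1$, which is in turn controlled by $C'=\alpha+\beta$ and the ball estimate for $M_0$; everything else in the argument is standard coupling bookkeeping.
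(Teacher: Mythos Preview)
Your argument is correct and achieves the same conclusion as the paper, but via a somewhat different coupling. The paper avoids the colouring step entirely: it takes i.i.d.\ Bernoulli$(q)$ variables $\eta(e_1,e_2)$ indexed by the full product $E_1\times E_2$, sets $\omega_1(e_1)=1$ iff $\eta(e_1,e_2)=1$ for all $e_2$ on the chosen path $\Phi(e_1)$, and $\omega_2(e_2)=1$ iff $\eta(e_1,e_2)=1$ for some $e_1$ with $e_2\in\Phi(e_1)$. Independence of the $\omega_1(e_1)$ (resp.\ $\omega_2(e_2)$) is then immediate since they depend on disjoint rows (resp.\ columns) of $\eta$; the bounded multiplicity $|\{e_1:e_2\in\Phi(e_1)\}|\le C$ gives $\P(\omega_2(e_2)=1)\le 1-(1-q)^C$, and one concludes by stochastic domination on both sides.

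Your route trades this simplicity for a sharper coupling: by colouring $E_1$ with $C$ colours so that same-coloured paths are edge-disjoint, you get $\omega_2$ distributed \emph{exactly} as Bernoulli-$f(p)$ rather than merely dominated by it, and only need to thin on the $\omega_1$ side. The cost is the extra bookkeeping of the conflict-graph colouring (and, implicitly, a well-ordering of $E_1$ for the greedy argument when $G_1$ is infinite, which is unproblematic for countable graphs). Either approach yields a constant $C=C(k,\alpha,\beta)$ of comparable quality; the paper's is shorter, yours is slightly tighter.
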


\begin{proof}
For each edge $e_1 \in E_1$ with endpoints $x$ and $y$, let $\Phi(e_1)$ be the set of edges of $G_2$ belonging to some shortest path connecting $\phi(x)$ and $\phi(y)$ in $G_2$.
The definitions are easily seen to imply that there exists a constant $C=C(k,\alpha,\beta)$ such that 
$|\Phi(e_1)| \leq C$ for every $e_1 \in E_1$ and
$|\{e_1 \in E_1 : e_2 \in \Phi(e_1) \}| \leq C$ for every $e_2 \in E_2$. Let $q \in [0,1]$, let $(\eta(e_1,e_2))_{e_1 \in E_1, e_2 \in E_2}$ be a family of independent Bernoulli random variables each with probability $q$ to be $1$, and consider the random variables $\omega_1\in \{0,1\}^{E_1}$ and $\omega_2 \in \{0,1\}^{E_2}$ defined by
\begin{align*}
\omega_1(e_1) &= \mathbbm{1}\bigl[\eta(e_1,e_2)=1 \text{ for every $e_2 \in \Phi(e_1)$}\bigr] && \text{for every $e_1\in E_1$ and }\\
 \omega_2(e_2) &= \mathbbm{1}\bigl[\eta(e_1,e_2)=1 \text{ for some $e_1 \in E_1$ with $e_2 \in \Phi(e_1)$}\bigr] && \text{for every $e_2 \in E_2$.}
\end{align*}
Observe that for each $v\in V_1$ the image under $\phi$ of the cluster of $v$ in $\omega_1$ is contained in the cluster of $\phi(v)$ in $\omega_2$. Moreover, each of the families $(\omega_1(e_1))_{e_1 \in E_1}$ and $(\omega_2(e_2))_{e_2 \in E_2}$ have independent entries with $q^C \leq \P(\omega_1(e_1)=1) \leq q$ for every $e_1 \in E_1$ and $(1-q)^C \leq \P(\omega_2(e_2)=0) \leq 1-q$ for every $e_2 \in E_2$. It follows in particular that $\omega_1$ stochastically dominates Bernoulli-$q^C$ bond percolation on $G_1$ and $\omega_2$ is stochastically dominated by Bernoulli-$(1-(1-q)^C)$ bond percolation on $G_2$, from which the claim follows easily.
\end{proof}

It is a standard and easily verified fact that if $\Gamma$ is a group, $S_1$ and $S_2$ are generating sets of $\Gamma$, and $m\geq 1$ is such that $S_1 \subseteq \hat S_2^m$ then the identity function $\Gamma \to \Gamma$ is an $(m,1)$-rough embedding (i.e., an $m$-Lipschitz injection) from $\Cay(\Gamma,S_1)$ to $\Cay(\Gamma,S_2)$.
\cref{lem:roughembedding} therefore has the following immediate corollary.

\begin{corollary}
\label{cor:changing_generators}
For each $k$ and $m$ there exists a constant $C=C(k,m)$ such that the following holds. Let $f:[0,1]\to [0,1]$ be the increasing homeomorphism $f(p)= 1-(1-p^{1/C})^C$, let $\Gamma$ be a group, and let $S_1$ and $S_2$ be finite generating sets of $\Gamma$ of size at most $k$ and with $\hat S_1 \subseteq \hat S_2^m$. 
 Then 
for each $v \in \Gamma$ and $p\in [0,1]$, the cluster of $v$ in Bernoulli-$f(p)$ percolation on $\Cay(\Gamma,S_2)$ stochastically dominates the cluster of $v$ in Bernoulli-$p$ percolation on $\Cay(\Gamma,S_1)$.
\end{corollary}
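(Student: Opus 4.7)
The proof plan is to check that, under the corollary's hypotheses, the identity map $\iota:\Gamma\to\Gamma$, viewed as a function from $\Cay(\Gamma,S_1)$ to $\Cay(\Gamma,S_2)$, is an $(m,1)$-rough embedding in the sense of \cref{lem:roughembedding}, and then to read off the conclusion by direct invocation of that lemma. Everything of probabilistic substance has already been carried out in the proof of \cref{lem:roughembedding}; the only task is the essentially combinatorial verification of the embedding definition, which the paper has already flagged as ``standard and easily verified''.

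First I would verify the rough-embedding property. The bounded-to-one condition holds trivially with constant $1$ because $\iota$ is a bijection, so every fibre is a singleton. For the roughly Lipschitz condition, observe that since $m\ge 1$ the set $\hat S_2^m$ is symmetric and contains the identity, so the hypothesis $\hat S_1\subseteq\hat S_2^m$ already incorporates inverses and the identity. Given $x,y\in\Gamma$, write $x^{-1}y=s_1\cdots s_n$ with $n=d_{S_1}(x,y)$ and each $s_i\in\hat S_1$. Each $s_i$ is then itself a product of at most $m$ elements of $\hat S_2$, so $x^{-1}y\in\hat S_2^{mn}$ and hence
\[
d_{S_2}(x,y)\le m\cdot d_{S_1}(x,y),
\]
which is stronger than the $(m,1)$-roughly-Lipschitz inequality $d_{S_2}(x,y)\le m\,d_{S_1}(x,y)+1$.

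Second, I would observe that both Cayley graphs have degrees bounded by $2k$, since a generating set of size at most $k$ produces a Cayley graph of degree at most $|S\cup S^{-1}|\le 2k$. Applying \cref{lem:roughembedding} with $G_1=\Cay(\Gamma,S_1)$, $G_2=\Cay(\Gamma,S_2)$, $\phi=\iota$, degree bound $2k$, and $(\alpha,\beta)=(m,1)$ then produces a constant $C=C(2k,m,1)$, which we rename to $C(k,m)$, together with the homeomorphism $f(p)=1-(1-p^{1/C})^C$ appearing in the statement. The conclusion of the lemma states that for every $v\in\Gamma$ and $p\in[0,1]$, the cluster of $\iota(v)=v$ in Bernoulli-$f(p)$ percolation on $\Cay(\Gamma,S_2)$ stochastically dominates the image under $\iota$ of the cluster of $v$ in Bernoulli-$p$ percolation on $\Cay(\Gamma,S_1)$. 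Since $\iota$ is the identity map, this image is simply the cluster itself, so the stochastic domination is precisely what the corollary asserts.

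There is no genuine obstacle here: the statement is a direct corollary, and the only subtlety is making sure that the rough-embedding conditions are correctly verified (in particular that the symmetrisation of $S_1$ is handled, which is automatic because $\hat S_2^m$ is already symmetric and contains the identity). All the non-trivial work — the construction of the auxiliary variables $\eta(e_1,e_2)$ and the resulting stochastic comparison — has already been done in \cref{lem:roughembedding}.
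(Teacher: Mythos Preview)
Your proposal is correct and matches the paper's own argument essentially verbatim: the paper simply observes (in the sentence immediately preceding the corollary) that the identity map is an $(m,1)$-rough embedding from $\Cay(\Gamma,S_1)$ to $\Cay(\Gamma,S_2)$ and then invokes \cref{lem:roughembedding}. Your added care about the degree bound $2k$ and the symmetrisation is accurate and in the same spirit.
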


\begin{remark}
\cref{lem:roughembedding,cor:changing_generators} could also be phrased slightly more strongly as statements concerning stochastic ordering of the \emph{random equivalence relations} given by connectivity.
\end{remark}

\begin{remark}
Russ Lyons has pointed out to us that our \cref{lem:roughembedding,cor:changing_generators} are essentially equivalent to \cite[Proposition 7.14 and Theorem 7.15]{LP:book}, which in turn are based on \cite[Theorem 6.1 and Remark 6.2]{LS99}. 
\end{remark}

\subsection{Quasitransitive graphs}\label{subsec:quasitrans}
We reduce \cref{thm:fin.perc,thm:gap} to the transitive case via the following folkloric result.
\begin{prop}\label{prop:quasitrans}
Let $n,k\in\N$ and suppose $G=(V,E)$ is a connected $n$-quasitransitive graph of degree at most $k$. Then there exists a connected transitive graph $G'=(V',E')$ of degree at most $(k+1)^{2n}$ satisfying $\diam(G')\le\diam(G)$ and an injective $(2n,n)$-rough isometry $G'\to G$. Moreover, if $G$ is finite then we may insist that $|V|/n\le|V'|\le|V|$.
\end{prop}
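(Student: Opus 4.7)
The plan is to set $G'$ as follows. Let $\Gamma:=\Aut(G)$ and let $O_1,\dots,O_n$ be its orbits on $V$, ordered so that $O:=O_1$ has maximum cardinality, which forces $|O|\ge|V|/n$ by pigeonhole. Take $V':=O$ and put an edge between distinct $u,v\in V'$ precisely when $d_G(u,v)\le 2n-1$; the required map is the inclusion $\iota:V'\hookrightarrow V$. The easy properties drop out immediately: $\Gamma$ preserves $d_G$ and acts transitively on $O$, hence acts transitively by automorphisms of $G'$; the degree at any $v\in V'$ is bounded by $|B_G(v,2n-1)|-1\le(k+1)^{2n-1}-1\le(k+1)^{2n}$; and the cardinality bounds $|V|/n\le|V'|\le|V|$ are clear.

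The one substantive ingredient is the following observation: \emph{every vertex of $V$ lies within $G$-distance $n-1$ of $O$.} To establish this, introduce the orbit graph $\Omega$ with vertex set $\{O_1,\dots,O_n\}$ and an edge between $O_i$ and $O_j$ whenever there exist $a\in O_i$, $b\in O_j$ with $a\sim b$ in $G$. Since $G$ is connected so is $\Omega$, and $\Omega$ has at most $n$ vertices, so $\diam(\Omega)\le n-1$. Given $w\in O_{j_0}$, take an $\Omega$-geodesic $O_{j_0},O_{j_1},\dots,O_{j_m}=O$ with $m\le n-1$ and witness edges $\{a_i,b_i\}$ (with $a_i\in O_{j_i}$, $b_i\in O_{j_{i+1}}$, $a_i\sim b_i$), and inductively build a $G$-path from $w$ of length $m$: at step $i$, use transitivity of $\Gamma$ on $O_{j_i}$ to pick $\sigma_i\in\Gamma$ carrying $a_i$ to the current endpoint, and extend the path to $\sigma_i(b_i)\in O_{j_{i+1}}$. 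After $m$ steps we land in $O$.

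The remaining properties now follow from a projection argument. Given $u,v\in V'$, take a $G$-geodesic $u=w_0,w_1,\dots,w_d=v$ with $d=d_G(u,v)$, and for each $i$ let $w_i'\in O$ be a vertex of $O$ closest to $w_i$ in $G$, so that $w_0'=u$, $w_d'=v$, and $d_G(w_i,w_i')\le n-1$ by the observation. Then $d_G(w_i',w_{i+1}')\le 2n-1$, so $w_i'$ and $w_{i+1}'$ are either equal or adjacent in $G'$. Concatenating yields $d_{G'}(u,v)\le d_G(u,v)$, which simultaneously tells us that $G'$ is connected, that $\diam(G')\le\diam(G)$, and that $d_G(u,v)\ge d_{G'}(u,v)\ge(2n)^{-1}d_{G'}(u,v)-n$. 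The matching bound $d_G(u,v)\le(2n-1)d_{G'}(u,v)\le 2n\cdot d_{G'}(u,v)+n$ is immediate from the definition of $G'$, the rough surjectivity of $\iota$ is exactly the observation (with constant $n-1\le n$), and injectivity of $\iota$ is tautological.

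Thus the only place where any genuine content occurs is the orbit-graph lifting step, which I expect to be the main (though mild) obstacle; it is a standard manoeuvre exploiting that each orbit is itself $\Gamma$-transitive. Everything else is bookkeeping of the definitions.
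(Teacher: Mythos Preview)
Your proof is correct and follows essentially the same approach as the paper: take $V'$ to be a maximal $\Aut(G)$-orbit, connect two points of $V'$ when their $G$-distance is small, prove that every vertex of $G$ lies within $n-1$ of $V'$ via the orbit (quotient) graph, and then verify the rough-isometry bounds by projecting a $G$-geodesic onto $V'$. The only cosmetic differences are that you use edge threshold $2n-1$ where the paper uses $2n$ (either suffices), and you spell out the path-lifting in the orbit graph explicitly, whereas the paper simply observes that the claim is equivalent to $\diam(G/\Gamma)\le n-1$ and declares this trivial.
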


The proof of \cref{prop:quasitrans} begins with the following simple observation.
\begin{lemma}\label{lem:quasi.orbits.dense}
Let $n\in\N$, suppose $G=(V,E)$ is a connected $n$-quasitransitive graph and let $\Gamma=\Aut(G)$. Then every $v\in V$ lies at a distance of at most $n-1$ from each $\Gamma$-orbit of $V$.
\end{lemma}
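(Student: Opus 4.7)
The plan is to analyze a shortest path from $v$ to the nearest vertex of the target orbit and show that the orbits visited along this path are all distinct; since there are only $n$ orbits, the length of the path is at most $n-1$.

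Concretely, fix $v\in V$ and an orbit $O$, and let $u\in O$ achieve $d(v,u)=d(v,O)=:d$. Choose a shortest path $v=w_0,w_1,\dots,w_d=u$, and let $a_i\in\{1,\dots,n\}$ denote the index of the $\Gamma$-orbit containing $w_i$. The key claim I would prove is that the labels $a_0,a_1,\dots,a_d$ are pairwise distinct. Once this claim is in hand, the conclusion $d\le n-1$ is immediate from the pigeonhole principle.

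To prove the claim, I would argue by contradiction: if $a_i=a_j$ for some $0\le i<j\le d$, then $w_i$ and $w_j$ lie in the same $\Gamma$-orbit, so there exists $\gamma\in\Gamma$ with $\gamma w_i=w_j$, equivalently $\gamma^{-1}w_j=w_i$. Applying $\gamma^{-1}$ to the sub-path $w_j,w_{j+1},\dots,w_d=u$ produces a path $w_i,\gamma^{-1}w_{j+1},\dots,\gamma^{-1}u$ of length $d-j$, because automorphisms preserve adjacency and distances. Concatenating this with the initial sub-path $w_0,\dots,w_i$ yields a walk from $v$ to $\gamma^{-1}u$ of length $i+(d-j)=d-(j-i)<d$. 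Since $\gamma^{-1}u$ still lies in $O$ (automorphisms permute orbits, and here they fix each orbit setwise), this contradicts the minimality of $d=d(v,O)$, completing the proof of the claim.

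I do not expect any real obstacle here: the only subtlety is remembering that while $\gamma$ need not fix $v$, it does fix each orbit setwise, which is exactly what is needed to keep the endpoint $\gamma^{-1}u$ inside $O$. The rest is a one-line path-surgery argument that exploits the isometric action of $\Gamma$ on $G$.
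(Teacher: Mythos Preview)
Your proof is correct and is essentially the paper's argument unpacked: the paper simply observes that the quotient graph $G/\Gamma$ is connected on at most $n$ vertices, hence has diameter at most $n-1$, and your path-surgery (showing that a geodesic from $v$ to $O$ visits each orbit at most once) is precisely this statement spelled out in $G$ rather than in the quotient.
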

\begin{proof}[Proof of \cref{lem:quasi.orbits.dense}]
This is equivalent to the claim that the quotient graph $G/\Gamma$ has diameter at most $n-1$, which is trivial since this graph is connected and has at most $n$ vertices.
\end{proof}

\begin{proof}[Proof of \cref{prop:quasitrans}]
Let $\Gamma=\Aut(G)$ and let $V'\subseteq V$ be some $\Gamma$-orbit, noting that if $G$ is finite then we may take $V'$ to be an orbit of maximum size so that $|V|/n\le|V'|\le|V|$ as required. Define $E'=\{(x,y)\in V'\times V':1\le d_G(x,y)\le2n\}$. Since $\Gamma$ acts by isometries on $G$ it also acts by isometries on $G'=(V',E')$, and this action is transitive by definition of $V'$. Moreover, the degree of a vertex $x$ in $G'$ is at most $|B_G(x,2n)|\le(k+1)^{2n}$, as required.

We claim that the inclusion map $V'\to V$ is a $(2n,n)$-rough isometry $G'\to G$. Indeed, \cref{lem:quasi.orbits.dense} shows that for every $v\in V$ there exists $u\in V'$ with $d_G(u,v)\le n$, and we trivially have that $d_G(u,v)\le2n d_{G'}(u,v)$ for every $u,v\in V'$.  On the other hand, given $u,v\in V'$ with $d_G(u,v)=m\in\N$, let $x_0=u,x_1,\ldots,x_{m-1},x_m=v\in V$ be such that $d(x_{i-1},x_i)=1$ for each $i=1,\ldots,m$. By \cref{lem:quasi.orbits.dense} there exist $y_0=u,y_1,\ldots,y_{m-1},y_m=v\in V'$ such that $d_G(x_i,y_i)\le n$ for each $i=0,\ldots,m$, and hence $d_{G'}(y_{i-1},y_i)\le1$ for each $i=1,\ldots,m$, so that $d_{G'}(u,v)\le m$.
This completes the proof that the inclusion $V'\to V$ is a $(2n,n)$-rough isometry $G'\to G$, and also proves that $G'$ is connected with diameter at most $\diam(G)$.
\end{proof}

\begin{corollary}\label{cor:quasitrans}
If \cref{thm:fin.perc,thm:gap} hold for $n=1$ then they hold for all $n\ge1$.
\end{corollary}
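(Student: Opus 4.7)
The plan is to reduce \cref{thm:fin.perc} and \cref{thm:gap} from the $n$-quasitransitive case to the transitive case via \cref{prop:quasitrans}, and then to transfer percolation estimates using \cref{lem:roughembedding}. Given an $n$-quasitransitive graph $G=(V,E)$ of degree at most $k$, \cref{prop:quasitrans} produces a connected transitive graph $G'=(V',E')$ of degree at most $k':=(k+1)^{2n}$ together with an injective $(2n,n)$-rough isometry $\phi:V'\to V$ satisfying $\diam(G')\le\diam(G)$ and, when $G$ is finite, $|V|/n\le|V'|\le|V|$. Since $\phi$ is an $(\alpha',\beta')$-rough embedding for constants depending only on $k$ and $n$, \cref{lem:roughembedding} provides an increasing homeomorphism $f:[0,1]\to[0,1]$ depending only on $k$ and $n$ such that, using the injectivity of $\phi$,
\[
\P^G_{f(p)}\bigl(|K_{\phi(v)}|\ge m\bigr)\ge\P^{G'}_p\bigl(|K_v|\ge m\bigr)\quad\text{and}\quad\P^G_{f(p)}\bigl(\phi(u)\leftrightarrow\phi(v)\bigr)\ge\P^{G'}_p\bigl(u\leftrightarrow v\bigr)
\]
for all $u,v\in V'$, $m\ge 1$, and $p\in[0,1]$.

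For \cref{thm:gap}, choose $V'$ to be an orbit of $\Aut(G)$ that is automatically infinite (any finite orbit would contradict \cref{lem:quasi.orbits.dense} together with the infiniteness of $G$). Then $G'$ is an infinite connected transitive graph of degree at most $k'$, and since superlinear volume growth is a rough-isometry invariant it also has superlinear growth. The $n=1$ case of \cref{thm:gap} applied to $G'$ yields $\eps_0=\eps_0(k,n)>0$ with $p_c(G')\le 1-\eps_0$, so $\P^{G'}_{1-\eps_0}(|K_v|=\infty)>0$ for each $v\in V'$; the first display inequality then forces $\P^G_{f(1-\eps_0)}(|K_{\phi(v)}|=\infty)>0$, whence $p_c(G)\le f(1-\eps_0)=:1-\eps(k,n)$.

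For \cref{thm:fin.perc}, using $|V'|\le|V|\le n|V'|$ one checks that $\diam(G')\le\lambda n|V'|/(\log|V'|)^a$, so $G'$ satisfies the same hypothesis with $\lambda':=\lambda n$. The main obstacle is that since $\phi$ is injective, a cluster of size $m$ in $G'$ transfers to a cluster of size $m\le|V'|$ in $G$; since $|V'|$ can be as small as $|V|/n$, direct transfer alone cannot produce clusters in $G$ larger than $|V|/n$, so reaching clusters of size $\alpha|V|$ for $\alpha$ close to $1$ requires a fattening argument exploiting that every $w\in V$ lies within $G$-distance $n$ of some $\phi(v_w)\in\phi(V')$. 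Fix $\alpha,q\in(0,1)$ and a small parameter $\eta>0$ to be chosen in terms of $\alpha,q,n$. Applying the $n=1$ case of \cref{thm:fin.perc} to $G'$ with parameters $\alpha'=q'=1-\eta$ yields $p=p(k,n,\lambda,\eta)<1$ such that $\P^{G'}_p(\exists\text{ cluster of size}\ge(1-\eta)|V'|)\ge 1-\eta$. A standard averaging argument using the transitivity of $G'$ gives $\E^{G'}_p|K_v|\ge(1-\eta)^3|V'|$, and Markov's inequality applied to $|V'|-|K_v|$ yields $\P^{G'}_p(|K_v|>\tfrac{2}{3}|V'|)\ge 1-9\eta$. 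Since any two clusters of size strictly greater than $\tfrac{2}{3}|V'|$ must coincide, a union bound gives $\P^{G'}_p(u\leftrightarrow v)\ge 1-18\eta$ for all $u,v\in V'$, which the second display inequality transfers to $\P^G_{f(p)}(\phi(u)\leftrightarrow\phi(v))\ge 1-18\eta$ for all $u,v\in V'$.

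For the fattening step, let $w,z\in V$ and choose $v_w,v_z\in V'$ with $d_G(w,\phi(v_w))$ and $d_G(z,\phi(v_z))$ both at most $n$. The three increasing events $\{w\leftrightarrow\phi(v_w)\}$, $\{\phi(v_w)\leftrightarrow\phi(v_z)\}$, and $\{\phi(v_z)\leftrightarrow z\}$ jointly force $\{w\leftrightarrow z\}$, so the Harris-FKG inequality gives
\[
\P^G_{f(p)}(w\leftrightarrow z)\ge f(p)^n\cdot(1-18\eta)\cdot f(p)^n.
\]
For $\eta$ small enough (and hence $p$ close enough to $1$) in terms of $\alpha,q,n$, the right-hand side exceeds $1-(1-\alpha)(1-q)$; summing over $z\in V$ gives $\E^G_{f(p)}|K_w|\ge(1-(1-\alpha)(1-q))|V|$ for every $w$, and \cref{lem:cluster/connect}(i) concludes $\P^G_{f(p)}(|K_w|\ge\alpha|V|)\ge q$, so that $p_c(G,\alpha,q)\le f(p)=1-\eps(k,n,\lambda,\alpha,q)$. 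The main technical challenge is this fattening step: direct transfer from $G'$ loses a factor of $n$ in cluster size, and we recover it by combining Harris-FKG with the rough density of $\phi(V')$ in $V$.
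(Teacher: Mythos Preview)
Your proof is correct and follows essentially the same route as the paper: reduce to a transitive $G'$ via \cref{prop:quasitrans}, transfer percolation estimates through the rough embedding via \cref{lem:roughembedding}, and fatten from $\phi(V')$ to all of $V$ using Harris--FKG and the fact that every vertex lies within distance $n$ of $\phi(V')$. The only substantive difference is cosmetic: to pass from a giant-cluster bound in $G'$ to point-to-point connection probabilities close to $1$, you use an explicit averaging/Markov/union-bound argument, whereas the paper simply invokes the second part of \cref{lem:cluster/connect} (together with the ability to increase $\alpha,q$ by raising $p$). Your argument is a direct unpacking of what that lemma would give in this regime, so the two proofs are interchangeable.
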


\begin{proof}
In the case of \cref{thm:gap} this is immediate from \cref{prop:quasitrans}, \cref{lem:roughembedding} and the fact that superlinear growth is preserved under rough isometries, so we concentrate on \cref{thm:fin.perc}. Let $k,n \geq 1$, and $\lambda>0$ and let $G=(V,E)$ be a finite, $n$-quasitransitive graph of degree at most $k$ satisfying
\[
\diam(G)\le\frac{\lambda|V|}{(\log|V|)^a}.
\]
By \cref{prop:quasitrans} there exists a transitive graph $G'=(V',E')$ of degree at most $(k+1)^{2n}$ with $|V|/n\le|V'|\le|V|$ and $\diam(G')\le\diam(G)$, and hence
\[
\diam(G')\le\diam(G)\le\frac{\lambda|V|}{(\log|V|)^a}\le\frac{\lambda n|V'|}{(\log|V'|)^a},
\]
and an injective $(2n,n)$-rough isometry $\phi:G'\to G$. If \cref{thm:fin.perc} holds for transitive graphs then we may conclude by that theorem and the second part of \cref{lem:cluster/connect} that for each $\eps>0$ there exists $\delta_1=\delta_1(k,n,\lambda,\eps)$ such that $\P_p^{G'}(u \leftrightarrow v) \geq 1-\eps$ for every $u,v \in V'$ and $p\geq 1-\delta_1$. 
 It then follows from \cref{lem:roughembedding} 
 that for each $\eps>0$ there exists $\delta_2=\delta_2(k,n,\lambda,\eps)$ such that $\P_p^{G}(u \leftrightarrow v) \geq 1-\eps$ for every $u,v \in \phi(V')$ and $p\geq 1-\delta_2$. It follows by Harris-FKG that $\P_p^{G}(u \leftrightarrow v) \geq (1-\eps)p^{2n}$ for every $u,v\in V$ and $p \geq 1-\delta_2$, from which the claim follows easily by the first part of \cref{lem:cluster/connect}.
\end{proof}

\section{Nilpotent and Abelian groups}
\label{sec:Nilpotent}

In this section we study percolation on Cayley graphs of Abelian and nilpotent groups. We study percolation in boxes in $\Z^d$ in \cref{subsec:boxes}, use techniques from additive combinatorics to generalize these results to arbitrary Abelian Cayley graphs in \cref{subsec:Abelian}, then use an inductive argument to study percolation on Cayley graphs of nilpotent groups in \cref{subsec:NilpotentNilpotent}. Finally, in \cref{subsec:virtuallynilpotent}, which can be read independently of the rest of the section, we prove uniform upper bounds on $p_c$ for Cayley graphs of infinite, virtually nilpotent groups.

\subsection{Percolation in an elongated Euclidean box}
\label{subsec:boxes}

Malon and Pak proved Theorem \ref{thm:fin.perc.ab} for certain specific types of generating sets called \emph{Hall bases} \cite[Theorem 1.2]{MR2238046}. In this section we prove a variant of their result that concerns percolation on \emph{boxes} rather than tori. 
In order to state this result it will be convenient to introduce some notation. Given $n_1,\ldots,n_d\in\N$, we define the \emph{box} $B(n_1,\ldots,n_d)\subset\Z^d$ via
\[
B(n_1,\ldots,n_d)=\bigl\{(x_1,\ldots,x_d)\in\Z^d:|x_i|\le n_i \text{ for every $1\leq i \leq d$}\bigr\}.
\]
We view $B(n_1,\ldots,n_d)$ as an induced subgraph of $\Z^d$, so that
\[
\diam(B(n_1,\ldots,n_d))=2(n_1+\cdots+n_d).
\]
We now establish an analogue of \cref{thm:fin.perc.ab} for Euclidean boxes. 
\begin{proposition}\label{prop:mal-pak}
Let $\lambda\ge1$ and let $n_1,\ldots,n_d\in\N$. Suppose that $B=B(n_1,\ldots,n_d)$ satisfies 
\begin{equation}\label{eq:mal-pak.hyp}
\diam(B)\le\frac{\lambda|B|}{\log |B|}.
\end{equation}
Then for every $\eps>0$ there exists $p_0=p_0(\lambda,\eps)$ such that
$\P_p(x\leftrightarrow y)\ge1-\eps$
for every $p\geq p_0$ and $x,y\in B$.
\end{proposition}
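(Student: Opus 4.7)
The plan is to adapt the block renormalization argument from Malon--Pak's treatment of the torus case. Assume without loss of generality that $n_1 \ge n_2 \ge \cdots \ge n_d$. The case $d \le 1$ is immediate since the hypothesis $\diam(B) \le \lambda|B|/\log|B|$ then forces $|B|$ to be bounded by a constant depending only on $\lambda$. For $d \ge 2$, combining the hypothesis with $2n_1 \le \diam(B)$ yields the cross-sectional lower bound
\[
\frac{|B|}{2n_1+1} \;=\; (2n_2+1)\cdots(2n_d+1) \;\ge\; \frac{\log|B|}{3\lambda}
\]
(valid once $|B|$ exceeds a constant depending on $\lambda$), with analogous lower bounds perpendicular to every long direction. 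This bounds from below the minimum edge cut between opposite long faces of $B$ and plays the role of the main geometric input.

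The core step is a block renormalization. Fix a scale $L$ and tile $B$ by sub-boxes of side at most $L$, truncating near the boundary of $B$. Call a sub-box \emph{good} if its restricted percolation configuration contains a cluster occupying at least a $(1-\eps/10)$ fraction of the sub-box and touching every face of it. By a standard Peierls/Grimmett--Marstrand estimate on cubes in $\Z^d$, we have $\P_p(\text{sub-box bad}) \le e^{-\alpha(p) L}$ with $\alpha(p) \to \infty$ as $p \to 1$. If two adjacent sub-boxes are both good, their face-spanning clusters merge provided at least one of the $L^{d-1}$ edges across the shared face is open, which happens with probability at least $1-(1-p)^{L^{d-1}}$. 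On the event that every sub-box is good and every adjacent pair is connected, $B$ contains a single cluster covering at least a $(1-\eps/10)$ fraction of every sub-box. Choosing $L$ to be a sufficiently large constant multiple of $\log|B|$, the cross-sectional bound ensures that the total number of sub-boxes is absorbed by the exponential decay in the union bound, so this event has probability at least $1-\eps/4$ for $p$ close enough to $1$. A separate local Peierls estimate shows that any specific vertex $v$ fails to lie in the good cluster of its own sub-box with probability at most $\eps/4$, and hence $\P_p(v \in \text{giant cluster}) \ge 1 - \eps/2$ for every $v$, which gives $\P_p(x \leftrightarrow y) \ge 1 - \eps$ for every $x,y \in B$ by inclusion--exclusion.

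The main obstacle is ensuring that the resulting $p_0$ can be chosen independently of the dimension $d$, since the Peierls constant $\alpha(p)$ and the counts of connected subsets of $\Z^d$ implicitly depend on $d$. The natural way to handle this is by splitting into two regimes. For bounded dimensions $d \le D_0(\lambda, \eps)$, the renormalization above yields the conclusion with $p_0$ depending only on $(D_0, \lambda, \eps)$. For $d > D_0$, the graph has very high local connectivity: $p_c(\Z^d) \to 0$ and the percolation density $\theta_d(p) \to 1$ as $d \to \infty$ for every fixed $p > 0$, so a direct path-counting or branching-process argument (modelled on percolation on the Hamming cube) gives $\P_p(x \leftrightarrow y) \ge 1-\eps$ for any fixed $p \ge 1/2$ and all sufficiently large $d$, uniformly in the $n_i$. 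Taking $p_0$ to be the maximum of the two threshold values yields the desired uniform $p_0 = p_0(\lambda, \eps)$.
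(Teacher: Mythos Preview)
Your renormalization sketch for bounded $d$ is plausible (though the sub-box analysis when several $n_i$ are much smaller than $L$ would need more care than you indicate), but the large-$d$ regime contains a genuine error. The claim that ``a direct path-counting or branching-process argument \ldots\ gives $\P_p(x\leftrightarrow y)\ge1-\eps$ for any fixed $p\ge1/2$ and all sufficiently large $d$, uniformly in the $n_i$'' is false. Large $d$ gives large \emph{local} degree, but the global geometry of $B$ can still be essentially one-dimensional. Take $\lambda=100$, any $d$, and $B=B(N,1,\ldots,1)$ with $N\approx e^{10\cdot 3^{d-1}}$. One checks that $\diam(B)\approx 2N$ while $\lambda|B|/\log|B|\approx 20N$, so the hypothesis holds. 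But the $3^{d-1}$ edges between any two adjacent cross-sections $\{k\}\times\{-1,0,1\}^{d-1}$ and $\{k+1\}\times\{-1,0,1\}^{d-1}$ are all closed with probability $(1-p)^{3^{d-1}}$, and the expected number of such fully-closed cuts is
\[
2N\cdot(1-p)^{3^{d-1}}\approx\exp\bigl[(10-\log\tfrac{1}{1-p})\,3^{d-1}\bigr],
\]
which tends to infinity as $d\to\infty$ unless $1-p\le e^{-10}$. So for $\lambda=100$ no fixed $p<1-e^{-10}$ works, regardless of how large $d$ is; in particular $p=1/2$ fails for every $d$. The appeal to $p_c(\Z^d)\to0$ and to the Hamming cube is a red herring: those results concern graphs that are genuinely high-dimensional on all scales, whereas here the long direction forces one-dimensional behaviour at large scales no matter how many thin directions are appended.

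The paper's proof avoids this entire difficulty by a dimension-reduction trick: one finds an index $k$ so that both $\prod_{i\le k}(2n_i+1)$ and $\prod_{i>k}(2n_i+1)$ are at least of order $\lambda^{-1}\log|B|$, then chooses Hamiltonian paths through the factor boxes $B(n_1,\ldots,n_k)$ and $B(n_{k+1},\ldots,n_d)$ to obtain a bijective graph homomorphism from a two-dimensional box $B(m,n)$ with $\min\{m,n\}\gtrsim\lambda^{-1}\log|B|$ onto a spanning subgraph of $B$. This reduces everything to the planar case (Lemma~\ref{lem:mal-pak.d=2}), where duality gives the result with no $d$-dependence anywhere.
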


\begin{remark}
This recovers \cite[Theorem 1.2]{MR2238046} by Lemma \ref{lem:cluster/connect}.
\end{remark}

We first consider the case $d=2$. The analysis of this case follows by standard methods and is similar to \cite[Chapter 11.5]{grimmett2010percolation}, so we will keep our presentation brief and focus on the main conceptual ideas.

\begin{lemma}\label{lem:mal-pak.d=2}
For every $0<\lambda,\eps \leq 1$ there exists $p_0=p_0(\lambda,\eps)<1$ such that if $m,n\in \N$ satisfy $\lambda \log n\le m\le n$ then
$\P_p(x\leftrightarrow y)\ge1-\eps$
for every $p\geq p_0$  and $x,y\in B(n,m)$. 
\end{lemma}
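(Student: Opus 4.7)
The plan is a block renormalization combined with the planar Peierls (dual-contour) bound, where the hypothesis $m\ge\lambda\log n$ is exactly what is needed to close the union bound in the block-crossing step. My strategy is to build a single open connected ``backbone'' $S$ spanning $B(n,m)$ horizontally by patching together crossings of overlapping blocks of size comparable to $m$, and then argue that each of $x$ and $y$ is connected to $S$ with high probability.

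To build $S$, I would cover the long direction of $B(n,m)$ by $O(n/m)$ overlapping $2m\times 2m$ blocks $Q_i=[(i-1)m,(i+1)m]\times[-m,m]$, so that consecutive blocks share an $m\times 2m$ overlap $O_i$. Let $E_i$ be the event that $Q_i$ has an open left--right crossing and $F_i$ the event that $O_i$ has an open top--bottom crossing. The standard planar Peierls count gives $\P_p(E_i^c),\P_p(F_i^c)\le Cm(3(1-p))^{2m}$ for $p>2/3$, since the failure of either entails a closed dual path of length at least $2m$. A union bound over the $O(n/m)$ events yields total failure probability at most $Cn(3(1-p))^{2m}\le Cn^{1+2\lambda\log(3(1-p))}$, using $m\ge\lambda\log n$, which is at most $\eps/4$ once $p$ is close enough to $1$ in terms of $\lambda$ and $\eps$ alone. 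On this good event, the horizontal crossings of $Q_i$ and $Q_{i+1}$ both traverse the overlap $O_i$, where the vertical crossing of $O_i$ must share a vertex with each of them (any horizontal and vertical crossings of a rectangle in $\Z^2$ meet at a vertex, by a discrete Jordan curve argument), so the union $S$ of all these crossings is a single open connected subgraph spanning $B(n,m)$ horizontally and containing both a horizontal crossing of every $Q_i$.

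The final step is to show that each of the two given vertices $x$ and $y$ is connected to $S$ with probability at least $1-3\eps/8$. The key topological observation is that since $S$ contains a horizontal crossing of the block $Q_i$ containing $x$, any open vertical crossing of $Q_i$ through $x$ necessarily shares a vertex with $S$, so it suffices to show that the open cluster of $x$ reaches both the top and bottom of $Q_i$. The failure of this last event corresponds by planar duality either to a short closed dual loop enclosing $x$ in $Q_i$ or to a closed dual horizontal crossing of $Q_i$ above or below $x$; the first contributes a quantity $C(p)$ that is independent of $n$ and satisfies $C(p)\to 0$ as $p\to 1$, while the second is again at most $Cm(3(1-p))^{2m}$ by a Peierls count, and both can be made arbitrarily small for $p$ close enough to $1$. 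Combining these estimates with Harris--FKG yields $\P_p(x\leftrightarrow y)\ge 1-\eps$ for every $x,y\in B(n,m)$.

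I expect the main obstacle to be the bookkeeping in arranging the various Peierls sums so that the resulting $p_0$ depends only on $\lambda$ and $\eps$ rather than on $n$, and in particular that the ``small loop around $x$'' contribution in the last step (which is not controlled by the logarithmic scaling) can genuinely be absorbed into a choice of $p_0$ that is uniform in $n$; once this is done, the logarithmic hypothesis on $m$ handles every remaining union bound.
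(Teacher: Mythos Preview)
Your proposal is correct and follows the same overall strategy as the paper---build a horizontal open backbone via a Peierls/duality count calibrated by the hypothesis $m\ge\lambda\log n$, then connect each of $x$ and $y$ to it by a local argument---but the paper's execution is simpler in one respect. Rather than decomposing $B(n,m)$ into $O(n/m)$ overlapping $2m\times 2m$ blocks and patching their crossings, the paper applies the Peierls bound directly to the whole rectangle: the probability that a fixed top edge is joined to the bottom by a closed dual path is at most $\sum_{r\ge m}3^{r-1}(1-p)^r$, and a union bound over the $O(n)$ top edges (closed precisely by $m\ge\lambda\log n$) shows that with high probability there is no closed dual top--bottom crossing, hence a single open left--right crossing of all of $B(n,m)$. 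This eliminates the block gluing entirely. For the step connecting $x$ and $y$ to the backbone, the paper packages the argument as a separate lemma (any point in a square box is connected to all four sides with probability $c(p)\to1$), proved via Kesten's theorem for the quarter-plane together with Harris--FKG, though they note the elementary Peierls argument you use suffices for $p>2/3$.

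One small imprecision in your final step: the dichotomy ``short closed dual loop around $x$'' versus ``closed dual horizontal crossing of $Q_i$'' is not exhaustive, since the separating dual path could begin and end on the same vertical side of $Q_i$. This is easily repaired: choose $Q_i$ so that $x$ lies at horizontal distance at least $m/2$ from both sides, and then any such path has length at least $m$, so the same Peierls bound applies.
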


\begin{remark}\label{rem:log.needed} 
One can also show conversely that if $p<1$ is fixed and $m=m(n)=o(\log n)$ then the torus $(\Z / n \Z) \times ( \Z / m \Z)$, and in particular its subgraph $B(n,m)$, does not contain a giant component with high probability under Bernoulli-$p$ percolation as $n\to\infty$. Indeed, this torus contains $\lfloor n/2\rfloor$ copies of the cycle $\Z/m\Z$ with disjoint $1$-neighbourhoods, and each such copy has all edges incident to it closed with probability $(1-p)^{3m}$, independently of all other copies. We deduce that if $n \gg (1-p)^{-3m}$ then there will exist with high probability many cycles having this property. Since the locations of these cycles are uniform among the $\lfloor n/2 \rfloor$ possibilities, their complement will not contain a giant cluster with high probability.
\end{remark}

In order to prove \cref{lem:mal-pak.d=2}, we first prove the following standard lemma concerning two dimensional bond percolation in a square box.

\begin{lemma}
\label{lem:foursides}
For each $p>1/2$ there exists a constant $c(p)>0$, with $c(p)\to1$ as $p\to1$, such that every point $x\in B(n,n)$ has probability at least $c(p)$ to be connected to all four sides of the box $B(n,n)$ in Bernoulli-$p$ bond percolation on $B(n,n)$.
\end{lemma}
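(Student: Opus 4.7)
The plan is to use the Harris--FKG inequality together with a planar-duality Peierls argument.

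First, since each event $\{x\leftrightarrow S_i\}$ (for $S_i$ a side of $B(n,n)$) is increasing, Harris--FKG gives
\[\P_p\bigl(x \text{ connected to all four sides}\bigr)\geq\prod_{i=1}^{4}\P_p(x\leftrightarrow S_i),\]
reducing the problem to showing $\P_p(x\nleftrightarrow S)\leq\eta(p)$ for each fixed side $S$, uniformly in $n$ and $x\in B(n,n)$, with $\eta(p)\to 0$ as $p\to 1$.

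Second, I reduce disconnection to a standard cycle-counting problem via planar duality. I augment $B(n,n)$ with an auxiliary vertex $*$ joined to every vertex of $S$ by always-open edges, so that $\{x\nleftrightarrow S\}$ in $B(n,n)$ becomes $\{x\nleftrightarrow *\}$ in the augmented planar graph $G'$. Passing to the planar dual of $G'$, this event is equivalent to the existence of a self-avoiding dual cycle separating $x$ from $*$ whose edges are duals of closed primal edges of $B(n,n)$. Such a cycle cannot use dual edges of the always-open $*$-edges, so it lives in the subgraph of the dual consisting of the internal dual vertices together with the single vertex $v_\mathrm{out}^{*}$ corresponding to the unbounded face of $G'$.

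Third, I apply a Peierls union bound. Writing $N_k$ for the number of such dual cycles of length $k$ enclosing $x$,
\[\P_p(x\nleftrightarrow S)\leq\sum_{k\geq 1}N_k\,(1-p)^k.\]
The key estimate is that $N_k\leq Ck^2\cdot 3^k$ for an absolute constant $C$, uniformly in $n$ and $x$. This holds because a self-avoiding cycle of length $k$ enclosing $x$ must contain an internal dual vertex within graph distance $O(k)$ of $x$, by a standard isoperimetric argument applied to the bounded planar region separated from $*$ by the cycle; the argument is still valid when the cycle traverses $v_\mathrm{out}^{*}$, since the at most two internal-dual neighbours of $v_\mathrm{out}^{*}$ appearing on the cycle must themselves lie within distance $O(k)$ of $x$. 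This yields $O(k^2)$ choices of starting vertex, and from each such vertex there are at most $3^{k-1}$ self-avoiding continuations in the internal dual (which has maximum degree $4$). Consequently
\[\P_p(x\nleftrightarrow S)\leq C\sum_{k\geq 1}k^2\bigl(3(1-p)\bigr)^k,\]
a convergent power series for $p>2/3$ whose sum tends to $0$ as $p\to 1$. Combined with the FKG step, this gives $c(p)\geq(1-\eta(p))^4\to 1$ as $p\to 1$, as required.

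The main technical obstacle I anticipate is maintaining the uniform Peierls bound in the presence of the high-degree outer dual vertex $v_\mathrm{out}^{*}$, whose degree grows linearly in $n$; this is handled by the confinement observation above, which forces the cycle to remain geometrically trapped near $x$ regardless of whether it passes through $v_\mathrm{out}^{*}$.
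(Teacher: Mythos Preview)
Your Peierls argument yields $c(p)>0$ only for $p>2/3$, since the series $\sum_k k^2\bigl(3(1-p)\bigr)^k$ diverges once $3(1-p)\ge 1$. The lemma, however, asserts $c(p)>0$ for \emph{every} $p>1/2$, and your proof gives no information in the range $1/2<p\le 2/3$. This is not a matter of sharpening constants: the crude count $N_k\le Ck^2 3^k$ cannot be pushed below $\mu^k$ with $\mu\approx 2.638$ the connective constant of $\Z^2$, so a bare Peierls bound can at best reach $p>1-1/\mu\approx 0.62$. Going all the way down to $p>1/2$ genuinely requires input of the strength of Kesten's theorem (equivalently, exponential decay of subcritical dual connection probabilities), which your argument does not invoke.

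The paper's proof takes a different route that avoids this obstruction. It first uses the inequality $\P_{p^\theta}(A)\ge \P_p(A)^\theta$ to reduce ``$c(p)\to 1$ as $p\to 1$'' to ``$c(p)>0$ for each fixed $p>1/2$''. For the latter it invokes Kesten's theorem on the quarter-plane: for $p>1/2$ there exists $q(p)>0$ such that the corner of a quarter-plane reaches infinity with probability at least $q(p)$. Applying this in the four quarter-planes cornered at $x$, and combining via Harris--FKG with the event that $B(n,n)$ admits both a horizontal and a vertical open crossing (probability $\ge 1/4$ by duality and symmetry), one gets an event of probability $\ge q(p)^4/4$ on which a short topological argument forces $x$ to be connected to all four sides. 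The paper in fact remarks that for its downstream applications the Peierls route for $p>2/3$ would already suffice, which is essentially what you have written; but as a proof of the lemma \emph{as stated} your proposal is incomplete.

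A secondary point: your handling of the high-degree outer dual vertex $v_{\mathrm{out}}^{*}$ is on the right track but would benefit from a cleaner justification. The claim that the internal-dual neighbours of $v_{\mathrm{out}}^{*}$ on a separating cycle of length $k$ lie within distance $O(k)$ of $x$ needs a short geometric argument (the region containing $x$ is bounded by the internal dual path of length $k-2$ together with an arc of the box boundary, and the path has diameter at most $k-2$), rather than an unspecified appeal to isoperimetry.
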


\begin{proof} We will prove that for an arbitrary $p>1/2$ there exists a constant $c(p)>0$ such that every point $x\in B(n,n)$ has probability at least $c(p)$ to be connected to all four sides of the box $B(n,n)$ in Bernoulli-$p$ bond percolation on $B(n,n)$; using the fact that $\P_{p^\theta}(A) \geq \P_p(A)^\theta$ for every $p,\theta \in [0,1]$ and every increasing event $A$ \cite[Theorem 2.38]{grimmett2010percolation}, we may then take $c(p)\to 1$ as $p\to 1$ as required.
It follows from a standard duality argument \cite[Lemma 11.21]{grimmett2010percolation} that $B(n,n)$ has a left-right crossing with probability at least $1/2$ when $p\geq 1/2$, and hence by symmetry and Harris-FKG that $B(n,n)$ has both a left-right crossing and a top-bottom crossing with probability at least $1/4$ when $p\geq 1/2$. Moreover, it is a consequence of Kesten's theorem \cite{MR575895} that the critical probability for the quarter-plane $[0,\infty)^2 \subseteq \Z^2$ is $1/2$ (see e.g.\ \cite[Chapter 11.5]{grimmett2010percolation}), and hence that if $p>1/2$ then there exists $q=q(p)>0$ such that the origin is connected to infinity in the quarter-plane with probability at least $q$. 
Letting $x \in B(n,n)$ and considering the four copies of the quarter-plane with corner at $x$, we have by Harris-FKG that with probability at least $q^4/4$, $B(n,n)$ has both a left-right crossing and a top-bottom crossing and $x$ is connected to the boundary of $B(n,n)$ within each of the four quarter-planes with corner at $x$. On this event, we see by a simple topological argument that $x$ must be connected to all four sides of the box as required; see \cref{fig:pathcrossing} for an illustration.
\end{proof}

\begin{remark} For our purposes it would suffice to prove \cref{lem:foursides} for $p > 2/3$, say, where one can use elementary path counting arguments (i.e., the Peierls argument) in place of Kesten's theorem.
\end{remark}

\begin{figure}
\centering
\includegraphics[height=3.5cm]{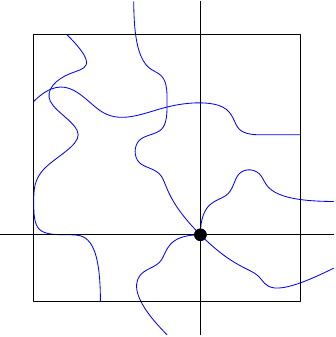} \qquad
\includegraphics[height=3.5cm]{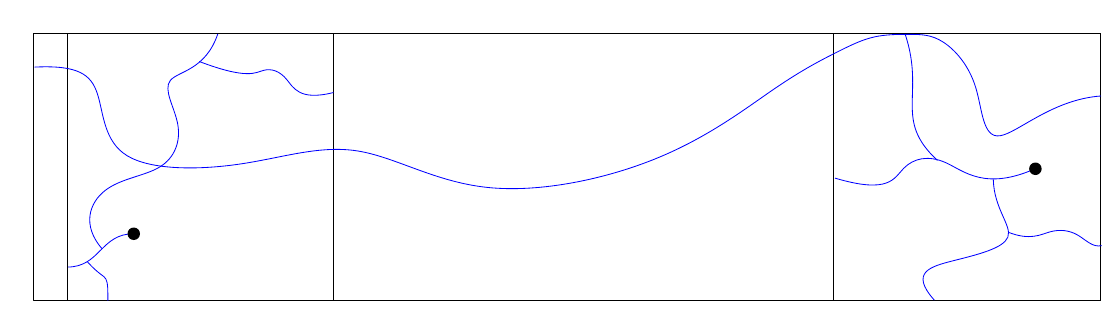}
\caption{Left: If a square box admits both a left-right and top-bottom open crossing and a point $x$ in the box is connected to infinity in all four quarter-planes with corner at $x$, then $x$ is connected to all four sides of the box within the box. Right: Consider a rectangular box $R$ of width greater than height, let $x$ and $y$ be points in the rectangle, and let $B_1$ and $B_2$ be square boxes contained in $R$, of height equal to that of $R$, and containing $x$ and $y$ respectively. If $R$ admits an open left-right crossing and $x$ and $y$ are connected to the top and bottom of $B_1$ and $B_2$ respectively by open paths, then $x$ and $y$ are connected by an open path.}
 \label{fig:pathcrossing}
\end{figure}

\begin{proof}[Proof of \cref{lem:mal-pak.d=2}]
This follows by a standard duality argument for percolation on the square lattice.
Consider an edge $e$ in the top of the rectangle $B(n,m)$. 
The expected number of dual paths of closed edges of length $r$ that start at $e$ is at most $3^{r-1}(1-p)^r$, and it follows by Markov's inequality that if $p>2/3$ then the probability that $e$ is connected to the bottom of the rectangle by a dual path of closed edges is at most $\sum_{r\geq m} 3^{r-1}(1-p)^r = (9p-6)^{-1}3^m(1-p)^m$. Since $m\geq \lambda \log n$, there exists $p_0=p_0(\lambda)<1$ such that if $p \geq p_0$ then this probability is at most $1/n^2$. If we take such a $p$, then it follows by a union bound that there does not exist any closed dual top-bottom crossing of the rectangle $B(n,m)$ with probability at least $1-2/n$. On this event there must exist an open path in the primal connecting the left and right sides of the rectangle.

On the other hand, for each $p> 1/2$ we have by \cref{lem:foursides} that there exists a constant $c(p)>0$, with $c(p) \to 1$ as $p\to 1$, such that every element of the box $[- m, m]^2$ has probability at least $c(p)$ to be connected to all four sides of the box $[-m,m]^2$ by open paths contained within this box. We can easily deduce from this that there is a giant cluster with high probability when $p$ is close to $1$. Indeed, if $x$ and $y$ are any two points in the rectangular box $B(m,n)$, then if $p$ is close enough to $1$, $x$ and $y$ each have probability at least $1 - \eps$ to be connected to both the top and bottom of the rectangle. By the Harris-FKG inequality, the probability that $x$ and $y$ are both connected to both the top and bottom of the rectangle and that there is an open left-right crossing of the rectangle is at least the product of these probabilities, and hence is close to $1$ when $p$ is close to $1$ and $n$ is large. On this event $x$ must be connected to $y$ (see \cref{fig:pathcrossing} for an illustration), and the claim is easily deduced. \qedhere

\end{proof}

We now apply the two-dimensional case to analyze the general case. Following Malon and Pak, we will do this by finding a constructing a homomorphic image of a two-dimensional box inside a box of general dimension, taking care to make sure the two-dimensional box has diameter of the correct order.

\begin{proof}[Proof of Proposition \ref{prop:mal-pak}]We follow Malon and Pak \cite[Theorem 1.2]{MR2238046}. 
For notational convenience we write $N_i=2n_i+1$, so that $|B|=\prod_{i=1}^dN_i$. We may assume without loss of generality that $n_1\le\cdots\le n_d$. We may also assume that $|B|\ge100e^\lambda$, noting that this combines with \eqref{eq:mal-pak.hyp} to force $d\ge2$. 
We claim that there exists $1 \leq k < d$ such that
\begin{equation}
\label{eq:mal-pak.claim}
N_1\cdots N_k\ge\lambda^{-1}\log|B| \qquad \text{ and } \qquad N_{k+1}\cdots N_d\ge\lambda^{-1}\log|B|.
\end{equation}
 Indeed, we will prove that if $k$ is minimal such that
$N_1\cdots N_k\ge\lambda^{-1}\log|B|$ then $k<d$ 
and $N_{k+1}\cdots N_d\ge\lambda^{-1}\log|B|$. 
If $k=1$ then this inequality is immediate since
\[
N_2 \cdots N_d \geq \sqrt{N_1 N_2 \cdots N_d} = \sqrt{|B|} \geq \lambda^{-1} \log |B|,
\]
where the final inequality follows by calculus and the assumption that $|B| \geq 100 e^\lambda$.
 We may therefore assume that $k>1$. In this case the bounds \eqref{eq:mal-pak.hyp} and $d\ge2$ imply that
\[
N_d\le \diam(B) \leq \frac{\lambda |B|}{\log|B|} = \frac{\lambda N_1\cdots N_d}{\log|B|},
\]
and hence that $k<d$.
If \eqref{eq:mal-pak.claim} does not hold then we have that
\begin{equation}\label{eq:mal-pak.contrap}
N_1\cdots N_k = \frac{|B|}{N_{k+1}\cdots N_d}>\frac{\lambda|B|}{\log|B|}
\end{equation}
and hence that
\begin{align*}
\log|B|&>\lambda N_{k+1}\cdots N_d&\text{(since \eqref{eq:mal-pak.claim} does not hold)\phantom{,}}\\
   &\ge\lambda N_k&\text{(since $k<d$ and the $N_i$ are increasing)\phantom{,}}\\
   &>\frac{\lambda^2N_1\cdots N_k}{\log|B|}&\text{(by minimality of $k$ and since $k>1$)\phantom{,}}\\
   &>\frac{\lambda^3|B|}{\log^2|B|}&\text{(by \eqref{eq:mal-pak.contrap}).}
\end{align*}
Since $\lambda \geq 1$ this is contrary to the assumption that $|B|\ge100$, so that $N_{k+1}\cdots N_d\ge\lambda^{-1}\log|B|$ as claimed.

Let $1 \leq k <d$ be such that \eqref{eq:mal-pak.claim} holds and set $m=\frac12(N_1\cdots N_k-1)$ and $n=\frac12(N_{k+1}\cdots N_d-1)$, noting that $m,n\in\N$ since the $N_i$ are all odd. It follows from \eqref{eq:mal-pak.claim} and the assumption that $|B|\geq 100 e^\lambda$ that
\begin{equation}\label{eq:size.m.n}
\min\{m,n\}\ge
\frac{\lambda^{-1}}{2} \log |B| -\frac{1}{2}
\geq
\frac{\lambda^{-1}}{4} \log |B|
\geq
\frac{\lambda^{-1}}4\log\max\{m,n\}.
\end{equation}

Recall that a \emph{Hamiltonian path} in a graph is a path that visits each vertex exactly once.
Continuing to follow Malon and Pak, choose Hamiltonian paths
\begin{align*}
\phi_1&:\{-m,\ldots,m\}\to B(n_1,\ldots,n_k),\\
\phi_2&:\{-n,\ldots,n\}\to B(n_{k+1},\ldots,n_d)
\end{align*}
such that $\phi_1(0)=0$ and $\phi_2(0)=0$, noting that such paths trivially exist, and note that the map $\phi=(\phi_1,\phi_2):B(m,n)\to B$ is a bijective graph homomorphism satisfying $\phi(0)=0$. In particular, $B(m,n)$ is isomorphic to a spanning subgraph of $B$, so that the desired result follows from \eqref{eq:size.m.n} and Lemma \ref{lem:mal-pak.d=2}.
\end{proof}

\subsection{Abelian groups}
\label{subsec:Abelian}

In this section we prove the following generalisation of Theorem \ref{thm:fin.perc.ab}.
Throughout this section we use additive notation for Abelian groups. In particular, we write $0$ for the identity element of an Abelian group, and given a subset $A$ of an abelian group and a positive integer $r$ we write $rA=\{a_1+\cdots+a_r:a_i\in A\}$. Given vertices $x$ and $y$ and a set of vertices $A$, we write $\{x \xleftrightarrow{A} y\}$ for the event that $x$ is connected to $y$ by an open path of edges with both endpoints in $A$.

\begin{theorem}\label{thm:fin.perc.ab.gen}
For each $k \geq 1$ and $\lambda,\eps\in (0,1]$ there exist constants $C=C_k \in \N$ and $p_0=p_0(k,\lambda,\eps)<1$ such that the following holds. Let $\Gamma$ be an Abelian group with generating set $S=\{x_1,\ldots,x_k\}$ and let $r\geq 1$ be such that $|r\hat S|\ge \lambda (r+1)\log (r+1)$.
Then 
\[
\P_p\bigl(x\xleftrightarrow{Cr\hat S} y\bigr)\geq 1-\eps
\]
for every $p\geq p_0$ and $x,y\in r\hat S$. 
\end{theorem}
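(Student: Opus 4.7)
The plan is to reduce to the Euclidean-box setting (\cref{prop:mal-pak}) via the box-approximation theorem (\cref{prop:boxes}). Applied to the ball $r\hat S$, I expect \cref{prop:boxes} to produce a dimension $d \leq k$, a box $B = B(n_1,\ldots,n_d) \subseteq \Z^d$, a constant $C_0 = C_0(k)$, and a group homomorphism $\pi : \Z^d \to \Gamma$ satisfying: (i) each standard generator of $\Z^d$ maps into $\hat S^{C_0}$, so that $\pi$ is a $C_0$-Lipschitz map of Cayley graphs; (ii) $\pi(B) \supseteq r\hat S$; (iii) $\pi(B) \subseteq C_0 r \hat S$; (iv) $|\pi^{-1}(x) \cap B| \leq C_0$ for every $x \in \Gamma$, in particular $|B| \leq C_0 |r\hat S|$; and (v) $\diam(B) \leq C_0 r$. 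Properties (i), (iii), (iv) together say that $\pi|_B$ is a $(C_0, C_0)$-rough embedding, in the sense of \cref{lem:roughembedding}, of $B$ into the induced subgraph of $\Cay(\Gamma, S)$ on the ball $2C_0 r \hat S$ (which is connected since any ball in a Cayley graph is).

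Next I would verify that $B$ satisfies the hypothesis \eqref{eq:mal-pak.hyp} of \cref{prop:mal-pak}. Combining (iv), (v), and the standing assumption $|r\hat S| \geq \lambda(r+1)\log(r+1)$,
\[
\diam(B) \leq C_0 r \leq \frac{C_0 |r \hat S|}{\lambda \log(r+1)} \leq \frac{C_0^2 |B|}{\lambda \log(r+1)}.
\]
Since $|B| \leq (2r+1)^d \leq (2r+1)^k$, we have $\log|B| \leq k \log(2r+1)$, so once $r$ is larger than some absolute constant, $\log|B|$ and $\log(r+1)$ are comparable up to factors depending only on $k$; hence $\diam(B) \leq \lambda' |B|/\log |B|$ for some $\lambda' = \lambda'(k,\lambda)$. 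The finitely many small $r$ can be absorbed into $p_0$ since each such $B$ is a fixed finite graph. Applying \cref{prop:mal-pak} then yields $p_0' = p_0'(k,\lambda,\eps)$ such that, for every $p' \geq p_0'$ and every $u,v \in B$, we have $\P_{p'}(u \leftrightarrow v \text{ in } B) \geq 1 - \eps$.

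Finally, I would apply \cref{lem:roughembedding} to $\pi|_B$ to transfer this back to $\Gamma$: there exists an increasing homeomorphism $f$ depending only on $k$ such that for every $p$ with $f^{-1}(p) \geq p_0'$ and every $u, v \in B$, the image points $\pi(u)$ and $\pi(v)$ are connected inside $2C_0 r \hat S$ with probability at least $1-\eps$ under Bernoulli-$p$ percolation on $\Cay(\Gamma, S)$. By (ii) every pair $x,y \in r\hat S$ has preimages in $B$, so taking $C = 2C_0$ and $p_0 = f(p_0')$ gives the claim. The main obstacle I anticipate is matching the idealised properties (i)--(v) with the actual conclusions of \cref{prop:boxes}: should that result only guarantee that $\pi(B)$ \emph{approximates} $r\hat S$ up to a small fraction of missing points, I would handle the exceptions by observing that each missing $x \in r\hat S$ lies $O(1)$ steps (in $\Cay(\Gamma,S)$) from some point of $\pi(B)$ and forcing those connector edges open using Harris--FKG at the cost of absorbing a $k$-dependent power of $p$ into $p_0$; alternatively one could first establish that $x$ is connected with high probability to a positive fraction of $r\hat S$ and then bootstrap to all of $r\hat S$ via \cref{lem:half.doubles.to.whole2} applied to a suitable translation subgroup of $\Gamma$.
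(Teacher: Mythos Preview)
Your overall strategy—reduce to a Euclidean box via \cref{prop:boxes} and then invoke \cref{prop:mal-pak}—is exactly the paper's. The gap is in how you bridge from connections in the box back to connections in all of $r\hat S$, and specifically in your property (ii) and the proposed fallbacks.

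What \cref{prop:boxes} actually gives is a \emph{proper} progression $P=P_{x_1,\ldots,x_k}(L_1,\ldots,L_k)$ with $L_i\le r$ and
\[
P\ \subseteq\ r\hat S\ \subseteq\ C_k(P+\hat S).
\]
So the natural map $\pi:(\ell_1,\ldots,\ell_k)\mapsto\sum\ell_i x_i$ is a graph isomorphism from $B(L_1,\ldots,L_k)$ onto $P$ (this is what ``proper'' means), and your (i), (iii), (iv), (v) all hold with $C_0=O_k(1)$. But (ii) is simply false: $P$ is a \emph{subset} of $r\hat S$, and the containment $r\hat S\subseteq C_kP+C_k\hat S$ involves the $C_k$-fold \emph{sumset} of $P$, which can be enormously larger than $P$. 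There is no reason for points of $r\hat S\setminus P$ to lie within $O(1)$ generator steps of $P$, so your first fallback does not apply. Your second fallback via \cref{lem:half.doubles.to.whole2} also fails: that lemma needs a subgroup $H<\Aut(G)$ acting transitively on the target set, but $r\hat S$ is not an orbit of any translation subgroup of $\Gamma$ unless it is itself a subgroup.

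The correct bridging step, which you are missing, is a chaining argument exploiting translation-invariance of the Abelian Cayley graph. Once \cref{prop:mal-pak} yields $\P_p(0\xleftrightarrow{P}x)\ge 1-\eps'$ for all $x\in P$, translation gives $\P_p(z\xleftrightarrow{z+P}z+x)\ge 1-\eps'$ for every $z\in\Gamma$ and $x\in P$. Now any $y\in r\hat S$ can be written as $y=a_1+\cdots+a_\ell$ with $\ell\le 2C_k$ and each $a_j\in P\cup\hat S$; Harris--FKG along the partial sums (each step being either a single generator edge or a ``long hop'' through a translated copy of $P$) gives $\P_p(0\xleftrightarrow{2C_k(P+\hat S)}y)\ge(1-\eps')^{2C_k}p^{2C_k}$. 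Since $P\subseteq r\hat S$, this is a connection inside $O_k(1)\cdot r\hat S$, as required. The \cref{lem:roughembedding} machinery is not needed here because $\pi$ is already an injective graph homomorphism on $B$; what replaces your use of it is this Harris--FKG chain through \emph{translates} of the box.
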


\begin{remark*}
Theorem \ref{thm:fin.perc.ab.gen} is not true in an arbitrary group. For example, if $S$ is a generating set for a non-Abelian free group $\Gamma$ then $|\hat S^r|$ grows exponentially in $r$, but under percolation on $\Cay(\Gamma,S)$ we have $\P_p(x\leftrightarrow y)\to0$ as $d(x,y)\to\infty$ for every fixed $p<1$.
\end{remark*}

Before we prove Theorem \ref{thm:fin.perc.ab.gen}, let us confirm that it really does generalise Theorem \ref{thm:fin.perc.ab}.
\begin{proof}[Proof of Theorem \ref{thm:fin.perc.ab}]
This follows by applying \cref{thm:fin.perc.ab.gen} with $r=\diam_S(\Gamma)$ and using that $|\Gamma|\geq \diam_S(\Gamma)$, and then applying the first part of \cref{lem:cluster/connect}.
\end{proof}

We will prove \cref{thm:fin.perc.ab.gen} by constructing subgraphs isomorphic to Euclidean boxes inside $\Cay(\Gamma,S)$ and then applying the results of the previous subsection.
We first recall a standard definition from additive combinatorics.

\begin{definition}[Progressions]
Given elements $a_1,\ldots,a_k$ of an Abelian group $\Gamma$, and $L_1,\ldots,L_k\ge0$, define the \textbf{progression} $P=P_{a_1,\ldots,a_k}(L_1,\ldots,L_k)$ via
\[
P_{a_1,\ldots,a_k}(L_1,\ldots,L_k)=\{\ell_1a_1+\cdots+\ell_ka_k:|\ell_i|\le L_i\text{ for every }i\}.
\]
(Note that the $L_i$ need not be integers.)
The progression $P$ is called \textbf{proper} if each of its elements has a unique representation of the form $\ell_1a_1+\cdots+\ell_ka_k$ with $|\ell_i|\le L_i$ for every $i$. Note that if $L_1,\ldots,L_k$ are integers and $P=P_{a_1,\ldots,a_k}(L_1,\ldots,L_k)$ is a progression in an Abelian group then $m P = P_{a_1,\ldots,a_k}(mL_1,\ldots,mL_k)$ for every integer $m \geq 1$.
\end{definition}

We now state our main additive-combinatorial theorem powering the results of this section.

\begin{theorem}\label{prop:boxes}
For each $k\geq 1$ let $C_k=2^{6k}(k!)^3$. Let $\Gamma$ be an Abelian group with generating set $S=\{x_1,\ldots,x_k\}$.
  Then for each $r\geq 1$ there exist non-negative integers $L_1,\ldots,L_k \leq r$ such that the progression $P=P_{x_1,\ldots,x_k}(L_1,\ldots,L_k)$ is proper and satisfies $P \subseteq r \hat S \subseteq C_k(P+\hat S)$.

\end{theorem}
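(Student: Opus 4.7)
I will prove \cref{prop:boxes} by induction on $k$. The base case $k=0$ is vacuous; for $k=1$, the subgroup $\langle x_1\rangle$ is cyclic of order $n\in\mathbb N\cup\{\infty\}$, and taking $L_1:=\min(r,\lfloor(n-1)/2\rfloor)$ makes $P_{x_1}(L_1)$ a proper progression contained in $r\hat S$, either equal to $r\hat S$ or missing a single element of $\langle x_1\rangle$ which lies in $P+\hat S$. Hence $C_1=1$ more than suffices.

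For the inductive step $k-1\to k$, I would pass to the quotient $\bar\Gamma:=\Gamma/\langle x_k\rangle$ with induced generating set $\bar S:=\{\bar x_1,\ldots,\bar x_{k-1}\}$. The plan is to first fix $L_k$, then apply the inductive hypothesis on $\bar\Gamma$ at a correspondingly reduced radius. Concretely, set $L_k:=\min(\lfloor r/k\rfloor,\lfloor(\mathrm{ord}(x_k)-1)/2\rfloor)$, let $\rho:=\lceil(r-L_k)/(k-1)\rceil$, and apply the inductive hypothesis to $\bar\Gamma$ at radius $\rho$ to obtain integers $L_1,\ldots,L_{k-1}\le\rho$ and a proper progression $\bar P=P_{\bar x_1,\ldots,\bar x_{k-1}}(L_1,\ldots,L_{k-1})$ satisfying $\bar P\subseteq\rho\hat{\bar S}\subseteq C_{k-1}(\bar P+\hat{\bar S})$. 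Defining $P:=P_{x_1,\ldots,x_k}(L_1,\ldots,L_k)$, properness of $P$ in $\Gamma$ follows cleanly from that of $\bar P$: any relation $\sum(\ell_i-\ell'_i)x_i=0$ with $|\ell_i-\ell'_i|\le 2L_i$ projects mod $\langle x_k\rangle$ to a relation among the generators of $\bar P$, forcing $\ell_i=\ell'_i$ for $i<k$; then $(\ell_k-\ell'_k)x_k=0$ with $|\ell_k-\ell'_k|<\mathrm{ord}(x_k)$ forces $\ell_k=\ell'_k$. The inclusion $P\subseteq r\hat S$ follows from the trivial length bound $\sum_iL_i\le(k-1)\rho+L_k\le r$.

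The main obstacle is the reverse inclusion $r\hat S\subseteq C_k(P+\hat S)$. Given $y=\sum\ell_ix_i\in r\hat S$, the projection $\bar y$ lies in $r\hat{\bar S}\subseteq\lceil r/\rho\rceil\cdot\rho\hat{\bar S}\subseteq O(kC_{k-1})(\bar P+\hat{\bar S})$, so one may write $\bar y=\bar p+\bar w$ with $\bar p\in O(kC_{k-1})\bar P$ and $\bar w\in O(kC_{k-1})\hat{\bar S}$. Lifting coordinate-wise via $\bar x_i\mapsto x_i$ produces $p\in O(kC_{k-1})P$ and $w\in O(kC_{k-1})\hat S$, and the residue $y-p-w$ lies in the kernel $\langle x_k\rangle$ of the projection, hence equals $mx_k$ for some integer $m$; since $P$ contains $\pm L_kx_k$, the element $mx_k$ can be written as a sum of at most $\lceil|m|/L_k\rceil+1$ elements of $P$. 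The delicate step, where I expect the paper's novel ``quotient modulo a subset'' construction to enter the argument, is bounding $|m|$: naive bounds via $\hat S$-length give $|m|$ controlled only by a multiple of the relation $\mathrm{ord}(\bar x_k)\cdot x_k\in H$ and could a priori be much larger than $r$, but by choosing the lift so as to minimise $|y-p-w|$ over the $\langle x_k\rangle$-coset of $\bar p$ one obtains $|m|\le O(kC_{k-1}r)$, so that $\lceil|m|/L_k\rceil=O(k^2C_{k-1})$ independent of $r$. Combining these gives $y\in O(k^2C_{k-1})(P+\hat S)$, hence the recursion $C_k\le 2^6 k^3 C_{k-1}$ (with the uniform factor $2^6$ absorbing all implied constants), which telescopes over $k$ levels to the stated bound $C_k\le 2^{6k}(k!)^3$.
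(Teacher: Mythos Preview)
Your approach has a genuine gap at the step you flag as ``delicate'': the bound $|m|\le O(kC_{k-1}r)$ is simply false, and the subgroup-quotient strategy cannot be repaired. Consider $\Gamma=\Z$, $k=2$, $x_1=N$ (very large), $x_2=1$, and any fixed $r\ge 3$. Then $\langle x_2\rangle=\Z$, so $\bar\Gamma$ is trivial and your inductive step forces $L_1=0$; the progression you produce is $P=\{j:|j|\le L_2\}$ with $L_2=\lfloor r/2\rfloor$. Now $y=rN=r\cdot x_1\in r\hat S$, and with $p=w=0$ the residue is $y=mx_2$ with $m=rN$. No choice of lift helps: every lift of $\bar p=0$ in your scheme is $0$ (since $L_1=0$), and $|m|/L_2\asymp N$ is unbounded. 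Consequently $r\hat S\not\subseteq C_2(P+\hat S)$ for your $P$, even though the theorem does hold here with the \emph{different} choice $L_1=L_2=\lfloor r/2\rfloor$. The failure is structural: quotienting by the full subgroup $\langle x_k\rangle$ can collapse the group entirely, destroying all information about the remaining generators.

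The paper's ``quotient modulo a subset'' is not a patch to bound $|m|$ inside your framework; it \emph{replaces} the subgroup quotient altogether. The paper first locates an extremal element $x\in r\hat S\setminus(r-1)\hat S$, writes $x=\sum m_ix_i$ with $\sum|m_i|=r$, and relabels so that $|m_k|=\max_i|m_i|\ge r/k$. The extremality of $x$ is then used to prove that the \emph{finite progression} $P_{x_k}(m_k/2)$ is proper (no subgroup ever enters), and the induction hypothesis is applied to $\{x_1,\ldots,x_{k-1}\}$ modulo the \emph{set} $P_{x_k}(m_k/2)$---this is precisely where Definition~3.5 (divisibility by a subset) and Lemma~3.6 are used. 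Because one only ever quotients by a progression of length $\asymp r$ rather than by all of $\langle x_k\rangle$, the residues are automatically controlled at scale $r$ and the problematic unbounded $m$ never arises. In the example above the paper's procedure selects $x_k=x_1=N$ (not $x_2=1$), sets $m_k=r$, and works modulo $\{aN:|a|\le r/2\}$, which retains the correct geometry.
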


Note that we do not claim that the progression $C_kP$ is proper. Before proving this theorem, let us see how it implies \cref{thm:fin.perc.ab.gen}, and hence in particular \cref{thm:fin.perc.ab}.

\begin{proof}[Proof of \cref{thm:fin.perc.ab.gen} given \cref{prop:boxes}]
Let $k\geq 1$ and $0<\lambda,\eps\leq 1$.
Fix $r\geq 1$, an Abelian group $\Gamma$, and a generating set $S=\{x_1,\ldots,x_k\}$ of $\Gamma$ such that 
  $|r\hat S| \geq \lambda (r+1) \log (r+1)$. First note that
\begin{equation}\label{eq:fin.perc.ab.gen}
|r\hat S|\ge\frac{\lambda}{2}(r+1)\log|r\hat S|.
\end{equation}
Indeed, if $|r\hat S|\le (r+1)^2$ then this is immediate from the assumption that $|r\hat S| \geq \lambda (r+1) \log (r+1)$, whilst if $|r\hat S|\ge (r+1)^2$ then we have $|r\hat S|/\log|r\hat S|\ge|r\hat S|^{1/2}\ge r+1$, from which the claim is also immediate.

Let $C=C_k$ be the constant coming from Proposition \ref{prop:boxes} and let $P=P_{x_1,\ldots,x_k}(L_1,\ldots,L_k)$  be as in the statement of that proposition, so that $P$ is proper and
\begin{equation}\label{eq:P.in.rS}
P\subseteq r\hat S  \subseteq CP+C\hat S.
\end{equation}
Since $P$ is proper, there is a subgraph of $G=\Cay(\Gamma,S)$ with vertex set $P$ that is isomorphic to the box $B=B(L_1,\ldots,L_k)$. (The subgraph of $G$ \emph{induced} by $P$ may `wrap around the sides' of $P$ and be strictly larger than this subgraph, but this does not cause any problems.)
Moreover, we also have that
\begin{align*}
\diam(B)&\leq k(2r+1)&\text{(by the bound on $L_i$)\phantom{.}}\\
   &\le\frac{4k|r\hat S|}{\lambda\log|r\hat S|}&\text{(by \eqref{eq:fin.perc.ab.gen})\phantom{.}}\\
   &\le\frac{4C^k(2k+1)^{C+1}|B|}{\lambda\log|B|}&\text{(by \eqref{eq:P.in.rS}),}
\end{align*}
where we used that $|CP| \leq \prod_{i=1}^k (2CL_i+1) \leq C^k \prod_{i=1}^k (2L_i+1) = C^k|B|$ and $|CP+C\hat S| \leq |CP| \cdot |\hat S|^C \leq (2k+1)^C |CP|$ in the last line.
Proposition \ref{prop:mal-pak} therefore implies that there exists $p_0=p_0(k,\lambda,\eps)<1$ with $p_0 \geq (1-\eps)^{1/4C}$ such that  $\P_p(x\xleftrightarrow{P} y)\ge(1-\eps)^{1/4C}$ for every $x,y\in P$ and $p \geq p_0$. It follows trivially that
\[
\P_p(0 \xleftrightarrow{P+\hat S} x)\ge(1-\eps)^{1/4C}
\]
for every $p\geq p_0$ and $x\in P \cup \hat S$ also.
Let $y \in CP+C\hat S$, so that we can write $y = a_1 + a_2 + \ldots + a_\ell $ for some $\ell \leq 2C$ and $a_1,\ldots,a_\ell \in P \cup \hat S$. Writing $y_0=0$ for the identity of $\Gamma$ and $y_i=\sum_{j=1}^{i} a_j$ for each $0\leq i \leq \ell$, we deduce from the Harris-FKG inequality that
\[
\P\bigl(0 \xleftrightarrow{2CP+2C\hat S} y\bigr) \geq \prod_{i=0}^{\ell-1} \P\Bigl(y_i \xleftrightarrow{y_i+P+\hat S} y_{i+1} \Bigr) \geq (1-\eps)^{1/2}
\]
for every $p\geq p_0$ and $y \in CP+C\hat S$. A further application of Harris-FKG then yields that \[\P_p\bigl(x \xleftrightarrow{2CP+2C\hat S} y\bigr)\geq 1-\eps\] for every $p\geq p_0$ and $x,y\in CP+C\hat S$. The claim follows since $r \hat S \subseteq CP+C\hat S$ and $2CP \subseteq 2Cr \hat S$. \qedhere

\end{proof}

The remainder of this section is dedicated to proving \cref{prop:boxes}, and is of an entirely additive-combinatorial nature. In order to facilitate an inductive proof we will prove a more general and technical statement concerning progressions that are \emph{proper modulo a set}. We begin with some relevant definitions.

\begin{definition}[Divisibility by a subset]
Let $\Gamma$ be an Abelian group and let $Q\subseteq \Gamma$ be symmetric and contain the identity. If a subset $A\subset \Gamma$ satisfies
\begin{equation}\label{eq:equiv.rel}
(\forall x,y,z\in A)\Bigl(((x-y\in Q)\wedge (y-z\in Q))\implies x-z\in Q\Bigr)
\end{equation}
then we may define an equivalence relation ``$\equiv$ mod $Q$'' on $A$ by saying that $x\equiv y$ mod $Q$ if and only if $x-y\in Q$. We write $A/Q$ for the set of equivalence classes of this equivalence relation, and call $A/Q$ the \textbf{quotient} of $A$ by $Q$. If in addition to \eqref{eq:equiv.rel} we have that
\begin{equation}
(\forall x,x',y,y'\in A)(((x\equiv x'\text{ mod }Q)\,\wedge\,(y\equiv y'\text{ mod }Q))\implies(x+y\equiv x'+y'\text{ mod }Q)
\label{eq:divisibilitydef}
\end{equation}
then we say that $A$ is \textbf{divisible by $Q$}. Note that if $A$ is divisible by $Q$ then so is every subset of $A$. If $Q\subset \Gamma$ is symmetric and contains $0$, we will say that a progression $P \subseteq \Gamma$ is \textbf{proper mod $Q$} if it is proper, divisible by $Q$, and no two of its elements belong to the same equivalence class of $P/Q$. Equivalently, $P$ is proper mod $Q$ if it is proper and $x-y \notin Q$ for each two distinct elements $x,y\in P$. 
\end{definition}

These definitions also make sense in a non-Abelian setting, but we restrict attention to the Abelian case here for simplicity of notation.

These definitions satisfy the following elementary inductive property.

\begin{lemma}\label{eq:sum.proper}
Let $\Gamma$ be an Abelian group, and let $Q\subseteq \Gamma$ be symmetric and contain the identity.
If the progression $P_{a_1,\ldots,a_m}(K_1,\ldots,K_m)$ is proper mod $Q$ and  $P_{b_1,\ldots,b_n}(L_1,\ldots,L_n)$ is proper mod $P_{a_1,\ldots,a_m}(2K_1,\ldots,2K_m)+Q$ then $P_{a_1,\ldots,a_m,b_1,\ldots,b_n}(K_1,\ldots,K_m,L_1,\ldots,L_n)$ is proper mod $Q$.
\end{lemma}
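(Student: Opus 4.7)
The plan is to exploit a clean projection principle: if two elements $x,y$ of $P_3:=P_{a_1,\ldots,a_m,b_1,\ldots,b_n}(K_1,\ldots,K_m,L_1,\ldots,L_n)$ satisfy $x-y\in Q$, then their $b$-parts must coincide, so $P_3$ inherits its mod-$Q$ behavior from $P_1:=P_{a_1,\ldots,a_m}(K_1,\ldots,K_m)$. First I would establish this projection lemma: write $x=x^{(a)}+x^{(b)}$ and $y=y^{(a)}+y^{(b)}$ with $x^{(a)},y^{(a)}\in P_1$ and $x^{(b)},y^{(b)}\in P_2:=P_{b_1,\ldots,b_n}(L_1,\ldots,L_n)$ (this decomposition is well-defined on $P_3$ by construction). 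Then $x^{(b)}-y^{(b)}=(x-y)-(x^{(a)}-y^{(a)})$ lies in $Q+(P_1-P_1)\subseteq Q+2P_1$, where $2P_1=P_{a_1,\ldots,a_m}(2K_1,\ldots,2K_m)$. Since $P_2$ is proper mod $2P_1+Q$, no two distinct elements of $P_2$ are equivalent modulo $2P_1+Q$, so $x^{(b)}=y^{(b)}$, and hence $x-y=x^{(a)}-y^{(a)}$.

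Next I would verify the three conditions for $P_3$ to be proper mod $Q$. For \emph{properness}, suppose $\sum\ell_ia_i+\sum k_jb_j=\sum\ell'_ia_i+\sum k'_jb_j$ with $|\ell_i|,|\ell'_i|\le K_i$ and $|k_j|,|k'_j|\le L_j$. The difference is $0\in Q$, so the projection lemma (applied with $x,y$ equal to the two sides, trivially equal) gives $\sum k_jb_j=\sum k'_jb_j$, hence $k_j=k'_j$ by properness of $P_2$; then $\sum\ell_ia_i=\sum\ell'_ia_i$ forces $\ell_i=\ell'_i$ by properness of $P_1$. For \emph{divisibility by $Q$}, the transitivity axiom \eqref{eq:equiv.rel} follows because if $x-y,y-z\in Q$ then the projection lemma gives $x^{(b)}=y^{(b)}=z^{(b)}$ and $x^{(a)}-y^{(a)},y^{(a)}-z^{(a)}\in Q$, so by divisibility of $P_1$ by $Q$ we obtain $x^{(a)}-z^{(a)}\in Q$, whence $x-z=x^{(a)}-z^{(a)}\in Q$. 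The additive compatibility \eqref{eq:divisibilitydef} reduces similarly: if $x\equiv x'$ and $y\equiv y'$ mod $Q$ in $P_3$, the projection lemma identifies $b$-parts, and $(x+y)-(x'+y')=(x^{(a)}-x'^{(a)})+(y^{(a)}-y'^{(a)})$, which lies in $Q$ by applying the divisibility condition for $P_1$. Finally, for the \emph{separation} condition, if $x\neq y$ in $P_3$ with $x-y\in Q$, the projection lemma forces $x^{(b)}=y^{(b)}$ and $x^{(a)}\neq y^{(a)}$ with $x^{(a)}-y^{(a)}\in Q$, contradicting that $P_1$ is proper mod $Q$.

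I expect no serious obstacle: the entire lemma is carried by the projection observation, whose only subtle point is ensuring that the $b$-differences land in $2P_1+Q$ (not merely $P_1-P_1+Q$), which is exactly why the hypothesis is formulated with the thickened progression $2P_1$ rather than $P_1$. Once that is in place, all three parts of ``proper mod $Q$'' are routine consequences of the corresponding properties of $P_1$.
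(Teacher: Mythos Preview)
Your argument is correct and is essentially the paper's proof: the paper likewise rearranges to get $\sum(\ell_j-\ell'_j)b_j\in P_{a_1,\ldots,a_m}(2K_1,\ldots,2K_m)+Q$, invokes properness of $P_2$ mod $2P_1+Q$ to force the $b$-coefficients to match, and then uses properness of $P_1$ mod $Q$ to match the $a$-coefficients. Two minor remarks: your parenthetical ``this decomposition is well-defined on $P_3$ by construction'' is not yet justified before properness of $P_3$ is established, so phrase the projection lemma for \emph{chosen} representations of $x$ and $y$ rather than for ``the'' $b$-part (the paper does exactly this by working directly with coefficient tuples); and your separate verification of divisibility is redundant, since once separation is shown (no two distinct elements of $P_3$ differ by an element of $Q$) conditions \eqref{eq:equiv.rel} and \eqref{eq:divisibilitydef} hold vacuously.
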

\begin{proof}
Let $k_1,\ldots,k_m,k'_1,\ldots,k'_m,\ell_1,\ldots,\ell_n,\ell'_1,\ldots,\ell'_n\in\Z$ be such that $|k_i|,|k'_i|\le K_i$ for all $i$ and $|\ell_j|,|\ell'_j|\le L_j$ for all $j$. It suffices to prove that if
\[
k_1a_1+\cdots+k_ma_m+\ell_1b_1+\cdots+\ell_nb_n\in k'_1a_1+\cdots+k'_ma_m+\ell'_1b_1+\cdots+\ell'_nb_n+Q.
\]
then $\ell_i=\ell_i'$ for every $1\leq i \leq n$ and $k_j = k_j'$ for every $1 \leq j \leq m$. To prove this, we first rearrange to obtain that
\begin{multline*}
(\ell_1-\ell_1')b_1+\cdots+(\ell_n-\ell'_n)b_n\in (k'_1-k_1)a_1+\cdots+(k'_m-k_m)a_m+ Q \\\subseteq P_{a_1,\ldots,a_m}(2K_1,\ldots,2K_m)+Q.
\end{multline*}
The properness of $P_{b_1,\ldots,b_n}(L_1,\ldots,L_n)$ mod $P_{a_1,\ldots,a_m}(2K_1,\ldots,2K_m)+Q$ then implies that $\ell_i=\ell'_i$ for every $1\leq i \leq n$. It follows in particular that
\[
(k_1-k_1')a_1+\cdots+(k_m-k_m')a_m \in Q
\]
and the properness of $P_{a_1,\ldots,a_m}(K_1,\ldots,K_m)$ mod $Q$ implies that $k_j=k'_j$ for every $1\leq j\leq m$.
\end{proof}

We now prove the following proposition, which generalizes \cref{prop:boxes}. We will always apply this proposition with $Q=\{0\}$, but we include $Q$ in the statement in order to facilitate the inductive proof. Note also that we do not require that the set $A$ generates $\Gamma$.

\begin{proposition}\label{prop:proper.prog}
For each $k \geq 1$ let $C_k=2^{6k} (k!)^3$.
Let $\Gamma$ be an Abelian group, let $Q\subset \Gamma$ be symmetric and contain the identity, and let $A=\{a_1,\ldots,a_k\}\subset \Gamma$ be a subset of $\Gamma$ with $k$ elements. For each $r\in\N$ such that $r\hat A$ is divisible by $Q$ 
 there exist non-negative integers $L_1,\ldots,L_k\le r$ such that the progression $P = P_{a_1,\ldots,a_k}(L_1,\ldots,L_k)$ is proper mod $Q$ and satisfies 
 \[P \subseteq r\hat A\subseteq C_k (P+Q+\hat A).\] 
\end{proposition}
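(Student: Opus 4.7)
The plan is to proceed by induction on $k$, using \cref{eq:sum.proper} to build the progression one coordinate at a time.

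For the base case $k=1$: I would take $L_1$ to be the largest integer in $[0,r]$ such that $P_{a_1}(L_1)$ is proper modulo $Q$, equivalently such that $ja_1\notin Q$ for all $0<j\le 2L_1$. If $L_1=r$ then $P_{a_1}(L_1)=r\hat A$ and the conclusion is immediate. Otherwise the failure of properness at $L_1+1$ produces a least positive integer $m\in\{2L_1+1,2L_1+2\}$ with $ma_1\in Q$; for any $na_1\in r\hat A$, writing $n=qm+s$ with $|s|\le L_1+1$ yields $sa_1\in P+\hat A$, while iterated use of the divisibility of $r\hat A$ by $Q$ shows $qm\,a_1\in Q$. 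Hence $r\hat A\subseteq P+Q+\hat A$, so $C_1=1$ already suffices.

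For the inductive step ($k\ge 2$): I would apply the inductive hypothesis to $A'=\{a_1,\ldots,a_{k-1}\}$ with the same $r$ and $Q$ (using that $r\hat{A'}\subseteq r\hat A$ inherits divisibility by $Q$) to obtain a box $P'=P_{a_1,\ldots,a_{k-1}}(L_1,\ldots,L_{k-1})$ proper modulo $Q$ with $P'\subseteq r\hat{A'}$ and $r\hat{A'}\subseteq C_{k-1}(P'+Q+\hat{A'})$. Setting $Q'=2P'+Q$, I would then take $L_k$ to be the largest integer in $[0,r]$ such that $P_{a_k}(L_k)$ is proper modulo $Q'$ and the combined box $P=P'+P_{a_k}(L_k)$ lies in $r\hat A$; by \cref{eq:sum.proper} this $P$ is proper modulo $Q$.

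The covering $r\hat A\subseteq C_k(P+Q+\hat A)$ is the most delicate point. The strategy would be to decompose $x\in r\hat A$ as $x=y+ja_k$ with $y\in(r-|j|)\hat{A'}\subseteq r\hat{A'}$, handle $y$ by the inductive covering of $r\hat{A'}$, and handle $ja_k$ by reducing $j$ modulo a short relation produced by the maximality of $L_k$---either a relation $m_ka_k\in Q'$ with $m_k\le 2L_k+2$ when maximality is limited by properness (so that $m_k a_k$ can be written as $p'_1+p'_2+q_1$ with $p'_i\in P'$ and $q_1\in Q$), or a comparable relation extracted from the short representation guaranteed by $x\in r\hat A$ when maximality is limited by the containment constraint. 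In each case the error from the reduction must be absorbed into a bounded number of copies of $P'$, $Q$, and $\hat A$, with bookkeeping that grounds each use of the relation in the divisibility hypothesis on $r\hat A$. Summing the contributions yields the claimed bound $C_k=64k^3\,C_{k-1}=2^{6k}(k!)^3$.

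The principal obstacle is the two-pronged constraint on $L_k$: properness modulo $Q'$ forces $L_k$ to be small when $a_k$ has a short relation modulo $Q'$, while containment in $r\hat A$ forces $L_k$ to be small when $a_k$ is ``nearly independent'' of the previous generators. The covering argument must handle both types of obstruction simultaneously, and since $Q$ is only assumed symmetric (not a subgroup), each use of divisibility must be invoked from the hypothesis about $r\hat A$ rather than taken for granted, which tightly constrains how the errors from the modular reductions can be absorbed.
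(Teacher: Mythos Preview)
Your base case is correct and essentially matches the paper's argument. The inductive step, however, has a genuine gap in the covering argument, and the strategy you sketch will not close it.

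The core problem is that your $L_k$ has no lower bound. Suppose properness of $P_{a_k}(L_k+1)$ modulo $Q'=2P'+Q$ fails already at $L_k=0$, so that (say) $a_k \equiv p_1'+p_2'$ mod $Q$ for some $p_1',p_2'\in P'$. To cover $j a_k$ with $|j|\le r$ you must iterate this relation up to $r$ times, and divisibility gives $j a_k \equiv j(p_1'+p_2')$ mod $Q$ with $j(p_1'+p_2')\in 2jP'$. Since $j$ can be as large as $r$, this lands you in $2rP'$, and you have no way to absorb $2rP'$ into a \emph{bounded} number of copies of $P'+Q+\hat A$: the inductive covering of $r\hat A'$ gives nothing for $2r\hat A'$, let alone $2rP'$. (There is also a subsidiary issue: to invoke divisibility you need $p_1'+p_2'\in r\hat A$, which is not guaranteed since your inductive $L_i$ are only bounded by $r$ individually.) The ``containment-limited'' case does not rescue this: the failure of $P'+P_{a_k}(L_k+1)\subseteq r\hat A$ is a non-representability statement, not a relation, and produces nothing with which to reduce $j a_k$.

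The paper's proof inverts your ordering, and this inversion is the key idea. It first locates a \emph{long} direction: after reducing to the case $r\hat A\not\subseteq(r-1)\hat A+Q$, it picks $x\in r\hat A\setminus((r-1)\hat A+Q)$, writes $x=\sum m_ia_i$ with $\sum|m_i|=r$, and reorders so that $m_k=\max_i|m_i|\ge r/k$. A short argument then shows $P_{a_k}(m_k/2)$ is proper mod $Q$, so one may take $L_k=\lfloor m_k/4k\rfloor\ge\lfloor r/4k^2\rfloor$, a definite fraction of $r$. Only \emph{then} is the inductive hypothesis applied to $A'=\{a_1,\ldots,a_{k-1}\}$, at the smaller scale $n\le m_k/4k$ and with $Q$ replaced by the fattened set $P_{a_k}(m_k/2)+Q$. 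Because $L_k$ is proportional to $r$, covering $r\hat A$ in the $a_k$-direction requires only $O(k^2)$ copies of $P_{a_k}(L_k)$, and the inductive covering handles the rest. Note that this is exactly why the proposition is stated for a general $Q$ rather than $Q=\{0\}$: the inductive call genuinely changes $Q$. Your scheme applies the inductive hypothesis with the same $Q$, so it does not exploit this flexibility, which is another sign that the induction is being run in the wrong order.
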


\begin{proof}
We prove the claim by induction on $k$, beginning with the base case $k=1$. To prove this case, let $A=\{a\}$ for some $a\in\Gamma$ and suppose that $r\hat A=\{-ra,\ldots,ra\}=P_a(r)$ is divisible by $Q$. If $P_a(r)$ is proper mod $Q$ then the proposition is satisfied, so we may assume not and set $L<r$ to be the maximum non-negative integer such that $P_a(L)$ is proper mod $Q$. By maximality there exist distinct $\ell,\ell'\in\Z$ with $|\ell|,|\ell|\le L+1$ such that $(\ell-\ell')a\in Q$. The divisibility of $P_a(r)$ by $Q$ then implies that $m(\ell-\ell')a\in Q$ for all $m\in\Z$ such that $|m(\ell-\ell')|\le r$, and so $r\hat A\subseteq P_a(|\ell-\ell'|)+Q$. In particular, we have that $P_a(L)\subseteq r\hat A\subseteq P_a(|\ell-\ell'|)+Q \subseteq P_a(2L+2)+Q \subseteq2(P_a(L)+Q+\hat A)$ and the proposition is satisfied. 

Now suppose that $k\geq 2$, let $\Gamma$, $Q$, $A$, and $r$ be as in the statement, and suppose that the claim has already been proven for all smaller values of $k$. It suffices to prove that if $r$ satisfies the additional assumption that $r \hat A \not \subseteq (r-1)\hat A +Q$, or equivalently that
\begin{equation}\label{eq:r<diam}
\text{there exists $x\in r \hat A$ such that $x \not\equiv y$ mod $Q$ for every $y \in (r-1)\hat A$},
\end{equation}
then 
there exist non-negative integers $L_1,\ldots,L_k\le r$ such that  $P = P_{a_1,\ldots,a_k}(L_1,\ldots,L_k)$ is proper mod $Q$ and satisfies 
 \[
 P \subseteq r\hat A\subseteq (C_k-1) (P+Q+\hat A).
 \]
 Indeed, suppose that we have proven this claim and that $r$ does \emph{not} satisfy \eqref{eq:r<diam}. Then either $r \hat A \subseteq Q$, in which case the claim is trivial, or we may take $r'\in\N$ to be 
maximal with $1\leq r' \leq r$ such that there exists $x \in r' \hat A$ that is not equal to any element of $(r'-1) \hat A$ mod $Q$.
Since $r \hat A$ is divisible by $Q$, so that equivalence mod $Q$ is an equivalence relation on $r \hat A$, the maximality of $r'$ implies that every element of $r \hat A$ is equivalent to an element of $r' \hat A$ mod $Q$, so that $r \hat A \subseteq r' \hat A + Q$. Applying the claim to the set $r' \hat A$, we obtain that there exist non-negative integers $L_1,\ldots,L_k\le r' \leq r$ such that the progression $P = P_{a_1,\ldots,a_k}(L_1,\ldots,L_k)$ is proper mod $Q$ and satisfies $P \subseteq r'\hat A\subseteq (C_k-1) (P+Q+\hat A)$, so that
$P \subseteq r \hat A\subseteq C_k (P+Q+\hat A)$ as required.

We now carry out the induction step under the additional hypothesis \eqref{eq:r<diam} as discussed above.
 By condition \eqref{eq:r<diam} we may pick $x\in r\hat A \setminus((r-1)\hat A+Q)$. Note then that $x=m_1a_1+\cdots+m_ka_k$ for some $m_1,\ldots,m_k\in\Z$ such that $|m_1|+\cdots+|m_k|=r$. By relabelling the generators $a_1,\ldots,a_k$ if necessary, we may assume without loss of generality that $m_k=\max_i|m_i|$, which implies in particular that
\begin{equation}\label{eq:md=r}
\frac{r}{k}\le m_k\le r.
\end{equation}
We next claim that if $\ell_1,\ldots,\ell_k\in\Z$ satisfy $|\ell_k|\le m_k$ and $\ell_1a_1+\cdots+\ell_ka_k\in Q$ then 
\[|\ell_k|\le|\ell_1|+\cdots+|\ell_{k-1}|.\] 
Indeed, since $Q$ is symmetric we may assume that $0\le\ell_k\le m_k$. For such $(\ell_i)_{i=1}^k$ we have trivially that 
$x\in(m_1-\ell_1)a_1+\cdots+(m_k-\ell_k)a_k+Q$, and using that $x\notin(r-1)\hat A+Q$ we deduce that
\begin{multline*}
r \leq |m_1-\ell_1|+\cdots+|m_k-\ell_k|\le|m_1|+\cdots+|m_k|+|\ell_1|+\ldots+|\ell_{k-1}|-|\ell_k|\\
     =r+|\ell_1|+\ldots+|\ell_{k-1}|-|\ell_k|
\end{multline*}
as claimed.
Applying this claim with $\ell_1=\cdots=\ell_{k-1}=0$ shows that the progression $P_{a_k}(m_k/2)$ is proper mod $Q$. Similarly, taking arbitrary $\ell_1,\ldots,\ell_{k-1}$ with $|\ell_i|\le m_k/4k$ establishes the implication
\begin{multline}\label{eq:m/2->m/4}
\Bigl(x-y\in P_{a_k}\Bigl(\frac{m_k}{2}\Bigr)+Q \Bigr) \implies \Bigl(x-y\in P_{a_k}\Bigl(\frac{m_k}{4}\Bigr)+Q\Bigr) \\ \text{ for all } x,y\in P_{a_1,\ldots,a_{k-1}}\Bigl(\frac{m_k}{4k},\ldots,\frac{m_k}{4k}\Bigr).
\end{multline}

We claim that $P_{a_1,\ldots,a_{k-1}}(m_k/4k,\ldots,m_k/4k)$ is divisible by $P_{a_k}(m_k/2)+Q$. We start the proof of this claim by showing that the quotient of $P_{a_1,\ldots,a_{k-1}}(m_k/4k,\ldots,m_k/4k)$ by $P_{a_k}(m_k/2)+Q$ is well defined (i.e., that \eqref{eq:equiv.rel} holds). Given $x,y,z\in P_{a_1,\ldots,a_{k-1}}(m_k/4k,\ldots,m_k/4k)$ with $x-y\in P_{a_k}(m_k/2)+Q$ and $y-z\in P_{a_k}(m_k/2)+Q$, it follows from \eqref{eq:m/2->m/4} that there exist $u,v\in P_{a_k}(m_k/4)$ such that $x\equiv y+u\text{ mod }Q$ and $y+u\equiv z+u+v\text{ mod }Q$ (equivalence mod $Q$ being well defined for $x$, $y+u$ and $z+u+v$ since they all belong to $r\hat A$, which is divisible by $Q$). We deduce from this that $x\equiv z+u+v\text{ mod }Q$, which implies in particular that $x-z\in P_{a_k}(m_k/2)+Q$ as required. To prove moreover that $P_{a_1,\ldots,a_{k-1}}(m_k/4k,\ldots,m_k/4k)$ is \emph{divisible} by $P_{a_k}(m_k/2)+Q$ (i.e., to verify \eqref{eq:divisibilitydef}), suppose that $x,x',y,y'\in P_{a_1,\ldots,a_{k-1}}(m_k/4k,\ldots,m_k/4k)$ satisfy $x\equiv x'\text{ mod }P_{a_k}(m_k/2)+Q$ and $y\equiv y'\text{ mod }P_{a_k}(m_k/2)+Q$. As before, \eqref{eq:m/2->m/4} implies that there exist $u,v\in P_{a_k}(m_k/4)$ such that $x\equiv x'+u\text{ mod }Q$ and $y\equiv y'+v\text{ mod }Q$, and since $r\hat A$ is divisible by $Q$ it follows that $x+y\equiv x'+y'+u+v\text{ mod }Q$. This in turn implies that $x+y-x'-y'\in  P_{a_k}(m_k/2)+Q$, and hence that $x+y\equiv x'+y'\text{ mod }P_{a_k}(m_k/2)+Q$ as required. Note moreover that writing $A'=\{a_1,\ldots,a_{k-1}\}$ we have that $\lfloor\frac{m_k}{4k}\rfloor\hat A'$ is a subset of $P_{a_1,\ldots,a_{k-1}}(m_k/4k,\ldots,m_k/4k)$, and is therefore divisible by $P_{a_k}(m_k/2)+Q$ also. 

Define $n \in\N\cup\{0\}$ to be minimal such that $\lfloor\frac{m_k}{4k}\rfloor\hat A'\subseteq n\hat A'+P_{a_k}(m_k/2)+Q$, noting that $n\leq m_k/ 4k$ and, by \eqref{eq:m/2->m/4}, that
\begin{equation}
\label{eq:prop.prog.case.2}
\left\lfloor\frac{m_k}{4k}\right\rfloor\hat A'\subseteq n\hat A'+P_{a_k}\left(\frac{m_k}{4}\right)+Q.
\end{equation}
Applying the induction hypothesis to the sets $A'$ and $P_{a_k}(m_k/2)+Q$, we deduce that there exist integers 
$L_1,\ldots,L_{k-1}\le n$
such that $P_{a_1,\ldots,a_{k-1}}(L_1,\ldots,L_{k-1})$ is proper mod $P_{a_k}(m_k/2)+Q$ and satisfies
\begin{equation}\label{eq:kA.in.P}
n\hat A'\subseteq C_{k-1}P_{a_1,\ldots,a_{k-1}}(L_1,\ldots,L_{k-1})+C_{k-1}P_{a_k}\left(\frac{m_k}{2}\right)+C_{k-1}Q+C_{k-1}\hat A.
\end{equation}
Set $L_k=\lfloor m_k/4k \rfloor$, so that $\lfloor r / 4k^2 \rfloor \leq L_k \leq r/4k$ by \eqref{eq:md=r}. Considering separately the cases $L_k=0$ and $L_k \geq 1$ yields that
\begin{equation}
\label{eq:progression_and_a_bit}
P_{a_k}(r) \subseteq 4k^2 \hat A + 8k^2 P_{a_k}(L_k).
\end{equation}
Since the progression $P_{a_1,\ldots,a_{k-1}}(L_1,\ldots,L_{k-1})$ is proper mod $P_{a_k}(m_k/2)+Q$ and $P_{a_k}(m_k/2k)$ is proper mod $Q$, we may apply \cref{eq:sum.proper} to deduce 
 that the progression 
 $P_{a_1,\ldots,a_k}(L_1,\ldots,L_k)$ is proper mod $Q$ as required. This progression is also clearly contained in $r \hat A$ since $L_i \leq r/4k$ for every $1\leq i \leq k$. 
   It follows from \eqref{eq:prop.prog.case.2} and \eqref{eq:kA.in.P} that 
\[
L_k \hat A'\subseteq 
C_{k-1}P_{a_1,\ldots,a_{k-1}}(L_1,\ldots,L_{k-1})+C_{k-1}P_{a_k}\left(\frac{m_k}{2}\right)+C_{k-1}Q +C_{k-1}\hat A + P_{a_k}\left(\frac{m_k}{4}\right)+Q.
\]
Noting that $P_{a_k}(m_k/2)\subseteq4kP_{a_k}(L_k)+2k\hat A$ (the term $2k\hat A$ being necessary only if $L_k=0$), we deduce that
\begin{align*}
L_k \hat A'
&\subseteq C_{k-1}P_{a_1,\ldots,a_{k-1}}(L_1,\ldots,L_{k-1})+(C_{k-1}+1)\Big(4kP_{a_k}(L_k)+2k\hat A\Big)+(C_{k-1}+1)Q +C_{k-1}\hat A\\
&\subseteq 4k(C_{k-1}+1)\Big(P_{a_1,\ldots,a_k}(L_1,\ldots,L_k)+Q\Big)+(C_{k-1}+1)(2k+1)\hat A.
\end{align*}
It follows from \eqref{eq:md=r} that $r\le km_k\le8k^2L_k+4k^2$, so we deduce that
\[
r \hat A'\subseteq 32k^3(C_{k-1}+1)\Big(P_{a_1,\ldots,a_k}(L_1,\ldots,L_k)+Q+\hat A\Big).
\]
It follows from this and \eqref{eq:progression_and_a_bit} that
\[
r \hat A \subseteq r \hat A' + P_{a_k}(r) \subseteq 32 k^3 (C_{k-1}+2) \Bigl( P_{a_1,\ldots,a_k}( L_1,\ldots, L_k) + Q +\hat A\Bigr).
\]
The claim follows since $32k^3 (C_{k-1}+2) + 1 \leq 64 k^3 C_{k-1}=C_k$ for every $k\geq 2$. \qedhere
\end{proof}

\begin{proof}[Proof of \cref{prop:boxes}]
This follows by applying \cref{prop:proper.prog} with $A=S$ and $Q=\{0\}$.
\end{proof}

\subsection{Finite nilpotent groups}
\label{subsec:NilpotentNilpotent}

In this section we extend Theorem \ref{thm:fin.perc.ab} to finite nilpotent groups. We first recall some basic relevant facts about nilpotent groups, referring the reader to e.g.\ \cite[\S5.2]{MR3971253} or \cite[Ch. 10]{MR0103215} for more detailed background. Let $\Gamma$ be a group.
Given elements $x_1,\ldots,x_k$ of $\Gamma$, the \textbf{simple commutator} $[x_1,\ldots,x_k]$ is defined recursively by $[x_1,x_2]=x_1^{-1}x_2^{-1}x_1x_2$ and  $[x_1,\ldots,x_k]=[[x_1,\ldots,x_{k-1}],x_k]$. Given two subgroups $H_1$ and $H_2$ of $\Gamma$, we define $[H_1,H_2]=\langle [h_1,h_2] : h_1 \in H_1, h_2 \in H_2\rangle$. The \textbf{lower central series}  of $\Gamma$ is the nested sequence of normal subgroups $\Gamma_1 > \Gamma_2 > \cdots$ defined recursively by
\[
\Gamma_1 = \Gamma \qquad \text{ and } \qquad \Gamma_{i+1} = [\Gamma_i,\Gamma] \quad \text{ for each $i\geq 1$}.
\]
Each subgroup $\Gamma_i$ appearing in the lower central series is easily seen to be characteristic in $\Gamma$ (that is, $\Gamma_i$ is fixed by every automorphism of $\Gamma$), and by definition $\Gamma_i/\Gamma_{i+1}$ is central in $\Gamma/\Gamma_{i+1}$ for every $i\geq 1$ (that is, every element of $\Gamma_i/\Gamma_{i+1}$ commutes with every element of $\Gamma/\Gamma_{i+1}$).
The group $\Gamma$ is said to be \textbf{nilpotent} if there exists $s \geq 1$ such that $\Gamma_{s+1} =\{\mathrm{id}\}$. Note in this case that $\Gamma_i =\{\mathrm{id}\}$ for every $i>s$ and $\Gamma_s=\Gamma_s/\Gamma_{s+1}$ is central in $\Gamma=\Gamma/\Gamma_{s+1}$. The minimal such $s$ is known as the \textbf{step} of $\Gamma$, and is equal to $1$ if and only if $\Gamma$ is Abelian.

The primary goal of this section is to prove the following proposition.
\begin{proposition}\label{prop:fin.perc.nilp}
For each $k,s \in \N$, $\lambda \geq 1$, and $\eps>0$ there exists $p_0(k,s,\lambda,\eps)<1$ such that the following holds: If 
 $\Gamma$ is a finite $s$-step nilpotent group and $S$ is a generating set for $\Gamma$ of size at most $k$ satisfying
\begin{equation}\label{eq:nilp.hyp}
\diam_S(\Gamma)\le\frac{\lambda|\Gamma|}{(\log|\Gamma|)^{s}} 
\end{equation}
then Bernoulli-$p$ bond percolation on $\Cay(\Gamma,S)$ satisfies $\P_p(x\lra y)\ge1-\eps$ for every $p\geq p_0$ and $x,y\in \Gamma$. 
\end{proposition}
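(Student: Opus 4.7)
The proof will proceed by induction on the step $s$, with the base case $s=1$ being \cref{thm:fin.perc.ab}: taking $\alpha=q=\sqrt{1-\eps/2}$ in that theorem, we obtain $p_0<1$ depending on $k,\lambda,\eps$ such that for $p\geq p_0$ the graph $\Cay(\Gamma,S)$ contains a cluster of size at least $\alpha|\Gamma|$ with probability at least $q$. Since the graph is vertex-transitive, any fixed vertex lies in this cluster with probability at least $\alpha q\geq 1-\eps/2$, so by inclusion--exclusion $\P_p(x\leftrightarrow y)\geq 2\alpha q-1\geq 1-\eps$ for all $x,y\in\Gamma$ as required.

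For the inductive step, fix $s\geq 2$ and assume the result for groups of step at most $s-1$. Given $\Gamma$, $S$, $\lambda$, $\eps$ as in the statement, let $Z=\Gamma_s$ denote the final nontrivial term of the lower central series of $\Gamma$, which is central and therefore abelian, and set $\bar\Gamma=\Gamma/Z$ (an $(s-1)$-step nilpotent group) with projection $\pi\colon\Gamma\to\bar\Gamma$ and induced generating set $\bar S=\pi(S)$. The key strategic observation is that
\[
\diam_{\bar S}(\bar\Gamma)\leq\diam_S(\Gamma)\leq\frac{\lambda|\Gamma|}{(\log|\Gamma|)^s}=\frac{\lambda|Z||\bar\Gamma|}{(\log|\Gamma|)^s},
\]
which meets the $(s-1)$-step hypothesis, up to an adjustment of the constant, whenever $|Z|\leq C\log|\Gamma|$ for a suitable $C=C(k,s,\lambda)$. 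Assuming this is the case, the induction applied to $\bar\Gamma$ yields $\P_p(\bar x\leftrightarrow\bar y)\geq 1-\eps'$ for all $\bar x,\bar y\in\bar\Gamma$ and $p$ close to $1$, and the transitivity/inclusion--exclusion argument of the base case produces a cluster of size at least $(1-\eps')|\bar\Gamma|$ in $\bar\Gamma$ with probability at least $1-\eps'$. By the coupling of \cref{prop:Benj-Schr}, this lifts to a cluster $K$ in $\Cay(\Gamma,S)$ whose projection covers all but an $\eps'$-fraction of $\bar\Gamma$, so that $|K|\geq(1-\eps')|\bar\Gamma|=(1-\eps')|\Gamma|/|Z|$. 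The second ingredient is a \emph{fiber connectivity} statement: for $p$ close to $1$, every vertex $v$ is with high probability connected in $\Cay(\Gamma,S)$ to every element of its $Z$-fiber $vZ$. Combining these two ingredients via Harris--FKG yields a cluster containing essentially all of $\Gamma$ with high probability, from which the base-case argument concludes.

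The main technical obstacle is the fiber connectivity step, and this is precisely where the additional factor of $\log|\Gamma|$ in \eqref{eq:nilp.hyp} relative to the $(s-1)$-step bound is spent. Since $Z\subseteq\Gamma_s$ is generated as a subgroup of $\Gamma$ by $s$-fold simple commutators of elements of $S$, we may choose a generating set $T\subseteq Z$ of size at most $k^s$ whose elements all have $S$-word-length at most $2^s-1$; the inclusion $Z\hookrightarrow\Gamma$ then induces a $(2^s,1)$-rough embedding $\Cay(Z,T)\hookrightarrow\Cay(\Gamma,S)$. Applying \cref{lem:roughembedding} in tandem with the abelian case \cref{thm:fin.perc.ab} applied to $\Cay(Z,T)$ reduces fiber connectivity to a diameter estimate of the form $\diam_T(Z)\leq\lambda'|Z|/\log|Z|$, which must be derived from \eqref{eq:nilp.hyp} together with general facts relating $\diam_S(Z)$ to $\diam_S(\Gamma)$; the vertex-transitivity of $\Cay(\Gamma,S)$ under left multiplication by $\Gamma$ then transfers connectivity inside $Z$ to connectivity inside every fiber $vZ$. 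Finally, the case $|Z|>C\log|\Gamma|$, in which the naive quotient does not satisfy the inductive diameter bound, is handled by replacing $Z$ with a carefully chosen central subgroup $N\leq Z$ whose index in $Z$ is roughly $\log|\Gamma|$ and iterating; the residual contribution is absorbed using the abelian structure of $Z/N$ via \cref{thm:fin.perc.ab}.
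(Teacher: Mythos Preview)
Your overall architecture matches the paper's: induction on the step $s$, with the inductive step split into the cases $|\Gamma_s|$ small and $|\Gamma_s|$ large, and in the small case one combines the inductive hypothesis on $\Gamma/\Gamma_s$ with a ``fiber connectivity'' statement. However, both of your remaining ingredients contain genuine gaps.

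\textbf{Fiber connectivity.} You claim that fiber connectivity reduces to a bound of the form $\diam_T(Z)\le\lambda'|Z|/\log|Z|$ on the abelian Cayley graph $\Cay(Z,T)$, to be ``derived from \eqref{eq:nilp.hyp} together with general facts''. No such bound is available: $Z=\Gamma_s$ can perfectly well be cyclic (e.g.\ the Heisenberg group over $\Z/p\Z$), in which case $\diam_T(Z)\asymp|Z|$ and percolation on $\Cay(Z,T)$ has no non-trivial giant-cluster phase. The paper's resolution is \cref{lem:diameter_induction2}: rather than working inside $\Gamma_s$ alone, one adjoins a single generator $z\in S$ of maximal order modulo $[\Gamma,\Gamma]$ and works in the abelian group $\langle z\rangle\Gamma_s$. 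Multilinearity of commutators (\cref{lem:comm.linear}) forces this $z$ to have order at least the maximal order $r$ of an element of $S_s$, so $\langle z\rangle\Gamma_s$ contains a genuinely two-dimensional piece of size $\gtrsim r^2$ inside a ball of radius $\asymp r$. This is exactly the input needed to apply the abelian result \cref{thm:fin.perc.ab.gen} at a scale containing $\Gamma_s$, yielding connectivity within each coset of $\Gamma_s$.

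\textbf{The case $|\Gamma_s|>C\log|\Gamma|$.} Your proposal here is to pass to a subgroup $N\le Z$ of index roughly $\log|\Gamma|$ and ``iterate''. But $\Gamma/N$ is still $s$-step nilpotent (its $s$-th lower-central term is $Z/N\ne\{1\}$), so the induction hypothesis does not apply, and it is unclear what ``iterating'' means or why it terminates. The paper instead abandons the inductive hypothesis entirely in this regime and proves a general statement (\cref{prop:the_moderate_centre}): if $H$ is a central, $r$-quasiconnected subgroup with both $|H|$ and $|\Gamma/H|$ at least of order $\log|\Gamma|$, then one can build a surjective rough embedding of a two-dimensional Euclidean box onto $\Cay(\Gamma,\hat S^r)$ using Hamiltonian-type paths in $H$ and in $\Gamma/H$, and conclude via \cref{prop:mal-pak}. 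The upper bound $|\Gamma/H|\gtrsim\log|\Gamma|$ needed here comes from \cref{lem:Gamma_s_is_small}, which bounds $|\Gamma_s|\le|\Gamma|^{1-\eta}$.
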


The proof of \cref{prop:fin.perc.nilp} will proceed by induction on the step $s$ of the group, with the Abelian case already being handled by \cref{thm:fin.perc.ab.gen}. The proof will rely on the fact that if $\Gamma$ is nilpotent of step $s$ then $\Gamma/\Gamma_s$ is nilpotent of step $s-1$.
 We begin by recalling some relevant basic facts about nilpotent groups.

\begin{lemma}[Multilinearity of commutators {\cite[Lemma 5.5.2]{MR3971253}}]\label{lem:comm.linear}
Let $j\ge2$ and let $\Gamma$ be a group. Then the map
\[
\begin{array}{ccccc}
\phi_j&:&\Gamma^j&\to&\Gamma_j\\
&&(\gamma_1,\ldots,\gamma_j)&\mapsto&[\gamma_1,\ldots,\gamma_j]
\end{array}
\]
is a homomorphism in each variable modulo $\Gamma_{j+1}$. Moreover, if $\gamma_i\in[\Gamma,\Gamma]$ for some $i$ then $\phi_j(\gamma_1,\ldots,\gamma_j)\in\Gamma_{j+1}$. In particular, if $\Gamma$ is $j$-step nilpotent group then $\phi_j$ is a homomorphism in each variable and the commutator subgroup $[\Gamma,\Gamma]$ is in the kernel of each of these homomorphisms.
\end{lemma}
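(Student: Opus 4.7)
The plan is to prove the lemma by induction on $j$, relying on just two ingredients: the elementary commutator identities
\begin{equation*}
[xy,z] = [x,z]^y[y,z] \qquad\text{and}\qquad [x,yz] = [x,z][x,y]^z
\end{equation*}
(with the convention $a^b:=b^{-1}ab$), together with the standard inclusion $[\Gamma_k,\Gamma_\ell]\subseteq\Gamma_{k+\ell}$ for the terms of the lower central series. The latter immediately implies that $\Gamma_j/\Gamma_{j+1}$ is central in $\Gamma/\Gamma_{j+1}$, so any element of $\Gamma_j$ commutes modulo $\Gamma_{j+1}$ with every element of $\Gamma$; in particular conjugation by any element of $\Gamma$ fixes elements of $\Gamma_j$ modulo $\Gamma_{j+1}$.

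The base case $j=2$ is immediate from the first identity: expanding $[x,z]^y = [x,z]\cdot[[x,z],y]$ and noting $[[x,z],y]\in[\Gamma_2,\Gamma]=\Gamma_3$ yields $[xy,z]\equiv[x,z][y,z]\pmod{\Gamma_3}$, and the corresponding identity for $[z,xy]$ follows analogously from the second identity. For the inductive step, set $g:=\phi_{j-1}(\gamma_1,\ldots,\gamma_{j-1})\in\Gamma_{j-1}$ so that $\phi_j(\gamma_1,\ldots,\gamma_j)=[g,\gamma_j]$. Taking a product in the $j$-th variable, the second identity together with the centrality of $\Gamma_j$ modulo $\Gamma_{j+1}$ gives
\begin{equation*}
[g,xy]=[g,y]\cdot[g,x]^y\equiv[g,y][g,x]\equiv[g,x][g,y]\pmod{\Gamma_{j+1}}.
\end{equation*}
For a product in the $i$-th variable with $i<j$, the induction hypothesis applied to $\phi_{j-1}$ gives $\phi_{j-1}(\ldots,xy,\ldots)=abc$, where $a,b\in\Gamma_{j-1}$ are the values of $\phi_{j-1}$ at $x$ and $y$ in the $i$-th slot and $c\in\Gamma_j$ is a correction term; two applications of the first identity, combined with the observations that $[c,\gamma_j]\in[\Gamma_j,\Gamma]\subseteq\Gamma_{j+1}$, $[ab,\gamma_j]^c\equiv[ab,\gamma_j]\pmod{\Gamma_{j+1}}$, and $[a,\gamma_j]^b\equiv[a,\gamma_j]\pmod{\Gamma_{j+1}}$ (each by centrality applied to elements of $\Gamma_j$), yield $[abc,\gamma_j]\equiv[a,\gamma_j][b,\gamma_j]\pmod{\Gamma_{j+1}}$ as required.

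The ``moreover'' clause admits a parallel one-line induction: for $i=j$, the hypothesis $\gamma_j\in[\Gamma,\Gamma]=\Gamma_2$ together with $\phi_{j-1}(\gamma_1,\ldots,\gamma_{j-1})\in\Gamma_{j-1}$ places $\phi_j$ in $[\Gamma_{j-1},\Gamma_2]\subseteq\Gamma_{j+1}$; for $i<j$, the inductive hypothesis places $\phi_{j-1}$ in $\Gamma_j$, so that $\phi_j\in[\Gamma_j,\Gamma]\subseteq\Gamma_{j+1}$. The final sentence of the lemma follows by specialising to $j$-step nilpotent $\Gamma$, where $\Gamma_{j+1}=\{\mathrm{id}\}$ and ``$\equiv\pmod{\Gamma_{j+1}}$'' becomes genuine equality. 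The only real obstacle will be bookkeeping in the inductive step: expanding iterated commutators of products and verifying that every correction term lands in $\Gamma_{j+1}$ via the single inclusion $[\Gamma_k,\Gamma_\ell]\subseteq\Gamma_{k+\ell}$. This is routine but error-prone, so careful labelling of which subgroup of the lower central series each factor inhabits will be essential.
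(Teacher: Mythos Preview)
The paper does not prove this lemma; it is quoted directly from \cite[Lemma 5.5.2]{MR3971253} and used as a black box. Your proof is correct and is essentially the standard argument one finds in textbook treatments: induct on $j$, reduce via the identities $[xy,z]=[x,z]^y[y,z]$ and $[x,yz]=[x,z][x,y]^z$, and absorb all error terms using $[\Gamma_k,\Gamma_\ell]\subseteq\Gamma_{k+\ell}$ together with the centrality of $\Gamma_j/\Gamma_{j+1}$ in $\Gamma/\Gamma_{j+1}$.

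One small remark on the ``moreover'' induction: your case $i<j$ for $j=2$ reads ``the inductive hypothesis places $\phi_{j-1}$ in $\Gamma_j$'', which for $j=2$ would refer to $\phi_1$. This is fine if one interprets $\phi_1$ as the identity map (so the ``inductive hypothesis'' is just the assumption $\gamma_1\in\Gamma_2$), but it would be cleaner to handle the base case $j=2$ directly: if either $\gamma_1$ or $\gamma_2$ lies in $\Gamma_2$ then $[\gamma_1,\gamma_2]\in[\Gamma_2,\Gamma]=\Gamma_3$.
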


\begin{lemma}[{\cite[Proposition 5.2.6]{MR3971253}}]
\label{lem:G_s=<comms>}
Let $s\ge2$ and suppose $\Gamma$ is an $s$-step nilpotent group with generating set $S$. Then $\Gamma_s$ is generated by the set $S_s:=\{[x_1,\ldots,x_s]:x_1,\ldots,x_s\in S\}$.
\end{lemma}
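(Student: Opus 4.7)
The plan is to reduce the statement, via the multilinearity provided by \cref{lem:comm.linear}, to the classical fact that $\Gamma_s$ is generated as a subgroup by the larger set $T_s := \{[\gamma_1,\ldots,\gamma_s] : \gamma_1,\ldots,\gamma_s \in \Gamma\}$ of simple commutators of length $s$ in \emph{arbitrary} elements of $\Gamma$.

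First I would establish this intermediate claim by induction on $1 \le k \le s$, showing that $\Gamma_k$ is generated as a subgroup by $T_k := \{[\gamma_1,\ldots,\gamma_k] : \gamma_i \in \Gamma\}$. The base case $k = 1$ is immediate. For the inductive step, $\Gamma_{k+1} = [\Gamma_k,\Gamma]$ is by definition generated (as a subgroup, since $\Gamma_k$ is normal) by commutators $[g,h]$ with $g \in \Gamma_k$ and $h \in \Gamma$. Writing $g$ as a word in $T_k \cup T_k^{-1}$ by the inductive hypothesis and applying the standard identity $[ab,c] = [a,c]^b\,[b,c]$ iteratively expresses $[g,h]$ as a word in conjugates of elements of $T_{k+1} \cup T_{k+1}^{-1}$. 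The conjugations can then be eliminated by a descending induction modulo deeper terms of the lower central series: each conjugate $t^b$ with $t \in T_{k+1}$ differs from $t$ by $[t^{-1},b] \in \Gamma_{k+2}$, so running the argument first at level $k = s$ (where $\Gamma_{s+1} = \{\mathrm{id}\}$ makes the correction vanish) and then at levels $s-1, s-2, \ldots$ yields the claim.

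Once we know $\Gamma_s = \langle T_s \rangle$, the remainder is a direct application of \cref{lem:comm.linear}: since $\Gamma$ is $s$-step nilpotent, the map $\phi_s : \Gamma^s \to \Gamma_s$ is a genuine homomorphism in each variable (not merely modulo $\Gamma_{s+1}$). Given $[\gamma_1,\ldots,\gamma_s] \in T_s$, one writes each $\gamma_i$ as a word $s_{i,1}^{\varepsilon_{i,1}} \cdots s_{i,n_i}^{\varepsilon_{i,n_i}}$ in $S \cup S^{-1}$ and expands one slot at a time by multilinearity, expressing $[\gamma_1,\ldots,\gamma_s]$ as a product of commutators of the form $[s_{1,j_1}^{\varepsilon_{1,j_1}},\ldots,s_{s,j_s}^{\varepsilon_{s,j_s}}]$. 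A final use of multilinearity collects the exponents to yield $[s_{1,j_1},\ldots,s_{s,j_s}]^{\varepsilon_{1,j_1}\cdots\varepsilon_{s,j_s}}$, which lies in $\langle S_s \rangle$.

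The main obstacle is the intermediate claim $\Gamma_k = \langle T_k \rangle$; although classical, its proof requires some careful bookkeeping with commutator identities in successive quotients of the lower central series. Once this is in hand, the reduction to generators follows immediately and mechanically from the multilinearity of $\phi_s$.
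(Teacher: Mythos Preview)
The paper does not prove this lemma; it is quoted from an external reference (\cite[Proposition 5.2.6]{MR3971253}) without argument. So there is no paper proof to compare against, and I will evaluate your proposal on its own merits.

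Your overall strategy---first show $\Gamma_s = \langle T_s\rangle$, then use the genuine multilinearity of $\phi_s$ from \cref{lem:comm.linear} to pass from arbitrary group elements to generators---is the standard one, and your Part~2 is correct.

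Part~1, however, has a gap in the inductive structure. Your upward induction assumes $\Gamma_k = \langle T_k\rangle$ and shows only that $\Gamma_{k+1}$ is generated by \emph{conjugates} of elements of $T_{k+1}$. Your proposed ``descending induction'' to remove the conjugates is circular: to run the step at $k+1=s$ (where $\Gamma_{s+1}=\{\mathrm{id}\}$ kills the correction) you already need the inductive hypothesis $\Gamma_{s-1}=\langle T_{s-1}\rangle$, which is precisely what the descending phase is supposed to supply afterwards.

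The fix is much simpler than the descending induction you propose. The set $T_k$ is itself closed under conjugation: a trivial induction using $[a,b]^h=[a^h,b^h]$ gives $[g_1,\ldots,g_k]^h=[g_1^h,\ldots,g_k^h]\in T_k$. Hence ``conjugates of elements of $T_{k+1}$'' are already elements of $T_{k+1}$, and your upward induction goes through verbatim with no correction term at all. Alternatively, you can weaken the inductive claim to $\Gamma_k = \langle T_k\rangle\,\Gamma_{k+1}$, which is straightforward to prove (the conjugate correction $[t,b]$ lies in $\Gamma_{k+2}\le\Gamma_{k+1}$) and is all you need at $k=s$.
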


We will split the proof of the induction step into two cases according to whether $|\Gamma_s| \geq \log |\Gamma|$ or $|\Gamma_s| < \log |\Gamma|$. It will be useful to know that $|\Gamma_s|$ cannot be too large. We recall that the \textbf{rank} of a finitely generated group $\Gamma$ is defined to be the minimal cardinality of a generating set of $\Gamma$.

\begin{lemma}[{\cite[Lemma 4.13]{MR3439705}}]
\label{lem:Gamma_s_is_small}
For each $s\geq 2$ and $k\geq 1$ there exists $\eta=\eta(s,k)>0$ such that if $\Gamma$ is an $s$-step nilpotent group with rank at most $k$ then $|\Gamma_s|\leq |[\Gamma,\Gamma]| \leq |\Gamma|^{1-\eta}$.
\end{lemma}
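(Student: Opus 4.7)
The first inequality $|\Gamma_s|\le|[\Gamma,\Gamma]|$ is immediate from $\Gamma_s\le\Gamma_2=[\Gamma,\Gamma]$, so the task is to prove the second inequality $|[\Gamma,\Gamma]|\le|\Gamma|^{1-\eta}$. My plan is to control the size of each successive factor $\Gamma_i/\Gamma_{i+1}$ in the lower central series in terms of the abelianisation $A:=\Gamma/[\Gamma,\Gamma]$, and then combine these bounds with the identity $|\Gamma|=|A|\cdot|[\Gamma,\Gamma]|$. Note that $A$ is generated by at most $k$ elements, since it is a quotient of $\Gamma$.

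The key step is to show that for each $2\le i\le s$ the commutator map $(x,y)\mapsto[x,y]\Gamma_{i+1}$ from $\Gamma_{i-1}\times\Gamma$ to $\Gamma_i/\Gamma_{i+1}$ descends to a surjection $(\Gamma_{i-1}/\Gamma_i)\otimes_{\Z}A\to\Gamma_i/\Gamma_{i+1}$. Bilinearity modulo $\Gamma_{i+1}$ follows from standard commutator identities together with \cref{lem:comm.linear}: expanding $[xx',y]=[x,y]^{x'}[x',y]$ and noting that $[x,y]\in\Gamma_i$ and $[[x,y],x']\in\Gamma_{i+1}$ gives linearity in the first variable, and an analogous identity handles the second. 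The same identities show that the map vanishes whenever $y\in[\Gamma,\Gamma]$ (using $[\Gamma_{i-1},\Gamma_2]\subseteq\Gamma_{i+1}$), so it factors through the tensor product as claimed; surjectivity is immediate from $\Gamma_i=[\Gamma_{i-1},\Gamma]$. Combining this with the elementary bound $|X\otimes_{\Z}Y|\le|X|^{d(Y)}$ for finite abelian groups, where $d(Y)$ denotes the minimum number of generators of $Y$ (which follows since $X\otimes_{\Z}Y$ is generated as a group by the $d(Y)$ subgroups $X\otimes\langle y_j\rangle$, each a quotient of $X$), and using $d(A)\le k$, I obtain $|\Gamma_i/\Gamma_{i+1}|\le|\Gamma_{i-1}/\Gamma_i|^k$.

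Iterating starting from $|\Gamma_2/\Gamma_3|\le|A|^k$ yields $|\Gamma_i/\Gamma_{i+1}|\le|A|^{k^{i-1}}$ for each $2\le i\le s$, and telescoping through the lower central series gives
\[
|[\Gamma,\Gamma]|=\prod_{i=2}^s|\Gamma_i/\Gamma_{i+1}|\le|A|^K,\qquad K:=k+k^2+\cdots+k^{s-1}.
\]
Combining this with $|\Gamma|=|A|\cdot|[\Gamma,\Gamma]|\ge|[\Gamma,\Gamma]|^{1/K}\cdot|[\Gamma,\Gamma]|=|[\Gamma,\Gamma]|^{(K+1)/K}$ rearranges to $|[\Gamma,\Gamma]|\le|\Gamma|^{1-1/(K+1)}$, so the lemma holds with $\eta=\eta(s,k):=1/(K+1)$. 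The main obstacle I anticipate is verifying carefully that the commutator map does factor through the tensor product over $\Z$ with the correct domain (which requires some care with the standard commutator identities and with interpreting \cref{lem:comm.linear} at the level of the successive quotients); the tensor product bound itself and the subsequent telescoping are routine.
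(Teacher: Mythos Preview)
The paper does not include its own proof of this lemma; it simply cites \cite[Lemma 4.13]{MR3439705}. Your argument is correct and is essentially the standard one: the bilinearity of the commutator map on successive quotients of the lower central series (relying on $[\Gamma_a,\Gamma_b]\subseteq\Gamma_{a+b}$, which you invoke as $[\Gamma_{i-1},\Gamma_2]\subseteq\Gamma_{i+1}$) gives a surjection from $(\Gamma_{i-1}/\Gamma_i)\otimes_\Z A$ onto $\Gamma_i/\Gamma_{i+1}$, and the elementary tensor bound together with telescoping yields the claimed inequality with an explicit $\eta=1/(1+k+\cdots+k^{s-1})$. The only point worth tightening in a write-up is to state explicitly that the map also vanishes when $x\in\Gamma_i$ (immediate from $[\Gamma_i,\Gamma]=\Gamma_{i+1}$), so that it genuinely descends to $\Gamma_{i-1}/\Gamma_i$ in the first variable; you implicitly use this but only spell out the second-variable kernel.
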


We now treat the case that $|\Gamma_s| \geq \log |\Gamma|$, for which we will be able to use a rather general argument that does not use the induction hypothesis and is not specific to the nilpotent setting. Let $\Gamma$ be a group with finite generating set $S$, and let $H$ be a subgroup. Given $r\geq 1$, we say that $H$ is \textbf{$r$-quasiconnected} if $\hat S^r \cap H$ generates $H$, or equivalently if the cosets of $H$ are connected as subsets of $\Cay(\Gamma,\hat S^r)$. In particular, \cref{lem:G_s=<comms>} implies that if $\Gamma$ is $s$-step nilpotent then $\Gamma_s$ is $C_s$-quasiconnected with respect to any generating set. The next proposition shows very generally that the existence of a quasiconnected central subgroup of moderate size implies non-triviality of the percolation phase transition. As above, we say that a subgroup $H$ of a group $\Gamma$ is \textbf{central} if $\gamma h = h \gamma$ for every $h\in H$ and $\gamma\in \Gamma$, noting that central subgroups are always normal.

\begin{proposition}
\label{prop:the_moderate_centre}
For each $\lambda,\eps>0$ and $k,r\geq 1$ there exists a constant $p_0=p_0(\lambda,\eps,k,r)<1$ such that the following holds. If $\Gamma$ is a finite group with generating set $S$ of size at most $k$ and $H$ is a central, $r$-quasiconnected subgroup of $\Gamma$ such that
\begin{equation}
\label{eq:central_subgroup_assumption}
|H| \geq \lambda \log |\Gamma| \qquad \text{ and } \qquad |\Gamma/H| \geq \lambda \log |\Gamma|
\end{equation}
then Bernoulli-$p$ bond percolation on $\Cay(\Gamma,S)$ satisfies $\P_p(x\leftrightarrow y) \geq 1-\eps$ for every $p\geq p_0$ and $x,y\in \Gamma$.
\end{proposition}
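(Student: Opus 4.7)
The plan is to construct a $2$-dimensional Euclidean box inside a suitably enlarged Cayley graph, apply \cref{lem:mal-pak.d=2}, and extend by transitivity. First I would apply \cref{cor:changing_generators} to reduce to proving the conclusion for $\Cay(\Gamma,T)$ with $T:=\hat S^r$; under this change $T_H:=T\cap H$ generates $H$ by $r$-quasiconnectedness, and percolation estimates on $\Cay(\Gamma,T)$ at probability $p'$ transfer back to $\Cay(\Gamma,S)$ at the higher probability $f(p')$. I would also assume $|\Gamma|$ is large in terms of $\lambda,k,r,\eps$, since otherwise the conclusion is immediate after choosing $p_0$ so close to $1$ that every edge is simultaneously open with probability at least $1-\eps$.

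The core step is to exhibit two commuting elements of $T$ spanning an abelian subgroup of $\Gamma$ inside which a 2D box of side length $\gtrsim \log|\Gamma|$ can be embedded. I would apply \cref{prop:boxes} to the abelian group $H$ (which is abelian because $H$ is central) with generators $T_H$, producing a proper progression $P_H = P_{a_1,\ldots,a_\ell}(L_1,\ldots,L_\ell)\subseteq H$ with $|P_H|\ge |H|/D$ for some $D=D(k,r)$. If at least two of the $L_i$ are positive and of comparable magnitude, the induced Euclidean box satisfies the hypothesis of \cref{prop:mal-pak}, giving good connectivity within each coset of $H$, which then propagates to all of $\Gamma$ via the abundant between-coset edges from $T\setminus H$. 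Otherwise $P_H$ is essentially one-dimensional, and tracing the analysis of that case in the proof of \cref{prop:proper.prog} forces the existence of $h\in T_H$ with $\mathrm{ord}(h) \ge |H|/C'' \gtrsim \log|\Gamma|$. I would pair this $h$ with any $s\in S$ satisfying $\pi(s)\ne e$ in $\Gamma/H$ (such an $s$ exists because $|\Gamma/H|\ge 2$); the centrality of $h$ makes $A:=\langle h,s\rangle$ abelian, and a further application of \cref{prop:boxes} followed by \cref{lem:mal-pak.d=2} inside $\Cay(A,\{h,s\})$ produces the required 2D box.

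Once good point-to-point connectivity holds on a translate $\gamma X$ of the 2D box, vertex-transitivity of $\Cay(\Gamma,T)$ transports this estimate to every translate, and \cref{lem:half.doubles.to.whole} bootstraps the resulting half-density of well-connected pairs into a uniform lower bound on $\P_p(x\leftrightarrow y)$; \cref{cor:changing_generators} then transports back to $\Cay(\Gamma,S)$ to finish. The hard part will be the essentially one-dimensional sub-case: the hypothesis $|\Gamma/H|\ge \lambda\log|\Gamma|$ alone does not yield an element of $\pi(S)$ of large order in $\Gamma/H$ (for instance $\Gamma/H$ could be non-abelian of bounded exponent), so the transverse direction of the 2D box in $\langle h,s\rangle$ may be too short to satisfy the hypothesis of \cref{lem:mal-pak.d=2}. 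Overcoming this will likely require using $|\Gamma/H|\ge \lambda\log|\Gamma|$ in a subtler way, for instance by stitching together paths in $\Cay(\Gamma/H,\pi(S))$ at each rung of the long $h$-cycle to form a cylindrical subgraph, or via a dichotomy between the regime where $\Cay(\Gamma/H,\pi(S))$ has expander-like behavior (handled by \cref{prop:Benj-Schr} applied to the quotient) and the regime where its diameter is large enough to furnish long simple paths.
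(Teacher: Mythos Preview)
You correctly identify the crux: your approach needs two commuting elements of large order, but the transverse direction $\Gamma/H$ may have bounded exponent, and no amount of case-splitting on expansion will produce such an element where none exists. Your ``essentially one-dimensional'' branch also overstates what \cref{prop:boxes} yields: you can extract $h\in T_H$ with $\mathrm{ord}(h)\gtrsim |H|^{1/|T_H|}$, not $\gtrsim |H|$, and even that does not help with the $s$-direction.

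The paper sidesteps this entirely by observing that one does not need elements of large order but rather long \emph{paths}, and these always exist. Any finite connected graph admits a walk that visits every vertex and crosses each edge at most twice (double the edges and take an Eulerian circuit). Applied to $\Cay(H,\hat S^r\cap H)$ and to $\Cay(\Gamma/H,\pi(\hat S^r))$, this produces surjective paths $\phi_1:\{1,\ldots,n_1\}\to H$ and $\phi_2:\{1,\ldots,n_2\}\to\Gamma/H$ with $n_i$ comparable, within a factor depending only on $k,r$, to $|H|$ and $|\Gamma/H|$ respectively. Writing $\phi_2(j+1)=\phi_2(j)\pi(s_j)$, the map $\phi(a,j)=\phi_1(a)\,s_1\cdots s_{j-1}$ is a graph homomorphism from the $n_1\times n_2$ box into $\Cay(\Gamma,\hat S^r)$ (this is precisely where centrality of $H$ is used), and it is surjective and boundedly-many-to-one. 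Since $n_1,n_2\succeq\log|\Gamma|$ by hypothesis, \cref{prop:mal-pak} and \cref{lem:roughembedding} finish immediately, with no case analysis and no bootstrapping via \cref{lem:half.doubles.to.whole}. Your instinct to ``stitch together paths in $\Cay(\Gamma/H,\pi(S))$'' was pointing in exactly the right direction; the Eulerian walk is the clean implementation, and it works symmetrically in $H$ as well, obviating any appeal to \cref{prop:boxes}.
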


\begin{proof}
We will construct a surjective rough embedding of a Euclidean box onto $\Cay(\Gamma,S)$. The box we construct will satisfy the hypotheses of \cref{prop:mal-pak} by \eqref{eq:central_subgroup_assumption}, so that we can conclude by applying that proposition together with \cref{lem:roughembedding}.

Let $\pi:\Gamma\to\Gamma/H$ be the projection map.
It suffices by \cref{cor:changing_generators} to prove the assertion about the non-triviality of the percolation phase transition for $G = \Cay(\Gamma,\hat S^r)$ rather than $\Cay(\Gamma,S)$. Let $G_1$ be the subgraph of $G$ induced by $H$, which is isomorphic to $\Cay(H,\hat S^r \cap H)$, and let $G_2$ be $\Cay(\Gamma/H,\hat S^r/H)$. 
Recall that every finite graph all of whose degrees are even admits an Eulerian circuit, i.e., a cycle that passes through every edge exactly once \cite[Theorem 1.8.1]{diestel2000graph}. It follows that \emph{every} finite graph admits a path that visits every vertex and crosses each edge exactly twice.
Thus, there exist $n_1,n_2 \geq 1$ and surjective functions $\phi_1:\{1,\ldots,n_1\} \to H$ and $\phi_2:\{1,\ldots,n_2\} \to \Gamma/H$ such that
\begin{itemize}
\item $\phi_i(j)$ and $\phi_i(j+1)$ are adjacent in $G_i$ for each $i\in \{1,2\}$ and $1 \leq j \leq n_i-1$ and
\item the path in $G_i$ associated to $\phi_i$ crosses each edge of $G_i$ at most twice.
\end{itemize} In particular, the path in $G_i$ associated to $\phi_i$ visits each \emph{vertex} of $G_i$ at most $2(2k+1)^r$ times, so that $|H| \leq n_1 \leq 2(2k+1)^r |H|$ and $|\Gamma/H| \leq n_2 \leq 2(2k+1)^r |\Gamma/H|$. Since we also have that $|\Gamma| = |H|\cdot|\Gamma/H|$, it follows from \eqref{eq:central_subgroup_assumption} that there exists a positive constant $\tilde \lambda = \tilde \lambda(\lambda,k,r)$ such that $\min\{n_1,n_2\} \geq \tilde \lambda \log \max \{n_1,n_2\}$.

 Let $B$ be the subgraph of $\Z^2$ induced by  $\{1,\ldots,n_1\} \times \{1,\ldots,n_2\}$.
  For each $1 \leq j \leq n_2-1$ let $s_j \in \hat S^r$ be such that $\phi_2(j+1)=\phi_2(j) \pi(s_j)=\phi_2(1)\pi(s_1\cdots s_j)$.
 We  define
$\phi: \{1,\ldots,n_1\} \times \{1,\ldots,n_2\} \to \Gamma$ by
  $\phi(a,1)=\phi_1(a)$ for each $a\in \{1,\ldots,n_1\}$ and
\[
\phi(a,j) = \phi_1(a) s_1 \cdots s_{j-1}
\qquad \text{ for each $1\leq a \leq n_1$ and $2 \leq j \leq n_2$.}
\]
The centrality of $H$ implies that the map $\phi$ defines a graph homomorphism $B\to G$, i.e., that 
 $\phi(a,j)$ and $\phi(a',j')$ are adjacent in $G$ whenever $|a-a'|+|j-j'|=1$. 
Equivalently, $\phi(a,j)^{-1} \phi(a',j') \in \hat S^r$ whenever $|a-a'|+|j-j'|=1$. Indeed, if $a'=a$ and $j'=j+1$ we have trivially that
\[
\phi(a,j)^{-1}\phi(a,j+1)= s_{j-1}^{-1} \cdots s_1^{-1}\phi_1(a)^{-1}\phi_1(a) s_1 \cdots s_{j} = s_j \in \hat S^r
\]
while if $a'=a+1$ and $j'=j$ then we have by centrality of $H$ that
\[
\phi(a,j)^{-1}\phi(a+1,j) = s_{j-1}^{-1} \cdots s_1^{-1}\phi_1(a)^{-1}\phi_1(a+1) s_1 \cdots s_{j-1} = \phi_1(a)^{-1}\phi_1(a+1),
\]
which belongs to $\hat S^r$ since $\phi_1(a)$ and $\phi_1(a+1)$ are adjacent in $G_1$.

Finally, we observe that the preimage of each element of $\Gamma$ under $\phi$ has between $1$ and $4(2k+1)^{2r}$ elements. Indeed, $\phi(a,j)$ belongs to the right coset of $H$ determined by $\pi(s_1 \cdots s_{j-1})$, so that for each $\gamma\in \Gamma$ there are at most $2(2k+1)^r$ values of $j$ for which $\phi(a,j)$ lies in the same $H$ coset as $\gamma$, and for each $j$ there are at most $2(2k+1)^r$ values of $a$ for which $\phi_1(a)=\gamma s_{j-1}^{-1} \cdots s_1^{-1}$. Thus
$\phi$ is a $(1,4(2k+1)^{2r})$-rough embedding of $B$ into $G$. Since $\phi$ is surjective on vertices and $\min\{n_1,n_2\} \geq \tilde \lambda \log \max \{n_1,n_2\}$ the claim follows from \cref{prop:mal-pak,lem:roughembedding}.
\end{proof}

\cref{prop:the_moderate_centre} together with \cref{lem:G_s=<comms>,lem:Gamma_s_is_small} immediately handles the case of \cref{prop:fin.perc.nilp} in which $|\Gamma_s| \geq \log |\Gamma|$. We next address the case that $|\Gamma_s|<\log |\Gamma|$, for which the induction hypothesis will actually be used. We begin with some preliminaries.
Let $\Gamma$ be an $s$-step nilpotent group with  finite generating set $S$, so that $\Gamma_s$ is generated by $S_s = \{[x_1,\ldots,x_s] : x_1,\ldots,x_s \in S\}$ by \cref{lem:G_s=<comms>}.
  Since $\Gamma_s$ is central in $\Gamma$, we have moreover that $\langle z \rangle \Gamma_s$ is an Abelian subgroup of $\Gamma$ for each $z\in \Gamma$ and that $\langle z \rangle \Gamma_s$ is generated by $S_s \cup \{z\}$. The next lemma shows that when $s\geq 2$ we can always take $z\in S$ in such a way that the Abelian group $\langle z \rangle \Gamma_s$ ``looks at least two-dimensional'' within a ball that contains $\Gamma_s$.

\begin{lemma}
\label{lem:diameter_induction2}
 Let $s\geq 2$ and let $\Gamma$ be an $s$-step nilpotent group with generating set $S$ of size $k$. Then there exists  $z\in S$ and $r \geq 1$ such that
\[
\hat S_s^r = \Gamma_s \qquad \text{ and } \qquad |(\hat S_s \cup\{z,z^{-1}\})^r| \geq \frac{r^2}{4k^{2s}}.
\]
\end{lemma}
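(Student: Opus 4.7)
The plan is to pick $z \in S \setminus \Gamma_s$ (such a $z$ exists because $s \geq 2$ forces the abelianisation of $\Gamma$ to be nontrivial) and to take $r = 2 r_0$ where $r_0 := \diam_{S_s}(\Gamma_s)$, so that the ball $(\hat S_s \cup \{z, z^{-1}\})^r$ captures many distinct cosets of $\Gamma_s$ in the Abelian subgroup $\langle z \rangle \Gamma_s$ and thereby exhibits growth on the $r^2$ scale.

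First I would verify that some $z \in S$ lies outside $\Gamma_s$. Since $\Gamma$ is $s$-step nilpotent with $s \geq 2$ and $\Gamma \neq \{\mathrm{id}\}$, the abelianisation $\Gamma/[\Gamma,\Gamma]$ is nontrivial; as $S$ projects onto a generating set of this abelianisation and $\Gamma_s \subseteq [\Gamma,\Gamma]$, some $z \in S$ must have nontrivial image in $\Gamma/\Gamma_s$. Denote by $n \geq 2$ the order of $z \Gamma_s$ in $\Gamma/\Gamma_s$.

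Next I would exploit centrality of $\Gamma_s$ in $\Gamma$, which follows directly from the definition of the lower central series together with $\Gamma_{s+1} = \{\mathrm{id}\}$. Centrality means $z$ commutes with every element of $\hat S_s$, so any product of $r$ factors from $\hat T := \hat S_s \cup \{z, z^{-1}\}$ can be rearranged as $z^a y$ with $a \in \Z$, $|a| \leq r$, and $y \in \hat S_s^{r - |a|}$. Taking $r = 2 r_0$ trivially gives $\hat S_s^r = \Gamma_s$, and for every $|a| \leq r_0$ the exponent $r - |a|$ is still at least $r_0$, so $\hat S_s^{r - |a|} = \Gamma_s$. Hence $\hat T^{2 r_0} \supseteq \bigcup_{|a| \leq r_0} z^a \Gamma_s$, which is a union of cosets inside the Abelian subgroup $H = \langle z \rangle \Gamma_s$. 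The cosets $z^a \Gamma_s$ repeat with period $n$ in $a$, so the union has cardinality $\min(2 r_0 + 1, n) \cdot |\Gamma_s|$. Using the elementary fact that ball sizes in Abelian Cayley graphs strictly grow until they stabilise, which forces $|\Gamma_s| \geq r_0 + 1$, we obtain
\[
|\hat T^{2 r_0}| \geq \min(2 r_0 + 1, n) \cdot (r_0 + 1).
\]

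Finally I would verify that this lower bound dominates $r^2/(4k^{2s}) = r_0^2/k^{2s}$. When some $z \in S \setminus \Gamma_s$ we can select has order $n$ modulo $\Gamma_s$ at least $r_0/k^{2s}$ the conclusion is immediate, so the remaining case is when every generator of $S$ has small order modulo $\Gamma_s$. The hardest part of the argument will be handling this remaining case: I expect to combine the $(s-1)$-step nilpotent structure of $\Gamma/\Gamma_s$, which is severely constrained once all generators have small order in it, with the additive-combinatorial structure theorem \cref{prop:boxes} applied to $\Cay(\Gamma_s, S_s)$, so as to upgrade the linear lower bound $|\Gamma_s| \geq r_0 + 1$ into a genuinely quadratic one in $r_0$, or alternatively to pass to a smaller scale $r$ (still at least $r_0$) at which the union $\hat T^r$ exhibits two-dimensional growth perhaps after augmenting $S_s$ by commutators involving $z$. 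Matching the resulting constant against the explicit $4k^{2s}$ in the statement is where I expect the bulk of the technical work to go.
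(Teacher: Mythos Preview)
Your argument has a genuine gap: you never establish that some $z\in S$ has large order modulo $\Gamma_s$ (or, more usefully, modulo $[\Gamma,\Gamma]$), and the ``remaining case'' you flag is not a technical detail but the entire content of the lemma. The approaches you suggest for it --- applying \cref{prop:boxes} to $\Cay(\Gamma_s,S_s)$, or passing to a smaller scale --- do not obviously produce the required two-dimensional growth, and your write-up does not indicate how they would.

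The idea you are missing is the multilinearity of commutators, \cref{lem:comm.linear}. The paper does not pick an arbitrary $z\in S\setminus\Gamma_s$; it lets $r$ be the maximal order of an element of $S_s$ and picks $z\in S$ of maximal order modulo $[\Gamma,\Gamma]$. If $c=[x_1,\ldots,x_s]\in S_s$ has order $r$, then by multilinearity $[x_1^\ell,x_2,\ldots,x_s]=c^\ell\ne\mathrm{id}$ for $\ell<r$, forcing $x_1$ to have order at least $r$ modulo $[\Gamma,\Gamma]$; hence so does $z$. Since $\Gamma_s\subseteq[\Gamma,\Gamma]$, the elements $\{z^ic^j:1\le i,j\le r\}$ are pairwise distinct, giving $r^2$ elements in $(\hat S_s\cup\{z,z^{-1}\})^{(m+1)r}$ where $m=|S_s|\le k^s$. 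The constant $4k^{2s}$ then falls out of $(m+1)^2\le 4k^{2s}$. This completely circumvents your problematic case: the existence of a generator of large order is \emph{forced} by the existence of a commutator of large order, not assumed.
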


\begin{proof}
Enumerate $S_s=\{[x_1,\ldots,x_s]:x_1\ldots,x_s \in S\} = \{c_1,\ldots,c_m\}$, noting that $m=|S_s| \leq k^s$.
 Let $z$ be an element of $S$ of maximal order modulo $[\Gamma,\Gamma]$ and consider the set 
 \[T=\Bigl\{z^{\epsilon_0}c_1^{\epsilon_1}\cdots c_m^{\epsilon_m}:\epsilon_i\in\{-1,0,1\}\Bigr\},\]
 which is contained in $(\hat S_s \cup \{z,z^{-1}\})^{m+1}$.  \cref{lem:G_s=<comms>} implies the set $T$ generates the group $\langle z\rangle \Gamma_s$, which is Abelian since $\Gamma_s$ is central. Let $r$ be the largest order of an element of $S_s$, so that
$\hat S_s^{mr} = \Gamma_s$ by commutativity and \cref{lem:G_s=<comms>}.
  We claim that $z$ must have order at least $r$ modulo $[\Gamma,\Gamma]$. Indeed, letting $[x_1,\ldots,x_s] \in S_s$ have order $r$, we have by \cref{lem:comm.linear} that $[x_1^\ell,x_2,\ldots,x_s]=[x_1,x_2,\ldots,x_s]^\ell \neq \mathrm{id}$ for every $\ell < r$. It follows by a further application of \cref{lem:comm.linear} that $x_1\in S$ has order at least $r$ mod $[\Gamma,\Gamma]$, and hence that $z$ has order at least $r$ modulo $[\Gamma,\Gamma]$ as claimed by maximality.
Let $c \in S_s$ be of order $r$. Since $\Gamma_s<[\Gamma,\Gamma]$ the group elements $\{z^i c^j: 1\leq i,j\leq r\}$ are all distinct, and so $|\hat T^r| \geq r^2$. Since $\hat T \subseteq (\hat S \cup \{z,z^{-1}\})^{m+1}$ it follows that
\[
\hat S_s^{(m+1)r} = \Gamma_s \qquad \text{ and } \qquad |(\hat S_s \cup\{z,z^{-1}\})^{(m+1)r}| \geq r^2,
\]
which implies the claim since $m +1\leq k^s +1 \leq 2k^s$.
\end{proof}

We are now ready to complete the proof of \cref{prop:fin.perc.nilp}.

\begin{proof}[Proof of \cref{prop:fin.perc.nilp}]
We prove the proposition by induction on the step $s$ of the nilpotent group, the Abelian $s=1$ base case having already been handled by \cref{thm:fin.perc.ab.gen}. 
Let $s\geq 2$, $k\in \N$, $\lambda \geq 1$, and $\eps>0$. Let  $\Gamma$ be a finite $s$-step nilpotent group and let $S$ be a generating set for $\Gamma$ of size at most $k$ satisfying 
\[
 \diam_S(\Gamma)\le\frac{\lambda|\Gamma|}{(\log|\Gamma|)^{s}}.
\]
We need to prove that there exists $p_0=p_0(s,k,\lambda,\eps)<1$ such that if $p \geq p_0$ then Bernoulli-$p$ bond percolation on $\Cay(\Gamma,S)$ satisfies $\P_p(x\leftrightarrow y)\geq 1-\eps$ for every $x,y\in \Gamma$.
We may split into two cases according to whether or not $|\Gamma_s| \geq \log |\Gamma|$, taking $p_0$ to be the maximum of the constants produced in the two cases.

\medskip

In the first case, $|\Gamma_s| \geq \log |\Gamma|$ and the claim follows immediately from \cref{prop:the_moderate_centre} with $H=\Gamma_s$. Indeed, the hypotheses of this proposition are satisfied by \cref{lem:G_s=<comms>}, which implies that $\Gamma_s$ is $r$-quasiconnected for some $r=r(s)$, and \cref{lem:Gamma_s_is_small}, which implies that $|\Gamma_s| \leq c|\Gamma|/\log |\Gamma|$ for some constant $c=c(s,k)$.

\medskip

Now suppose that $|\Gamma_s|<\log |\Gamma|$. In this case, we clearly have that
\[
\diam_S(\Gamma/\Gamma_s) \leq \diam_S(\Gamma) \leq \frac{\lambda|\Gamma|}{(\log |\Gamma|)^s} \leq \frac{\lambda|\Gamma/\Gamma_s|}{(\log |\Gamma|)^{s-1}} \leq \frac{\lambda|\Gamma/\Gamma_s|}{(\log |\Gamma/\Gamma_s|)^{s-1}} .
\]
Letting $G_1 = \Cay(\Gamma/\Gamma_s,S)$, it follows from the induction hypothesis that there exists 
$q_1=q_1(s,k,\lambda,\eps) <1$ such that if $p\geq q_1$ then
\begin{align*}
\P_p^{G_1}
\bigl(x \lra y \bigr) &\geq \sqrt{1-\eps} \qquad \text{for every $x,y \in \Gamma/\Gamma_s$}.
\end{align*}
Let $z \in S$ and $r\geq 1$ be as in the statement of \cref{lem:diameter_induction2} and let $G_2=\Cay(\langle z \rangle \Gamma_s,S_s \cup \{z\})$. Using the conclusions of \cref{lem:diameter_induction2} together with \cref{thm:fin.perc.ab.gen} yields that there exists $q_2=q_2(s,k,\eps)$ such that
\[
\P_{q_2}^{G_2}(x \leftrightarrow y) \geq \sqrt{1-\eps} \qquad \text{ for every $x,y \in \Gamma_s$.}
\]
Letting $G'=\Cay(G,S \cup S_s)$, we deduce from these estimates together with \cref{prop:Benj-Schr} that
\begin{align*}
\P_{q_1}^{G'}
\bigl(x \lra y \Gamma_s \bigr) &\geq 1-\eps \qquad \text{for every $x,y \in \Gamma$ and}
 \\
\P_{q_2}^{G'}
\bigl(x \lra y \bigr) &\geq 1-\eps \qquad \text{for every $x,y \in \Gamma$ belonging to a common coset of $ \Gamma_s$.}
\end{align*}
Letting $q_3<1$ be defined by $1-q_3 = (1-q_1\vee q_2)^2$ so that Bernoulli-$q_3$ percolation on $G'$ has the same law as the union of two independent copies of Bernoulli-$q_1\vee q_2$ percolation, we obtain that if $p\ge q_3$ then
\[
\P_{p}^{G'}(x \lra y) \geq \P_{q_1}^{G'}
\bigl(x \lra y \Gamma_s \bigr) \min\Bigl\{ \P_{q_2}^{G'}
\bigl(y \lra w \bigr) : w \in y \Gamma_s \Bigr\} \geq 1-\eps
\]
for every $x,y\in \Gamma$. Since $\eps>0$ was arbitrary, we may conclude by  applying \cref{cor:changing_generators}. \qedhere

\end{proof}

\subsection{Subgroups of bounded index and virtually nilpotent groups}
\label{subsec:virtuallynilpotent}

In this section we show how to reduce the study of percolation on a group to the study of percolation on a subgroup of bounded index. Our main objective in so doing is to prove the following extension of Proposition \ref{prop:fin.perc.nilp}.

\begin{theorem}\label{thm:fin.perc.nilp}
Let $k,n,s\in\N$ and let $\lambda,\eps>0$. Then there exist $p_0=p_0(k,n,s,\lambda,\eps)<1$ such that if $\Gamma$ is a finite group containing an $s$-step nilpotent subgroup $H$ of index $n$, and if $S$ is a generating set for $\Gamma$ of size at most $k$ satisfying
\[
\diam_S(\Gamma)\le\frac{\lambda|\Gamma|}{(\log|\Gamma|)^s},
\]
then Bernoulli-$p$ bond percolation on $\Cay(\Gamma,S)$ satisfies $\P_p(x\leftrightarrow y) \geq 1-\eps$ for every $p\geq p_0$ and $x,y\in \Gamma$.
\end{theorem}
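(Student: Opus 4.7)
The plan is to reduce \cref{thm:fin.perc.nilp} to \cref{prop:fin.perc.nilp} applied to the nilpotent subgroup $H$ equipped with a Schreier generating set derived from $S$, and then transfer the resulting percolation estimates back to $\Cay(\Gamma,S)$ via \cref{lem:roughembedding}. We may assume throughout that $|\Gamma|$ is larger than any constant depending on $(k,n,s,\lambda)$; otherwise $\Cay(\Gamma,S)$ has bounded size and the conclusion is trivial for $p_0$ close enough to $1$.

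I will choose a right-coset transversal $T_0=\{t_1,\ldots,t_n\}$ for $H$ in $\Gamma$ with $t_1=\mathrm{id}$ and $|t_i|_S\le n-1$ for each $i$; this is possible because the coset graph on $\Gamma/H$ is connected and has at most $n$ vertices. Let $T$ denote the symmetric closure of the associated Schreier generating set; classical results then give that $T$ generates $H$, satisfies $|T|\le 4kn$, and is contained in $H\cap\hat S^{2n-1}$. Crucially, the standard Schreier rewriting procedure shows that every $h\in H$ of $S$-length $m$ factors as a product of $m$ elements of $\hat T$, and hence $\diam_T(H)\le\diam_S(\Gamma)$.

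Since $|H|=|\Gamma|/n$, for $|\Gamma|$ sufficiently large we have $\log|\Gamma|\le 2\log|H|$, so
\[
\diam_T(H)\le\diam_S(\Gamma)\le\frac{\lambda|\Gamma|}{(\log|\Gamma|)^s}\le\frac{2^s\lambda n\cdot|H|}{(\log|H|)^s}.
\]
Applying \cref{prop:fin.perc.nilp} to $\Cay(H,T)$ with degree bound $4kn$, step $s$, diameter constant $2^s\lambda n$, and error $\eps/2$ yields $p_0'=p_0'(k,n,s,\lambda,\eps)<1$ such that $\P^{\Cay(H,T)}_{p'}(h_1\leftrightarrow h_2)\ge 1-\eps/2$ for all $h_1,h_2\in H$ and $p'\ge p_0'$. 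The inclusion $\iota\colon H\hookrightarrow\Gamma$ is a $(2n-1,1)$-rough embedding of $\Cay(H,T)$ into $\Cay(\Gamma,S)$, since each $T$-edge corresponds to a specific $S$-path of length at most $2n-1$ and $\iota$ is injective; \cref{lem:roughembedding} therefore supplies $p_1=p_1(k,n,s,\lambda,\eps)<1$ with $\P^{\Cay(\Gamma,S)}_p(h_1\leftrightarrow h_2)\ge 1-\eps/2$ for all $h_1,h_2\in H$ and $p\ge p_1$.

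Finally, to extend connectivity to arbitrary $u,v\in\Gamma$, write $u=h_u t_{i(u)}$ and $v=h_v t_{i(v)}$ with $h_u,h_v\in H$, and fix shortest $S$-paths of length at most $n-1$ from $u$ to $h_u$ and from $v$ to $h_v$. All three relevant events are increasing, so Harris--FKG yields $\P_p(u\leftrightarrow v)\ge p^{2(n-1)}(1-\eps/2)$, which exceeds $1-\eps$ once $p$ is sufficiently close to $1$. The main obstacle is the Schreier rewriting bound $\diam_T(H)\le\diam_S(\Gamma)$, without which the diameter hypothesis of \cref{prop:fin.perc.nilp} could not be inherited by $H$ from $\Gamma$; the remaining steps consist of routine bookkeeping of constants and direct applications of results established earlier in the paper.
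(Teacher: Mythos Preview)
Your proof is correct and follows essentially the same approach as the paper. The paper quotes \cref{lem:fin.ind.gen} (which is itself proved via Schreier rewriting) to obtain that $U=H\cap\hat S^{2n-1}$ generates $H$ with $\diam_U(H)\le\diam_S(\Gamma)$, applies \cref{prop:fin.perc.nilp} to $\Cay(H,U)$, and then transfers back by viewing $\Cay(H,U)$ as a subgraph of $\Cay(\Gamma,\hat S^{2n-1})$ and invoking \cref{cor:changing_generators}; you make the Schreier construction explicit and use \cref{lem:roughembedding} directly for the transfer, which amounts to the same thing with slightly different bookkeeping.
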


Another objective is to prove the following quantitative version of \cite[Corollary 7.19]{LP:book}, which is an important ingredient in the proof of Theorem \ref{thm:gap}. Note here that the upper bound on $p_c$ does not depend on the size of the generating set.

\begin{theorem}\label{thm:inf.perc.nilp}
For each $n\geq 1$ there exists $\eps=\eps(n)>0$ such that if $\Gamma$ is an infinite, finitely generated group that is not virtually cyclic and that contains a nilpotent subgroup of index at most $n$, and if $S$ is a finite generating set of $\Gamma$, then $p_c(\Cay(\Gamma,S))\le1-\eps$.
\end{theorem}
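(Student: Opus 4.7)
The plan is to perform a sequence of Benjamini and Schramm style quotient reductions via \cref{prop:Benj-Schr}, arriving at the virtually $\Z^r$ case, and then to give a direct coupling argument carefully designed to produce an $\eps$ depending only on $n$. Replacing the given nilpotent subgroup by its normal core, one obtains a nilpotent normal subgroup $N \trianglelefteq \Gamma$ of index at most $n!$. Quotienting successively by the torsion subgroup of $N$ and then by $[N,N]$ (both of which are characteristic in $N$ and hence normal in $\Gamma$), and applying \cref{prop:Benj-Schr} at each step, reduces to the case where $\Gamma$ is virtually $\Z^r$ with $\Z^r$ normal of index $m$ bounded in terms of $n$. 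The hypothesis that $\Gamma$ is not virtually cyclic is preserved throughout; combined with the standard fact that a torsion-free finitely generated nilpotent group with cyclic abelianization is itself cyclic, this forces $r \geq 2$.

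The next step is to extract a canonical $\Z^2$ witness. A Schreier transversal for $\Z^r$ in $\Gamma$ can be chosen of $S$-length at most $m-1$, so Schreier generation shows that $\Z^r$ is generated by elements of $\hat S^{2m-1}$; since any generating set of $\Z^r$ contains a $\Z$-basis (as the matrix of a minimal generating set of $\Z^r$ has determinant $\pm 1$), one can select $a, b \in \Z^r \cap \hat S^{2m-1}$ generating a rank-$2$ sub-lattice $L$, for which $\Cay(L, \{a, b\})$ is canonically isomorphic to the standard Cayley graph of $\Z^2$.

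The main obstacle is now comparing percolation on $\Cay(\Gamma, S)$ to percolation on $\Cay(L, \{a,b\})$ without introducing constants depending on $|S|$: a direct appeal to \cref{lem:roughembedding} would fail because the constant $C$ there depends on the degree $2|S|$ of the ambient graph. To circumvent this I use a sprinkling and lifting construction. Fix once and for all words $w_a, w_b$ of length at most $2m-1$ in $S$ representing $a$ and $b$, and decompose Bernoulli-$p$ percolation on $\Cay(\Gamma, S)$ as the union of $2m-1$ independent copies $\omega_1, \dots, \omega_{2m-1}$ of Bernoulli-$q$ percolation with $1-p = (1-q)^{2m-1}$. For each $x \in L$ and $s \in \{a, b\}$, I declare the auxiliary edge $\{x, xs\}$ of $\Cay(L, \{a, b\})$ \emph{good} if, for every $1 \leq k \leq |w_s|$, the $k$-th edge along the lift of $w_s$ starting at $x$ is $\omega_k$-open. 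Each auxiliary edge is then good with probability at least $q^{2m-1}$, and a direct case analysis of when two such lifts can share an (edge, copy-index) slot shows that any given good-event $(x, s)$ is independent of all but at most $8m-4$ others; crucially, this bound depends only on $m$, and hence only on $n$, with no dependence on $|S|$.

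The goodness indicators thus form a dependent percolation on $\Cay(L, \{a, b\}) \cong \Z^2$ whose dependency graph has maximum degree at most $8m-4$ and whose marginal density is at least $q^{2m-1}$. Applying the Liggett, Schonmann and Stacey domination theorem for dependent percolation with bounded-degree dependency graph, this process stochastically dominates Bernoulli bond percolation on $\Z^2$ of parameter $\pi(q, m)$ with $\pi(q, m) \to 1$ as $q \to 1$. Choosing $q$ close enough to $1$ (depending only on $m$) so that $\pi(q, m) > p_c(\Z^2)$ produces, with positive probability, an infinite cluster of good edges in $L$, which by construction is contained in a single cluster of $\Cay(\Gamma, S)$. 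This yields $p_c(\Cay(\Gamma, S)) \leq 1 - (1-q)^{2m-1}$, an upper bound depending only on $n$, completing the proof.
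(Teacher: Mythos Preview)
Your proof is correct in its essential structure, though there are two small inaccuracies in the reductive portion that you should tidy up. First, the parenthetical claim that ``any generating set of $\Z^r$ contains a $\Z$-basis'' is false (for instance $\{(1,0),(1,2),(0,3)\}$ generates $\Z^2$ but no two of these vectors form a basis); fortunately you only use the much weaker fact that any generating set of $\Z^r$ with $r\ge 2$ contains two $\Q$-linearly independent elements, which is trivial. Second, the abelianisation of a torsion-free nilpotent group can have torsion (take the subgroup $\langle x^2,y,[x,y]\rangle$ of the integer Heisenberg group), so after your two quotients the normal subgroup need only be $\Z^r\times(\text{finite})$; again this is harmless, since a rank-$2$ free abelian subgroup is all you need. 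The sprinkling-by-layers construction and the Liggett--Schonmann--Stacey step are correct and rather nice: assigning the $k$th letter of each lifted word to the $k$th independent copy of percolation forces the dependency graph of the ``good'' indicators to have degree at most $8m-4$, independently of $|S|$.

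The paper achieves the same $|S|$-independence by a much more elementary device, avoiding both the chain of quotient reductions and the appeal to LSS. Having fixed words of length at most $2n-1$ in $\hat S$ representing two elements $x_1,x_2\in H$ with linearly independent images in $\Z^2$, it simply \emph{discards all other generators}: letting $S_0\subset\hat S$ be the set of at most $4n-2$ letters appearing in these words and $\Gamma_0=\langle S_0\rangle$, the graph $\Cay(\Gamma_0,S_0)$ is a subgraph of $\Cay(\Gamma,S)$ of degree at most $8n-4$, so it suffices to bound its $p_c$. This is done by a straightforward combination of \cref{prop:Benj-Schr} (the quotient $H_0\to\Z^2$) with the rough-embedding comparison \cref{lem:roughembedding}, whose constant now depends only on $n$ because the ambient degree does. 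Your approach trades this single observation (pass to a bounded-degree subgraph first) for a more sophisticated coupling argument; both are valid, but the paper's is shorter and uses fewer external tools.
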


In fact, proving these results is fairly straightforward given the results of the previous section and the following standard lemmas.

\begin{lemma}[{\cite[Lemma 11.2.1]{MR3971253}}]\label{lem:coset.reps}
Let $n\in\N$. Suppose $\Gamma$ is a group with a finite generating set $S$ and $H$ is a subgroup of index $n$ in $\Gamma$. Then $\hat S^n$ contains a complete set of coset representatives for $H$ in $\Gamma$.
\end{lemma}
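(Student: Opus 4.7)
The plan is to establish the slightly stronger claim that $\hat S^{n-1}$ already meets every coset of $H$; since $\mathrm{id}\in\hat S$ gives $\hat S^{n-1}\subseteq\hat S^n$, this immediately yields the lemma.

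I would work with the Schreier coset graph $\Sigma$ associated to the left action of $\Gamma$ on the space $\Gamma/H$ of right cosets: the vertex set is $\Gamma/H$, and for each right coset $C$ and each $s\in\hat S$ there is an edge between $C$ and $sC$. This is well-defined because left multiplication preserves right cosets, namely $xH=yH$ implies $sxH=syH$ for every $s\in\Gamma$. The key observation is that, by a straightforward induction on $k$, the ball of radius $k$ around the trivial coset $H$ in $\Sigma$ is exactly $\{gH:g\in\hat S^k\}$; indeed, a length-$k$ walk out of $H$ with consecutive edge labels $s_1,s_2,\ldots,s_k\in\hat S$ terminates at $(s_k\cdots s_1)H$, and the set of such products is precisely $\hat S^k$ after relabelling indices.

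Because $S$ generates $\Gamma$, the action of $\Gamma$ on $\Gamma/H$ is transitive, so $\Sigma$ is connected. A connected graph on $n$ vertices has diameter at most $n-1$, hence the ball of radius $n-1$ around $H$ in $\Sigma$ is all of $\Gamma/H$. Combined with the observation of the previous paragraph, this shows that $\hat S^{n-1}$ meets every right coset of $H$, completing the proof. The argument is a routine graph-theoretic translation of the statement, so I anticipate no substantive obstacle; the only point requiring care is the choice of compatible conventions for the coset action (left multiplication acting on right cosets, as above), since right multiplication by generators does not in general preserve right cosets unless $H$ is normal.
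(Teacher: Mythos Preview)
Your argument is correct and is the standard Schreier-graph proof of this fact. The paper does not give its own proof of this lemma; it simply quotes the result from \cite[Lemma 11.2.1]{MR3971253}, so there is nothing to compare against.

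One small terminological slip: you refer to $\Gamma/H$ as ``the space of right cosets'' and later say $\hat S^{n-1}$ meets every ``right coset'', but the objects you actually work with throughout are $gH$, which are \emph{left} cosets. The mathematics is unaffected (and in any case $\hat S^{n-1}$ is symmetric, so meeting every left coset is equivalent to meeting every right coset), but you should correct the wording.
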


\begin{lemma}[{\cite[Lemma 4.2]{MR3439705}}]\label{lem:fin.ind.gen}
Let $n\in\N$. Suppose $\Gamma$ is a group with a finite generating set $S$, and $H$ is a subgroup of index $n$ in $\Gamma$. Then $H\cap \hat S^{2n-1}$ generates $H$ with diameter at most $\diam_S(\Gamma)$.
\end{lemma}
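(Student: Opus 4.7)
The plan is to prove Lemma~\ref{lem:fin.ind.gen} via the classical Schreier rewriting construction: first produce short coset representatives for $H$ in $\Gamma$, then use them to rewrite any $S$-word representing an element of $H$ as a bounded-length product of Schreier generators, each of which lies in $H \cap \hat S^{2n-1}$.

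The first ingredient is a slight sharpening of Lemma~\ref{lem:coset.reps}: I would produce a transversal $T$ for the right cosets $H\backslash\Gamma$ with $T\subseteq\hat S^{n-1}$ and $\mathrm{id}\in T$. This follows from a breadth-first search in the Schreier coset graph, whose vertices are the right cosets and whose edges connect $Hg$ to $Hgs$ for $s\in\hat S$. This graph is connected with $n$ vertices, so a spanning tree rooted at the coset $H$ itself has depth at most $n-1$, and following the tree from the root produces the desired representatives. The sharpening from $\hat S^n$ to $\hat S^{n-1}$ is exactly what converts the naive exponent $2n+1$ into the claimed $2n-1$.

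Given such a transversal and an arbitrary $h\in H$, I would then write $h = s_1 s_2 \cdots s_k$ with $s_i\in\hat S$ and $k = d_S(\mathrm{id},h) \leq \diam_S(\Gamma)$. Put $g_i = s_1\cdots s_i$, so that $g_0 = \mathrm{id}$ and $g_k = h$, and let $\tau_i \in T$ represent the coset $Hg_i$; then $\tau_0 = \tau_k = \mathrm{id}$ because $\mathrm{id}$ and $h$ both lie in $H$. Define
\[
h_i = \tau_{i-1}\, s_i\, \tau_i^{-1} \qquad (1 \leq i \leq k).
\]
The identity $Hg_i = Hg_{i-1}s_i$ forces $H\tau_i = H\tau_{i-1}s_i$, so each $h_i$ lies in $H$; and since $\tau_{i-1},\tau_i \in \hat S^{n-1}$ and $s_i \in \hat S$, each $h_i$ lies in $\hat S^{n-1}\cdot\hat S\cdot\hat S^{n-1}=\hat S^{2n-1}$, hence in $H \cap \hat S^{2n-1}$. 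A one-line telescoping computation using $\tau_0 = \tau_k = \mathrm{id}$ then gives $h_1 h_2 \cdots h_k = \tau_0 s_1 s_2 \cdots s_k \tau_k^{-1} = h$, exhibiting $h$ as a product of at most $\diam_S(\Gamma)$ elements of $H \cap \hat S^{2n-1}$. This simultaneously shows that $H \cap \hat S^{2n-1}$ generates $H$ and that the diameter of $H$ with respect to this generating set is at most $\diam_S(\Gamma)$.

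There is no serious obstacle; the only point requiring care is the spanning-tree construction in the first step, whose mild improvement on Lemma~\ref{lem:coset.reps} is essential for hitting the exponent $2n-1$ rather than the $2n+1$ one would get by applying that lemma as a black box. After that, the rewriting is the standard Schreier procedure and the telescoping identity is immediate.
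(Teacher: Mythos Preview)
Your proof is correct and is the standard Schreier rewriting argument. The paper does not provide its own proof of this lemma but merely cites it from an external reference, so there is nothing in the paper to compare against directly; the argument you give is essentially the classical one and is presumably what the cited source does.
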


\begin{lemma}\label{lem:nilp.Z^2.quotient}
Let $\Gamma$ be a nilpotent group that is not virtually cyclic. Then there exists a surjective homomorphism $\Gamma\to\Z^2$.
\end{lemma}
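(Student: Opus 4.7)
The plan is to pass to the abelianisation and split into cases based on its torsion-free rank. Assume throughout that $\Gamma$ is finitely generated; this is the relevant setting from the rest of the paper (e.g.\ since in \cref{thm:inf.perc.nilp} the nilpotent subgroup has finite index in a finitely generated group and is therefore itself finitely generated). Let $A=\Gamma/[\Gamma,\Gamma]$ be the abelianisation of $\Gamma$, which is a finitely generated abelian group, and write $A\cong\Z^r\oplus T$ with $T$ finite by the structure theorem. If $r\ge2$, then composing the quotient map $\Gamma\to A$ with the projection $\Z^r\oplus T\to\Z^r\to\Z^2$ onto the first two coordinates gives the desired surjection.

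It therefore remains to show that the case $r\le1$ cannot occur, i.e.\ that if $\Gamma$ is finitely generated nilpotent and the rank of $A$ is at most $1$, then $\Gamma$ is virtually cyclic. For this we will prove that $[\Gamma,\Gamma]=\Gamma_2$ is finite; granting this, $\Gamma$ is a finite extension of $A\cong\Z^r\oplus T$ with $r\le1$ and hence virtually cyclic, contradicting the hypothesis.

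To show $\Gamma_2$ is finite, we argue by induction that each successive quotient $\Gamma_i/\Gamma_{i+1}$ in the lower central series is finite for $i\ge2$; since $\Gamma_{s+1}=\{\mathrm{id}\}$ for some $s$ by nilpotence, this gives $|\Gamma_2|<\infty$. Each $\Gamma_i/\Gamma_{i+1}$ is finitely generated abelian (subgroups and quotients of finitely generated nilpotent groups are finitely generated), so it suffices to show that $\Gamma_i/\Gamma_{i+1}$ is torsion. For the base case $i=2$, \cref{lem:comm.linear} tells us that the commutator map $[\,\cdot\,,\,\cdot\,]\colon\Gamma\times\Gamma\to\Gamma_2/\Gamma_3$ is bilinear modulo $\Gamma_3$ and (since it vanishes on $[\Gamma,\Gamma]$ in either coordinate) descends to a $\Z$-bilinear map $A\times A\to\Gamma_2/\Gamma_3$. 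This map is alternating because $[x,x]=\mathrm{id}$, so it factors through the exterior square $A\wedge A$. When $A=\Z^r\oplus T$ with $r\le1$, one checks directly that $A\wedge A$ is a torsion group (the only potentially non-torsion summand $\Z\wedge\Z$ vanishes). Hence $\Gamma_2/\Gamma_3$ is a quotient of a torsion group and is therefore finite. For the inductive step, $\Gamma_{i+1}=[\Gamma_i,\Gamma]$, and applying \cref{lem:comm.linear} again the commutator descends to a bilinear map $(\Gamma_i/\Gamma_{i+1})\times A\to\Gamma_{i+1}/\Gamma_{i+2}$ whose image generates the target. Since $\Gamma_i/\Gamma_{i+1}$ is finite by the inductive hypothesis, the tensor product $(\Gamma_i/\Gamma_{i+1})\otimes A$ is finite, so $\Gamma_{i+1}/\Gamma_{i+2}$ is finite as required.

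The main technical point is the alternating/bilinear factorisation in the base case $i=2$: once one identifies the map as an alternating bilinear form on $A$ with values in $\Gamma_2/\Gamma_3$, the rank-$\le 1$ hypothesis on $A$ forces it to take values in a torsion group, after which the induction up the lower central series is essentially automatic. The rest of the argument is routine finitely-generated-nilpotent bookkeeping.
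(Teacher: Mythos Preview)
Your proof is correct and follows essentially the same approach as the paper's: both reduce to showing that if the abelianisation $A$ has torsion-free rank at most $1$ then $[\Gamma,\Gamma]$ is finite, by arguing that each successive quotient $\Gamma_j/\Gamma_{j+1}$ for $j\ge2$ is finite. The paper does this directly for all $j$ at once by fixing a generating set $S$ whose image in $A$ has at most one element of infinite order and then using \cref{lem:comm.linear} and \cref{lem:G_s=<comms>} to see that every simple commutator $[x_1,\ldots,x_j]$ with $x_i\in S$ has finite order modulo $\Gamma_{j+1}$; you instead phrase the base case via the alternating form $A\wedge A\to\Gamma_2/\Gamma_3$ and then induct. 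One small point: in your inductive step the bilinearity and well-definedness of the map $(\Gamma_i/\Gamma_{i+1})\times A\to\Gamma_{i+1}/\Gamma_{i+2}$ is the standard fact that $[\Gamma_i,\Gamma_j]\subseteq\Gamma_{i+j}$ rather than a direct consequence of \cref{lem:comm.linear} as stated (which concerns simple commutators of fixed length), but this is routine commutator calculus and does not affect the validity of your argument.
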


Although this lemma is well known as folklore, we were not able to find a reference and have therefore included a proof for completeness.

\begin{proof}[Proof of \cref{lem:nilp.Z^2.quotient}]
The abelianisation $\Gamma/[\Gamma,\Gamma]$ is a finitely generated abelian group, so $\Gamma/[\Gamma,\Gamma]$ $\cong\Z^d\times T$ for some $d\geq 0$ and some finite abelian group $T$. It suffices to show that $d\ge2$. If, on the contrary, $d\le1$, then writing $\pi:\Gamma\to\Gamma/[\Gamma,\Gamma]$ for the quotient homomorphism there exists a finite subset $S$ of $\Gamma$ such that $\pi(S)$ generates $\Gamma/[\Gamma,\Gamma]$ and contains at most one element of infinite order. It then follows from \cref{lem:comm.linear} that each simple commutator $[x_1,\ldots,x_j]$ with each $x_i\in S$ has finite order modulo $\Gamma_{j+1}$.
The fact that $\pi(S)$ generates $\Gamma/[\Gamma,\Gamma]$ implies that $S$ generates $\Gamma$ \cite[Corollary 10.3.3]{MR0103215}, so \cref{lem:G_s=<comms>} shows that for each $j=2,\ldots,s$ the simple commutators $[x_1,\ldots,x_j]$ with each $x_i\in S$ generate $\Gamma_j$ modulo $\Gamma_{j+1}$. Thus each quotient $\Gamma_j/\Gamma_{j+1}$ with $j=2,\ldots,s$ is an abelian group generated by finitely many elements of finite order. Since $\Gamma_{s+1}=\{\mathrm{id}\}$, it follows that $\Gamma_2=[\Gamma,\Gamma]$ is finite, and hence that $\Gamma$ is finite by cyclic. Finite-by-cyclic groups are well known to be virtually cyclic: indeed, if $\Gamma/H\cong\Z$ for some $H\lhd\Gamma$ and if $xH$ is a generator of $G/H$ then $H$ is a complete set of coset representatives for the cyclic subgroup $\langle x\rangle$ of $\Gamma$. This completes the proof.
\end{proof}

\begin{proof}[Proof of Theorem \ref{thm:fin.perc.nilp}]
Let $U=H\cap \hat S^{2n-1}$, so that 
\[
\diam_U(H)\le\frac{\lambda n|H|}{(\log|H|)^s}
\]
by \cref{lem:fin.ind.gen}. Applying \cref{prop:fin.perc.nilp} to $G'=\Cay(H,U)$ yields that 
for every $\eps>0$ there exists $p_0=p_0(k,n,s,\lambda,\eps)<1$ such that $\P_p^{G'}(h_1\leftrightarrow h_2) \geq 1-\eps$ for every $p\geq p_0$ and $h_1,h_2 \in H$.
Since $G'$ is a subgraph of $G''=\Cay(\Gamma,\hat S^{2n-1})$, it follows that $\P_p^{G''}(h_1\leftrightarrow h_2) \geq 1-\eps$ for every $p\geq p_0$ and $h_1,h_2\in H$ also. Now, we have by \cref{lem:coset.reps} that for every $\gamma \in \Gamma$ there exists $h\in H$ such that $h$ and $\gamma$ have distance at most $1$ in $\Cay(\Gamma,\hat S^{2n-1})$ and hence that $\P_p^{G''}(h\leftrightarrow \gamma)\ge p$ for every $p\in[0,1]$. It follows by Harris-FKG that
$\P_p^{G''}(x \leftrightarrow y) \geq p^2(1-\eps)$
for every $p \geq p_0$ and $x,y\in \Gamma$, and the result follows easily from \cref{cor:changing_generators}.
\end{proof}

\begin{proof}[Proof of Theorem \ref{thm:inf.perc.nilp}]
Let $H$ be a nilpotent subgroup of index at most $n$ in $\Gamma$ that is not virtually cyclic. \cref{lem:nilp.Z^2.quotient} implies that there exists a surjective homomorphism $\pi:H\to\Z^2$, and \cref{lem:fin.ind.gen} then implies that $\pi(H\cap\hat{S}^{2n-1})$ is a generating set for $\Z^2$, and in particular that there exist elements $x_1,x_2\in H\cap\hat{S}^{2n-1}$ such that $\pi(x_1)$ and $\pi(x_2)$ are linearly independent. Since $x_1,x_2\in\hat{S}^{2n-1}$ there exists a subset $S_0\subset\hat{S}$ of size at most $4n-2$ such that $x_1,x_2\in S_0^{2n-1}$. Let $\Gamma_0=\langle S_0\rangle$ and $H_0=\langle H\cap S_0^{2n-1}\rangle$. Since $\pi(x_1)$ and $\pi(x_2)$ are linearly independent they generate a subgraph of $\Cay(\pi(H_0),\pi(H\cap S_0^{2n-1}))$ that is isomorphic to the two-dimensional square lattice. This lattice has $p_c=\frac12$ by a famous theorem of Kesten \cite{MR575895}, so that $p_c(\Cay(H_0,H\cap S_0^{2n-1}))\le\frac12$ by \cref{prop:Benj-Schr}. Since $H_0<\Gamma_0$ we may consider the inclusion map $H_0\to\Gamma_0$, which induces a $(2n-1,1)$-rough embedding $\Cay(H_0,H\cap S_0^{2n-1}))\to\Cay(\Gamma_0,S_0)$. \cref{lem:roughembedding} therefore implies that there exists $\eps=\eps(n)$ such that $p_c(\Cay(\Gamma_0,S_0))\le1-\eps$. This proves the theorem, since $\Cay(\Gamma_0,S_0)$ is a subgraph of $\Cay(\Gamma,S)$.
\end{proof}

\begin{remark}
Instead of invoking Kesten's theorem, one could instead use the easy bound $p_c(\Z^2) \leq 2/3$ to deduce a result of the same form.
\end{remark}

\section{Percolation from isoperimetry}
\label{sec:Isoperimetry}

The goal of this section is to prove Theorems \ref{thm:iso.inf}, \ref{thm:pu} and \ref{thm:uniquenesscriterion}. 
The section is organised as follows:
In \cref{subsec:DGRSY} we explain how the methods of Duminil-Copin, Goswami, Raoufi, Severo, and Yadin \cite{1806.07733} can be adapted to the setting of finite graphs, then apply the resulting theorems to prove \cref{thm:iso.inf} in \cref{subsec:largetosmall}. Finally, in \cref{subsec:pu} we prove \cref{thm:uniquenesscriterion} and then deduce \cref{thm:pu} from Theorems \ref{thm:iso.inf} and \ref{thm:uniquenesscriterion}.

\subsection{Connecting large sets via the Gaussian free field}
\label{subsec:DGRSY}

In this section we discuss those results that can be obtained by a direct application of the methods of \cite{1806.07733} to finite graphs.
Before stating these results, we first introduce some relevant definitions. Let $G=(V,E)$ be a finite, connected graph and let $B \subseteq V$ be a distinguished set of \emph{boundary vertices}. We say that the pair $(G,B)$ satisfies a $d$-dimensional isoperimetric inequality with constant $c$, abbreviated \eqref{Assumption:IDB}, if
\begin{equation}
\label{Assumption:IDB}
\tag{$\mathrm{ID}(B)_{d,c}$} |\partial_E K| \geq c |K|^{(d-1)/d} \qquad \text{ for every $K \subseteq V \setminus B$.}
\end{equation}
Note that, in contrast to our earlier definition \eqref{Assumption:ID}, we now require that \emph{every} large subset of $V\setminus B$ has large boundary, not just those with at most half the total volume of $V$. In particular, $B$ must itself be large for \eqref{Assumption:IDB} to hold.  For each two non-empty, disjoint sets of vertices $A$ and $B$ in the finite connected graph $G$, we write $\Ceff(A \leftrightarrow B)$ for the \textbf{effective conductance} between $A$ and $B$, which is defined by
\[
\Ceff(A \leftrightarrow B) = \sum_{a\in A} \deg(a) \bP_a(\tau_B < \tau^+_A) = \sum_{b\in B} \deg(b) \bP_b(\tau_A < \tau^+_B)
\]
where $\bP_v$ is the law of a simple random walk started at $v$ and $\tau_A$ and $\tau^+_A$ denote the first time and first positive time that the walk visits $A$ respectively. It is a theorem of Lyons, Morris, and Schramm~\cite{LMS08} that if $G=(V,E)$ satisfies a $d$-dimensional isoperimetric inequality \eqref{Assumption:ID} then there exists a positive constant $c'=c'(d,c)$ such that
\begin{equation}
\label{eq:LMS}
\Ceff(A \leftrightarrow B) \geq c' \min\{|A|,|B|\}^{(d-2)/d}
\end{equation}
for every two disjoint non-empty sets $A,B \subseteq V$. See e.g.\ \cite{LP:book} for further background on effective conductances.

\medskip

The following theorem follows by an essentially identical proof to that of \cite[Theorem 1.2]{1806.07733}.

\begin{theorem}[Connecting to large sets]
\label{prop:connecting_big_sets}
Let $G=(V,E)$ be a finite, connected graph with degrees bounded by $k$ and let $B \subseteq V$ be such that $(G,B)$ satisfies a $d$-dimensional isoperimetric inequality \eqref{Assumption:IDB} for some $d>4$ and $c>0$. Then there exists $p_0=p_0(d,c,k)<1$ such that
\begin{equation}
\label{eq:connecting_big_sets}
\P_p(A \leftrightarrow B) \geq 1-\exp\Bigl[ - \frac{1}{2} \Ceff(A\leftrightarrow B) \Bigr]
\end{equation}
for every $p_0 \leq p \leq 1$ and every non-empty set $A \subseteq V$.
\end{theorem}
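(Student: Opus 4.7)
The plan is to adapt the Gaussian free field argument of Duminil-Copin, Goswami, Raoufi, Severo and Yadin~\cite{1806.07733} to the finite setting with boundary $B$; since the excerpt anticipates that the proof is ``essentially identical'', the work is largely to track where $B$ replaces the role of infinity throughout their paper.

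Let $\phi$ denote the centred Gaussian free field on $G$ with Dirichlet boundary conditions on $B$. Its covariance is the Green's function $G_B$ of simple random walk killed upon hitting $B$, and the isoperimetric assumption \eqref{Assumption:IDB}, combined with the Lyons--Morris--Schramm estimate \eqref{eq:LMS} and standard heat-kernel bounds for graphs satisfying a $d$-dimensional isoperimetric inequality with $d>4$, yields that $\phi$ has uniformly bounded pointwise variance (depending only on $d,c,k$). The first step, following DGRSY, is to construct for every $p$ close enough to $1$ a coupling such that the cluster of any vertex $x$ in the site level set $\{y:\phi(y)\ge -h(p)\}$ is contained in its Bernoulli-$p$ cluster, with $h(p)\to\infty$ as $p\to 1$. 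Their construction is purely local --- a block renormalisation combined with a finite-energy argument --- and relies only on the uniform pointwise variance bound, so it transfers unchanged once ``killed on $B$'' replaces ``killed at infinity''.

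With this coupling in hand, the non-connection event $\{A\nleftrightarrow B\}$ under $\omega_p$ implies that some level set $\{\phi \ge -h\}$ disconnects $A$ from $B$, which in turn forces the Gaussian linear functional $\Phi=\langle e_A,\phi\rangle$ to be atypically small, where $e_A$ denotes the equilibrium measure of $A$ relative to $B$ (for the killed walk). Since $\Phi$ is centred Gaussian with variance exactly $\Ceff(A\leftrightarrow B)^{-1}$, the standard Gaussian tail bound gives
\[
\P\bigl(A\nleftrightarrow B\bigr)\;\le\;\P\bigl(\Phi\le -t\bigr)\;\le\;\exp\Bigl[-\tfrac{1}{2}t^{2}\,\Ceff(A\leftrightarrow B)\Bigr]
\]
for suitable $t=t(p,h(p))$. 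Choosing $p$ close enough to $1$ that the relevant threshold $t$ is at least $1$ produces exactly the claimed bound $1-\exp[-\tfrac{1}{2}\Ceff(A\leftrightarrow B)]$.

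The main obstacle will be the careful bookkeeping needed to obtain the precise constant $\tfrac12$ in the exponent: this requires optimising the threshold $h(p)$ against the capacity and extracting sharp constants from DGRSY's comparison between percolation and GFF level sets, which their paper states only up to unspecified multiplicative constants. Everything else in their argument --- the block renormalisation, the pointwise variance bounds via \eqref{eq:LMS}, and the Gaussian concentration inequality for equilibrium potential averages --- is robust to the passage from the infinite to the finite setting with boundary $B$, because the only structural input is isoperimetry of subsets disjoint from $B$, which is precisely what \eqref{Assumption:IDB} provides.
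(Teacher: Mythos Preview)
Your high-level picture --- use the Gaussian free field with Dirichlet conditions on $B$ and compare to Bernoulli percolation --- is correct, but the mechanism you describe is not the one in \cite{1806.07733}, and the version you sketch has real problems.

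First, DGRSY do \emph{not} construct a coupling in which clusters of a GFF site level set $\{\varphi\ge -h\}$ are contained in Bernoulli-$p$ clusters; no such domination is proved (nor would it obviously hold, since $\varphi$ is unbounded). The intermediate object is instead the \emph{bond} percolation in random environment with edge probabilities $\bp(\varphi)_e=1-\exp[-2(\varphi_x+1)_+(\varphi_y+1)_+]$, arising from the Edwards--Sokal coupling applied to the Ising model formed by the signs of $\varphi+1$. The two ingredients are then: (i) an exact inequality (their (2.5), reproduced as \cref{prop:GFFpercestimate} here) stating that $\E_B^{\mathrm{GFF}}[\P_{\bp(\varphi)}(A\nleftrightarrow B)]\le\E_B^{\mathrm{GFF}}[\exp(-\sum_{x\in A}t_x(\varphi_x+1))]$ for every $t\in\R^A$; and (ii) a comparison (their Proposition~3.2, here \cref{prop:GFF_comparison}) showing $\P_p(A\leftrightarrow B)\ge\E_B^{\mathrm{GFF}}[\P_{\bp(\varphi)}(A\leftrightarrow B)]$ for $p\ge p_0(d,c,k)$. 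Step~(ii) uses a decomposition of $\varphi$ into a local part and a slowly varying harmonic part (this is where $d>4$ enters via heat-kernel bounds), followed by Liggett--Schonmann--Stacey --- not a ``level-set contained in Bernoulli cluster'' coupling.

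Second, your worry about extracting the sharp constant $\tfrac12$ is misplaced: it drops out \emph{exactly} from step~(i). Choosing $t_x=\deg(x)\bP_x(\tau_B<\tau_A^+)$ gives $\sum_x t_x=\Ceff(A\leftrightarrow B)$ and $\sum_y t_y\bG_B(x,y)=1$ for each $x\in A$, so the Gaussian Laplace transform evaluates to $\exp[-\Ceff+\tfrac12\Ceff]=\exp[-\tfrac12\Ceff]$ on the nose. There is no optimisation left undone in \cite{1806.07733}; the constant is explicit there and carries over verbatim to the finite setting.
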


Rather than reproduce the entire proof of \cite[Theorem 1.2]{1806.07733}, which would take rather a lot of space, we instead give a brief summary of the main ideas of that proof with particular emphasis given to the (very minor) changes needed to prove \cref{prop:connecting_big_sets}.

\begin{remark}\label{rem:DGRSY}
Note that if $G=(V,E)$ is an \emph{infinite} connected, locally finite graph satisfying \eqref{Assumption:ID}, $\Lambda \subseteq V$ is a finite set of vertices, $G_\Lambda$ is the subgraph of $G$ induced by $\Lambda$, and $\partial^-_V \Lambda$ is the internal vertex boundary of $\Lambda$ then $(G_\Lambda,\partial^-_V \Lambda)$ satisfies $(\mathrm{ID}(\partial^-_V \Lambda)_{d,c})$. Taking an exhaustion of $G$ by finite sets, this allows one to recover \cref{thm:quantitative_DGRSY} from \cref{prop:connecting_big_sets} and the inequality \eqref{eq:LMS}.
\end{remark}

The proof relies crucially on the relationships between Bernoulli bond percolation and the \emph{Gaussian free field}. The basic idea of the proof is that, by using the Gaussian free field, we can construct a \emph{percolation in random environment} model that is easier to prove has a phase transition than for standard Bernoulli percolation. The main technical step of the proof of \cite{1806.07733} then shows that, under a suitable isoperimetric assumption, we can `integrate out' the randomness of the environment and compare the new model to standard Bernoulli percolation of sufficiently high retention probability.

\medskip

We now recall the relevant definitions.
Let $G=(V,E)$ be a finite, connected graph and let $B$ be a non-empty subset of $V$. 
Let $P_B : V^2 \to \R$ be the transition matrix of a random walk on $G$ that is killed when it first visits $B$, so that 
\[P_B(u,v) = \frac{\text{number of edges between $u$ and $v$}}{\deg(u)} \mathbbm{1}(u,v \notin B)\]
for every $u,v\in V$. The \textbf{Green function} $\bG_B : V^2 \to \R$ is defined by
\[
\bG_B(u,v) = \frac{\mathbbm{1}(u,v\in V \setminus B)}{\deg(v)}\sum_{n\geq 0} P_B^n(u,v)
\]
for every $u,v\in V$, so that $\deg(v)\bG_B(u,v)$ is the expected number of times a random walk started at $u$ visits $v$ before first hitting $B$. (Note that the normalization by the degree is not always included in the definition of the Green function, but is convenient for our applications here as it makes the Green function symmetric.) 
The \textbf{Gaussian free field} (GFF) on $G$ with Dirichlet boundary conditions on $B$ is the mean-zero Gaussian random vector $\varphi = (\varphi_v)_{v\in V}$ with covariances given by the Green function. That is,  $\varphi$ is a Gaussian random vector with
\[
\E_B^\mathrm{GFF} \left[\varphi_u\right] = 0 \qquad \text{ and } \qquad \E_B^\mathrm{GFF} \left[\varphi_u\varphi_v\right] = \bG_B(u,v)
\]
for every $u,v\in V$, where we write $\E_B^\mathrm{GFF}$ for expectations taken with respect to the law of $\varphi$. Equivalently, the law $\P_B^\mathrm{GFF}$ of the GFF on $G$ with Dirichlet boundary conditions on $B$ can be defined as the measure on $\R^{V}$ that is supported on $\{\psi \in \R^V : \psi(b)=0 $ for every $b \in B\} \cong \R^{V \setminus B}$ and has density with respect to Lebesgue measure $\operatorname{Leb}$ on $\R^{V\setminus B}$ given by
\begin{equation}
\label{eq:GFF_Hamiltonian}
\frac{\dif\P_B^\mathrm{GFF}}{\dif\operatorname{Leb}}(\varphi) = \frac{1}{Z(G,B)} \exp\left[-\frac{1}{2} \sum_{e\in E^\rightarrow} \bigl|\varphi\bigl(e^+\bigr)-\varphi\bigl(e^-\bigr)\bigr|^2 \right],
\end{equation}
 where $E^\rightarrow$ denotes the set of oriented edges of the graph $G$ and $Z(G,B)$ is a normalizing constant. (Although our graphs are not oriented, we can still think of each edge as having exactly two possible orientations.)

\medskip

 Let $\lambda >0$. It follows from the representation \eqref{eq:GFF_Hamiltonian} that if we condition on the absolute values $(|\varphi_v+\lambda|)_{v\in V}$, then the signs $(\sign(\varphi_v+\lambda))_{v\in V}$ are distributed as an \emph{Ising model} on $G$ with plus boundary conditions on $B$ and with coupling constants given by $J(e)=|\varphi_x+\lambda||\varphi_y+\lambda|$ for each edge $e$ with endpoints $x$ and $y$. (The discussion here serves only as background; we will not use the Ising model directly in this paper.) Using this fact with $\lambda=1$ together with the relationship between the Ising model and Bernoulli percolation given by the Edwards-Sokal coupling led the authors of \cite{1806.07733} to derive an important estimate concerning connection probabilities in a certain \emph{percolation in random environment} model derived from the GFF.
Before stating this estimate, let us first give the relevant definitions.
Given a vector of probabilities $\bp=(\bp_e)_{e\in E}$, $\P_\bp$ denotes the law of the inhomogeneous Bernoulli bond percolation process in which each edge $e$ is either deleted or retained independently at random with retention probability $\bp_e$.
 Finally, given a real number $x$ we write $x_+= \max \{x,0\}$. The following proposition follows by the same proof as \cite[Proposition 2.1]{1806.07733}. (Indeed, the proof of that proposition implicitly establishes the proposition as stated here and then applies a limiting argument to deduce a similar claim for connections to infinity in infinite graphs.)

\begin{proposition}
\label{prop:GFFpercestimate}
Let $G=(V,E)$ be a finite, connected graph and let $A,B$ be non-empty disjoint subsets of $V$. Then
\begin{equation}
\label{eq:GFFpercestimate}
\E_B^\mathrm{GFF}\left[\P_{\mathbf{p}(\varphi)}(A \leftrightarrow B) \right] \geq 1- \exp \left[ - \frac{1}{2} \Ceff(A \leftrightarrow B) \right]
\end{equation}
where $\bp(\varphi)_e:=1-\exp\left[-2(\varphi_x+1)_+(\varphi_y+1)_+ \right]$ for each edge $e$ with endpoints $x$ and $y$. 
\end{proposition}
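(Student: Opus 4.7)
The plan is to adapt the proof of \cite[Proposition 2.1]{1806.07733} to the finite-volume setting. Writing $h = \varphi + 1$ and conditioning on the magnitudes $(|h_v|)_{v \in V}$, the identity $(h_x - h_y)^2 = h_x^2 + h_y^2 - 2\sigma_x\sigma_y|h_x||h_y|$ with $\sigma_v := \sign(h_v)$ shows via \eqref{eq:GFF_Hamiltonian} that the conditional law of the signs is an Ising model on $G$ with plus boundary condition on $B$ (since $h \equiv 1$ on $B$) and random couplings $J_e := |h_x||h_y|$.

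Next I would couple $\omega$ to the Edwards--Sokal/FK-Ising configuration using common randomness. Given $\sigma$ and i.i.d.\ uniforms $(U(e))_{e\in E}$, define
\[
\omega(e) = \mathbbm{1}\bigl[U(e) \leq 1 - e^{-2J_e}\bigr]\,\mathbbm{1}\bigl[\sigma_x = \sigma_y = +1\bigr], \qquad \eta(e) = \mathbbm{1}\bigl[U(e) \leq 1 - e^{-2J_e}\bigr]\,\mathbbm{1}\bigl[\sigma_x = \sigma_y\bigr].
\]
The marginal of $\omega$ given $\varphi$ is correct because $(h_x)_+(h_y)_+ = |h_x||h_y|$ exactly when $\sigma_x = \sigma_y = +1$ and vanishes otherwise. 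Both $\omega$- and $\eta$-open edges respect the sign partition, and since $B$ lies in $\{\sigma \equiv +1\}$, the cluster of $B$ in either configuration is contained in the $(+)$-region; on edges within this $(+)$-region $\omega$ and $\eta$ coincide, so the cluster of $B$ is literally the same in the two models. Hence $\P(A \leftrightarrow B \text{ in } \omega) = \P(A \leftrightarrow B \text{ in } \eta)$, reducing the task to lower-bounding the averaged FK-Ising connection probability in the random environment $J$.

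The heart of the argument, and the main obstacle, is this final capacity bound. Following \cite{1806.07733}, let $f$ be the harmonic function on $V \setminus (A \cup B)$ with $f|_A = 1$ and $f|_B = 0$, so that its Dirichlet energy equals $\Ceff(A \leftrightarrow B)$. The Cameron--Martin formula for the GFF allows one to shift $\varphi \mapsto \varphi + tf$ at the cost of multiplying by a Radon--Nikodym factor $\exp\bigl[-\tfrac{t^2}{2}\Ceff(A, B) + t\langle f, \varphi \rangle_\nabla\bigr]$; this shift simultaneously increases $h$ on $A$ to $t + 1$, forcing $A$ to be overwhelmingly likely to be connected to $B$ in $\omega$. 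Pairing this with monotonicity of the FK-Ising connection probability in the couplings $J_e$ (Griffiths' inequalities) and a Jensen/Cauchy--Schwarz step to absorb the linear-in-$\varphi$ term, then optimizing over $t$, yields $\P(A \nleftrightarrow B \text{ in } \omega) \leq \exp[-\Ceff(A \leftrightarrow B)/2]$. The sign--magnitude decomposition and FK identification above are elementary in finite volume and require no change from the infinite-volume argument of \cite{1806.07733}; the delicate part is precisely the execution of the capacity step while preserving the exact constant $\tfrac12$ in the exponent.
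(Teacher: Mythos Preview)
Your first two steps are correct and in fact more detailed than the paper's own proof, which simply invokes \cite[Equation (2.5)]{1806.07733}: your sign--magnitude decomposition and the coupling showing that the cluster of $B$ coincides in $\omega$ and $\eta$ are exactly the ingredients behind that equation.

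The gap is in your final step. The Cameron--Martin shift by $tf$ does not ``force $A$ to be connected to $B$'': making $h$ large on $A$ (or even on all of $V\setminus B$, since $f>0$ there) only helps asymptotically, and there is no clean monotonicity/Cauchy--Schwarz pairing that converts this into the sharp bound $\exp[-\tfrac12\Ceff(A\leftrightarrow B)]$. The actual mechanism in \cite{1806.07733}, which you have already set up, is a sign-symmetry argument: on the event $\{A\nleftrightarrow B\text{ in }\eta\}$, conditionally on $|h|$ and $\eta$, the Edwards--Sokal coupling gives the clusters meeting $A$ independent fair-coin signs, whence
\[
\E\Bigl[e^{-\sum_{x\in A} t_x\sigma_x|h_x|}\ \Big|\ |h|,\eta,\ A\nleftrightarrow B\Bigr]\ \geq\ 1
\]
for every $t\in\R^A$ (each cluster contributes a $\cosh\geq 1$). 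This yields the key inequality $\E_B^{\mathrm{GFF}}[\P_{\bp(\varphi)}(A\nleftrightarrow B)]\leq\E_B^{\mathrm{GFF}}[e^{-\sum_{x\in A}t_x(\varphi_x+1)}]$, which is precisely \cite[(2.5)]{1806.07733}. The paper then takes $t_x=\deg(x)\,\bP_x(\tau_B<\tau_A^+)$ and computes the Gaussian moment generating function directly, using $\sum_{x\in A}t_x=\Ceff(A\leftrightarrow B)$ and $\sum_{y\in A}t_y\,\bG_B(x,y)=1$ for $x\in A$. Note that this choice of $t$ is exactly $-\Delta f$ on $A$ for your equilibrium potential $f$, so the Gaussian computation \emph{is} a Cameron--Martin identity in disguise --- your instinct to bring in $f$ is right, but it enters through the exponential test function $e^{-\sum t_x h_x}$ and sign symmetry, not through a shift-plus-monotonicity argument.
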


\begin{proof}[Proof of \cref{prop:GFFpercestimate}]
For each $t\in \R^A$ let 
$X^t_A(\varphi)=\exp\left[-\sum_{x\in A}t_x (\varphi_x+1)\right]$.
Equation (2.5) of \cite{1806.07733} states in our notation that
\[
\E_B^\mathrm{GFF}\left[\P_{\mathbf{p}(\varphi)}(A \nleftrightarrow B) \right] \leq \E_B^\mathrm{GFF}\left[X^t_A(\varphi)\right] 
\]
for every $t\in \R^A$. We have by the definitions that $\sum_{x\in A}t_x (\varphi_x+1)$ is a Gaussian with mean $\sum_{x\in A} t_x$ and variance $\sum_{x,y\in A} t_xt_y \bG_B(x,y)$, so that
\[
\E_B^\mathrm{GFF}\left[X^t_A(\varphi)\right] = \exp\left[ -\sum_{x\in A} t_x + \frac{1}{2}\sum_{x,y\in A} t_xt_y \bG_B(x,y)\right].
\]
Taking $t_x = \deg(x) \mathbf{P}_x(\tau_B<\tau_A^+)$ for each $x \in A$ yields the claimed bound since we have by a standard calculation that $\sum_{x\in A} t_x = \Ceff(A \leftrightarrow B)$ and 
\begin{align*}
 \sum_{y\in A}t_y \bG_B(x,y) &= \sum_{y\in A}\sum_{n\geq 0} P_B^n(x,y) \mathbf{P}_y(\tau_B<\tau_A^+)
  \\&= \sum_{y\in A}\bP_x(\text{the walk visits $A$ for the last time at $y$ before hitting $B$})=1
\end{align*}
for every $x\in A$.
\end{proof}

 \cref{prop:GFFpercestimate} establishes the same estimate as \cref{prop:connecting_big_sets} but applying to the percolation in random environment model associated to the GFF rather than to our original Bernoulli percolation model. An important and technical part of the argument of \cite{1806.07733} shows that, in high-dimensional graphs, the two models can be compared in such a way that \cref{prop:GFFpercestimate} implies \cref{prop:connecting_big_sets}.

\begin{proposition}
\label{prop:GFF_comparison}
Let $G=(V,E)$ be a finite, connected graph with degrees bounded by $k$ and let $B \subseteq V$ be such that $(G,B)$ satisfies a $d$-dimensional isoperimetric inequality \eqref{Assumption:IDB} for some $d>4$ and $c>0$. Then there exists $p_0=p_0(d,c,k)<1$ such that
\begin{equation}
\P_p(A \leftrightarrow B) \geq \E_B^\mathrm{GFF}\left[\P_{\mathbf{p}(\varphi)}(A \leftrightarrow B) \right]
\end{equation}
for every $p_0 \leq p \leq 1$ and every non-empty set $A \subseteq V$.
\end{proposition}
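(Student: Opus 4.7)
The plan is to follow the proof of the corresponding comparison in the infinite setting from~\cite{1806.07733} essentially line for line, with the single substitution that the ``bad'' vertex set which must be controlled lives in $V \setminus B$ rather than constituting finite components of an infinite graph. The isoperimetric hypothesis \eqref{Assumption:IDB} plays exactly the role that the global isoperimetric inequality on an infinite graph plays in~\cite{1806.07733}, with the distinguished boundary set $B$ serving as a proxy for infinity; once this substitution is made, their proof and their estimates apply verbatim.

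More concretely, I would proceed as follows. Fix a large threshold $\lambda_0 > 0$ and define the random set of bad vertices $\cB_\varphi = \{v \in V \setminus B : \varphi_v + 1 < \lambda_0\}$. On any edge $e = \{x,y\}$ with $x, y \notin \cB_\varphi$ one has $\bp(\varphi)_e \geq 1 - \exp[-2\lambda_0^2]$, which can be made as close to $1$ as we wish. Letting $\omega$ be an independent Bernoulli-$p$ configuration coupled to $\varphi$, the task reduces to showing that with probability one, every $\bp(\varphi)$-open connection from $A$ to $B$ can be converted into a connection from $A$ to $B$ in the configuration which uses $\bp(\varphi)$-open edges outside $\cB_\varphi$ and $\omega$-open edges to detour around each visit to $\cB_\varphi$, provided $p$ is sufficiently close to $1$.

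The rerouting step is the heart of the argument. Using the uniform bound $\bG_B(v,v) \leq C(d,c,k)$, which follows from the $d$-dimensional isoperimetric inequality via \eqref{eq:LMS}, one obtains uniform Gaussian tail bounds on the values of $\varphi$. Combined with \eqref{Assumption:IDB} applied to connected subsets of $V \setminus B$, this forces the connected components of $\cB_\varphi$ inside $V \setminus B$ to be small: any such component $T$ has vertex boundary of size at least $c|T|^{(d-1)/d}$, and a union bound over connected subgraphs combined with the Gaussian tails yields a polynomial tail bound $\P(|T| \geq n) \leq C n^{-\alpha}$ for some $\alpha = \alpha(d,c,k) > 0$ that grows with $d$. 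The rerouting of a single $\bp(\varphi)$-open path through a bad component $T$ is then performed by finding an $\omega$-open path along $\partial_E T$; the ``cost'' of this detour is controlled by the electrical capacity of $\partial_E T$, and summing these costs over all bad components visited by the path converges precisely when $d > 4$.

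The main obstacle, and the reason the argument does not extend below $d = 4$, is the quantitative balance between the polynomial tail bound on component sizes of $\cB_\varphi$ and the geometric cost of detouring around each such component using $\omega$: these conspire to yield a convergent series exactly when $d > 4$. In our finite setting this balance is identical to the one arising in~\cite{1806.07733}, since the only use of the ambient graph in their argument is through the isoperimetric control on components of $\cB_\varphi$, which for us is furnished by \eqref{Assumption:IDB} whenever those components are contained in $V \setminus B$. Beyond replacing ``infinity'' by ``$B$'' throughout, no genuinely new ideas are required, which is why the authors indicate that the proof follows~\cite{1806.07733} essentially unchanged and that only a brief summary is warranted.
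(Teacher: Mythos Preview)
Your overarching strategy is correct and is exactly the paper's: both defer to \cite[Proposition~3.2]{1806.07733}, replacing ``infinity'' by the boundary set $B$ and invoking \eqref{Assumption:IDB} wherever the global isoperimetric inequality was used; the paper's own proof adds only a verification that every constant appearing in the DGRSY argument depends on $d,c,k$ alone.

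However, your description of how the DGRSY argument actually runs is garbled, and if you were to try to fill in the sketch as written you would get stuck. The paper (and DGRSY) first convert \eqref{Assumption:IDB} into the heat-kernel bound $P_B^n(u,v)\le Cn^{-d/2}$, which is the hypothesis $(\mathrm{H}_d)$ of \cite{1806.07733}; it is this on-diagonal decay, not a Green-function diagonal bound or any ``electrical capacity of $\partial_E T$'', that drives the comparison. The rerouting is not done by summing detour costs over bad components along a single path. Instead, a decoupling estimate (Lemma~3.6 of \cite{1806.07733}, which is precisely where the condition $d>4$ enters, via summability of covariances) shows that the indicator field of bad vertices is stochastically dominated by an i.i.d.\ Bernoulli field of arbitrarily small density, using the Liggett--Schonmann--Stacey theorem (Lemma~3.5 there). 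After that domination, one works entirely with homogeneous Bernoulli percolation, and the choice of $p_0$ absorbs the sparse bad set. Your ``convergent series of detour costs'' heuristic is not the mechanism and does not correctly locate where $d>4$ is used.
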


Note that the proof does \emph{not} establish a stochastic domination relation between the two models, and indeed such a relation does not hold. 

\begin{proof}[Proof of \cref{prop:GFF_comparison}]
 We first apply the classical relationship between isoperimetric inequalities and return probability bounds (see e.g.\ \cite[Theorem 3.2.7]{KumagaiBook} or \cite[Corollary 6.32]{LP:book}) to deduce from \eqref{Assumption:IDB} that there exists a constant $C=C(d,c,k)$ such that
\[
P_B^n(u,v) \leq C n^{-d/2} 
\]
for every $u,v\in V$. This bound is of the same form as the hypothesis $(\mathrm{H}_d)$ of \cite{1806.07733}, and given this bound the proof proceeds exactly as that of \cite[Proposition 3.2]{1806.07733};
one need only check that the value of $p_0$ given by that proof depends only on $d,c,$ and $k$. Let us now give a brief indication of how this can be done. That proof gives a value for $p_0$ of the form $1-(1-q)e^{-h}$, where $q\in[\frac12,1)$ and $h>0$ both depend on two other quantities $\alpha>0$ and $n_0\in\N$, and in principle on the graph $G$. The quantity $q$ is computed explicitly from $n_0$ in the proof of Proposition 3.2, with no dependence on the graph. The quantity $h$ is computed in Lemma 3.5, and depends only on $n_0$ and the maximum degree of the graph since all the constants arising in the quoted theorem of Liggett, Schonmann and Stacey \cite{MR1428500} depend only on the maximum degree. The value of $\alpha$ and a preliminary value for $n_0$ are given by Lemma 3.6; they depend on several constants appearing in the conclusion of the proof of that lemma (pages 21 and 22 of the published version), each of which can easily be checked to depend only on the parameters $d,c$, and $k$ (indeed, these constants are introduced only to simplify a completely explicit expression involving the maximum degree and sums of return probabilities). The integer $n_0$ is then possibly increased at the start of the proof of Proposition 3.2, but only to ensure that it is larger than some constant depending only on $\alpha$.
\end{proof}

\begin{proof}[Proof of \cref{prop:connecting_big_sets}]
This follows immediately from \cref{prop:GFFpercestimate,prop:GFF_comparison}.
\end{proof}

 \subsection{From large sets to small sets}
\label{subsec:largetosmall}

We now apply \cref{prop:connecting_big_sets} to complete the proof of \cref{thm:iso.inf}. We first establish the following crude relationship between the two kinds of isoperimetric inequality we consider.

\begin{lemma}
\label{lem:isoperimetry_twodefinitions}
Let $G=(V,E)$ be a finite, connected graph satisfying the isoperimetric inequality \eqref{Assumption:ID} for some $d>1$ and $c>0$. If $B\subseteq V$ has $|B| \geq \eps |V|^{1-\delta}$ for some $0< \eps,\delta \leq 1$, then $(G,B)$ satisfies the isoperimetric inequality $(\mathrm{ID}(B)_{d',c'})$ with 
\[d'=\frac{d}{1+\delta(d-1)} \qquad \text{ and } \qquad c'=c \eps^{(d-1)/d}.\]
\end{lemma}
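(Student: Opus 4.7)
The plan is to unpack both notions of isoperimetric inequality and apply $(\mathrm{ID}_{d,c})$ directly, splitting into two cases according to whether the minimum $\min\{|K|,|V\setminus K|\}$ appearing in $(\mathrm{ID}_{d,c})$ is attained by $|K|$ or by $|V\setminus K|$. A short computation gives the target exponent $(d'-1)/d' = (d-1)(1-\delta)/d$, so the conclusion to establish is
\[
|\partial_E K| \geq c\,\eps^{(d-1)/d}\, |K|^{(d-1)(1-\delta)/d} \quad \text{for every } K \subseteq V \setminus B.
\]

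First I would dispose of the easy case $|K| \leq |V \setminus K|$. Here $(\mathrm{ID}_{d,c})$ yields $|\partial_E K| \geq c|K|^{(d-1)/d}$, and comparing exponents reduces the desired bound to $|K|^\delta \geq \eps$, which is immediate since $K$ is non-empty (so $|K| \geq 1$) and $\eps \leq 1$.

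The main case is $|K| > |V \setminus K|$. The key observation is that the assumption $K \subseteq V \setminus B$ forces $B \subseteq V \setminus K$, so
\[
|V \setminus K| \;\geq\; |B| \;\geq\; \eps |V|^{1-\delta} \;\geq\; \eps |K|^{1-\delta},
\]
using $|K| \leq |V|$ and $1-\delta \geq 0$ in the last step. Raising to the power $(d-1)/d$ and substituting into $(\mathrm{ID}_{d,c})$ then gives the required bound.

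I do not anticipate any substantial obstacle; the lemma is essentially a bookkeeping step that converts a two-sided isoperimetric inequality into a one-sided one (relative to $B$), at the price of degrading the exponent from $d$ to $d'$ in order to compensate for the fact that $|V\setminus K|$ need no longer be comparable to $|K|$ when $K$ is large.
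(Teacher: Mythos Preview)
Your proof is correct and follows essentially the same approach as the paper: split according to whether $|K|\le|V\setminus K|$ (equivalently $|K|\le|V|/2$), handle the first case directly from $(\mathrm{ID}_{d,c})$, and in the second case use $B\subseteq V\setminus K$ to bound $|V\setminus K|\ge\eps|V|^{1-\delta}\ge\eps|K|^{1-\delta}$. The paper's write-up is slightly terser (it calls the first case ``trivial'' without spelling out the $|K|^\delta\ge\eps$ check), but the argument is the same.
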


\begin{proof}
Let $K \subseteq V \setminus B$. We need to prove that $|\partial_E K|\geq c'|K|^{(d'-1)/d'}$. The claim follows trivially from \eqref{Assumption:ID} if $|K|\leq |V|/2$. Otherwise we have that
\begin{multline*}|\partial_E K| \geq c|V \setminus K|^{(d-1)/d} \geq c |B|^{(d-1)/d} \geq c\eps^{(d-1)/d}|V|^{(1-\delta)(d-1)/d} \\ \geq c \eps^{(d-1)/d} |K|^{(1-\delta)(d-1)/d}=c \eps^{(d-1)/d}|K|^{(d'-1)/d}
\end{multline*}
as claimed.
\end{proof}

Putting together \cref{prop:connecting_big_sets} and \cref{lem:isoperimetry_twodefinitions} yields the following immediate corollary.

\begin{corollary}
\label{cor:connecting_big_sets}
Let $G=(V,E)$ be a finite, connected graph with degrees bounded by $k$ satisfying  a $d$-dimensional isoperimetric inequality \eqref{Assumption:ID} for some $d>4$ and $c>0$. Then for each $\delta<(d-4)/4(d-1)$ and $\eps>0$ there exists $p_0(\delta,\eps)=p_0(\delta,\eps,d,c,k)<1$  such that
\begin{equation}
\P_p(A \leftrightarrow B) \geq 1-\exp\left[ -\frac{1}{2}\Ceff(A \leftrightarrow B) \right]
\end{equation}
for every $p_0(\delta,\eps) \leq p \leq 1$ and every two non-empty sets $A,B \subseteq V$ with $|B|\geq \eps |V|^{1-\delta}$.
\end{corollary}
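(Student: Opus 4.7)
The plan is to combine \cref{lem:isoperimetry_twodefinitions}, which converts a global isoperimetric inequality into a relative one on the complement of a large boundary set, with \cref{prop:connecting_big_sets}, which gives the desired connection estimate under such a relative isoperimetric inequality. As the corollary is billed as ``immediate'', the proof is really a matter of bookkeeping rather than of introducing any new idea.

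First, for any non-empty $B \subseteq V$ with $|B| \geq \eps |V|^{1-\delta}$, \cref{lem:isoperimetry_twodefinitions} shows directly that $(G,B)$ satisfies the relative isoperimetric inequality $(\mathrm{ID}(B)_{d',c'})$ with
\[
d' = \frac{d}{1+\delta(d-1)} \qquad \text{and} \qquad c' = c \eps^{(d-1)/d}.
\]

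The next step is to check that the hypothesis $d'>4$ required by \cref{prop:connecting_big_sets} is \emph{exactly} equivalent to the hypothesis $\delta<(d-4)/(4(d-1))$ assumed in the corollary. This is a trivial algebraic rearrangement: since $d-1>0$, the inequality $d/(1+\delta(d-1))>4$ is equivalent to $d-4>4\delta(d-1)$, which is equivalent to $\delta<(d-4)/(4(d-1))$. This is arguably the only ``substantive'' content of the proof, and it is what fixes the dimensional threshold $d>4$ inherited from \cref{prop:connecting_big_sets}.

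Finally, applying \cref{prop:connecting_big_sets} to the pair $(G,B)$ with parameters $d'$, $c'$, and $k$ produces a threshold $p_0 = p_0(d',c',k)$ above which $\P_p(A \leftrightarrow B) \geq 1-\exp[-\Ceff(A\leftrightarrow B)/2]$ holds for every non-empty $A \subseteq V$. Since $d'$ is determined by $\delta$ and $d$, and $c'$ is determined by $\eps, d, c$, this threshold depends only on $\delta, \eps, d, c, k$. Crucially, $p_0$ does not depend on the specific choice of $B$, only on the lower bound $|B| \geq \eps|V|^{1-\delta}$, so the same $p_0$ works uniformly in $A$ and $B$. There is no real obstacle; the corollary simply packages the two preceding results into a form that will be convenient for the applications in \cref{subsec:largetosmall}.
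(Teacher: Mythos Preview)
Your proposal is correct and follows exactly the approach the paper intends: the paper states the corollary as an ``immediate'' consequence of \cref{prop:connecting_big_sets} and \cref{lem:isoperimetry_twodefinitions}, and you have carried out precisely that combination, including the key algebraic verification that $d'>4$ is equivalent to $\delta<(d-4)/(4(d-1))$ and the observation that $p_0$ depends only on $(d',c',k)$ and hence only on $(\delta,\eps,d,c,k)$, uniformly in the choice of $B$.
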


We now apply \cref{cor:connecting_big_sets} to prove \cref{thm:iso.inf}. To do this, we will apply \cref{prop:connecting_big_sets} with $B$ taken to be a \emph{random} set given by a so-called \emph{ghost field}, which we now introduce. For each $p\in [0,1]$ and $h>0$ we write $\P_{p,h}$ for the joint distribution of the Bernoulli-$p$ bond percolation configuration $\omega$ and an independent \textbf{ghost field} $\cG$ of intensity $h$, that is, a random subset of $V$ in which each vertex is included independently at random with probability $1-e^{-h}$. Note that the ghost field $\cG$ has the property that 
\[
\P_{p,h}(A \cap \cG = \varnothing) = e^{-h|A|} 
\]
for every $A \subseteq V$. Moreover, it follows from a standard Chernoff bound calculation for sums of Bernoulli random variables \cite[Theorem 4.5]{mitzenmacher2017probability} that there exists a universal constant $a \in(0,1)$ such that if $h \leq 1$ then $|\cG|$ satisfies the lower tail estimate
\begin{equation}
\label{eq:ghost_Chernoff}
\P_{p,h}\Bigl(|\cG| \leq \frac{1}{2} h |V|\Bigr) \leq \P_{p,h}\Bigl(|\cG| \leq \frac{e}{2e-2} (1-e^{-h})|V|\Bigr) \leq \exp\left[ -a h |V| \right].
\end{equation}
(Indeed, one may take $a=(e-2)^2/8(e-1)^2$.)

\begin{proof}[Proof of \cref{thm:iso.inf}]
Let $d > 6+2\sqrt{7}$ and write $\alpha = (d-2)/d$. Let $a \in(0,1)$ be the universal constant from \eqref{eq:ghost_Chernoff}. By the results of Lyons, Morris, and Schramm \cite{LMS08} stated in \eqref{eq:LMS}, there exists a positive constant
$\eta=\eta(d,c,k) \le a$ such that $\Ceff(A\leftrightarrow B) \geq \eta \min\{|A|,|B|\}^{\alpha}$ for every two disjoint non-empty sets $A,B \subseteq V$. The condition $d>6+2\sqrt{7}$ allows us to take $\delta$ such that 
\[\frac{2}{d} < \delta < \frac{d-4}{4(d-1)}.\]
Fix one such choice of $\delta$ and let
$p_0=p_0(\delta,\eta/16,d,c,k)<1$ be as in \cref{cor:connecting_big_sets} so that
\begin{equation}
\label{eq:corconnectingrestatement}
\P_{p_0}(A \nleftrightarrow B) \leq \exp\left[ -\frac{\eta}{2} \min\{|A|,|B|\}^\alpha \right]
\end{equation}
for every two non-empty sets $A,B \subseteq V$ with $\max\{|A|,|B|\} \geq \frac{\eta}{16}|V|^{1-\delta}$. Since small values of $|V|$ can be handled by increasing $p$, we may assume throughout the rest of the proof that $|V|\ge(8/\eta)^{1/(1-\delta)}$ so that $\frac{\eta}{8}|V|^{-\delta}\ge|V|^{-1}$.

For each $A \subseteq V$, let $K_A = \bigcup_{v\in A} K_v$ be the union of all clusters intersecting $A$.
 We first apply \cref{cor:connecting_big_sets} with one of the sets \emph{equal to the ghost field}  $\cG$ to show that $K_A$ is much larger than $|A|$ with high probability when $p\geq p_0$ and $|A|$ is either large or small in a certain sense. 
 Indeed, \cref{cor:connecting_big_sets} implies immediately that if $|V|^{-1} \leq h \leq 1$ and $A \subseteq V$ are such that either $|A| \geq \frac{\eta}{16}|V|^{1-\delta}$ or $h\geq \frac{\eta}{8} |V|^{-\delta}$ then we have by \eqref{eq:ghost_Chernoff} and the choice of $\eta$ that
\begin{align}
\P_{p_0,h}(A \nleftrightarrow \cG) &\leq \P_{p_0,h}\left(|\cG| \leq \frac{1}{2} h|V|\right)+\exp\left[-\frac{\eta}{4} \min\{|A|,h |V|\}^{\alpha}\right]
\nonumber
\\ &\leq \exp\left[ -a h |V| \right] +  \exp\left[-\frac{\eta}{4} \min\{|A|,h |V|\}^{\alpha}\right] \leq 2 \exp\left[-\frac{\eta}{4} \min\{|A|,h |V|\}^{\alpha}\right].
\label{eq:ghost1}
\end{align}
On the other hand, we have by definition of the ghost field that
\begin{equation}
\label{eq:ghost2}
\P_{p_0,h}(A \nleftrightarrow \cG \mid K_A) = e^{-h|K_A|}, \qquad \text{ so that } \qquad \E_{p_0} \left[e^{-h|K_A|}\right] = \P_{p_0,h}(A \nleftrightarrow \cG)
\end{equation}
for every $h>0$ and $A \subseteq V$. It follows by \eqref{eq:ghost1}, \eqref{eq:ghost2} and Markov's inequality that
\begin{align*}
\P_{p_0}\left(|K_A| \leq \frac{\eta}{8h} \min\{|A|,h |V|\}^{\alpha}\right) &=
\P_{p_0}\left(e^{-h|K_A|} \geq \exp\left[-\frac{\eta}{8} \min\{|A|,h |V|\}^{\alpha}\right]\right)
\\
&\leq 
\exp\left[\frac{\eta}{8} \min\{|A|,h |V|\}^{\alpha}\right]
\E_{p_0} \left[e^{-h|K_A|}\right]\nonumber
\\
&\leq 2 \exp\left[-\frac{\eta}{8} \min\{|A|,h |V|\}^{\alpha}\right]
\end{align*}
for every $|V|^{-1} \leq h \leq 1$ and $A \subseteq V$ such that either $|A| \geq \frac{\eta}{16}|V|^{1-\delta}$ or $h\geq \frac{\eta}{8}|V|^{-\delta}$.
Since $\frac{\eta}{8}|V|^{-\delta} \geq |V|^{-1}$ by assumption, we may apply the previous inequality with $h=\frac{\eta}{8}|V|^{-\delta}$ to obtain that
\begin{equation}\label{eq:K_A<V^delta}
\P_{p_0}\left(|K_A| \leq |V|^{\delta} \min\left\{|A|,\frac{\eta}{8}|V|^{1-\delta}\right\}^{\alpha}\right) \leq 2 \exp\left[-\frac{\eta}{8} \min\left\{|A|,\frac{\eta}{8}|V|^{1-\delta}\right\}^{\alpha}\right]
\end{equation}
for every $A \subseteq V$.

Consider the function $\varphi:[0,\frac{\eta}{8}|V|^{1-\delta}]\to[0,\frac{\eta}{8}|V|^{1-\delta}]$ defined by
\[
\varphi(x) = \min\left\{|V|^\delta x^\alpha,\frac{\eta}{8}|V|^{1-\delta}\right\}.
\]
 It follows from \eqref{eq:K_A<V^delta} that
\[
\P_{p_0}\left(|K_A| \le \ph(|A|)\right) \leq 2 \exp\left[-\frac{\eta}{8}|A|^\alpha\right]
\]
for every $A\subseteq V$ with $|A|\le\frac{\eta}{8}|V|^{1-\delta}$. Now for each $i\geq 1$, let $p_i$ be defined by $1-p_i= (1-p_0)^{i+1}$,  so that Bernoulli-$p_i$ percolation has the same distribution as the union of $i+1$ independent copies of Bernoulli-$p_0$ percolation. This relationship immediately yields that the recursive inequality
\[
\P_{p_{i+1}}(|K_A| \leq n) \leq \P_{p_i}(|K_A| \leq m) + \max\bigl\{\P_{p_0}(|K_{A'}| \leq n) : A'\subseteq V,\, |A'| > m \bigr\}
\]
holds for every $i\geq 0$, $A \subseteq V$ and $n,m \geq 1$, which combines with \eqref{eq:K_A<V^delta} and the fact that $x\le\ph^i(x)\le\frac{\eta}{8}|V|^{1-\delta}$ for every $i\ge0$ and $x\in[0,\frac{\eta}{8}|V|^{1-\delta}]$ to imply that
\begin{align*}
\P_{p_i}\left(|K_A| \leq \varphi^{i+1}(|A|)\right)
&\leq
\P_{p_{i-1}}\left(|K_A| \leq \varphi^i(|A|)\right) + 2 \exp\left[-\frac{\eta}{8}\varphi^i(|A|)^\alpha\right],
\end{align*}
and we deduce by induction on $i$ that
\begin{equation}
\P_{p_i}\left(|K_A| \leq \varphi^{i+1}(|A|)\right)\le2 \sum_{j=0}^{i-1}\exp\left[-\frac{\eta}{8}\ph^j(|A|)^\alpha\right]
\end{equation}
for every $i\geq 1$ and $A \subseteq V$ with $|A|\le\frac{\eta}{8}|V|^{1-\delta}$.
Let $\eps=\delta-(1-\alpha)(1-\delta)$, which is positive since $\delta > 2/d$. Observe that
\begin{equation}\label{eq:ph.increases.uniformly}
|V|^\delta x^\alpha \geq  |V|^{\delta-(1-\alpha)(1-\delta)} x =  |V|^\eps x \qquad \text{for every $0 \leq x \leq|V|^{1-\delta}$,}
\end{equation}
  so that $\ph$ is increasing and moreover that
 \begin{equation} \varphi^i(x) \geq \min\Bigl\{|V|^{\eps i},\frac{\eta}{8}|V|^{1-\delta}\Bigr\} \qquad \text{for every $i\geq 0$ and $1 \leq x \leq|V|^{1-\delta}$}.
 \end{equation}
  It follows that there exists a constant $i_0=i_0(d,c,k)$ such that $\ph^{i_0+1}(x) =\frac{\eta}{8} |V|^{1-\delta}$ for every $x\in[1,\frac{\eta}{8}|V|^{1-\delta}]$, and hence that there exists a constant $C_1=2 i_0$ such that
\[
\P_{p_{i_0}}\left(|K_A| \leq \frac{\eta}{8}|V|^{1-\delta}\right) \leq 2 \sum_{j=0}^{i_0-1}\exp\left[-\frac{\eta}{8}\ph^j(|A|)^\alpha\right] \leq C_1 \exp\left[-\frac{\eta}{8}|A|^\alpha\right]
\]
for every non-empty set $A \subseteq V$ with $|A|\leq \frac{\eta}{8}|V|^{1-\delta}$, and hence for every non-empty set $A \subseteq V$ since the inequality holds vacuously in the case $|A|> \frac{\eta}{8}|V|^{1-\delta}$. Considering again the coupling of $p_{i_0+1}$ percolation with $p_{i_0}$ and $p_0$ percolation, we deduce that if $A,B \subseteq V$ are non-empty then
\begin{align}
&\P_{p_{i_0+1}}(A \nleftrightarrow B) \nonumber\\
&\hspace{0.05cm}\leq \P_{p_{i_0}}\left(|K_A| \leq \frac{\eta}{8}|V|^{1-\delta}\right) + \P_{p_{i_0}}\left(|K_B| \leq \frac{\eta}{8}|V|^{1-\delta}\right) + \max\left\{\P_{p_0}(A' \nleftrightarrow B') : |A'|,|B'| \geq\frac{\eta}{8} |V|^{1-\delta}\right\}
\nonumber\\
&\hspace{0.05cm}\leq C_1 \exp\left[-\frac{\eta}{8} |A|^\alpha\right] + C_1 \exp\left[-\frac{\eta}{8} |B|^\alpha\right] + \exp\left[-\frac{\eta^2}{16} |V|^{(1-\delta)\alpha}\right]
\nonumber\\
&\hspace{0.05cm}\leq C_2 \exp\left[-\frac{\eta}{8} \min\{|A|,|B|\}^\alpha\right]\label{eq:AtoBfinal}
\end{align}
for some constant $C_2=C_2(d,c,k)=(2C_1+1)$, where we used \eqref{eq:corconnectingrestatement} to bound the third term in the second line. 


To complete the proof, it remains to show that the constant prefactor $C_2$ in \eqref{eq:AtoBfinal} can be made arbitrarily small by increasing $p$ in a uniform manner. This is particularly important in the case that $A$ and $B$ are singletons, in which case the right hand side of \eqref{eq:AtoBfinal} might be larger than $1$, rendering the inequality useless. 
Let $B(v,r)$ denote the graph-distance ball of radius $r$ around $v$ for every $v\in V$ and $r\geq 1$.
Since $|B(v,r)| \geq r+1$ for every $v\in V$ and $r\leq \diam(G)$, it follows from \eqref{eq:AtoBfinal} that there exists a constant $r_0=r_0(d,c,k)$ such that
\[
\P_{p_{i_0+1}}\left(B(u,r_0) \leftrightarrow B(v,r_0) \right) \geq \frac{1}{2}
\]
for every $u,v\in V$. Applying the Harris-FKG inequality, it follows that there exists a positive constant $c_1=c_1(d,c,k)$ such that
\begin{multline}
\P_{p_{i_0+1}}(u \leftrightarrow v) \geq \P_{p_{i_0+1}}\Bigl(B(u,r_0) \leftrightarrow B(v,r_0),\; B(u,r_0) \subseteq K_u \text{ and } B(v,r_0) \subseteq K_v \Bigr) \\\geq \frac{1}{2}\left(p_{i_0+1}^{k^{r_0+1}}\right)^2 \geq c_1
\label{eq:pttoptlowerc1}
\end{multline}
for every $v\in V$, where in the second line we used that $B(v,r)$ is incident to at most $k^{r+1}$ edges. It follows from \eqref{eq:AtoBfinal} and \eqref{eq:pttoptlowerc1} by calculus that there exists a positive constant $\eta'=\eta'(d,c,k)$ such that
\begin{align*}
\P_{p_{i_0+1}}(A \nleftrightarrow B) &\leq \min\left\{(1-c_1), C_2 \exp\left[-\frac{\eta}{8} \min\{|A|,|B|\}^\alpha\right]\right\}\leq \exp\left[-\eta' \min\{|A|,|B|\}^\alpha\right]
\end{align*}
for every two non-empty sets $A,B \subseteq V$.
The claim as stated in the theorem, in which we can increase $p$ in a uniform way to introduce an arbitrarily small constant prefactor, follows easily by calculus from this together with the  fact \cite[Theorem 2.38]{grimmett2010percolation} that $\P_{p^\theta}(A) \geq \P(A)^\theta$ for every $p,\theta \in (0,1)$ and every increasing event $A$. \qedhere
\end{proof}

\subsection{Consequences for the uniqueness threshold}
\label{subsec:pu}

In this section we prove our results concerning the uniqueness threshold. More specifically, we first prove \cref{thm:uniquenesscriterion}, then deduce \cref{thm:pu} from \cref{thm:iso.inf,thm:uniquenesscriterion}. The results of this section are not used in the proof of our main theorem, \cref{thm:fin.perc}, and the reader may safely skip this section if they are interested only in that result.

The proof of \cref{thm:uniquenesscriterion} will apply the following theorem of Schonmann~\cite[Theorem 3.1 and Corollaries 3.2 and 3.3]{MR1676831}.

\begin{theorem}[Schonmann]
\label{thm:Schonmann}
Let $G=(V,E)$ be an infinite, connected, bounded degree graph, let $0 < p_0 < 1$, and suppose that 
\[
\lim_{n\to \infty} \inf_x \P_{p_0}(B(x,n) \nleftrightarrow \infty) = 0.
\]
Then $\P_p(x\leftrightarrow \infty)$ is continuous on $[p_0,1]$ for every $x\in V$. Moreover, if $p_0\leq p_1 \leq 1$ is such that there is a unique infinite cluster $\P_{p_1}$-almost surely, then there is a unique infinite cluster $\P_{p_2}$-almost surely for every $p_1 \leq p_2 \leq 1$.
\end{theorem}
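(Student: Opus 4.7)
The plan is to prove the two conclusions in sequence, with the continuity statement doing most of the work and the uniqueness monotonicity following from it via a coupling argument.

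For continuity, I would first observe that $f(p):=\P_p(x\leftrightarrow\infty)$ is automatically non-decreasing and right-continuous on $[0,1]$: writing $\{x\leftrightarrow\infty\}=\bigcap_n\{x\leftrightarrow\partial B(x,n)\}$ expresses $f(p)=\inf_n \P_p(x\leftrightarrow\partial B(x,n))$ as an infimum of polynomials in $p$, hence as an upper semicontinuous function, and combining with monotonicity forces right-continuity. The content is therefore to establish left-continuity at each $p\in (p_0,1]$. The plan is to transfer ``connection to infinity'' to ``connection to a distinguished large finite set''. Fix $\eps>0$ and use the hypothesis to pick $F=B(y,n)$ with $\P_{p_0}(F\leftrightarrow\infty)\geq 1-\eps$; by monotonicity this bound persists at every $p'\in[p_0,1]$. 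Combining Harris--FKG applied to the increasing events $\{x\leftrightarrow F\}$ and $\{F\leftrightarrow\infty\}$ with a finite-energy ``gluing'' step that opens all internal edges of $F$ (at a cost $c=c(|F|,k)>0$ independent of $p'$), one obtains a lower bound of the form $f(p')\geq c\,(1-\eps)\,\P_{p'}(x\leftrightarrow F)$. Since $\{x\leftrightarrow F\}$ is a countable increasing union of cylinder events, $p'\mapsto\P_{p'}(x\leftrightarrow F)$ is lower semicontinuous and non-decreasing, hence left-continuous, and so $\liminf_{p'\uparrow p}f(p')\geq c(1-\eps)\P_p(x\leftrightarrow F)$. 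A matching inequality controlling $\P_p(x\leftrightarrow F)$ from below by $f(p)$ then, upon letting $\eps\to 0$ and $n\to\infty$ in a coordinated way, closes the argument.

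For the monotonicity of uniqueness, I would couple $\omega_{p_1}\subseteq\omega_{p_2}$ via the standard independent thinning $1-p_2=(1-p_1)(1-q)$ with $\omega_q$ independent of $\omega_{p_1}$, and suppose for contradiction that uniqueness fails at $p_2$. Then with positive probability there would exist two disjoint infinite $\omega_{p_2}$-clusters, at least one of which is disjoint from the unique infinite $\omega_{p_1}$-cluster $C_\infty^{(1)}$; however, the continuity from the first part applied to point-to-point connection probabilities, combined with a Harris--FKG merging argument that uses the $\omega_q$-edges to splice the auxiliary cluster into $C_\infty^{(1)}$ along any chosen short path, would contradict its disjointness.

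The main obstacle is the reverse bound in the continuity step, namely extracting $\P_p(x\leftrightarrow F)\gtrsim f(p)$: the naive decomposition $\{x\leftrightarrow\infty\}=\{x\leftrightarrow F\}\cup\{x\leftrightarrow\infty,\,x\nleftrightarrow F\}$ shows that the gap between $\P_p(x\leftrightarrow F)$ and $f(p)$ is governed by the probability that $x$ lies in an infinite cluster disjoint from $F$, which need not vanish without some uniqueness input. The real argument therefore either first establishes a quantitative ``eventual uniqueness'' using the hypothesis (so that infinite clusters disjoint from $F$ contribute negligibly as $n$ grows) or runs the comparison through a family of sets $F$ simultaneously capturing all competing infinite clusters. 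This is precisely where Schonmann's argument invests its technical effort.
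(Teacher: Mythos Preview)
This theorem is quoted from Schonmann~\cite{MR1676831} and not proven in the paper, so there is no in-paper argument to compare against; nonetheless your sketch has a genuine gap beyond the one you flag. Even granting the ``matching inequality'' $\P_p(x\leftrightarrow F)\gtrsim f(p)$ that you identify as the crux, your chain yields only $\liminf_{p'\uparrow p} f(p') \geq c(|F|,k)\,(1-\eps)\,\P_p(x\leftrightarrow F)$, and the gluing constant $c(|F|,k)\leq p_0^{|E(F)|}$ tends to zero as $n\to\infty$. Since $\eps$ is tied to $n$ through the hypothesis, no ``coordinated'' limit can remove this factor: the FKG-plus-finite-gluing scheme is simply too lossy to produce a bound that is asymptotically sharp as $p'\uparrow p$. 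It can show that $f$ is uniformly positive on $[p_0,1]$, which is useful, but not that it is left-continuous. One has to work instead in the standard monotone coupling and rule out directly the event that $x$ lies in an infinite $\omega_p$-cluster that is finite in $\omega_{p'}$ for every $p'<p$, using the hypothesis to show that large pieces of the $\omega_p$-cluster of $x$ survive in $\omega_{p'}$ for $p'$ close to $p$ and are then reconnected to infinity.

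Your uniqueness sketch is also incomplete. Splicing an infinite $\omega_{p_2}$-cluster $C$ into $C_\infty^{(1)}$ via $\omega_q$-edges ``along any chosen short path'' presupposes that $C$ and $C_\infty^{(1)}$ come within bounded distance of one another, which is precisely the content to be established and does not follow from continuity of $\theta$. Moreover, $C$ is itself built from $\omega_{p_2}=\omega_{p_1}\cup\omega_q$, so the $\omega_q$-edges are not available for a further independent sprinkling step once $C$ has been revealed.
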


\begin{remark}
We believe that the bounded degree assumption is not really necessary for this result to hold, and hence should not be necessary for \cref{thm:uniquenesscriterion} to hold either.
\end{remark}

The proof of \cref{thm:uniquenesscriterion} will also apply \cite[Theorem 2.45]{grimmett2010percolation}, which states that if $\sA \subseteq\{0,1\}^E$ is an increasing event and $I_r(\mathscr{A})$ denotes the event that $\sA$ holds in any configuration obtained from $\omega$ by deleting at most $r$ edges then
\begin{equation}
\label{eq:interiorprobability}
\P_{p_2}\Bigl(I_r(\mathscr{A})\Bigr) \geq 1 - \left(\frac{p_2}{p_2-p_1}\right)^{r} \P_{p_1}(\mathscr{A}^c)
\end{equation}
for every $0 \leq p_1 < p_2 \leq 1$ and $r\geq 1$. That is, if $\mathscr{A}$ holds with high probability at $p_1$ then it will hold and be stable to the perturbation of a large number of edges with high probability at $p_2>p_1$.

\begin{proof}[Proof of \cref{thm:uniquenesscriterion}]
It follows from the hypotheses that there exists a function $f:\N\to(0,1]$ with $f(n)\to 0$ as $n\to\infty$ such that
\[
\P_{p_0}(A \nleftrightarrow B) \leq f(\min\{|A|,|B|\})
\]
for every two finite sets $A,B \subseteq V$.
Letting $(V_n)_{n\geq 1}$ be an exhaustion of $V$ by finite connected graphs, we note that
\begin{equation}
\P_{p_0}(A \nleftrightarrow \infty) = \lim_{n\to \infty} \lim_{m\to\infty} \P_{p_0}(A \nleftrightarrow V_m \setminus V_n) \leq f(|A|)
\label{eq:onesetf(A)}
\end{equation}
for every finite set $A \subseteq V$, and hence that the hypotheses of \cref{thm:Schonmann} are satisfied.
To complete the proof, it suffices by \cref{thm:Schonmann} to prove that the set
\[\sD := \{p \in (p_0,1]: \P_p(\text{there exist multiple infinite clusters})>0\}\]
has dense complement in $(p_0,1]$. We will do this by proving that $\sD$ is contained in the set of discontinuities of point-to-point connection probabilities and hence that $\sD$ has at most countably many elements.

\medskip

Fix $p_0 < p_1 <1$. Given finite sets $A$ and $B$ and $r\geq 1$, let $\{A \xLleftRrightarrow{r} B\}$ be the event that there exists a collection of $r$ edge-disjoint open paths each of which starts in $A$ and ends in $B$. By Menger's theorem this is equivalent to the event that there does not exist any set of $r-1$ open edges whose deletion disconnects $A$ from $B$. 
Applying \eqref{eq:interiorprobability} yields that there exists a constant $C_1$, depending on the choice of $p_1$, such that
\[
\P_{p_1}\bigl(A \xLleftRrightarrow{r} B\bigr) \geq 1 - \left(\frac{p_1}{p_1-p_0}\right)^{r-1} \P_{p_0}(A \nleftrightarrow B) \geq 1-e^{C_1 r} f(\min\{|A|,|B|\})
\]
for every $A,B \subseteq V$ finite and $r\geq 1$.
 It follows in particular that there exists a positive constant $c$, depending on the choice of $p_1$, such that if we define $g(n) = \lfloor -c \log f(n) \rfloor$ for every $n\geq 1$ then
\begin{equation}
\label{eq:manydisjointpaths}
\P_{p_1}\bigl(A \xLleftRrightarrow{r} B\bigr) \geq 1-\sqrt{f(\min\{|A|,|B|\})}
\end{equation}
for every $A,B \subseteq V$ finite and $1 \leq r \leq g(\min\{|A|,|B|\})$. Since $f(n)\to0$ as $n\to\infty$, we have that $g(n)\to\infty$ as $n\to \infty$.

\medskip

Let $p\geq p_1$ and suppose that $u$ and $v$ belong to distinct infinite clusters with positive probability in Bernoulli-$p$ percolation on $G$. 
Let $\omega_1$ and $\omega_2$ be independent copies of Bernoulli-$p$ percolation on $G$, and let $\omega\in \{0,1\}^E$ be defined by
\[
\omega(e) = \begin{cases} \omega_1(e) &\text{ if $e$ touches the cluster of $u$ or $v$ in $\omega_1$}\\
\omega_2(e) &\text{ otherwise,}
\end{cases}
\]
where we say that an edge \emph{touches} the cluster of a vertex if it has at least one endpoint in that cluster; note that these touching edges are exactly those edges that are revealed when exploring the cluster.
It is easily seen that $\omega$ is itself distributed as Bernoulli-$p$ bond percolation on $G$, and that the clusters of $u$ and $v$ are the same in $\omega_1$ and $\omega$. Condition on $\omega_1$ and suppose that $u$ and $v$ belong to distinct infinite clusters of $\omega_1$. Let $m \geq 1$ and let $A$ and $B$ be finite subsets of $K_u$ and $K_v$ respectively such that $|A|,|B| \geq m$. Using that $\omega_2$ is independent of $\omega_1$ and applying \eqref{eq:manydisjointpaths}, we have with probability at least $1-\sqrt{f(m)}$ that there exists a collection of at least $g(m)$ disjoint open paths connecting $A$ to $B$ in $\omega_2$. Since $\omega$ and $\omega_2$ coincide for those edges that do not touch the cluster of $u$ or $v$ in $\omega_1$ we deduce that, with probability at least $1-\sqrt{f(m)}$, there exists a collection of $g(m)$ edge-disjoint  paths in $G$ each of which starts in the cluster of $u$ in $\omega$, ends in the cluster of $v$ in $\omega$, and is $\omega$-open other than at its first and last edge. Since $m$ was arbitrary, $g(m)\to\infty$ as $m\to \infty$, and the probability of the aforementioned event tends to $1$ as $m\to \infty$, we deduce that, for each $k\geq 1$, there almost surely exists a collection of $k$ edge-disjoint paths in $G$ each of which starts in the cluster of $u$ in $\omega$, ends in the cluster of $v$ in $\omega$, and is $\omega$-open other than at its first and last edge.
 Considering the standard monotone coupling of percolation at $p$ and $p+\eps$, we deduce that
 \[
\P_{p+\eps}(u \leftrightarrow v) \geq \P_p(u \leftrightarrow v) + (1-(1-\eps^2)^k)\P_p(\text{$u$ and $v$ belong to distinct infinite clusters})
\]
for every $p_1 \leq p \leq p+\eps \leq 1$, $u,v\in V$, and $k\geq 1$ and hence that
\[
\P_{p+\eps}(u \leftrightarrow v) \geq \P_p(u \leftrightarrow v) + \P_p(\text{$u$ and $v$ belong to distinct infinite clusters})
\]
for every $p_1 \leq p \leq p+\eps \leq 1$ and $u,v\in V$. It follows that 
\begin{equation*}\sD \cap [p_1,1]
\subseteq \bigcup_{u,v\in V}\bigl\{p \in [p_1,1] : \P_p(u \leftrightarrow v) \text{ is discontinuous at $p$}\bigr\},
\end{equation*}
and since increasing functions have at most countably many points of discontinuity we deduce that $\sD \cap [p_1,1]$ is at most countable also. Since $p_0<p_1<1$ was arbitrary, we deduce that $\sD$ is at most countable and hence that $\sD$ has dense complement in $(p_0,1]$ as required. \qedhere

\end{proof}

\begin{proof}[Proof of \cref{thm:pu}]
It follows from \cref{thm:iso.inf} that there exist positive constants $\eta=\eta(d,c,k)>0$ and $p_0=p_0(d,c,k)<1$ such that 
\begin{equation}
\label{eq:AtoBpuproof}
\P_p(A \leftrightarrow B) \geq 1-\exp\left[-\eta \min\{|A|,|B|\}^{(d-2)/d}\right]
\end{equation}
for every $p\geq p_0$ and every two finite sets of vertices $A$ and $B$. (Indeed, simply take $n$ to be sufficiently large that $A,B \subseteq V_n$ and apply \cref{thm:iso.inf} to $G_n$.) The claim therefore follows from \cref{thm:uniquenesscriterion}.
\end{proof}

\section{The structure theory of vertex-transitive graphs}
\label{sec:structuretheory}

In this section we review the structure theory of vertex-transitive graphs that will  be used in the proofs of our main theorems and prove some related supporting technical propositions.

Let us begin with a brief historical overview of the relevant theory.
The structure theory of vertex-transitive graphs that we use in this paper has its roots in celebrated results of Gromov and Trofimov from the 1980s. Gromov's theorem states that every group of polynomial growth is virtually nilpotent \cite{gromov81poly}. Trofimov's work shows that every transitive graph of polynomial growth is roughly isometric to a Cayley graph \cite{MR811571} (see also \cite[Remark 2.2]{tt.Trof} for more details), the underlying group of which is then virtually nilpotent by Gromov's theorem. Combined with a formula of Bass \cite{bass72poly-growth} and Guivarc'h \cite{guivarch73poly-growth}, these results immediately imply that if $G=(V,E)$ is an infinite transitive graph of polynomial growth then there exist constants $c,C>0$ and an \emph{integer} $d\ge1$ such that
\begin{equation}\label{eq:BG}
cn^d\le|B(o,n)|\le Cn^d
\end{equation}
for all $o\in V$ and $n\in\N$.

It is a classical result of Coulhon and Saloff-Coste~\cite{MR1232845} that the lower bound of \eqref{eq:BG} implies in particular that $G$ satisfies \eqref{Assumption:ID} (with a possibly smaller constant $c$). This extends without too much difficulty to transitive graphs (see e.g. \cite[Proposition 4.1]{tt.resist}). Moreover, in a transitive graph the upper bound of \eqref{eq:BG} easily implies that $G$ does not satisfy \eqref{Assumption:ID} with any larger value of $d$ (see e.g. \cite[Proposition 6.7]{tt.resist}). An infinite transitive graph thus has a well-defined integer ``dimension'' $d$ that manifests itself both as the graph's growth rate and as its isoperimetric dimension.

Of course, if $G$ is finite then the existence of constants $c,C$ such that \eqref{eq:BG} holds with $d=0$ is completely trivial. For our analysis of percolation on finite graphs, therefore, we need something more finitary and quantitative. Moreover, even in an infinite graph the growth rate and isoperimetry can behave differently on different scales: for example, the graph of $\Z^{d_1} \times (\Z/m \Z)^{d_2-d_1}$ with $d_2>d_1$ looks $d_2$-dimensional on scales up to $m$, and thereafter looks $d_1$-dimensional. Indeed, as was first noted by Tao \cite{MR3658282}, the growth degree of a transitive graph of polynomial growth can increase and decrease several times as the scale increases, before finally settling down to the rate detected by the bounds \eqref{eq:BG}. See \cite[Example 1.11]{MR3658282} for a particularly illuminating example of a Cayley graph in which the growth rate is \emph{faster} at large scales than at small scales.

The key result allowing us to understand the different ``local'' dimensions of transitive graphs at different scales is the celebrated theorem of Breuillard, Green and Tao \cite{bgt12} describing the structure of finite approximate groups. Roughly speaking, an approximate group is a subset of a group that is ``approximately closed'' under the group operation. Such sets arose implicitly in the original proof of Gromov's theorem, and ``approximate closure'' can be seen as a natural finitary analogue of polynomial growth (see e.g. \cite[Proposition 11.3.1]{MR3971253}). Breuillard, Green and Tao essentially show that every finite approximate group has a large finite-by-nilpotent piece; when applied in the context of polynomial growth, this implies in particular a quantitative, finitary version of Gromov's theorem \cite[Corollary 11.7]{bgt12} (see also \cite{shalom-tao,hrushovski} for earlier results in this direction). Tessera and the second author also recently used approximate groups to give a quantitative, finitary version of Trofimov's result \cite{tt.Trof}, which complements Breuillard, Green and Tao's finitary Gromov-type theorem in the same way that Trofimov's original work complements Gromov's theorem. For more general background on approximate groups see \cite{MR3971253,tointon.survey}; for some other examples of applications of approximate groups, see \cite{green.survey} and \cite[\S11]{bgt12}.

After some fairly delicate additional work, these results lead to a number of refinements of the bounds \eqref{eq:BG} and their isoperimetric consequences. For a complete picture of the state of the art, as well as a detailed bibliography, see  \cite{tt.resist,tt.Lie}. Of particular relevance to the present work is a result of Tessera and the second author stating that if $|B(o,n)| \ge n^d$ for some vertex $o$ of a transitive graph $G=(V,E)$ and some $n\in\N$ then all subsets of $V$ size at most $\frac12|B(o,n)|$ satisfy the $d$-dimensional isoperimetric inequality \eqref{Assumption:ID} for some $c=c(d)$ (see Theorem \ref{thm:isop_resist} below). This confirmed a conjecture of Benjamini and Kozma \cite{BenKoz05}.

This isoperimetric inequality was in fact motivated by another application to probability: Tessera and the second author use it to give a quantitative, finitary refinement of Varopoulos's famous result that the simple random walk on a vertex-transitive graph is transient if and only if the graph has superquadradic growth \cite{tt.resist}, verifying and extending another conjecture of Benjamini and Kozma. In particular, this leads to a gap for the escape probability of a random walk on a vertex-transitive graph \cite[Corollary 1.7]{tt.resist}, very similar in spirit to the gap for the critical percolation probability we obtain here in \cref{thm:gap}. For some other applications of this structure theory to probability see \cite[Corollary 1.10 and Theorem 1.11]{MR3439705} and \cite[Remark 2.9]{tt.Trof}.

\medskip

We now present some specific results for use in the present work. The first follows directly from Tessera and the second author's finitary version of Trofimov's theorem.

\begin{thm}\label{prop:structure.last.large.ball}
For each $d\geq 1$ there exists a positive constant $M=M(d)$ such that the following holds. Let $G=(V,E)$ be a vertex-transitive graph and set $m=\sup\{n\le\diam(\Gamma):|B(o,n)|\ge n^d\}$. 
 If $m<\diam(G)$ then there are groups $H\lhd\Aut(G)$ and $\Gamma<\Aut(G/H) \cong \Aut(G)/H$ such that
\begin{enumerate}[label=(\roman*)]
\item every orbit of $H$ has diameter at most $\max\{m^{1/2},M\}$;
\item $\Gamma$ acts transitively on $V/H$;
\item the stabiliser in $\Gamma$ of each vertex of $G/H$ has size at most $M$;
\item $\Gamma$ has a nilpotent subgroup of step and index at most $M$. 
\end{enumerate}
\end{thm}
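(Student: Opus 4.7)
The plan is to invoke the quantitative, finitary version of Trofimov's theorem developed by Tessera and the second author in \cite{tt.Trof}, which is essentially designed for exactly this setting. The hypothesis $m < \diam(G)$ together with the definition of $m$ tells us that $|B(o, m+1)| < (m+1)^d$, so at the scale $m+1$ the graph fails to look $d$-dimensional. Combined with the fact that $|B(o,m)| \geq m^d$, this supplies a genuine ``polynomial growth'' datum at scale $m$: growth is at least $n^d$ up to scale $m$, and ceases to be $d$-dimensional just above $m$. This is precisely the kind of input required by the finitary structure theory.

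My first step is to apply the finitary Trofimov theorem at scale $m$ to produce a normal subgroup $H \lhd \Aut(G)$ and a quotient group $\Gamma < \Aut(G)/H$ with the desired structural properties (ii) and (iii): $\Gamma$ acts transitively on $V/H$ with bounded vertex stabilisers. The way this is set up in \cite{tt.Trof}, the scale of the orbits of $H$ is controlled by the scale at which the theorem is applied and by the ``dimension drop'' occurring at that scale. To get the orbit diameter bound of $\max\{m^{1/2},M\}$ claimed in (i), I would apply the theorem at a scale somewhat smaller than $m$ (typically $m^{1/2}$), balancing two competing pressures: choosing the scale too small loses the polynomial-growth information, but choosing it too large forces $H$ to have large orbits. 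The $m^{1/2}$ exponent, which is not optimised, likely comes out naturally from this balance; the constant $M = M(d)$ absorbs the finitely many cases in which $m$ is too small for the asymptotic version of the theorem to apply directly.

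Once the quotient structure from Trofimov's theorem is in place, I need to verify part (iv): that $\Gamma$ has a nilpotent subgroup of both step and index bounded by $M(d)$. Here the input is that the quotient $G/H$ again looks approximately $d$-dimensional up to scale $m$ but no higher, so its growth is controlled by a polynomial of degree at most $d$ at the relevant scale. Feeding this into the Breuillard--Green--Tao structure theorem for approximate groups \cite{bgt12}, as applied in the transitive graph setting (see \cite[Corollary 11.7]{bgt12} and the discussion in \cite{tt.Trof,tt.Lie}), produces a virtually nilpotent subgroup of bounded index. Both the nilpotent step and the index are controlled in terms of $d$ alone, since the degree of polynomial growth is at most $d$.

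The main obstacle I anticipate is not the broad strategy, which is standard in this line of work, but the careful matching of scales and constants: ensuring that the scale at which the Trofimov-type theorem is applied simultaneously delivers (a) orbit diameters at most $m^{1/2}$, (b) bounded-size vertex stabilisers, and (c) a quotient whose growth is compatible with the Breuillard--Green--Tao input. In practice this is handled by the statement of the finitary Trofimov theorem in \cite{tt.Trof}, which packages together the relevant trade-offs; the contribution of this proposition is essentially to rephrase that result in a form adapted to the crossover scale $m$, absorbing small-$m$ pathologies into the universal constant $M(d)$.
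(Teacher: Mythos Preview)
Your overall identification is correct: this theorem is a direct consequence of the finitary Trofimov theorem, specifically \cite[Corollary 2.4]{tt.Trof}, and the constant $M(d)$ is obtained by absorbing the threshold and implied constants from that result. The paper's proof is essentially a one-line citation with case analysis on the size of $m$.

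However, you have misidentified the mechanism producing the $m^{1/2}$ in part (i). You suggest applying the structure theorem at a scale near $m^{1/2}$ to force small $H$-orbits. That is not how it works, and would in fact lose the polynomial-growth input you need. The cited result \cite[Corollary 2.4]{tt.Trof} takes as input a scale $n$ and a parameter $\lambda \in (0,1)$, and outputs (among other things) an $H$ whose orbits have diameter $O(n^\lambda)$. The paper applies it at scale $n = m+1$ with $\lambda = 1/2$, which directly gives orbit diameter $O(m^{1/2})$; the constant $M(d)$ is then taken to be the maximum of the threshold $n_0(d,1/2)$ and the implied constants. When $m < M$ one instead applies the corollary at $n = M$ (or, if $\diam(G) < M$, takes $H = \Aut(G)$ trivially).

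You also unnecessarily separate part (iv) from parts (i)--(iii), proposing to feed the quotient into Breuillard--Green--Tao as a second step. In fact all four conclusions are already packaged together in \cite[Corollary 2.4]{tt.Trof}; the BGT input is internal to that result and need not be invoked again.
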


\begin{proof}
The statement of \cite[Corollary 2.4]{tt.Trof} features parameters $d\ge0$ and $\lambda\in(0,1)$, a constant $n_0(d,\lambda)$, and various implied constants depending only on $d$ and $\lambda$; let $M(d)$ be the maximum of $n_0(d,\lambda)$ and these implied constants when $\lambda=1/2$. If $M\le m<\diam(G)$ then the desired conclusion follows from applying \cite[Corollary 2.4]{tt.Trof} with $n=m+1$. If $m<M\le\diam(G)$ then the desired conclusion follows from applying \cite[Corollary 2.4]{tt.Trof} with $n=M$. If $\diam(G)<M$ then we may take $H=\Aut(G)$ and $\Gamma=\{1\}$.
\end{proof}

The next result shows that a bound of the form $|B(o,n)| \leq c n^d$ is enough to ensure that the ``local'' dimension of a transitive graph can never go above $d$ at higher scales. This verified a conjecture of Benjamini.

\begin{thm}[{\cite[Corollary 1.5]{tt.Trof}}]
\label{thm:dimension_drop}
Let $d\geq 1$ be an integer. There exist positive constants  $c=c(d)$ and $C=C(d)$ such that if $G=(V,E)$ is a connected, vertex-transitive, locally finite graph, $o$ is a vertex of $G$, and $n\geq 1$ is such that $|B(o,n)| \leq c n^d$ then 
\[
\frac{|B(o,m_2)|}{|B(o,m_1)|} \leq C\left(\frac{m_2}{m_1}\right)^{d-1} 
\]
for every $n \leq m_1 \leq m_2$. 
\end{thm}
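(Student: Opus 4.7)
The plan is to combine the quantitative Trofimov-type structure theorem underlying Proposition~\ref{prop:structure.last.large.ball} with the Bass--Guivarc'h formula for nilpotent groups. Roughly speaking, the hypothesis $|B(o,n)|\le cn^d$ should force the ``large-scale'' nilpotent model of $G$ to have degree of polynomial growth at most $d-1$, and then this degree controls the growth at all scales $m\ge n$.

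First I would apply the structure theorem at scale $n$ to produce a normal subgroup $H\lhd\Aut(G)$ with $H$-orbits of diameter at most $O(\sqrt{n})$ and a group $\Gamma<\Aut(G)/H$ acting transitively on $V/H$ with bounded vertex stabilisers and containing a nilpotent subgroup of step and index bounded by some $M=M(d)$. The quotient graph $G/H$ is then, with constants depending only on $d$, rough-isometric to a Cayley graph of the finitely generated virtually nilpotent group $\Gamma$. Next, I would invoke the Bass--Guivarc'h formula to obtain an integer $k\ge 0$ and constants $c'',C''$ depending only on $d$ such that
\[
c''m^{k}\le|B_{\Gamma}(\mathrm{id},m)|\le C''m^{k}\qquad\text{for all }m\ge 1,
\]
where $k=\sum_{i\ge 1}i\cdot\operatorname{rank}(\Gamma_i/\Gamma_{i+1})$ on the nilpotent finite-index subgroup.

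The key step is then to show $k\le d-1$ provided $c=c(d)$ is chosen small enough. Transporting the lower Bass--Guivarc'h bound through the rough isometry and the (polynomially bounded) fibres of the quotient map $V\to V/H$ gives a lower bound of the form $|B(o,n)|\ge c'''n^{k}$ with $c'''$ depending only on $d$. If $c$ is taken smaller than $c'''$, then $k\ge d$ would violate the hypothesis $|B(o,n)|\le cn^{d}$; hence $k\le d-1$. Finally, for $m\ge n$ the upper Bass--Guivarc'h bound pushes forward (again through the rough isometry and the bounded $H$-orbits) to an estimate of the form $|B(o,m)|\le C'''m^{d-1}\cdot|B(o,n)|/n^{d-1}$; dividing the bounds at $m_2$ and $m_1$ yields the desired ratio estimate $|B(o,m_2)|/|B(o,m_1)|\le C(m_2/m_1)^{d-1}$.

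The main obstacle will be step three, the transfer between the Cayley graph of the virtually nilpotent model and the original graph $G$. The naïve application of a rough isometry only gives polynomial growth ratios up to an additive error in the scales, which is useless when $m_1$ is close to $n$. To obtain a clean multiplicative ratio one must exploit transitivity to get uniform doubling-type estimates and carefully absorb the rough-isometry constants into the prefactor $C$, while ensuring that the fibre sizes of $V\to V/H$ (bounded by a function of the $H$-orbit diameter $O(\sqrt n)$) cancel when one forms the ratio at scales $\ge n$. This bookkeeping, together with the fact that the Bass--Guivarc'h degree is an \emph{integer}, is what yields the gain from $d$ down to $d-1$ in the exponent.
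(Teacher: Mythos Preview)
The paper does not give a proof of this theorem: it is quoted verbatim as \cite[Corollary 1.5]{tt.Trof} and used as a black box. So there is no ``paper's own proof'' to compare against here; what follows is an assessment of whether your sketch is a viable route to the result.

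Your outline captures the correct high-level strategy --- this result is indeed proved in \cite{tt.Trof} by passing through the finitary structure theory and controlling growth on the virtually nilpotent model --- but two of your steps hide substantial difficulties that your write-up does not address.

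First, the appeal to Bass--Guivarc'h as stated is not enough. The classical Bass--Guivarc'h formula gives $|B_\Gamma(\mathrm{id},m)|\asymp m^k$ only asymptotically, with implicit constants depending on the particular group and generating set, not just on $d$. For your argument you need a \emph{uniform} finitary version: constants $c'',C''$ depending only on the step (and hence only on $d$) such that the two-sided bound holds for all $m\ge 1$ and all relevant nilpotent groups with all relevant generating sets. Such uniform bounds exist (and are developed in the Tessera--Tointon papers precisely for this purpose), but they are a genuine ingredient, not a corollary of the classical formula. Relatedly, when $G$ is finite the group $\Gamma$ is finite and the torsion-free rank of each $\Gamma_i/\Gamma_{i+1}$ is zero, so your formula for $k$ gives $k=0$; the argument must instead track growth at the specific scales $n\le m_1\le m_2$ rather than an asymptotic growth degree.

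Second, the step ``$k\le d-1$'' is more delicate than you indicate. The structure theorem bounds the step and index of the nilpotent piece by $M(d)$, but does not directly bound its rank or its generating set in terms of $d$ alone (note that the theorem has no hypothesis on the degree of $G$). Extracting a lower bound $|B(o,n)|\ge c''' n^k$ with $c'''$ depending only on $d$ therefore requires additional control on the geometry of $\Gamma$, again supplied by the uniform growth estimates in \cite{tt.Trof,tt.Lie}. Your final paragraph correctly flags the transfer step as the main obstacle, but the uniformity issues above are at least as serious.
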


The next theorem is essentially what results from combining Theorem \ref{thm:dimension_drop} with the Coulhon--Saloff-Coste argument, and is a slight generalisation of \cite[Theorem 1.20]{tt.resist}.

\begin{thm}
\label{thm:isop_resist}
For each $d \geq 1$ there exists a positive constant $c=c(d)$ such that the following holds. If $G=(V,E)$ is a connected, locally finite, vertex-transitive graph, $o$ is a vertex of $G$, and $n \geq 1$ 
then
\[
|\partial_E A|\ge|\partial_V^+A| \geq c(d) \min\left\{1, \frac{1}{n}|B(o,n)|^{1/d} \right\} \,|A|^{(d-1)/d}
\]
for every set $A \subseteq V$ with $|A| \leq \frac{1}{2} |B(o,n)|$. In particular, if $\sup \{n : |B(o,n)| \geq n^d\}=\diam(G)$ then $G$ satisfies the $d$-dimensional isoperimetric inequality \eqref{Assumption:ID}.
\end{thm}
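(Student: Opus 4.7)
The theorem is a finitary version of the classical Coulhon--Saloff-Coste argument, with \cref{thm:dimension_drop} providing the crucial control on the growth function below scale $n$. My starting point would be the standard vertex-boundary CSC inequality for vertex-transitive graphs: there exists an absolute constant $c_0>0$ such that
\[
|\partial_V^+A|\ge c_0\,\frac{|A|}{r}
\]
for every finite $A\subseteq V$ and integer $r\ge 1$ with $|A|\le |B(o,r)|/2$. I would either quote this from the literature (e.g.\ \cite{tt.resist}) or include a short proof via the usual averaging argument, translating $A$ using automorphisms sending $o$ to elements of $B(o,r)$ and double-counting.

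Given the CSC lemma, fix $A\subseteq V$ with $|A|\le |B(o,n)|/2$ and let $r=r_A$ denote the smallest positive integer with $|B(o,r)|\ge 2|A|$; monotonicity of volume together with the hypothesis force $r\le n$. Writing $\alpha(m):=|B(o,m)|/m^d$, the desired bound will follow from $|\partial_V^+A|\ge c_0|A|/r$ once we establish the monotonicity-type lower bound
\[
\alpha(m)\ge \kappa(d)\min\{1,\alpha(n)\}\qquad\text{for every }1\le m\le n,
\]
where $\kappa(d)>0$ depends only on $d$. I would prove this by contradiction using \cref{thm:dimension_drop}: assuming $\alpha(m)<\kappa_1\min\{1,\alpha(n)\}$ with $\kappa_1\le c(d)$ (the constant from \cref{thm:dimension_drop}) forces $|B(o,m)|\le c(d)m^d$, so applying that theorem with $m_1=m$ and $m_2=n$ yields
\[
|B(o,n)|\le C(d)(n/m)^{d-1}|B(o,m)|<C(d)\kappa_1\min\{1,\alpha(n)\}\,m\,n^{d-1}\le C(d)\kappa_1\min\{1,\alpha(n)\}\,n^d.
\]
This says $\alpha(n)<C(d)\kappa_1\min\{1,\alpha(n)\}$, which in both cases $\alpha(n)\ge 1$ and $\alpha(n)<1$ reduces to $1<C(d)\kappa_1$; choosing $\kappa_1=\kappa(d):=\min\{c(d),1/(2C(d))\}$ gives the contradiction.

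Applying this monotonicity at $m=r-1$ (for $r\ge 2$), together with the defining inequality $|B(o,r-1)|<2|A|$, gives
\[
(r-1)^d\le \frac{2|A|}{\kappa(d)\min\{1,\alpha(n)\}},
\]
and hence $r\le C(d)\,|A|^{1/d}/\min\{1,\alpha(n)^{1/d}\}$ after absorbing the ``$+1$''; the boundary case $r=1$ is immediate since then $|\partial_V^+A|\ge c_0|A|\ge c_0|A|^{(d-1)/d}\ge c_0\min\{1,\alpha(n)^{1/d}\}|A|^{(d-1)/d}$. Substituting this bound on $r$ into the CSC inequality produces the stated isoperimetric lower bound on $|\partial_V^+A|$ with constant $c(d)$ depending only on $d$, and the inequality $|\partial_EA|\ge|\partial_V^+A|$ is tautological. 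The ``in particular'' statement then follows by taking $n=\diam(G)$: the hypothesis forces $|B(o,n)|\ge n^d$, so $\min\{1,|B(o,n)|^{1/d}/n\}=1$, and every $A\subseteq V$ with $|A|\le |V|/2$ automatically satisfies $|A|\le|B(o,n)|/2=|V|/2$. I expect the main point requiring care is quoting or proving a CSC lemma whose constant $c_0$ depends neither on the degree of $G$ nor on its group structure, which the standard averaging proof for vertex-transitive graphs does provide.
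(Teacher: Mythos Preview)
Your argument is correct. The monotonicity step via \cref{thm:dimension_drop} works as you describe (for non-integer $d$ you should apply \cref{thm:dimension_drop} with the integer $\lceil d\rceil$, which only strengthens the hypothesis and whose conclusion $(n/m)^{\lceil d\rceil-1}$ still yields $\alpha(n)<C\kappa_1\min\{1,\alpha(n)\}$ after dividing by $n^d$, since $(m/n)^{d-\lceil d\rceil+1}\le 1$). The remainder of the chain---choosing $r$ minimal with $|B(o,r)|\ge 2|A|$, applying CSC at scale $r$, and bounding $r$ via the monotonicity at $r-1$---is clean.

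However, this is not how the paper proceeds. The paper does \emph{not} invoke \cref{thm:dimension_drop} at all. Instead it quotes \cite[Theorem 1.20]{tt.resist} as a black box: whenever $|B(o,n)|\ge n^{d'}$ for some $d'\ge1$, one has $|\partial_V^+A|\ge c(\lfloor d'\rfloor)|A|^{(d'-1)/d'}$ for all $|A|\le\tfrac12|B(o,n)|$. The case $|B(o,n)|\ge n^d$ is then immediate, and the case $|B(o,n)|<n^d$ is handled by an exponent-shift trick: write $|B(o,n)|=n^{d-\delta}$, apply the black box at dimension $d-\delta$, and use the algebraic identity $\frac{d-\delta-1}{d-\delta}=\frac{d-1}{d}-\frac{\delta}{d(d-\delta)}$ together with $|A|\le|B(o,n)|$ to convert $|A|^{(d-\delta-1)/(d-\delta)}$ into $(|B(o,n)|/n^d)^{1/d}|A|^{(d-1)/d}$.

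Your route makes the role of the structure theory explicit and uses only the elementary CSC lemma as input, at the cost of importing \cref{thm:dimension_drop} (itself a consequence of the Breuillard--Green--Tao machinery). The paper's route is shorter but leans on the stronger black box from \cite{tt.resist}; the exponent-shift trick is the genuinely new idea there, and it sidesteps any need to control growth at intermediate scales $m<n$.
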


Note that the case in which $m=\diam(G)$ includes the case in which $G$ is infinite and $m=\infty$.

\begin{proof}
It is stated in  \cite[Theorem 1.20]{tt.resist} that for each positive integer $k$ there exists a positive constant $c(k) \leq 1$ such that if $n\geq 1$ is such that $|B(o,n)| \geq n^d$ for some $d\geq 1$ then
\[
|\partial_V^+ A| \geq c(\lfloor d\rfloor) |A|^{(d-1)/d}
\]
for every set $A \subseteq V$ with $|A| \leq \frac{1}{2} |B(o,n)|$. 
 The case of \cref{thm:isop_resist} in which $|B(o,n)| \geq n^d$, which includes the case that $n=1$, follows immediately. Now suppose that $n\geq 2$ and that $|B(o,n)| < n^d$. Let $c'(d) = \min\{1,c(1),\ldots,c(\lfloor d \rfloor)\}>0$ for each $d\geq 1$. We can write $|B(o,n)| = n^{d-\delta}$ where 
\[
\delta = d-\frac{\log |B(o,n)|}{\log n},
\]
and deduce that
\[
|\partial_V^+ A| \geq c'(d) |A|^{(d-\delta-1)/(d-\delta)}
\]
for every $A \subseteq V$ with $|A| \leq \frac12 |B(o,n)|$: This is trivial when $d-\delta<1$, and follows from \cite[Theorem 1.20]{tt.resist} otherwise. A little algebra gives that
\[
\frac{d-\delta-1}{d-\delta} = \frac{d-1}{d}-\frac{\delta}{d(d-\delta)} 
\]
and we easily obtain that
\begin{multline*}
|\partial_V^+ A| \geq c'(d) |A|^{(d-1)/d} |A|^{-\delta/d(d-\delta)} \geq 
c'(d) |A|^{(d-1)/d} |B(o,n)|^{-\delta/d(d-\delta)} 
\\
= c'(d) \left(\frac{|B(o,n)|}{n^d} \right)^{1/d} \,|A|^{(d-1)/d}
\end{multline*}
as claimed.
\end{proof}

Again, speaking very roughly, \cref{prop:structure.last.large.ball,thm:isop_resist} tell us that for every locally finite transitive graph $G=(V,E)$, there is a scale $m$ such that $G$ ``looks high-dimensional" on scales smaller than $m$ and ``looks low-step nilpotent" on scales larger than $m$.

\subsection{Transitive graphs as quotients of Cayley graphs}

In this section we describe a construction due to Abels \cite{MR0344375} expressing any vertex-transitive graph as a quotient of a Cayley graph of its isometry group. We start in a fairly abstract setting. Suppose $\Gamma$ is a group with a symmetric generating set $S$, and suppose that $H<\Gamma$ is a subgroup with respect to which $S$ is \emph{bi-invariant} in the sense that $HSH=S$. This implies that given $x,y\in\Gamma$ and $h\in H$ we have $x\sim y$ in $\Cay(\Gamma,S)$ if and only if $xh\sim yh$ in $\Cay(\Gamma,S)$, where we write $u\sim v$ to mean that two vertices $u,v$ of a given graph are neighbours. We may therefore define an injective homomorphism $\rho:H\to\Aut(\Cay(\Gamma,S))$ by setting $\rho(h)(x)=xh^{-1}$ for $x\in\Gamma$. We then denote by $\operatorname{A}(\Gamma,S,H)$ the quotient graph $\Cay(\Gamma,S)/\rho(H)$. Note that the vertex set of $\operatorname{A}(\Gamma,S,H)$ is the set $\Gamma/H$ of left cosets of $H$ in $\Gamma$, and the action of $\Gamma$ on $\Cay(\Gamma,S)$ induces a transitive action of $\Gamma$ on $\operatorname{A}(\Gamma,S,H)$ given by $g(xH)=(gx)H$, so that $\operatorname{A}(\Gamma,S,H)$ is a transitive graph.

It turns out that every transitive graph whose automorphism group is discrete (and hence every finite transitive graph) can be realised in this way, as follows. Note that if $\Gamma$ is a closed subgroup of $\Aut(G)$ then the stabilizer $\Gamma_o$ is compact, so that $\Gamma$ is discrete if and only if $\Gamma_o$ is finite.

\begin{prop}\label{prop:cay-ab}
Let $G=(V,E)$ be a vertex-transitive graph, and suppose that $\Gamma$ is a discrete transitive subgroup of $\Aut(G)$.
 Let $S=\{\gamma\in\Gamma:d(\gamma o,o)\le1\}$. Then $S$ is a symmetric generating set for $\Gamma$ satisfying $\diam_S(\Gamma)=\diam(G)$ and $|S^n|=|B(o,n)||\Gamma_o|$ for every non-negative integer $n$. Moreover, $S$ is bi-invariant with respect to the stabiliser $\Gamma_o$, and $G\cong\operatorname{A}(\Gamma,S,\Gamma_o)$. In particular, there exists $H\le\Aut(\operatorname{Cay}(\Gamma,S))$ with $H\cong\Gamma_o$ such that $G\cong\operatorname{Cay}(\Gamma,S)/H$.
\end{prop}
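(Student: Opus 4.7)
The plan is to verify the claims in the order given, with everything following from careful bookkeeping once the identification $d_S(\mathrm{id},\gamma) = d_G(o,\gamma o)$ is established.

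First I would check the elementary structural properties of $S$. Symmetry: if $\gamma\in S$ then since $\gamma$ is an automorphism of $G$ we have $d(\gamma^{-1}o,o)=d(o,\gamma o)\le 1$. The stabilizer $\Gamma_o$ is contained in $S$ since $d(ho,o)=d(o,o)=0$ for each $h\in \Gamma_o$. Bi-invariance is also immediate: for $h_1,h_2\in\Gamma_o$ and $s\in S$, one has $d(h_1 s h_2 o,o)=d(sh_2 o, h_1^{-1}o)=d(so,o)\le 1$, so $\Gamma_o S\Gamma_o=S$.

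Next I would establish the identity $d_S(\mathrm{id},\gamma)=d_G(o,\gamma o)$, which simultaneously yields that $S$ generates $\Gamma$ and that $\diam_S(\Gamma)=\diam(G)$. The upper bound $d_S(\mathrm{id},\gamma)\le d_G(o,\gamma o)$ comes from lifting a geodesic $o=v_0,v_1,\ldots,v_m=\gamma o$ in $G$ to elements $\gamma_i\in\Gamma$ with $\gamma_i o=v_i$, $\gamma_0=\mathrm{id}$, $\gamma_m=\gamma$ (using transitivity of $\Gamma$); then $s_i:=\gamma_{i-1}^{-1}\gamma_i$ satisfies $d(s_i o,o)=d(v_i,v_{i-1})=1$, so $s_i\in S$ and $\gamma=s_1\cdots s_m$. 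The reverse inequality follows from the triangle inequality: if $\gamma=s_1\cdots s_m$ with $s_i\in S$, then $d(o,\gamma o)\le\sum_i d(s_1\cdots s_{i-1}o,s_1\cdots s_i o)=\sum_i d(o,s_i o)\le m$. Taking the maximum over $\gamma\in\Gamma$ and using transitivity to write any pair of vertices as $(o,\gamma o)$ yields $\diam_S(\Gamma)=\diam(G)$.

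For the cardinality formula $|S^n|=|B(o,n)|\,|\Gamma_o|$, I would consider the orbit map $\pi:S^n\to B(o,n)$, $\gamma\mapsto\gamma o$. Surjectivity onto $B(o,n)$ follows from the lifting argument above (padding shorter words by elements of $\Gamma_o\subseteq S$, or by repeating vertices in the path). The fiber over $\gamma o\in B(o,n)$ is $\gamma\Gamma_o\cap S^n$; since $S\Gamma_o=S$ (by bi-invariance) we have $S^n\Gamma_o=S^n$, so the whole coset $\gamma\Gamma_o$ lies in $S^n$ and the fiber has size $|\Gamma_o|$.

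Finally I would construct the isomorphism $G\cong \operatorname{A}(\Gamma,S,\Gamma_o)$. The map $\Phi:\Gamma/\Gamma_o\to V$, $\gamma\Gamma_o\mapsto\gamma o$, is a well-defined bijection by transitivity and the definition of the stabilizer. The orbits of $\rho(\Gamma_o)$ on $\Gamma$ (acting by right multiplication by $h^{-1}$) are precisely the left cosets $\gamma\Gamma_o$, so the vertex set of the quotient $\operatorname{A}(\Gamma,S,\Gamma_o)$ agrees with $\Gamma/\Gamma_o$. For edges: $x\Gamma_o$ and $y\Gamma_o$ are adjacent in the quotient iff there exist $h_1,h_2\in\Gamma_o$ with $(xh_1)^{-1}(yh_2)\in S\setminus\{\mathrm{id}\}$, i.e.\ iff $x^{-1}y\in \Gamma_o S\Gamma_o\setminus\Gamma_o = S\setminus\Gamma_o$; on the other hand $xo\sim yo$ in $G$ iff $d(o,x^{-1}yo)=1$ iff $x^{-1}y\in S\setminus\Gamma_o$. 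Hence $\Phi$ preserves adjacency. Taking $H=\rho(\Gamma_o)$, which is isomorphic to $\Gamma_o$ since $\rho$ is injective (if $\rho(h)=\mathrm{id}$ then $xh^{-1}=x$ for all $x$, forcing $h=\mathrm{id}$), gives the final statement $G\cong\Cay(\Gamma,S)/H$.

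No single step is genuinely hard; the main care needed is in the edge calculation for the quotient graph, where one must keep track of which identity elements appear in $S$ and verify that bi-invariance $\Gamma_o S\Gamma_o=S$ is exactly the property that makes the quotient well-defined as a graph and its adjacency relation match that of $G$.
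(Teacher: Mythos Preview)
Your proof is correct and follows essentially the same approach as the paper's own proof. The paper cites external lemmas from \cite{tt.Trof} for the generating/diameter/cardinality claims whereas you prove them directly via the key identity $d_S(\mathrm{id},\gamma)=d_G(o,\gamma o)$, and the paper organizes the edge calculation slightly differently (first isolating the implication that adjacency in $\operatorname{A}(\Gamma,S,\Gamma_o)$ forces adjacency in $\Cay(\Gamma,S)$), but both arguments reduce to the same characterization $x\Gamma_o\sim y\Gamma_o \iff x^{-1}y\in S\setminus\Gamma_o \iff xo\sim yo$.
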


\begin{proof}
The fact that $S$ is a symmetric generating set for $\Gamma$ satisfying $\diam_S(\Gamma)=\diam(G)$ follows immediately from \cite[Lemma 3.4]{tt.Trof}. The sets $S^n$ are of the claimed cardinality by \cite[Lemma 3.8]{tt.Trof}. To see that $S$ is bi-invariant with respect to $\Gamma_o$, first note that we trivially have $S\Gamma_o=S$. Applying this and the symmetry of $S$ repeatedly then gives $\Gamma_oS\Gamma_o=\Gamma_oS=(S\Gamma_o)^{-1}=S^{-1}=S$, as required. Note also that bi-invariance in turn implies that for every $\gamma,\gamma'\in\Gamma$ we have
\begin{equation}\label{eq:CA.bi-inv}
\gamma\Gamma_o\sim\gamma'\Gamma_o\text{ in }\operatorname{A}(\Gamma,S,\Gamma_o)\quad\implies\quad \gamma\sim\gamma'\text{ in }\Cay(\Gamma,S).
\end{equation}
Indeed, if $\gamma\Gamma_o\sim\gamma'\Gamma_o$ in $\operatorname{A}(\Gamma,S,\Gamma_o)$ then by definition there exists $h\in\Gamma_o$ such that $\gamma h\sim\gamma'$ in $\Cay(\Gamma,S)$, which in turn means that there exists $s\in S$ such that $\gamma hs=\gamma'$. The bi-invariance of $S$ then implies that $hs\in S$, and hence that $\gamma\sim\gamma'$ in $\Cay(G,S)$, as claimed.

Since $\Gamma$ acts transitively on $G$, the map
\[\begin{array}{ccccc}
\ph&:&G&\to&\Gamma/\Gamma_o\\
    &&\gamma o&\mapsto&\gamma\Gamma_o
\end{array}\]
is a well-defined bijection by the orbit--stabiliser theorem. This map $\ph$ defines a graph isomorphism $G\to\operatorname{A}(\Gamma,S,\Gamma_o)$, since for every $\gamma,\gamma'\in\Gamma$ we have
\begin{align*}
\gamma\Gamma_o\sim\gamma'\Gamma_o\text{ in }\operatorname{A}(\Gamma,S,\Gamma_o)&\iff \gamma^{-1}\gamma'\in S\setminus \Gamma_o&\text{(by \eqref{eq:CA.bi-inv})}\\
   &\iff \gamma^{-1}\gamma' o\sim o\text{ in }G\\
   &\iff \gamma o\sim\gamma' o\text{ in }G
\end{align*}
as claimed.
\end{proof}

\subsection{Isoperimetry in induced subgraphs}
\label{subsec:induced_isoperimetry}

We will see in \cref{sec:mainproofs} that is a straightforward matter to deduce \cref{thm:gap}, which concerns the value of $p_c$ in \emph{infinite} transitive graphs, from the structure-theoretic results \cref{prop:structure.last.large.ball,thm:isop_resist} together with our analyses of virtually nilpotent groups from \cref{sec:Nilpotent} and graphs of large isoperimetric dimension from \cref{sec:Isoperimetry}. Our main results regarding \emph{finite} transitive graphs require a rather more delicate approach 
  owing to the possibility that the auxiliary graph $G/H$ is of `intermediate size', i.e., that $1 \ll |V/H| \ll |V|$, in which case we cannot rely on the results of either \cref{sec:Nilpotent} or \cref{sec:Isoperimetry} alone to establish the existence of a giant component. 
The purpose of this section is to prove the following proposition, which is a variation on \cref{thm:isop_resist} and will be used to apply the results of \cref{sec:Isoperimetry} in the case that $G/H$ is of intermediate size.

\begin{proposition}\label{prop:iso.subgraph}
For each  integer $d \geq 1$, $k\geq 1$, $\eps>0$, and $\rho \in(0,1)$ there exist positive constants  $\ell=\ell(d,k,\eps,\rho)$, $\eps_0=\eps_0(d)$, and $c=c(d,k,\eps,\rho)$ such that the following holds. Let $G=(V,E)$ be a vertex-transitive graph of degree $k$ and let $o$ be a vertex of $G$. If $n \geq 1$ is such that
$\eps n^d \leq |B(o,n)| \leq \eps_0 n^d$
then there exists a subset $I\subseteq B(o, \ell n)$ such that
\begin{enumerate}
\item 
$|I\cap B(o,  n )|\ge\rho|B(o,  n )|$ and
\item the subgraph of $G$ induced by $I$ satisfies the $d$-dimensional isoperimetric inequality \eqref{Assumption:ID}.
\end{enumerate}
\end{proposition}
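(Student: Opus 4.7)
The plan is to take $I$ to be a ball $B(o,\ell_0 n)$ whose radius is selected, via a F{\o}lner-type pigeonhole argument, so that $I$ has small relative boundary, and then to verify the induced $d$-dimensional isoperimetric inequality by combining \cref{thm:isop_resist} with a case analysis based on how much of $A\subseteq I$ lies in the boundary layer of $I$.

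First, I would set $\eps_0:=c(d)$ coming from \cref{thm:dimension_drop}, so that the hypothesis $|B(o,n)|\le\eps_0 n^d$ forces $|B(o,mn)|\le C(d)\,m^{d-1}\,|B(o,n)|$ for every integer $m\ge 1$. The product of the ratios $|B(o,(j+1)n)|/|B(o,jn)|$ for $j=1,\ldots,\ell-1$ is then at most $C(d)\ell^{d-1}$, so by a geometric-mean pigeonhole argument, for $\ell=\ell(d,\theta)$ chosen large enough there exists $\ell_0\in\{1,\ldots,\ell-1\}$ with $|B(o,(\ell_0+1)n)\setminus B(o,\ell_0 n)|\le\theta|B(o,\ell_0 n)|$, where $\theta>0$ is a parameter to be tuned at the end. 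I would take $I:=B(o,\ell_0 n)$, which contains $B(o,n)$ and so verifies condition (1) immediately with room to spare.

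To verify the induced isoperimetric inequality, fix $A\subseteq I$ with $|A|\le|I|/2$ and write $E(A,V\setminus I)$ for the set of edges of $G$ from $A$ to $V\setminus I$, so that $|\partial_E^I A|=|\partial_E A|-|E(A,V\setminus I)|$. \cref{thm:isop_resist}, applied at reference scale $\ell n$ and using \cref{thm:dimension_drop} to bound $|B(o,\ell n)|^{1/d}/(\ell n)$ from below, yields $|\partial_E A|\ge c'|A|^{(d-1)/d}$ for some $c'=c'(d,\eps,\ell)>0$, while $|E(A,V\setminus I)|\le k|A\cap\partial_V^- I|$, where $\partial_V^- I:=\{v\in I:v\sim u\text{ for some }u\in V\setminus I\}$ satisfies $|\partial_V^- I|\le k\theta|I|$ by choice of $\ell_0$. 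I would then split into cases according to whether $A$ is concentrated in the interior $I\setminus\partial_V^- I$ (where the leakage term is small relative to $|\partial_E A|$ and the direct bound suffices), in the boundary shell $\partial_V^- I$ (where $|\partial_E^I A|$ is instead bounded from below by the $I$-boundary of the interior region, whose $G$- and $I$-boundaries coincide), or in a mix of both (handled by the dual identity $|\partial_E^I A|=|\partial_E^I(I\setminus A)|$, allowing one to swap $A$ with $I\setminus A$).

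The main obstacle will be gluing these cases together uniformly in $|A|$: the direct bound degrades when $A$ concentrates near $\partial_V^- I$, while the averaged form
\[
|\partial_E^I A|=\tfrac12\bigl(|\partial_E A|+|\partial_E(I\setminus A)|\bigr)-\tfrac12|\partial_E I|
\]
carries a subtraction of order $\theta|I|$ that does not scale with $|A|$ and so breaks down for small $A$. I expect this to be resolved by calibrating $\theta$ (and hence $\ell$) against the constant $c'$ and the degree bound $k$, and, if necessary, by further trimming the thinnest boundary shell off $I$ so that residual leakage can be absorbed; this tuning will finally pin down $\ell=\ell(d,k,\eps,\rho)$ and the isoperimetric constant $c=c(d,k,\eps,\rho)$ promised in the statement.
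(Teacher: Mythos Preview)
Your plan has a genuine gap at precisely the point you flag as ``the main obstacle''. Taking $I=B(o,\ell_0 n)$ and hoping to calibrate $\theta$ against $c'$ cannot close the argument: the F{\o}lner pigeonhole over width-$n$ annuli only gives $|\partial_E I|\le k\theta|I|$ with $\theta$ a \emph{constant} independent of $n$, while the target $c'|A|^{(d-1)/d}$ is at most of order $|I|^{(d-1)/d}\asymp n^{d-1}$, so the subtraction $\tfrac12|\partial_E I|\asymp \theta n^d$ swamps it. Pigeonholing instead over width-$1$ annuli improves $\theta$ to order $(\log\ell)/(\ell n)$, but the constant $c'$ you extract from \cref{thm:isop_resist} at scale $\ell n$ \emph{also} degrades (it is at best of order $\ell^{-(d-1)/d}$, since above scale $n$ the graph looks only $(d-1)$-dimensional by \cref{thm:dimension_drop}), and one checks that $c'(|I|/2)^{(d-1)/d}$ and $\tfrac12|\partial_E I|$ remain comparable no matter how $\ell$ is chosen. ``Trimming the boundary shell'' does not help either, since the trimmed set has its own boundary layer with the same scaling.

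The paper sidesteps this entirely by \emph{not} taking $I$ to be a ball. Instead it takes $I$ to be a minimizer of $|\partial_E A|/|A|^{(d-1)/d}$ over subsets of $B(o,2\ell n)$, following an idea of Le~Coz and Gournay. The minimality gives, for every $A\subseteq I$ with $A'=I\setminus A$, the \emph{multiplicative} comparisons $|\partial_E A|\ge(|A|/|I|)^{(d-1)/d}|\partial_E I|$ and $|\partial_E A'|\ge(|A'|/|I|)^{(d-1)/d}|\partial_E I|$; substituting these into the identity $2|\partial_I A|=|\partial_E A|+|\partial_E A'|-|\partial_E I|$ and using concavity of $x\mapsto x^{(d-1)/d}$ yields $|\partial_I A|\ge c_d|\partial_E A|$ uniformly in $|A|$, with no additive loss. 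The price is that $I$ is no longer explicitly a ball, so condition~(1) is not automatic; this is where \cref{prop:sparse.iso} and \cref{lem:min.contains.ball} enter, showing that any set with sufficiently small isoperimetric ratio must be $\rho$-dense in some ball of radius $n$, after which vertex-transitivity lets one recentre at $o$.
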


Note that a nontrivial argument is still required to deduce our main theorems from this together with our analyses of the nilpotent and high-dimensional cases. This argument is carried out in \cref{sec:mainproofs}.

\medskip

We now begin to work towards the proof of \cref{prop:iso.subgraph}.
We begin by adapting the arguments of Coulhon--Saloff-Coste and Tessera and the second author to prove the following result, which essentially says that if a finite set $A$ of vertices in a vertex-transitive graph is sparse in every ball of radius $r$ then its external vertex boundary $\partial_V^+\!A$ has size at least a constant times $|A|/r$.

\begin{proposition}[Locally sparse sets have large boundary]
\label{prop:sparse.iso}
 Let $G=(V,E)$ be a locally finite, vertex-transitive graph, and let $A$ be a finite set of vertices of $G$. If $\rho\in (0,1)$ and $r\geq 1$ are such that $|A\cap B(x,r)|\le\rho |B(x,r)|$ for every $x\in V$ then
\[
|\partial_E A| \geq |\partial_V^+\!A|\ge\frac{1-\rho}{6r}|A|.
\]
\end{proposition}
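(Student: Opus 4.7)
The plan is to adapt the Coulhon--Saloff-Coste isoperimetric argument to the locally sparse setting by counting suitable pairs in two ways. Fix a transitive subgroup $\Gamma \leq \Aut(G)$ (assume $|\Gamma_o|$ is finite for simplicity; the infinite case is handled analogously using Haar measure on $\Gamma_o$) and a base vertex $o \in V$. For each $g \in \Gamma$ with $|g|_o := d(o, go) \leq r$, I would fix once and for all a geodesic $\gamma_g : \{0, 1, \ldots, |g|_o\} \to V$ from $o$ to $go$, and write $\Gamma_r = \{g \in \Gamma : |g|_o \leq r\}$. The strategy is to estimate the number $N$ of pairs $(h, g) \in \Gamma \times \Gamma_r$ with $ho \in A$ and $hgo \notin A$ from above and below and compare.

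For the lower bound on $N$: since $h$ acts by isometries, $\{hgo : g \in \Gamma_r\} = B(ho, r)$ with each vertex covered by exactly $|\Gamma_o|$ elements of $\Gamma_r$ (by the orbit--stabilizer theorem), so applying the sparsity hypothesis at $ho$ gives at least $(1-\rho)|\Gamma_o||B(o, r)|$ choices of $g$ with $hgo \notin A$. Summing over the $|\Gamma_o|$ elements $h$ with $ho = a$ and then over $a \in A$ yields $N \geq (1-\rho)|A| \cdot |\Gamma_o|^2 |B(o, r)|$. For the upper bound, each counted pair $(h, g)$ gives a transported geodesic $(h\gamma_g(i))_{i=0}^{|g|_o}$ from $ho \in A$ to $hgo \notin A$, which must exit $A$ at a first-exit vertex $z := h\gamma_g(j) \in \partial_V^+\!A$ for some $1 \leq j \leq |g|_o$. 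For fixed $z$ and $(g, j)$, the equation $h\gamma_g(j) = z$ confines $h$ to a single coset of the stabilizer $\Gamma_{\gamma_g(j)}$, which has size $|\Gamma_o|$; summing over $g \in \Gamma_r$ and over $j \leq |g|_o \leq r$ bounds the contribution per boundary vertex by $r|\Gamma_o|^2|B(o, r)|$, so $N \leq |\partial_V^+\!A| \cdot r|\Gamma_o|^2|B(o, r)|$. Comparing the two estimates and cancelling the common factor $|\Gamma_o|^2|B(o, r)|$ produces $|\partial_V^+\!A| \geq (1-\rho)|A|/r$, which is in fact stronger than the stated bound; the factor $6$ provides comfortable slack. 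The inequality $|\partial_E A| \geq |\partial_V^+\!A|$ is then immediate from the fact that each external boundary vertex is incident to at least one boundary edge.

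The main subtlety will be handling the vertex stabilizer $\Gamma_o$ correctly. A naive parametrization fixing one coset representative $h_a$ per vertex $a$ appears to lose a factor of $|\Gamma_o|$ in the upper bound, which would give only a stabilizer-dependent (hence degree-dependent) isoperimetric constant rather than the absolute one required. The crucial observation is that summing instead over all $h \in \Gamma$ with $ho \in A$ makes the stabilizer factors $|\Gamma_o|^2$ appear symmetrically in both the lower and upper bounds, so they cancel cleanly, producing an absolute constant independent of the graph's degree or the size of its automorphism stabilizers.
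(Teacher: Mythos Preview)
Your double-counting argument is correct and elegant when $\Gamma$ is discrete (in particular whenever $G$ is finite, which is the main setting of the paper), and it yields the sharper bound $|\partial_V^+ A| \geq (1-\rho)|A|/r$. This is a genuinely different route from the paper's: the paper passes to the locally compact group $\Aut(G)$, replaces $A$ by the pullback $\Gamma_{o\to A}=\{\gamma:\gamma o\in A\}$, and bounds $\|\mathbbm{1}_{\Gamma_{o\to A}} - M(\mathbbm{1}_{\Gamma_{o\to A}})\|_1$ from above via a triangle inequality along the $r$ letters of a word in the generating set $S$, rather than tracking the first exit vertex of a transported geodesic. Your path-exit accounting is more concrete and avoids the $L^1$ machinery.

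However, the claim that the infinite-stabilizer case ``is handled analogously using Haar measure'' hides a real issue: when $\Gamma$ is non-unimodular, the set $\{h : h\gamma_g(j) = z\}$ is a left coset of $\Gamma_{\gamma_g(j)}$, and for $v=\gamma_g(j)$ one has $\mu(\Gamma_v) = \Delta(g_1)\mu(\Gamma_o)$ where $g_1 o = v$, which is not $\mu(\Gamma_o)$ in general. Your upper bound on $N$ therefore picks up an extra factor of $\sup_{g \in S^r}\Delta(g)$, which need not be bounded. The paper resolves this by a dichotomy: either some $s\in S$ has $\Delta(s) \geq 1 + (\log 2)/r$, in which case $\mu(As^{-1} \setminus A) \geq (\Delta(s)-1)\mu(A)$ already gives the bound, or else $\Delta \leq 2$ on all of $S^r$ and the argument proceeds with this uniform control. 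This dichotomy is exactly what produces the constant $6$, so it is not slack. For the paper's main applications your discrete argument suffices, but the proposition as stated covers graphs (e.g.\ Trofimov's grandparent graph) that admit no unimodular transitive group of automorphisms, so a complete proof must address the modular function.
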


Proposition \ref{prop:sparse.iso} can be seen as a generalisation of \cite[Proposition 4.1]{tt.resist}, and is implicitly contained in the proof of that result. We provide the details here for the convenience of the reader.

We now briefly recall some relevant definitions that will be used in the proof of \cref{prop:sparse.iso}.
  Given a locally finite, vertex-transitive graph $G$, the group $\Aut(G)$ of automorphisms of $G$ is a locally compact group with respect to the topology of pointwise convergence, and every closed subgroup of $\Aut(G)$ is also a locally compact group in which vertex stabilisers are compact and open. Moreover, an arbitrary closed subgroup $\Gamma<\Aut(G)$ admits a (left) \emph{Haar measure} $\mu$, the properties of which include that
\begin{enumerate}[label=(\arabic*)]
\item$\mu(K)<\infty$ if $K \subseteq \Gamma$ is compact,
\item$\mu(U)>0$ if $U \subseteq \Gamma$ is open and nonempty,
\item$\mu(\gamma A)=\mu(A)$ for every Borel set $A\subset \Gamma$ and every $\gamma \in \Gamma$, and
\item\label{item:Haar.unique} if $\mu'$ is another Haar measure on $\Gamma$ then there exists $\lambda>0$ such that $\mu'=\lambda\cdot\mu$.
\end{enumerate}
See \cite[\S15]{MR551496} for a detailed introduction to Haar measures.
Given a locally compact group $\Gamma$ with left Haar measure $\mu$, we define the space $L^1(\Gamma)$ with respect to $\mu$, so that $\Gamma$ acts on $L^1(\Gamma)$ via $\gamma f(x)=f(x\gamma)$. Note that since a right translate of a Haar measure is again a Haar measure, by property \ref{item:Haar.unique} there exists a homomorphism $\Delta_\Gamma:\Gamma\to\R^+$, called the \emph{modular function} of $\Gamma$, such that
\[
\mu(A \gamma)=\Delta_\Gamma(\gamma^{-1})\mu(A)
\]
for every Borel set $A$. See also \cite[Section 2.1]{Hutchcroftnonunimodularperc} for background on the modular function for probabilists.

\begin{remark}
A locally compact group is said to be \textbf{unimodular} if its modular function is identically equal to $1$, or equivalently if its left Haar measures are also right-invariant. Every finite or countable discrete group is unimodular since its Haar measure is equal to counting measure. Thus, in our primary setting of finite transitive graphs one may assume that all the groups appearing below have $\Delta \equiv 1$, simplifying the analysis somewhat.
\end{remark}

The following lemma follows implicitly from the proof of \cite[Proposition 4.4]{tt.resist}.

\begin{lemma}\label{lem:lc.sparse.iso}
Let $\Gamma$ be a locally compact group with a left Haar measure $\mu$ and a precompact symmetric open generating set $S$. Let $A\subset \Gamma$ be a precompact open set. If $\rho\in (0,1)$ and $r\geq 1$ are such that $\mu(A\cap\gamma S^r)\le\rho\mu(S^r)$ for every $\gamma\in \Gamma$ then
\[
\sup_{s\in S}\mu(As\setminus A)\ge\frac{1-\rho}{6r}\mu(A).
\]
\end{lemma}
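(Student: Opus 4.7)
The plan is to prove this lemma by a classical Coulhon--Saloff-Coste style averaging argument in its locally compact incarnation, as in \cite[Proposition 4.4]{tt.resist}. The strategy is to upper- and lower-bound the averaged boundary quantity
\[
I := \int_{S^r}\mu(A\setminus As^{-1})\,d\mu(s).
\]
The lower bound on $I$ will come from the sparseness hypothesis, and the upper bound will come from telescoping single generators along a word representing $s$.

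For the lower bound, Fubini combined with the left-invariance of $\mu$ and the symmetry of $S$ yields
\[
\int_{S^r} \mu(A \cap As^{-1})\,d\mu(s) = \int_\Gamma \mathbf{1}_A(x) \mu(S^r \cap x^{-1}A)\,d\mu(x) = \int_A \mu(A \cap xS^r)\,d\mu(x) \le \rho\mu(A)\mu(S^r),
\]
where the second equality is left-invariance and the inequality is the hypothesis applied at $\gamma = x \in A$. Since $\int_{S^r} \mu(A)\,d\mu(s) = \mu(A)\mu(S^r)$, subtracting gives $I \ge (1-\rho)\mu(A)\mu(S^r)$.

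For the upper bound, any $s \in S^r$ may be written as $s = s_1 s_2 \cdots s_k$ with $k \le r$ and each $s_i \in S$, and the $L^1$ triangle inequality gives
\[
\mu(A \setminus As^{-1}) \le \mu(A \triangle As^{-1}) \le \sum_{i=1}^{k} \mu(A s_1^{-1}\cdots s_{i-1}^{-1} \triangle A s_1^{-1}\cdots s_i^{-1}).
\]
Each term on the right has the form $\mu(B \triangle B s_i^{-1})$ for a right-translate $B$ of $A$; relating such symmetric differences back to $\mu(A \triangle A s_i^{-1})$ via the left-invariance of $\mu$ together with the defining relation $\mu(B s) = \Delta_\Gamma(s^{-1})\mu(B)$ of the modular function $\Delta_\Gamma$, and then averaging over $s \in S^r$ while carefully absorbing the modular-function factors, yields an inequality of the form $I \le 6r\,\mu(S^r)\sup_{t\in S}\mu(At\setminus A)$. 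Combining with the lower bound on $I$ produces $\sup_{t\in S}\mu(At\setminus A) \ge \frac{1-\rho}{6r}\mu(A)$, as required.

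The main obstacle is the potential non-unimodularity of $\Gamma$: because left Haar measure is not right-invariant in general, the symmetric-difference quantity $\mu(B \triangle Bs)$ is not invariant under replacing $B$ by a right-translate, and modular-function factors appear each time one unwinds such a translate in the telescoping expansion. Tracking and bounding these factors is where the constant $6$ (rather than, say, the optimal $2$ one obtains in the unimodular case, in particular whenever $\Gamma$ is discrete as in our applications to finite transitive graphs via \cref{prop:cay-ab}) enters; the argument must arrange the telescoping so that these factors contribute only a universal multiplicative constant rather than a quantity growing with $r$.
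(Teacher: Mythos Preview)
Your lower bound on $I$ via Fubini is correct and is essentially the same computation the paper performs (phrased there via the averaging operator $M$ and the pointwise bound $M(\mathbf{1}_A)\le\rho$). The gap is in the upper bound. When you telescope $\mu(A\triangle As^{-1})$ along a word $s=s_1\cdots s_k$, the terms $\mu(B_{i-1}\triangle B_{i-1}s_i^{-1})$ with $B_{i-1}=As_1^{-1}\cdots s_{i-1}^{-1}$ differ from $\mu(A\triangle As_i^{-1})$ by modular factors of the form $\Delta_\Gamma(s_1^{-1}\cdots s_{i-1}^{-1})$ (and, worse, the generator appearing is a conjugate $t s_i^{-1} t^{-1}$ rather than $s_i^{-1}$ itself, which need not lie in $S$). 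These factors can be as large as $(\sup_{s\in S}\Delta_\Gamma(s))^{r}$, which is unbounded in general; ``averaging over $s\in S^r$'' does not help, because the Haar integral over $S^r$ is not an average over word decompositions, and the pushforward of product Haar measure on $S\times\cdots\times S$ under multiplication is not Haar on $S^r$ in the non-unimodular case. So as written, your upper bound $I\le 6r\,\mu(S^r)\sup_t\mu(At\setminus A)$ is not justified, and I do not see any rearrangement of the telescoping that makes the modular weights collapse to a universal constant without further input.

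The missing idea, which is exactly what the paper supplies, is a dichotomy on the modular function. If some $s\in S$ has $\Delta_\Gamma(s)\ge 1+\tfrac{\log 2}{r}$, then the bare identity $\mu(As^{-1})=\Delta_\Gamma(s)\mu(A)$ already gives $\mu(As^{-1}\setminus A)\ge(\Delta_\Gamma(s)-1)\mu(A)\ge\tfrac{\log 2}{r}\mu(A)$, and you are done with no telescoping at all. Otherwise $\Delta_\Gamma(s)\le 1+\tfrac{\log 2}{r}$ for every $s\in S$, whence $\Delta_\Gamma(\gamma)\le 2$ for every $\gamma\in S^r$; under this assumption every modular factor arising in the telescoping is uniformly bounded by $2$, and the argument goes through to give $\|\mathbf{1}_A-M(\mathbf{1}_A)\|_1\le 2r\sup_{s\in S}\mu(A\triangle As)$, after which one more use of the same bound converts $\mu(A\triangle As)$ to $3\sup_{s'}\mu(As'\setminus A)$. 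This is precisely where the constant $6=2\cdot 3$ comes from. You correctly flagged non-unimodularity as the obstacle, but the resolution is this preliminary case split, not a clever reorganisation of the telescoping.
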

\begin{proof}
We follow the proof of \cite[Proposition 4.4]{tt.resist}. If there exists $s\in S$ such $\Delta_\Gamma(s)\ge1+\frac{\log 2}{r}$ then
\[
\mu(As^{-1}\setminus A)\ge\mu(As^{-1})-\mu(A)=(\Delta_\Gamma(s)-1)\mu(A)\geq \frac{\log 2}{r}\mu(A),
\]
which certainly gives the required bound. We may therefore assume that $\Delta_\Gamma(s)\le1+\frac{\log 2}{r}$ for every $s\in S$. This implies that
\begin{equation}\label{eq:mod.log2}
\Delta_\Gamma(\gamma)\le\left(1+\frac{\log 2}{r}\right)^r\leq 2
\end{equation}
for every $\gamma \in S^r$.

Define a linear operator $M:L^1(\Gamma)\to L^1(\Gamma)$ via 
\[M(f)(x)=\frac{1}{\mu(S^r)}\int_{\gamma \in S^r}\,f(x\gamma) \dif \mu(\gamma).\] The hypothesis on $A$ implies that its indicator function $\mathbbm{1}_A$ satisfies $M(\mathbbm{1}_A)(x)\le\rho$ for every $x\in \Gamma$, and hence that
\begin{equation}\label{eq:Mf}
\|\mathbbm{1}_A-M(\mathbbm{1}_A)\|_1\ge(1-\rho)\mu(A).
\end{equation}
On the other hand, given $\gamma \in S^r$ written as $\gamma =s_1\cdots s_r$ with each $s_i\in S$, the triangle inequality implies that
\[
\|\mathbbm{1}_A-\gamma \mathbbm{1}_A\|_1\le\sum_{i=0}^{r-1}\|s_{r-i+1}\ldots s_r\mathbbm{1}_A-s_{r-i}\ldots s_r\mathbbm{1}_A\|_1=\sum_{i=0}^{r-1}\Delta_\Gamma(s_{r-i+1}\ldots s_r)\|\mathbbm{1}_A-s_{r-i}\mathbbm{1}_A\|_1.
\]
By \eqref{eq:mod.log2}, this implies that $\|\mathbbm{1}_A-\gamma\mathbbm{1}_A\|_1\le2r\sup_{s\in S}\|\mathbbm{1}_A-s\mathbbm{1}_A\|_1=2r\sup_{s\in S}\mu(A\vartriangle As)$,
and averaging this bound over $S^r$ then gives that
\begin{equation}\label{eq:1-M1<A.sd.As}
\|\mathbbm{1}_A-M(\mathbbm{1}_A)\|_1\le2r\sup_{s\in S}\mu(A\vartriangle As).
\end{equation}
To conclude, simply note that for all $s\in S$ we have that
\begin{align*}
\mu(A\vartriangle As)&=\mu(A\setminus As)+\mu(As\setminus A)\\
   &=\Delta_\Gamma^{-1}(s^{-1})\mu(As^{-1}\setminus A)+\mu(As\setminus A)\\
   &\le3\sup_{s'\in S}\mu(As'\setminus A)&\text{(by \eqref{eq:mod.log2})},
\end{align*}
so that the desired bound follows from \eqref{eq:Mf} and \eqref{eq:1-M1<A.sd.As}.
\end{proof}

\begin{proof}[Proof of Proposition \ref{prop:sparse.iso}]
We follow the proof of \cite[Proposition 4.1]{tt.resist}. Set $\Gamma=\Aut(G)$ and fix a vertex $o$ of $G$. By transitivity we may pick, for each $x\in V$, an automorphism $\gamma_x\in \Gamma$ such that $\gamma_x o =x$. Write $\Gamma_o$ for the stabiliser of $o$ in $\Gamma$, and given an arbitrary subset $X\subset V$, write $\Gamma_{o\to X}=\{\gamma \in \Gamma:\gamma o\in X\}$, noting that
\[
\Gamma_{o\to X}=\bigcup_{x\in X}\gamma_x\Gamma_o.
\]
The stabiliser $\Gamma_o$ is open by definition, and it is shown in \cite[Lemma 4.4]{tt.Trof} that it is compact, so $\Gamma_{o\to X}$ is compact and open whenever $X$ is finite. Normalising the left Haar measure $\mu$ on $\Gamma$ so that $\mu(\Gamma_o)=1$, we also have that
\begin{equation}\label{eq:pullback.mu}
\mu(\Gamma_{o\to X})=|X|,
\end{equation}
since the sets $\gamma_x \Gamma_o = \{\gamma \in \Gamma : \gamma o =x\}$ are disjoint.

It is shown in \cite[Lemma 4.8]{tt.Trof} that the set $S=\{\gamma \in \Gamma:d(\gamma o,o)\le1\}$ is a compact open generating set for $\Gamma$ containing the identity, and in \cite[Lemma 3.4]{tt.Trof} that
\begin{equation}\label{eq:Lem3.4}
S^r=\Gamma_{o\to B(o,r)}
\end{equation}
for every $r\in\N$. For every $\gamma \in \Gamma$ we have that
\begin{align*}
\Gamma_{o\to A}\cap \gamma S^r&=\Gamma_{o\to A}\cap \gamma \Gamma_{o\to B(o,r)}&\text{(by \eqref{eq:Lem3.4})}\\
   &=\Gamma_{o\to A\cap B(\gamma o,r)},
\end{align*}
and hence that
\begin{align*}
\mu(\Gamma_{o\to A}\cap \gamma S^r)&=|A\cap B(\gamma o,r)|&\text{(by \eqref{eq:pullback.mu})\phantom{.}}\\
   &\le\rho|B(o,r)|&\text{(by hypothesis)\phantom{.}}\\
   &=\rho\mu(S^r)&\text{(by \eqref{eq:pullback.mu} and \eqref{eq:Lem3.4}).}
\end{align*}
Lemma \ref{lem:lc.sparse.iso} and \eqref{eq:pullback.mu} therefore imply that 
\[
\sup_{s\in S}\mu(\Gamma_{o\to A}s\setminus \Gamma_{o \to A})\ge\frac{1-\rho}{6r}|A|.
\]
It is shown in \cite[(4.7)]{tt.resist} that $\mu(\Gamma_{o\to A}S\setminus \Gamma_{o\to A})=|\partial_V^+A|$, so this implies the required bound.
\end{proof}

\begin{corollary}\label{lem:min.contains.ball}
For each $d\geq 1$ there exists a positive constant $c=c(d)$ such that the following holds. If $G=(V,E)$ is a connected, locally finite, vertex-transitive graph, $o$ is a vertex of $G$, $\rho \in (0,1)$, and $n \geq 1$ then
\[
|\partial_E A| \geq |\partial_V^+ A| \geq c(d) (1-\rho) \min\left\{1, \frac{1}{n}|B(o,n)|^{1/d} \right\} \cdot |A|^{(d-1)/d}
\]
for every set $A \subseteq V$ such that $|A \cap B(x, n)| \leq \rho |B(x,n)|$ for every $x\in V$.
\end{corollary}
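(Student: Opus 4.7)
The plan is to prove the corollary by splitting into two cases according to the size of $|A|$ relative to $\frac{1}{2}|B(o,n)|$, so that in one regime Theorem~\ref{thm:isop_resist} applies directly, and in the complementary regime the local sparsity hypothesis lets us apply Proposition~\ref{prop:sparse.iso}. Let $c_1(d)$ denote the constant produced by Theorem~\ref{thm:isop_resist} and set $c(d) = \min\{c_1(d), 1/12\}$.

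First, suppose $|A| \leq \frac{1}{2}|B(o,n)|$. Then Theorem~\ref{thm:isop_resist} gives
\[
|\partial_V^+ A| \geq c_1(d)\min\!\left\{1,\tfrac{1}{n}|B(o,n)|^{1/d}\right\} |A|^{(d-1)/d},
\]
and the desired bound follows from the fact that $1-\rho \leq 1$ and $c(d) \leq c_1(d)$. This case does not even use the local sparsity hypothesis.

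Now suppose $|A| > \frac{1}{2}|B(o,n)|$. Applying Proposition~\ref{prop:sparse.iso} with $r=n$, which is permissible by the local sparsity assumption, yields
\[
|\partial_V^+ A| \geq \frac{1-\rho}{6n}|A| = \frac{1-\rho}{6n}|A|^{1/d}\cdot|A|^{(d-1)/d}.
\]
To conclude it suffices to verify that $\frac{1}{6n}|A|^{1/d} \geq c(d)\min\{1,\tfrac{1}{n}|B(o,n)|^{1/d}\}$, or equivalently $|A|^{1/d} \geq 6c(d)\min\{n,|B(o,n)|^{1/d}\}$. Since $|A| > \tfrac{1}{2}|B(o,n)|$ we have $|A|^{1/d} \geq 2^{-1/d}|B(o,n)|^{1/d} \geq \tfrac{1}{2}|B(o,n)|^{1/d}$ for every $d \geq 1$, and since $\min\{n,|B(o,n)|^{1/d}\} \leq |B(o,n)|^{1/d}$, the required inequality holds provided $6c(d) \leq \tfrac{1}{2}$, which is satisfied by our choice $c(d) \leq 1/12$.

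There is no real obstacle here: the two cases are handled by the two cited results, and the constant $c(d)$ is chosen to simultaneously work for both. The slight subtlety is that the $(1-\rho)$ factor in the conclusion is wasted in the first case but essential in the second, reflecting the fact that Theorem~\ref{thm:isop_resist} is unconditional on sparsity while Proposition~\ref{prop:sparse.iso} loses a factor of $(1-\rho)$ from the locally sparse hypothesis.
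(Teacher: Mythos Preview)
Your proof is correct and follows essentially the same approach as the paper: split according to whether $|A|\le\tfrac12|B(o,n)|$, apply Theorem~\ref{thm:isop_resist} in the first case and Proposition~\ref{prop:sparse.iso} in the second, using $|A|^{1/d}\ge 2^{-1/d}|B(o,n)|^{1/d}$ to convert the linear bound into the desired form. The paper's version is terser but the argument and the resulting constant $c(d)=\min\{c_1(d),1/12\}$ are identical.
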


\begin{proof}
If $|A|\le\frac12|B(o,n)|$ then the desired bound follows immediately from \cref{thm:isop_resist}. Meanwhile, if $|A|>\frac12|B(o,n)|$ then the desired bound
\[
|\partial_E A| \geq |\partial_V^+ A| \geq \frac{1-\rho}{6 n} |A| \geq \frac{(1-\rho)}{12 n}  |A|^{(d-1)/d} |B(o,n)|^{1/d}
\]
 follows from Proposition \ref{prop:sparse.iso}.
\end{proof}

\begin{proof}[Proof of Proposition \ref{prop:iso.subgraph}]
We apply an argument similar to one used by Le Coz and Gournay \cite[Lemma 3.2]{coz2019separation}.
Let $\eps_0=\eps_0(d)$ be the constant $c(d)$ from \cref{thm:dimension_drop}. 
Fix $\eps>0$ and $n\geq 1$ such that $\eps n^d \leq |B(o,n)| \leq \eps_0 n^d$. 
Let $\ell \geq 1$ and consider the quantity
\[
\eta(\ell) = \min\left\{ \frac{|\partial_E A|}{|A|^{(d-1)/d}} : A \subseteq B(o, 2 \ell n) \right\}.
\]
Writing $B_r=B(o,r)$ and noting that $|\partial_E B_r|\le k|\partial_V^-B_r|=k(|B_r|-|B_{r-1}|)$,
we have that
\[
\eta(\ell) \leq  k \cdot \frac{|B_{r}|-|B_{r-1}|}{|B_r|^{(d-1)/d}}.
\]
for every $0\leq r \leq 2\ell n$.
Averaging over $\ell n < r \leq 2\ell n$, we deduce from \cref{thm:dimension_drop} that there exist constants $C_1$ and $C_2$ depending only on $d$ such that 
\[
\eta(\ell) \leq \frac{k}{\ell n}\sum_{r=\ell n+1}^{2\ell n}\frac{|B_{r}|-|B_{r-1}|}{|B_r|^{(d-1)/d}} \leq \frac{k}{\ell n} \frac{|B_{2\ell n}|}{|B_{\ell n}|^{(d-1)/d}} \leq \frac{C_1k}{\ell n} |B_{2\ell n}|^{1/d} \leq \frac{C_2k}{\ell n} (n^d \ell^{d-1})^{1/d} = C_2k \ell^{-1/d}.
\]
The only important feature of this bound is that the right hand side tends to zero as $\ell\to\infty$, at a rate depending only on $d$ and $k$.
Indeed, letting $c_1=c_1(d)$ be the constant from \cref{lem:min.contains.ball}, if $I \subseteq B(o, 2\ell n)$ is a set attaining the minimum in the definition of $\eta(\ell)$ then there exists $x\in V$ such that
\[
  |I \cap B(x,n)| \geq \left(1-\frac{\eta(\ell)}{c_1 \eps^{1/d}}\right)|B(x,n)|.
\]
It follows that there exists a constant $\ell=\ell(d,k,\rho,\eps)$ such that if $I \subseteq B(o,2\ell n)$ is a set attaining the minimum in the definition of $\eta(\ell)$ then there exists $x\in V$ such that
\[
  |I \cap B(x,n)| \geq \rho |B(x,n)|.
\]

Fix one such set $I$ and $x\in V$. Since $I \subseteq B(x,5\ell n)$, it suffices to prove that the subgraph of $G$ induced by $I$ satisfies a $d$-dimensional isoperimetric inequality with constants depending only on $d,k,\eps$ and $\rho$. 
Since $|B(o,2\ell n)| \geq |B(o,n)| \geq  \eps n^d$, we can apply \cref{thm:isop_resist} to deduce that there exists a positive constant $c_2=c_2(d,k,\rho,\eps)$ such that
\[
|\partial_E A| \geq c_2 |A|^{(d-1)/d}
\]
for every subset $A$ with $|A| \leq |I|/2 \leq |B(o,2\ell n)|/2$. We are not done at this point of course, since what we really need is a lower bound on the size of the boundary of $A$ \emph{considered as a subset of the subgraph of $G$ induced by $I$.} Write $\partial_I A$ for this boundary.
Fix $A\subseteq I$ with $|A|\le|I|/2$, write $A'=I\setminus A$ and, following Le Coz and Gournay, note that
\begin{equation}\label{eq:AA'I}
2|\partial_I A| = |\partial_E A|+|\partial_E A'|-|\partial_E I|.
\end{equation}
 We have by minimality of $I$ that
\[\frac{|\partial_E A|}{|A|^{(d-1)/d}}\geq 
\frac{|\partial_E I|}{|I|^{(d-1)/d}} \qquad \text{ and } \qquad \frac{|\partial_E A'|}{|A'|^{(d-1)/d}}\geq 
\frac{|\partial_E I|}{|I|^{(d-1)/d}}. \]
Thus, writing $\alpha = |A|/|I| $ and $\gamma=(d-1)/d$, it follows that
\[
|\partial_E I|-|\partial_E A'| \leq \left(1-(1-\alpha)^\gamma\right)|\partial_E I| \leq \frac{1-(1-\alpha)^\gamma}{\alpha^\gamma}|\partial_E A|
\]
and hence by \eqref{eq:AA'I} that
\[
|\partial_I A| \geq \frac{\alpha^\gamma+(1-\alpha)^\gamma-1}{2\alpha^\gamma} |\partial_E A| \geq \frac{2^{1/d}-1}{2^{1/d}} |\partial_E A| \geq c_3 |A|^{(d-1)/d}
\]
for some positive constant $c_3=c_3(d,k,\rho,\eps)$, where in the central inequality we used the readily verified fact that $\frac{\alpha^\gamma+(1-\alpha)^\gamma-1}{2\alpha^\gamma}$ is a decreasing function of $\alpha$ on $[0,1/2]$. \qedhere

\end{proof}

\section{Proofs of the main theorems}
\label{sec:mainproofs}

\cref{thm:gap}, which concerns \emph{infinite} vertex-transitive graphs, can be deduced easily from \cref{prop:structure.last.large.ball,thm:isop_resist} together with \cref{thm:quantitative_DGRSY,thm:inf.perc.nilp}.

\begin{proof}[Proof of \cref{thm:gap}]
By \cref{cor:quasitrans} we may assume that $G$ is transitive. Set $m=\sup\{n\in\N:|B(o,n)|\ge n^5\}$. If $m=\infty$ then the theorem follows from Theorems \ref{thm:isop_resist} and \ref{thm:quantitative_DGRSY}. If $m<\infty$ then let $M$ be the constant $M(5)$ appearing in \cref{prop:structure.last.large.ball}, and note that that theorem implies that there exist groups $H\lhd\Aut(G)$ and $\Gamma<\Aut(G/H)$ such that $\Gamma$ acts transitively on $V/H$ and has a nilpotent subgroup of step and index at most $M$, and such that the stabiliser in $\Gamma$ of each orbit $Hv$ with $v\in V$ has size at most $M$.  \cref{prop:cay-ab} implies that $S=\{\gamma\in\Gamma:d(\gamma(Ho),Ho)\le1\}$ is a finite symmetric generating set for $\Gamma$, and since the growth of $G$ is superlinear, the same proposition also implies that the growth of $\Cay(\Gamma,S)$ is superlinear. As is well known (see e.g. \cite[Lemma 11.1.2 and Proposition 11.1.3]{MR3971253}), this means that $\Gamma$ is not virtually cyclic. \cref{thm:inf.perc.nilp} therefore implies that there is an absolute constant $\eps>0$ such that $p_c(\Cay(\Gamma,S))\le1-\eps_0$. \cref{prop:cay-ab} implies that $G/H$ is isomorphic to a quotient of $\Cay(\Gamma,S)$ by a subgroup of $\Aut(\Cay(\Gamma,S))$ of order at most $M$, so \cref{lem:bdd.orbits} imples that $p_c(G/H)\le1-\eps$ for some absolute constant $\eps>0$. The theorem then follows from \cref{prop:Benj-Schr}.
\end{proof}

The remainder of this section is dedicated to the proof of \cref{thm:fin.perc}. 
We begin with some simple and standard geometric lemmas.

\begin{lemma}[cf. Ruzsa's covering lemma \cite{MR1701207}]\label{lem:covering}
Let $A$ be a subset of a graph $G$, and let $m\in\N$. Let $X$ be a maximal subset of $A$ such that the balls $B(x,m)$ are pairwise disjoint. Then $A\subseteq \bigcup_{x\in X}B(x,2m)$.
\end{lemma}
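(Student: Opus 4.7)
The plan is to imitate the standard proof of Ruzsa's covering lemma in the additive-combinatorial setting, translated to the graph metric. The key observation is that maximality of $X$ prevents any vertex of $A$ from being ``too far'' from every element of $X$, because otherwise we could enlarge $X$.

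First I would fix an arbitrary $a \in A$ and aim to produce some $x \in X$ with $d(a,x) \le 2m$. Consider the ball $B(a,m)$. If $a \in X$ already, then $a \in B(a,2m)$ and we are done. Otherwise, by the maximality of $X$, the set $X \cup \{a\}$ must fail to have the pairwise-disjoint-$m$-balls property, and since $X$ itself has that property, the only way this can fail is that $B(a,m) \cap B(x,m) \neq \varnothing$ for some $x \in X$.

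Pick such an $x$ and pick a vertex $y \in B(a,m) \cap B(x,m)$. Then $d(a,y) \le m$ and $d(x,y) \le m$, so by the triangle inequality $d(a,x) \le 2m$, i.e., $a \in B(x,2m) \subseteq \bigcup_{x \in X} B(x,2m)$. Since $a \in A$ was arbitrary this gives the desired inclusion.

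I do not anticipate any real obstacle here: the lemma is a purely combinatorial consequence of maximality together with the triangle inequality, and the proof is a few lines. The only thing to be slightly careful about is handling the degenerate case where $a$ itself already lies in $X$, but this is handled trivially since $a \in B(a,2m)$.
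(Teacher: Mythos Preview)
Your proof is correct and essentially identical to the paper's own argument, which simply notes that maximality of $X$ forces $B(a,m)\cap B(x,m)\ne\varnothing$ for some $x\in X$ and then applies the triangle inequality. The paper does not separate out the case $a\in X$, since it is subsumed by taking $x=a$; otherwise the arguments are the same.
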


\begin{proof}
The maximality of $X$ implies that for every $a\in A$ there exists $x\in X$ such that $B(x,m)\cap B(a,m)\ne\varnothing$, and hence $a\in B(x,2m)$.
\end{proof}

\begin{lemma}\label{lem:linear.rel.growth}
Let $G$ be a graph of diameter at least $n$ and let $v$ be a vertex of $G$. Then $B(v,n)$ contains at least $(n-2m)/(4m+2)$ disjoint balls of radius $m$ for each $m\leq n/2$. As such, if $G$ is transitive and $1 \leq m_1 \leq m_2 \leq \diam(G)$ then
\[
\frac{|B(v,m_2)|}{|B(v,m_1)|}  \geq 1 \vee \frac{m_2-2m_1}{4m_1+2} \geq \frac{m_2}{8m_1}.
\]
\end{lemma}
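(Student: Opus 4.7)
The plan is to construct the disjoint balls along a geodesic of length $n$ using Ruzsa's covering lemma (\cref{lem:covering}), which is set up for exactly this kind of packing argument. The key geometric fact is that any two vertices on a common geodesic are at graph distance equal to their distance along the geodesic, so the intersection of a ball of radius $2m$ with a geodesic path of length $n$ contains at most $4m+1$ consecutive geodesic vertices.

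For the first part, I would split on the eccentricity $e(v)$. If $e(v) \geq n$, choose a geodesic $v = u_0, u_1, \ldots, u_n$ starting from $v$, and set $A = \{u_m, u_{m+1}, \ldots, u_{n-m}\}$, so that $|A| = n-2m+1$. Applying \cref{lem:covering} to $A$ produces $X \subseteq A$ with the balls $\{B(x,m) : x \in X\}$ pairwise disjoint and $A \subseteq \bigcup_{x \in X} B(x,2m)$. The geodesic fact above forces $|B(x,2m) \cap A| \leq 4m+1$ for each $x \in X$, yielding
\[
|X| \geq \frac{|A|}{4m+1} = \frac{n-2m+1}{4m+1} \geq \frac{n-2m}{4m+2},
\]
and each $B(x,m) \subseteq B(v,n)$ since $d(v,x) \leq n-m$ gives the inclusion by the triangle inequality. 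If instead $e(v) < n$, then $B(v,n) = V$ and I may apply the same packing argument to any geodesic of length $n$ in $G$ (which exists by the diameter hypothesis), with no need to worry about containment.

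For the second part, transitivity gives $|B(u,m_1)| = |B(v,m_1)|$ for every $u \in V$, so if $B(v,m_2)$ contains $k$ pairwise disjoint balls of radius $m_1$ then $|B(v,m_2)| \geq k|B(v,m_1)|$. When $m_1 \leq m_2/2$ the first part (applicable since $e(v) = \diam(G) \geq m_2$ in the transitive case) gives $k \geq (m_2-2m_1)/(4m_1+2)$, while the trivial bound $k \geq 1$ holds in all cases. The final inequality $1 \vee (m_2-2m_1)/(4m_1+2) \geq m_2/(8m_1)$ is then a case check: if $m_2 \leq 8m_1$ then $m_2/(8m_1) \leq 1$, and if $m_2 > 8m_1$ then using $m_1 \geq 1$ one has $m_2(2m_1 - 1) > 8m_1(2m_1 - 1) \geq 8m_1^2$, which rearranges to $(m_2-2m_1)/(4m_1+2) > m_2/(8m_1)$. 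I do not foresee any real obstacle; the argument is essentially a packing/volume comparison, with the only minor bookkeeping being the eccentricity split in the first part and the elementary case check at the end.
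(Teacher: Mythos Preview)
Your proof is correct. The main difference from the paper is in the first part: the paper avoids the case split on eccentricity by observing that $\diam(G)\ge n$ forces $e(v)\ge\lceil n/2\rceil$ (by the triangle inequality applied to a diametral pair), so there is always a geodesic of length $k=\lceil n/2\rceil$ starting at $v$. It then simply places the balls $B(x_{(2m+1)i},m)$ for $0\le i\le(k-m)/(2m+1)$ at equally spaced centres along this geodesic; these are automatically disjoint and contained in $B(v,k+m)\subseteq B(v,n)$. This is more direct than going through \cref{lem:covering} and splits no cases, at the cost of a marginally weaker count. Your route via the covering lemma is perfectly valid and gives the same bound; the eccentricity split is the price you pay for insisting on a geodesic of full length $n$ rather than $\lceil n/2\rceil$. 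For the second part the paper just says ``a small calculation'', and your explicit case check is exactly what is intended. (The parenthetical ``applicable since $e(v)=\diam(G)\ge m_2$'' is unnecessary: the first part only needs $\diam(G)\ge m_2$, which is given.)
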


\begin{proof}The second claim follows easily from the first by a small calculation. We now prove the first, following \cite[Lemma 5.3]{tt.resist}. Since $\diam(G)\ge n$, there exists a geodesic of length $k=\lceil n/2 \rceil$ starting at $v$. Let $x_0=v,x_1,x_2,\ldots,x_k$ be the vertices of this geodesic, written in increasing order of distance from $v$. The balls $B(x_{(2m+1)i},m)$ with $0\le i\le(k-m)/(2m+1)$ are then disjoint subsets of $B(v,n)$. This is easily seen to imply the claim.
\end{proof}

\begin{proof}[Proof of Theorem \ref{thm:fin.perc}]
Throughout the proof, we will write $\asymp$, $\preceq$, and $\succeq$ for equalities and inequalities holding to within positive multiplicative constants depending only on $k,\lambda$, and $\eps$. Note that small values of $|V|$ can be handled by increasing $p$, so that we may assume wherever necessary that $|V|$ is larger than any given constant depending on $k,\lambda,$ and $\eps$.

By \cref{cor:quasitrans} we may assume that $G$ is transitive. Let $M$ be the absolute constant $M=M(14)\vee M(13)$, where $M(13)$ and $M(14)$ are the constants coming from \cref{prop:structure.last.large.ball}. We will prove the theorem with the constant $a=M+400$. Indeed, we will prove that for each $k,\lambda \geq 1$ and $\eps>0$ there exists a constant $p_0=p_0(k,\lambda,\eps)<1$ such that if $G=(V,E)$ is a finite, connected, vertex-transitive graph of degree at most $k$ and 
\[
\diam(G)\leq \frac{\lambda |V|}{(\log |V|)^{M+400}}
\]
then $\P_p^G(x\leftrightarrow y) \geq 1-\eps$ for every $x,y\in V$; this suffices by the first part of \cref{lem:cluster/connect}. Fix one such choice of $k,\lambda \geq 1$, $\eps>0$, and $G=(V,E)$. Let $\eps_0$ be the positive absolute constant $1\wedge c(14) \wedge \eps_0(12) \wedge \eps_0(13) \wedge \eps_0(14)$, where $c(14)$ is as in \cref{thm:dimension_drop} and $\eps_0(12)$, $\eps_0(13)$, and $\eps_0(14)$ are as in \cref{prop:iso.subgraph}.
Fix a vertex $o$ of $G$ and consider the three scales $1\leq m_1 \leq m_2 \leq m_3$ defined by
\begin{align*}m_1&=\max\bigl\{n\le\diam(G):|B(o,n)|\ge \textstyle{\frac12}\eps_0 n^{14}\bigr\},\\ 
m_2&=\max\bigl\{n\le\diam(G):|B(o,n)|\ge \textstyle{\frac12}\eps_0 n^{13}\bigr\},\\ 
m_3&=\max\bigl\{n\le\diam(G):|B(o,n)|\ge \textstyle{\frac12}\eps_0 n^{12}\bigr\}.
\end{align*}

If $m_3=\diam(G)$ then the theorem follows from  \cref{thm:isop_resist} and \cref{thm:iso.inf}.
If $m_1 \leq 10^{156} \vee M^2$ then letting $R=10^{78}\vee M$ we may apply \cref{prop:structure.last.large.ball} with $d=14$ to obtain groups $H\lhd\Aut(G)$ and $\Gamma<\Aut(G/H)$ such that $\Gamma$ acts transitively on $V/H$ and has a nilpotent subgroup of step and index at most $M$, such that the stabiliser in $\Gamma$ of each orbit $Hv$ with $v\in V$ has size at most $M$, and such that each such orbit has diameter at most $R$, and hence size at most $R^{14}$. \cref{prop:cay-ab} then implies that $S=\{\gamma\in\Gamma:d(\gamma(Ho),Ho)\le1\}$ is a symmetric generating set for $\Gamma$ of size at most $(k+1)M$, that $\diam_S(\Gamma)=\diam(G/H)$, and that $|\Gamma|=|\Gamma_{Ho}||V/H|\asymp|V|$, and hence that
\[
\diam_S(\Gamma)=\diam(G/H)\le\diam(G)\le\frac{\lambda |V|}{(\log|V|)^{M}}
\preceq \frac{|\Gamma|}{(\log|\Gamma|)^{M}}.
\]
It then follows from Theorem \ref{thm:fin.perc.nilp}, Proposition \ref{prop:cay-ab}, Lemma \ref{lem:bdd.orbits} and \cref{prop:Benj-Schr} that for every $\eps>0$ there exists $q_1=q_1(k,\lambda,\eps)<1$ such that
\[
\P_p^G\bigl(u\leftrightarrow Hv\bigr)\geq \sqrt{1-\eps}
\]
for every $u,v\in V$ and $p\geq q_1$. Since the orbits of $H$ have diameter at most $R$, there also exists $q_2=q_2(\eps)<1$ such that
\[
\P_p^G\bigl(u\leftrightarrow v\bigr)\ge \sqrt{1-\eps}
\]
for $p \geq q_2$ and every $u,v$ belonging to the same orbit of $H$. Letting $q_3$ be defined by $1-q_3=(1-q_1)(1-q_2)$, we have as usual that Bernoulli-$q_3$ percolation is distributed as the union of two independent copies of Bernoulli-$q_1$ and Bernoulli-$q_2$ percolation, so that
\[
\P_{q_3}^G\bigl(u \leftrightarrow v) \geq \P_{q_1}^G\bigl(u\leftrightarrow Hv\bigr) \cdot \min \left\{\P_{q_2}^G(w \leftrightarrow v) : w \in H v\right\} \geq 1-\eps
\]
for every $u,v\in V$ and the theorem is proved in this case.

\medskip

From now on we assume that $10^{156} \leq m_1 \leq m_2 \leq m_3 <\diam(G)$, which covers all outstanding cases of the theorem. Note in this case that the three scales $m_1,m_2,$ and $m_3$ satisfy the hypothesis of \cref{prop:iso.subgraph} and are well separated from each other. Indeed, we have that
\[
\frac12\eps_0 m_i^{15-i} \leq |B(o,m_i)| \leq  |B(o,m_i+1)| < \frac12\eps_0 (m_i+1)^{15-i}\le\eps_0m_i^{15-i} \qquad \text{ for each $i=1,2,3$},
\]
from which it follows that
\begin{equation}
\label{eq:m_2>>m_1}
m_2 > m_1^{14/13}-1 \qquad \text{ and } \qquad m_3 > m_2^{13/12}-1
\end{equation}
and hence that
\begin{equation}
\label{eq:m_2>>m_1b}
m_2 \geq 10^{12} \cdot m_1 \geq 10^{168} \vee M^2 \qquad \text{ and } \qquad m_3 \geq 10^{14} \cdot m_2 \geq 10^{182}.
\end{equation}

Let $H\lhd\Aut(G)$ and $\Gamma<\Aut(G/H)$ be the groups given by applying \cref{prop:structure.last.large.ball} with $d=13$. Thus $\Gamma$ acts transitively on $V/H$ and has a nilpotent subgroup of step and index at most $M$, the stabiliser in $\Gamma$ of each orbit $Hv$ with $v\in V$ has size at most $M$, and each such orbit has diameter at most $m_2^{1/2} \vee M$. The orbits of $H$ all have the same cardinality by \cite[Lemma 3.3]{tt.Trof}, and we divide the proof into two cases: the case in which the orbits of $H$ have size at most $(\log|V|)^{400}$, and the case in which they have size greater than $(\log|V|)^{400}$.

\medskip

\textbf{Small $H$-orbits.} 
We begin with the case in which $|Hv| \leq (\log |V|)^{400}$ for every $v\in V$. \cref{prop:cay-ab} implies that $S=\{\gamma\in\Gamma:d(\gamma(Ho),Ho)\le1\}$ is a symmetric generating set for $\Gamma$ of size at most $(k+1)M$, that $\diam_S(\Gamma)=\diam(G/H)$, and that $|\Gamma|=|\Gamma_{Ho}||V/H|$. The bounds on the sizes of the $H$-orbits and their stabilisers thus imply that $|V|/(\log|V|)^{400}\le|\Gamma|\le M|V|$, and hence that
\[
\diam_S(\Gamma)=\diam(G/H)\le\diam(G)\le\frac{\lambda |V|}{(\log|V|)^a}
\preceq \frac{|\Gamma|}{(\log|\Gamma|)^{M}}.
\]
It then follows from Theorem \ref{thm:fin.perc.nilp}, Proposition \ref{prop:cay-ab}, Lemma \ref{lem:bdd.orbits} and \cref{prop:Benj-Schr} that for every $\eps>0$ there exists $q_1=q_1(k,\lambda,\eps)<1$ such that
\[
\P_p^G\bigl(u\leftrightarrow Hv\bigr)\geq \sqrt{1-\eps}
\]
for every $u,v\in V$ and $p\geq q_1$. To complete the proof in this case, it will suffice to prove that there exists $q_2=q_2(k,\lambda,\eps)<1$ such that
\begin{equation}\label{eq:H(u).cluster}
\P_p^G\bigl(u\leftrightarrow v\bigr)\ge \sqrt{1-\eps},
\end{equation}
for every $p \geq q_2$ and every $u,v$ belonging to the same orbit of $H$ since we may then conclude as in the $m_1\leq 10^{156}$ case above.

Since $m_2 \geq M^2$, each orbit of $H$ has diameter at most $m_2^{1/2}$ in $G$ and $H x \subseteq B(o,m_2)$ for every $x\in B(o,m_2')$, where we set $m_2'= \lfloor m_2-m_2^{1/2} \rfloor$, which satisfies $m_2' \geq m_2/2 \geq 2m_1$ by \eqref{eq:m_2>>m_1b}.
 Applying \cref{thm:dimension_drop} with $d=14$ at the scale $m_1$ therefore yields that there exists an absolute constant $0<c_1 \leq 1$ such that $|B(o,m_2')|\geq c_1|B(o,m_2)|$. 
 Applying \cref{prop:iso.subgraph}, we obtain that there exists a constant $\ell_2=\ell_2(k)$ such that for each $v\in V$ there exists a subset $I_{2,v}\subseteq B(v,\ell_2 m_2)$ such that the subgraph of $G$ induced by $I_{2,v}$ satisfies a $13$-dimensional isoperimetric inequality $(\mathrm{ID}_{13,c_2})$ for some $c_2=c_2(k)>0$ and 
\[
|I_{2,v} \cap B(v,m_2)| \geq \left(1-\frac{c_1}{4}\right) |B(v,m_2)| \geq \frac{3}{4} |B(v,m_2)|.
\]
(Here we use the subscript $2$ to remind the reader that $I_{2,v}$ is an object associated to the scale $m_2$.)
Letting $O \subseteq B(o,m_2)$ be the union of the $H$-orbits of the elements of $B(o,m_2')$, we have that $|O| \geq |B(o,m_2')|\geq c_1|B(o,m_2)|$ and hence that
\[
|I_{2,o} \cap O| = |O|-|O\setminus I_{2,o}| \geq |O| - | B(o,m_2) \setminus I_{2,o}| \geq |O| - \frac{c_1}{4}|B(o,m_2)| \geq \frac{3}{4} |O|.
\]
It follows in particular that there exists $y\in B(o,m_2')$ such that
 $|I_{2,o}\cap H y| \geq \frac{3}{4}|H y|$.
Since $\Gamma$ acts transitively on $V/H$, we may apply an automorphism $\gamma$ of $G$ mapping some element of $I_{2,o} \cap H y$ to $o$ to obtain a set $\gamma I_{2,o}$ such that
$o\in \gamma I_{2,o}$ and $|(\gamma I_{2,o})\cap H o | \geq \frac{3}{4}|H o|$. Applying \cref{thm:iso.inf} and \cref{lem:cluster/connect} to the subgraph of $G$ induced by $\gamma I_{2,o}$ yields that there exists $q_2=q_2(k,\eps)$ such that 
\[
\P_p^G(o \leftrightarrow v) \geq (1 - \eps)^{1/4}
\]
for every $v\in(\gamma I_{2,o}) \cap H o$ and $p \geq q_2$. Since $|(\gamma I_{2,o}) \cap Ho|\geq \frac{3}{4} |Ho|$, we may apply Lemma \ref{lem:half.doubles.to.whole2} to conclude that
\[
\P_p^G(o \leftrightarrow v) \geq \sqrt{1 - \eps}
\]
for every $v \in Ho$ and $p \geq q_2$. This immediately implies the claim \eqref{eq:H(u).cluster}.

\medskip

\textbf{Large $H$-orbits.}
We now consider the second case, in which $|Hv| \geq (\log |V|)^{400}$ for every $v\in V$. We will continue to use the sets $(I_{2,v})_{v\in V}$ as constructed in the case of small $H$-orbits.  Note that $B(o,m_2)$ contains the orbit $Ho$, and since $|B(o,m_2)|\le(m_2+1)^{13}$, this implies that $m_2^{13} \succeq (\log|V|)^{400}$, and hence that
\begin{equation}\label{eq:m>log(G)}
m_2\succeq (\log|V|)^{400/13}.
\end{equation}
Note in particular that this implies we may assume $m_2$ to be larger than any given constant depending on $k,\lambda,\eps$, the theorem being trivial for graphs of bounded volume.

Let the constant $\ell_2=\ell_2(k)$ be as in the construction of the sets $I_{2,v}$ above, so that for each $v\in V$ there exists a subset $I_{2,v}\subseteq B(v,\ell_2 m_2)$ such that the subgraph of $G$ induced by $I_{2,v}$ satisfies a $13$-dimensional isoperimetric inequality $(\mathrm{ID}_{13,c_2})$ for some $c_2=c_2(k)>0$ and 
\[
|I_{2,v} \cap B(v,m_2)| \geq \frac{3}{4} |B(v,m_2)|.
\]
By \eqref{eq:m_2>>m_1}, we may assume that $m_2$ is sufficiently large that $m_3 \geq 2(\ell_2 m_2 +1)$. It follows from \cref{lem:linear.rel.growth} and \eqref{eq:m_2>>m_1} that there exists a subset $X_0 \subseteq B(o,m_3-\ell_2 m_2-1)$ satisfying 
\begin{equation}|X_0|\succeq m_3/  m_2 \succeq m_2^{1/12}
\label{eq:X>>m^eps}
\end{equation} that is maximal such that the balls $B(x,\ell_2 m_2)$ with $x\in X_0$ are pairwise disjoint. Lemma \ref{lem:covering} then implies that $B(o,m_3-\ell_2 m_2-1)$ is covered by the sets $B(x,2\ell_2 m_2)$ with $x\in X_0$, so that $B(o,m_3)$ is covered by the sets $B(x, 3\ell_2 m_2+1)$ with $x\in X_0$ and hence that
\begin{equation}\label{eq:B(m')<<B(m)}
|B(o,m_3)|\le|X_0|\cdot |B(o,3 \ell_2 m_2+1)|.
\end{equation} 
We have by \cref{thm:dimension_drop} that there exists a constant $C=C(k)$ such that
 $|B(o,3 \ell_2 m_2+1)|\le C |B(o,m_2)|$, and hence by \eqref{eq:B(m')<<B(m)} and the disjointness of the balls $B(x,\ell_2 m_2)$ with $x\in X_0$ that
\begin{align*}
\Biggl|\bigcup_{x\in X_0}B(x,m_2)\Biggr| = |X_o| \cdot |B(o,m_2)| \ge C^{-1}  |X_0| \cdot |B(o,3 \ell_2 m_2+1)| 
 \geq C^{-1} |B(o,m_3)|.
\end{align*}
As such, there exists a positive constant $\delta=\delta(k)$ such that $\delta \leq 1/4$ and $|\bigcup_{x\in X_0}B(x,m_2)|\geq 8 \delta (k+1) |B(o,m_3)|$.
Note also that
\begin{equation}\label{eq:X_0.in.B(m')}
\bigcup_{x\in X_0}B(x,m_2)\subseteq \bigcap_{v\in V:d(o,v)\le1}B(v,m_3)
\end{equation}
since $X_0 \subseteq  B(o,m_3-\ell_2 m_2-1)$.
 A second application of \cref{prop:iso.subgraph}, this time at the scale $m_3$, implies that 
 there exists a constant $\ell_3=\ell_3(k)$ such that 
 for each $v\in V$ there exists a set $I_{3,v}\subset B(v, \ell_3 m_3)$ such that
\begin{align}
|I_{3,v}\cap B(v,m_3)|\ge(1-\delta)|B(v,m_3)|\label{eq:Iv.big}
\end{align}
and such that the subgraph of $G$ induced by $I_{3,v}$ satisfies a $12$-dimensional isoperimetric inequality with constant depending only on $k$.
 For each $v\in V$, set
\[
J_{3,v}=\bigcap_{u\in V :d(u,v)\le1}I_{3,u}.
\]
(Again, we include the subscript $3$ to emphasize that $J_3$ is associated to the large scale $m_3$.) It follows from \eqref{eq:X_0.in.B(m')} and \eqref{eq:Iv.big} that for each $v$ equal or adjacent to $o$ we have that
\[
\left|\bigcup_{x\in X_0}B(x,m_2) \setminus I_{3,v} \right|\leq  \bigl|B(v,m_3) \setminus I_{3,v}\bigr| \leq \delta |B(v,m_3)| \leq \frac{1}{8(k+1)} \left|\bigcup_{x\in X_0}B(x,m_2) \right|
\]
  and hence that
\[
\left|J_{3,o} \cap\bigcup_{x\in X_0}B(x,m_2)\right|\ge\frac34\left|\bigcup_{x\in X_0}B(x,m_2)\right|.
\]
 Letting $X$ be the set of $x\in X_0$ such that $|J_{3,o}\cap B(x,m_2)|\ge\textstyle\frac12|B(x,m_2)|$, it follows in particular that $|X| \geq |X_0|/2$.
Since $|I_{2,x} \cap B(x,m_2)| \geq \frac{3}{4}|B(x,m_2)|$ for every $x\in V$, it follows moreover that
\begin{equation}
\label{eq:twoscalesbigintersection}
|J_{3,o} \cap I_{2,x} \cap B(x,m_2)|\geq \frac{1}{4} |B(x,m_2)| \geq \frac{1}{4} (\log|V|)^{400}
\end{equation}
for every $x\in X$.

\medskip

For each $x \in V$, let $G_{2,x}$ and $G_{3,x}$ be the subgraphs of $G$ induced by $I_{2,x}$ and $I_{3,x}$, both of which satisfy a $12$-dimensional isoperimetric inequality $(\mathrm{ID}_{12,c})$ for some constant $c=c(k)$. As such, \cref{thm:iso.inf} implies that there exists $p_0=p_0(k,\eps)<1$ and $\eta=\eta(k)>0$ such that if $p\geq p_0$ then 
\begin{equation}
\label{eq:G_2_connectivity}
\P^{G_{2,x}}_p(x \leftrightarrow y) \geq \frac{5}{8}
\end{equation}
for every $x\in V$ and $y \in I_{2,x}$,
\begin{equation}
\label{eq:G3twopoint}
\P^{G_{3,x}}_p(x \leftrightarrow y) \geq (1-\eps)^{1/4}
\end{equation}
for every $x\in V$ and $y \in I_{3,x}$,
 and moreover that
\begin{equation}
\label{eq:big_sets_high_prob}
\P^{G}_p(A \leftrightarrow B) \geq \P^{G_{3,x}}_p(A \leftrightarrow B) \geq 1 - \exp\left[ - \eta \min \{|A|,|B|\}^{10/12} \right]
\end{equation}
for every $x\in V$ and every two non-empty sets $A,B \subseteq I_{3,x}$.

\medskip

Applying Markov's inequality to the random variable $\bigl|J_{3,o} \cap I_{2,x} \cap B(x,m_2)\setminus K_x\bigr|$, it follows by \eqref{eq:twoscalesbigintersection} and \eqref{eq:G_2_connectivity} that
\begin{multline*}
\P^{G_{2,x}}_p\left(\bigl|K_x \cap J_{3,o} \cap I_{2,x} \cap B(x,m_2)\bigr| \geq \frac{1}{16} (\log |V|)^{400} \right)\\
\geq \P^{G_{2,x}}_p\left(\bigl|K_x \cap J_{3,o} \cap I_{2,x} \cap B(x,m_2)\bigr| \geq \frac{1}{4} \bigl|J_{3,o} \cap I_{2,x} \cap B(x,m_2)\bigr| \right) \geq \frac{1}{2}
\end{multline*}
for every $p\geq p_0$ and $x\in X$.
 Since the sets $I_{2,x}$ with $x\in X$ are all disjoint, we may couple Bernoulli-$p$ bond percolation on $G$ with independent copies of Bernoulli-$p$ bond percolation on each $G_{2,x}$ and conclude that
\[
\P_p^G\left(\text{$\exists\,x\in X$ such that $|K_x \cap J_{3,o}\cap I_{2,x}\cap B(x,m_2)|\geq \frac{1}{16} (\log|V|)^{400}$}\right)\ge1-2^{-|X|}
\]
for every $p\geq p_0$.
It follows from the definition of $X$ that $B(x,m_2)\subset B(o,m_3)$ for every $x\in X$, and from \eqref{eq:m>log(G)} and \eqref{eq:X>>m^eps} that 
\[|X|\succeq m_2^{1/12} \succeq (\log|V|)^{400/156}.\] Thus, if $|V|$ is sufficiently large then $|X| \geq (\log |V|)^2$ and we deduce that
\[
\P_p^G\left(\text{$\exists$ an open cluster $K$ such that $|K \cap J_{3,o}\cap B(o,m_3)|\ge\frac{1}{16}(\log|V|)^{400}$}\right)\ge1-2^{-(\log|V|)^2}
\]
for every $p\geq p_0$. 
Since $o$ was taken to be an arbitrary vertex of $G$, it follows by a union bound that
\begin{multline}
\P_p^G\left(\text{$\forall v\in V$ $\exists$ an open cluster $\tilde K(v)$ such that $|\tilde K(v) \cap J_{3,v}\cap B(v,m_3)|\ge\frac{1}{16}(\log|V|)^{400}$}\right)\\
\geq1-|V|\cdot 2^{-(\log|V|)^2}\nonumber
\end{multline}
for every $p\geq p_0$. Since the right hand side tends to $1$ as $|V| \to \infty$ and small values of $|V|$ can be handled by increasing $p$, it follows that there exists $p_1=p_1(k,\eps) \in [p_0,1)$ such that 
\begin{multline}\label{eq:C_v.exist}
\P_p^G\left(\text{$\forall v\in V$ $\exists$ an open cluster $\tilde K(v)$ such that $|\tilde K(v) \cap J_{3,v}\cap B(v,m_3)|\ge\frac{1}{16}(\log|V|)^{400}$}\right)\\
\geq (1-\eps)^{1/4}
\end{multline}
for every $p \geq p_1$.

Let $p_2=p_2(k,\eps)<1$ be defined by $1-p_2=(1-p_0)(1-p_1)$ and let $\sA$ be the event whose probability is estimated in \eqref{eq:C_v.exist}. Let $\omega_0$ and $\omega_1$ be independent copies of Bernoulli-$p_0$ and Bernoulli-$p_1$ percolation on $G$, so that $\omega_2:=\omega_0 \vee \omega_1$ is distributed as Bernoulli-$p_2$ percolation. Write $\P$ for the joint law of $\omega_0,\omega_1,$ and $\omega_2$.  Suppose that $\omega_1 \in \sA$ 
and for each $v \in V$ let $\tilde K(v)= \tilde K(v,\omega_1)$ be an $\omega_1$-open cluster such that $|\tilde K(v) \cap J_{3,v} \cap B(v,m_3)| \ge\frac{1}{16}(\log |V|)^{400}$. Write $\tilde K'(v)=J_{3,v}\cap B(v,m_3)\cap \tilde K(v)$ for each $v\in V$. 
 Note that for every pair $u,v$ of neighbours in $V$ we have that $\tilde K'(u), \tilde K'(v)\subset I_{3,u} \cap I_{3,v}$ by definition of $J_{3,v}$. Since $400 \cdot (10/12) \geq 2$, we may condition on $\omega_1$ and apply \eqref{eq:big_sets_high_prob} to $\omega_0$ to deduce that
 \[
\P(\text{$\omega_1 \in \sA$ and $\tilde K'(u)$ and $\tilde K'(v)$ are connected in $\omega_0$} \mid \omega_1 ) \geq 1 - \exp\left[-\frac{\eta}{16}(\log |V|)^{2}\right]
 \]
 for every two adjacent $u,v\in V$ on the event $\sA$. Since there are at most $|V|^2$ pairs of adjacent vertices, we may take a union bound to deduce that
 \begin{multline*}
\P(\text{$\omega_1\in \sA$ and $\tilde K'(u)$ and $\tilde K'(v)$ are connected in $\omega_0$ for every two adjacent vertices $u,v$} \mid \omega_1 ) 
\\\geq 1 - |V|^2\exp\left[-\frac{\eta}{16}(\log |V|)^{2}\right]
 \end{multline*}
 on the event $\sA$. Letting $\sB$ be the event that there exists an open cluster $K$ having non-trivial intersection with $I_{3,v}$ for every $v\in V$, it follows that
 \[
\P^G_{p_2}(\sB) \geq (1-\eps)^{1/4} \left(1 - |V|^2\exp\left[-\frac{\eta}{16}(\log |V|)^{2}\right]\right).
 \]
 Since the term in parentheses on the right hand side tends to $1$ as $|V|\to \infty$ and small values of $|V|$ can be handled by increasing $p$, it follows that there exists a constant $p_3 = p_3(k,\eps) \in [p_2,1)$ such that $\P^G_p(\sB) \geq (1-\eps)^{1/2}$ for every $p\geq p_3$. Finally, letting $p_4=p_4(k,\eps)<1$ be defined by $1-p_4=(1-p_3)(1-p_0)^2$, we have by a similar coupling argument to above that
\[
\P_{p_4}^G(x \leftrightarrow y) \geq \P^G_{p_3}(\sB) \cdot \min \{\P_{p_0}^G(x \leftrightarrow v) : v\in I_{3,x}\} \cdot \min \{\P_{p_0}^G(y \leftrightarrow v) : v\in I_{3,y}\} \geq 1-\eps
 \]
 for every $x,y\in V$ by \eqref{eq:G3twopoint}, completing the proof. \qedhere
\end{proof}

\subsection*{Acknowledgements} 
The first author was supported in part by ERC starting grant 804166 (SPRS) and thanks Gabor Pete for helpful discussions.
The second author was partially supported by the Stokes Research Fellowship at Pembroke College, Cambridge.
He is also grateful to Itai Benjamini and Ariel Yadin for introducing him to the problems considered in this paper, to Ben Green for a useful discussion on the additive-combinatorics literature, and to Romain Tessera for helpful conversations.

\phantomsection
\addcontentsline{toc}{part}{References}

 \setstretch{1}
 \footnotesize{
  \bibliographystyle{abbrv}
  \bibliography{unimodularthesis.bib}
  }

\end{document}